\title[ Superdiffusion transition  ]
      { Superdiffusion transition for a phonon Boltzmann equation}
\author{Ga\"etan Cane \\\\
\texttt{\href{https://gaetancane.ch/}{gaetancane.ch}}}
\def\1{{\mathbf 1}}
\def\N{{\mathbb N}}
\def\Z{{\mathbb Z}}
\def\R{{\mathbb R}}
\def\C{{\mathbb C}}
\def\T{\mathbb{T}}
\def\P{{\mathbb P}}
\def\E{{\mathbb E}}
\def\Bc{{\mathcal B}}
\def\Cc{{\mathcal C}}
\def\Ec{{\mathcal E}}
\def\Fc{{\mathcal F}}
\def\Gc{{\mathcal G}}
\def\Lc{{\mathcal L}}
\def\Oc{{\mathcal O}}
\def\Sc{{\mathcal S}}
\def\Tc{{\mathcal T}}
\def\Wc{{\mathcal W}}
\def\i{{\mathbf{i}}}
\newtheorem{thm}{Theorem}[section]
\newtheorem{proposition}[thm]{Proposition}
\newtheorem{lemma}[thm]{Lemma}
\newtheorem{rmk}[thm]{Remark}
\newcommand{\ba}{\begin{array}}
\newcommand{\ea}{\end{array}}
\newcommand{\be}{\begin{equation}}
\newcommand{\ee}{\end{equation}}
\newcommand{\bea}{\begin{eqnarray}}
\newcommand{\eea}{\end{eqnarray}}
\newcommand{\beaa}{\begin{eqnarray*}}
\newcommand{\eeaa}{\end{eqnarray*}}
\newcommand{\ignore}[1]{}
\newcommand{\vertiii}[1]{{\left\vert\kern-0.25ex\left\vert\kern-0.25ex\left\vert #1 
    \right\vert\kern-0.25ex\right\vert\kern-0.25ex\right\vert}}
\begin{document}

\begin{abstract}
We consider an infinite harmonic chain of charged particles submitted to the action of a magnetic field of intensity $B$ and subject to the action of a stochastic noise conserving the energy. In \cite{JKO09} it has been proved that if $B=0$ the transport of energy is described by a $3/4$-fractional diffusion while it has been proved in \cite{SSS19} that if $B\ne 0$ it is described by a $5/6$-fractional diffusion. In \cite{JKO09,SSS19} the authors used a two step argument, i.e. they first proved that the kinetic limit of the Wigner distribution is the solution of a phonon Boltzmann equation and then proved that this solution converges to the solution of a fractional diffusion equation with exponent $3/4$ if $B=0$ (see \cite{JKO09}) and exponent $5/6$ if $B\neq 0$ (see \cite{SSS19}). In this paper we quantify the intensity of the magnetic field required to switch from one macroscopic regime to the other one from the phonon Boltzmann equation.  We also describe the transition mechanism to cross the two different phases.
\end{abstract}

\keywords{}

\maketitle

\setcounter{tocdepth}{1}
\medskip
\medskip
\medskip
\hrule
\tableofcontents
\hrule
\newpage

\section{Introduction}
Since the seminal numerical experiments of Fermi-Pasta-Ulam-Tsingou in 1953 \cite{DPR05}, the understanding of energy transport in very long anharmonic chains of coupled oscillators attracted a lot of attention but still remains a fascinating challenging open problem in mathematical physics. During the two last decades many researchers have been interested to the one dimensional case for which the energy transport is anomalous (we refer the reader to the physical reviews \cite{D08,LLP03}). The most elaborated theory to describe the form of this anomalous transport is probably the recent nonlinear fluctuating hydrodynamics theory initiated by Spohn \cite{S14, SS15} which predicts, for interacting particle systems with several conserved quantities (like energy, momentum etc.) and local interactions, several different universality classes containing not only the famous  KPZ universality class \cite{QS15} but also many fractional diffusion classes, apart from the standard Edwards-Wilkinson class (i.e. normal diffusion class). The theory is macroscopic and based on formal arguments starting from the hydrodynamic equations associated to the interacting particle system under investigation.  Unfortunately its validity for systems with more than one conservation law has been proved rigorously only for few stochastic models \cite{BGJ16, BGJS18-2,JKO15}. It has also to be noticed that while the link between the KPZ universality class and the Edwards-Wilkinson universality class is provided by the now well understood KPZ equation \cite{GP17,H13}, almost nothing is known about the potential equations connecting the other universality classes, even at some heuristic level.\\

The system we are interested to belongs to the class of systems introduced in \cite{FFL94}, revisited with the heat conduction problem perspective in \cite{BBO06, BBO09,BO05}, and studied since in several subsequent works by various authors (see e.g. \cite{BBJKO16} and \cite{KO20} for some reviews). In this paper we consider the model of \cite{SSS19} (introduced first in \cite{SS18}), i.e. a harmonic chain of charged particles submitted to a magnetic field and a stochastic exchange of velocity between neighbor sites. Each particle is labeled by its rest position $x$ in $\mathbb Z$ and lives in the two dimensional space $\mathbb R^2$. Its displacement from its rest position is denoted by $q (x) \in \mathbb R^2$, its velocity by $p (x) \in \mathbb R^2$ and its energy by $e(x)$. The magnetic field of intensity $B$ is constant and orthogonal to the plan of motion of the chain. A picture of the model is given in Fig. \ref{model_Makiko}.  The equations of motion of the deterministic system are then given for any time $t$ and $i$ in $\{1,2\}$ by 
\begin{equation}
\label{microscopic_system_intro}
\begin{array}{l}
  \frac{d}{dt}q_{i}(t,x)  = p_{i}(t,x)\ , \\\\
  \frac{d}{dt}p_{i}(t,x)= \Delta_{d} \left[q_i(t,x)\right] +\delta_{i,1}B p_2(t,x) - \delta_{i,2}B p_1(t,x)\ ,
\end{array}
\end{equation}
where $\delta_{i,j}$ denotes the Kronecker symbol  and $\Delta_{d}$ is the discrete\footnote{We recall that for a function $f$ defined on $\mathbb{Z}$ the discrete Laplacian of $f$  is defined for any $x$ in $\mathbb{Z}$ by $\Delta_d \left[f(x)\right]= f(x+1)+f(x-1)-2f(x)$.} Laplacian on $\mathbb{Z}$. 
The total energy $E$, which is conserved by the dynamics, is given by
\[E= \sum_{x \in \mathbb Z} e (x)= \frac{1}{2} \left(\sum_{x \in \mathbb{Z}} \vert p(x) \vert ^2 + \sum_{x \in \mathbb{Z}} \vert q(x)-q(x+1) \vert ^2 \right).\]
We superpose to the deterministic dynamics \eqref{microscopic_system_intro} a stochastic noise which exchanges continuously the velocities of nearest neighbor particles. The noise \textbf{conserves the total energy} of the chain and \textbf{the total pseudo-momentum}{\footnote{We recall that the pseudo-momentum of particle $x$ is $p(x) +B\sigma q(x)$ where $\sigma =\left( \begin{array}{cc} 0&1\\-1&0\end{array} \right)$. If $B=0$ the pseudo-momentum coincides with the velocity. If $B\ne0$, the velocity is not conserved by the Hamiltonian dynamics.}}.  The goal of this paper is to understand the mechanism to cross different universality classes by varying the intensity of the magnetic field.\\
\begin{figure}[h]
\label{model_Makiko}
\includegraphics[scale=0.25]{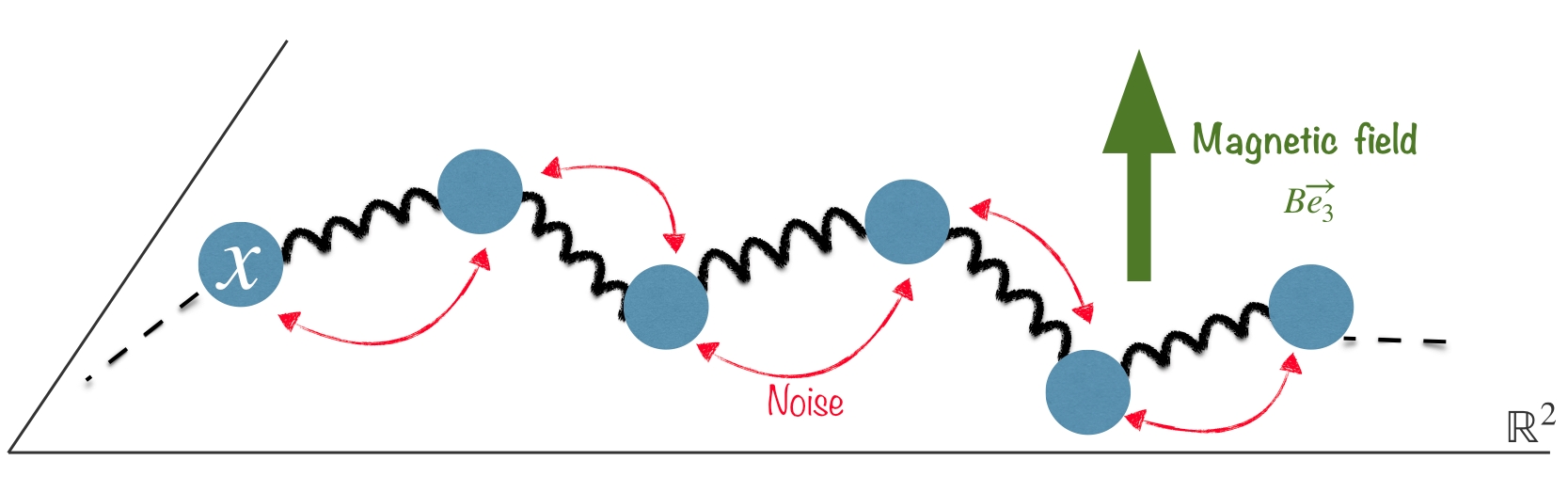}
\caption{Picture of the model studied.}
\end{figure}
\subsection*{Case without magnetic field, i.e. $B=0$.} In order to study the macroscopic evolution of the energy the authors of \cite{BOS09}, following \cite{S06}, introduced the Wigner distribution  $\Wc^\varepsilon$ (defined in Sec.\;\ref{section_wigner}) in the time scale $t \varepsilon^{-1}$, and  in the weak noise limit, i.e. the intensity of the noise is of order $\varepsilon$. Here $\varepsilon$ is a space scaling parameter, i.e. the lattice $\mathbb Z$ is rescaled in $\varepsilon \mathbb Z$, and going to zero. The Wigner distribution is a kind of localized Fourier transform of the space correlations of the eigenmodes, also called phonons, of the purely deterministic harmonic chain (the presence of the stochastic noise couple their time evolutions which become then non trivial). Each eigenmode is labeled by a $k\in {\mathbb T}$, the continuous torus of length one. For each time $t$, $\Wc^{\varepsilon} (t) $ is a distribution acting on a class of test functions $J: (u,k) \in (\mathbb R \times \mathbb T) \to J(u,k) \in \mathbb C$. To get some intuition on the relevance of the Wigner distribution for the study of the macroscopic behavior of the energy, we observe that if $J: (u,k)  \in (\mathbb R \times \mathbb T) \to {\tilde J} (u) \in \mathbb R$ is a test function not depending on the $k$ variable, we have
\begin{align*} \left\langle \Wc^\varepsilon (t), J \right\rangle =  \varepsilon \sum_{x \in \mathbb{Z}} \mathbb{E}_{\mu^\varepsilon} \left[e\left( t\varepsilon^{-1},x\right) \right] {\tilde J} (\varepsilon x) + \mathcal{O}_J(\varepsilon) \ ,\end{align*}
where $e(t,x)$ is the value of the energy of the particle $x$ at time $t$, $\mu^\varepsilon$ the initial distribution of the dynamics and $\mathcal{O}_J(\varepsilon)$ is an error term which depends on the test function $J$ and goes to zero when $\varepsilon$ goes to zero. From the previous equation it is clear that if we want to understand the macroscopic behavior of the energy when $\varepsilon$ goes to zero we have to understand the behavior of $\Wc^\varepsilon$ as $\varepsilon \to 0$. 

In \cite{BOS09} it is proved that at kinetic time scale $\varepsilon^{-1}$, the Wigner distribution ${\mathcal W}^\varepsilon$ converges to the unique solution $f_0(t,u,k,i)$, of the following phonon linear Boltzmann equation
\begin{equation}
\label{boltzmann_intro_stefano}
\partial_t f_0(t,u,k,i) + \frac{\textbf{v}_0(k)}{2\pi} \partial_u f_0(t,u,k,i) = \mathcal{L}_0\left[f_0\right](t,u,k,i) \ , 
\end{equation}
with

\begin{equation}
\label{def_C_intro}
    \mathcal{L}_0\left[f_0\right](t,u,k,i) = \sum_{j=1}^2 \int_{\T} R_0(k,k^\prime,i,j) \left[ f_0\left(t,u,k^\prime,j\right) - f_0\left(t,u,k,i\right) \right] \; dk^\prime \ .
    \end{equation}
Here, $u \in \R$ represents the position along the chain after the kinetic limit,  $t\ge 0$ the time and $k \in \T$ the wave number of a phonon whereas $i$ is the type of phonon. $\mathcal{L}_0$ is a collisional operator due to by the noise introduced on the system and $\textbf{v}_0$ is the group velocity.\\

In \cite{BOS09}, the authors studied a system where the particles live in $\mathbb{R}$ instead of $\mathbb{R}^2$ as presented in this introduction. As a consequence, the Boltzmann equation obtained in \cite{BOS09} does not depend on the variables $(i,j)$. However, following the proof performed in \cite{BOS09} we can derive Eq.\;\eqref{boltzmann_intro_stefano} for particles living in $\mathbb{R}^2$. \\

Since $R_0(k,k^\prime,i,j)$ is positive we can interpret the solution of Eq.\;\eqref{boltzmann_intro_stefano} as the evolution of the density of a continuous time Markov process $\left(Z_0(\cdot), K_0(\cdot), I_0(\cdot)\right)$. Here $\left(K_0(\cdot), I_0(\cdot)\right) \in {\mathbb T}\times \{ 1,2\}$ is the pure jump Markovian process with generator given by Eq.\;\eqref{def_C_intro} and $Z_0(\cdot)$ is the additive functional defined for any positive time $t$ by
\begin{align}
\label{definition_intro_Z0}
Z_0(t) = -\int_0^t \frac{\textbf{v}_0\left(K_0(s)\right)}{2\pi } \; ds\ .
\end{align}
Using this interpretation, the authors of \cite{JKO09} proved first that the finite-dimensional distributions of $N^{-1}Z_0\left(N^{\frac{3}{2}} \cdot\right)$ converge to the finite-dimensional distributions of a L\'evy process generated (up to a constant) by $-(-\Delta)^{\tfrac{3}{4}}$. In a second time, they used the previous result to show that $f_0\left(N^{\frac{3}{2}}t, Nu,\cdot\right)$ converges in $\mathbb{L}^2\left( \T \right)$ (see Theorem \ref{thm_limi_hydro_u_old}) to the unique solution $\rho_0$, of the following fractional diffusion equation 
\begin{equation}
\label{fractionnal_intro}
\forall u \in \R, \quad \forall t \in \; ]0,T], \quad \partial_t \rho_0(t,u) = -D\left(-\Delta\right)^{\tfrac{3}{4}}\left[\rho_0\right](t,u) \ , 
\end{equation}
where $D$ is a strictly positive constant. \\

As we explained for the results in \cite{BOS09}, in \cite{JKO09} the Boltzmann equation studied does not depend on $(i,j)$ but following the proof of \cite{JKO09} we can derive the fractional diffusion equation \eqref{fractionnal_intro} from the Boltzmann equation \eqref{boltzmann_intro_stefano}. \\

Hence, this two-step argument which consists to first prove the convergence of the Wigner distribution to the unique solution $f_0$ of a phonon Boltzmann equation and then the convergence of $f_0$ to the solution $\rho_0$ of a fractional Laplacian equation gives the nature of the superdiffusion of energy proved in \cite{BBO09}.  
Fractional diffusion equations can also be derived from some Boltzmann equations by purely analysis arguments, see for example \cite{CMP18,CMP21,MMM11} and references therein  for more details .
In 2015, by investigating further the time evolution of the Wigner distribution in a longer time scale, the authors of \cite{JKO15},  proved that we can obtain \textbf{in one step} Eq.\;\eqref{fractionnal_intro} from the microscopic model in a suitable time scale. The results above are consistent with the predictions of the nonlinear fluctuating hydrodynamics theory of Spohn.\\

\subsection*{Case with a magnetic field $B\ne 0$} By following the strategy initiated in \cite{BOS09,JKO09}, the authors of \cite{SSS19} proved that at kinetic time scale $\varepsilon^{-1}$ the Wigner distribution converges to the unique solution $f_B(t,u,k,i)$, of a phonon linear Boltzmann equation of the same form as in Eq.\;\eqref{boltzmann_intro_stefano}
\begin{equation}
\label{boltzmann_intro_makiko}
\partial_t f_B(t,u,k,i) + \frac{\textbf{v}_B(k)}{2\pi} \partial_u f_B(t,u,k,i) = \mathcal{L}_B\left[f_B\right](t,u,k,i) \ , 
\end{equation}
with 
\begin{align*}\mathcal{L}_B\left[f_B\right](t,u,k,i) = \sum_{j=1}^2 \int_{\T} R_B(k,k^\prime,i,j) \left[ f_B\left(t,u,k^\prime,j\right) - f_B(t,u,k,i) \right] \; dk^\prime \ .
\end{align*}
However, because of the presence of the magnetic field, as it is explained in the introduction of \cite{SSS19}, the group velocity  $\textbf{v}_B$ and the scattering kernel $R_B(k,i) = \int_{\T} R_B\left(k,k^\prime,i,j\right) dk^\prime \delta_j(i)$ have different behavior when $k$ goes to zero. Indeed when $B$ is positive we have for $k$ near to zero
\begin{align*}  \textbf{v}_B(k) \sim k , \quad  R_B(k,1) \sim k^2 \quad \text{and} \quad  R_B(k,2) \sim k^4 \ , \end{align*}
whereas in the Boltzmann equation \eqref{boltzmann_intro_stefano} studied in \cite{JKO09} $R_0(k,k^\prime,i,j)$ does not depend on $i$ and $j$ and satisfies when $k$ is close to zero
\begin{align*} \textbf{v}_0(k) \sim 1 \quad \text{and} \quad  R_0(k) \sim k^2  \quad \text{where} \quad R_0(k)= \int_{\T} R_0(k,k^\prime,i,j) \; dk^\prime \  .\end{align*}
This difference has a drastic effect on the energy transport properties of the chain moving the chain from one universality class to an other one. Indeed, the energy superdiffusion is still described by a fractional diffusion equation as in Eq.\;\eqref{fractionnal_intro} but with a different exponent. Following \cite{JKO09} it is proved in \cite{SSS19} that the finite-dimensional distributions of\footnote{Here $Z_B$ is defined as $Z_0$ in Eq.\;\eqref{definition_intro_Z0} with $\mathbf{v}_0$ replaced by $\mathbf{v}_B$.} $N^{-1}Z_B\left(N^{\frac{5}{3}} \cdot\right)$ converge weakly to the finite-dimensional distributions of a L\'evy process generated (up to a constant) by $-(-\Delta)^{\tfrac{5}{6}}$  which implies the convergence of $f_B\left(N^{\frac{5}{3}}t,Nu,\cdot\right)$ in $\mathbb{L}^2\left(\T \right)$ to the unique solution $\rho_B$ of the following fractional diffusion equation 
\begin{equation}
\forall u \in \R, \quad \forall t \in \; ]0,T], \quad \partial_t \rho_B(t,u) = -D_B\left(-\Delta \right)^{\tfrac{5}{6}} \left[\rho_B\right](t,u) \ ,
\end{equation}
where $D_B$ is a strictly positive constant. Notice that since the hydrodynamic limits of this chain are trivial in the Euler time scale, the nonlinear fluctuating hydrodynamics theory of Spohn does not give any prediction for this model. \\

\subsection*{Contribution} The aim of this paper is to study the transition between the study of \cite{JKO09} and the one of \cite{SSS19}. As we have seen, the presence of the magnetic field moves the model from the $3/4$-fractional universality class to the $5/6$-fractional universality class and it makes sense to ask if we can quantify the intensity of the magnetic field necessary to cross from one universality class to some other one, and to understand what is the mechanism occurring at the transition. As far as we know, while very interesting, these questions did not receive answer, even at some heuristic level.  We notice however that in \cite{BGJ18} and \cite{ BGJS18,BGJSS15}  are obtained results describing two transition mechanisms between standard diffusion universality class and fractional diffusion universality class in a Hamiltonian system stochastically perturbed and having two conserved quantities. We would like to mention that a family of fractional diffusion equations have been derived from stochastic harmonic chains with long-range interactions by Suda in \cite{S21}.\\

In the first instance we introduce a small magnetic field of intensity $B \varepsilon^\delta$ with $\delta >0$ and $B\neq0 $. We prove first (see Theorem \ref{main_theorem_kinetic}) that at the kinetic time scale of order $\varepsilon^{-1}$ the transition is trivial in the sense that for $\delta=0$ the Wigner distribution converges to the Boltzmann equation of \cite{SSS19} $\left(\text{i.e. Eq.\;} \eqref{boltzmann_intro_makiko}\right)$ and for $\delta>0$ the Wigner distribution converges to the one of \cite{BOS09} $\left(\text{i.e. Eq.\;} \eqref{boltzmann_intro_stefano}\right)$. We believe however that the effect of the small magnetic field could be seen in a longer time scale.

Therefore, in a second time, we study the hydrodynamic limit of the solution $f_{B_N}$ of the Boltzmann equation 
 \eqref{boltzmann_intro_makiko} when $B$ is changed to $B_N:=BN^{-\delta}$ with $\delta$ in $\R^+$ and $B \neq 0$. Let $\beta_\delta$ defines as follows
 \begin{align*}
 \beta_\delta=\frac{3}{2} \quad \text{if} \quad \delta \geq \frac{1}{2} \quad \text{and} \quad \beta_\delta = \frac{5-\delta}{3} \quad \text{for} \quad \delta \leq \frac{1}{2} \ . 
 \end{align*}
 We prove, in Theorem \ref{main_theorem_hydro} that the finite-dimensional distributions of $N^{-1}Z_{B_N}\left(N^{\beta_\delta}\cdot\right)$ converge weakly to the finite-dimensional distributions of a L\'evy process generated (up to a constant) by an operator $\mathcal{L}_\delta$ whose action on smooth functions $\phi$ which decay sufficiently fast is defined as follows
\begin{equation}
\label{Levy_process_genere_intro}
\forall u \in \R, \quad \mathfrak{L}_\delta\left[\phi\right] \ (u) = \left\{
\begin{array}{l}
  -(-\Delta)^{\frac{3}{4}}\left[\phi\right] \ (u) \quad \text{if} \quad \delta > \frac{1}{2} \ , \\\\
  \mathfrak{L}_{B}\left[\phi\right] \ (u) \quad \text{if} \quad \delta = \frac{1}{2} \ ,\\\\
  -(-\Delta)^{\frac{5}{6}}\left[\phi\right] \ (u) \quad \text{if} \quad \delta < \frac{1}{2} \ ,
\end{array}
\right.
\end{equation}
with $\mathfrak{L}_B$ defined in Eq.\;\eqref{def_Lev_gen_limit}. From Theorem \ref{main_theorem_hydro}, we prove then in Theorem \ref{main_thm_limit_u} that $f_{B_N} \left( N^{\beta_\delta}t,Nu,\cdot\right)$ converges to the unique solution $\rho_\delta$ of the following integro-differential equation 
\begin{equation}
\forall u \in \mathbb{R}, \quad \forall t \in ]0,T[, \quad \partial_t \rho_\delta(t,u) = \mathfrak{L}_\delta\left[\rho_\delta\right] (t,u) \ .
\end{equation}

Finally, we prove in Theorem \ref{main_thm_limit_B_u_barre} that, up to a constant,  $\mathfrak{L}_B \rightarrow -(-\Delta)^{\frac{3}{4}}$ when $B \rightarrow 0$ and $\mathfrak{L}_B \rightarrow -(-\Delta)^{\frac{5}{6}}$ when $B \rightarrow \infty$. Hence, $\mathfrak{L}_B$ is the infinitesimal generator of a L\'evy process which interpolates between the two fractional universality classes. 

These results are summarized in Fig.\;\ref{fig:transition}. On the horizontal axis, $\delta$ represents the intensity of the magnetic field and on the vertical axis $\beta_\delta$ represents the exponent of the scaling in time we have to do in order to obtain the hydrodynamic limit of $f_{B_N}$.\\

\subsection*{Structure of the paper} In Sec.\;\ref{section_notation}, we precise the notations of the paper. In Sec.\;\ref{section_micro}, we present the microscopic dynamics and the Wigner distribution. In Sec.\;\ref{section_previous}, we recall the historical results obtained in \cite{BOS09,JKO09,SSS19}. In Sec.\;\ref{section_result}, we state the main results of this paper which are proved in Sec.\;\ref{section_proof}. In order to make the reading easier, intermediate results are shown in the Appendices. 
\begin{figure}
    \centering
    \includegraphics[scale=0.26]{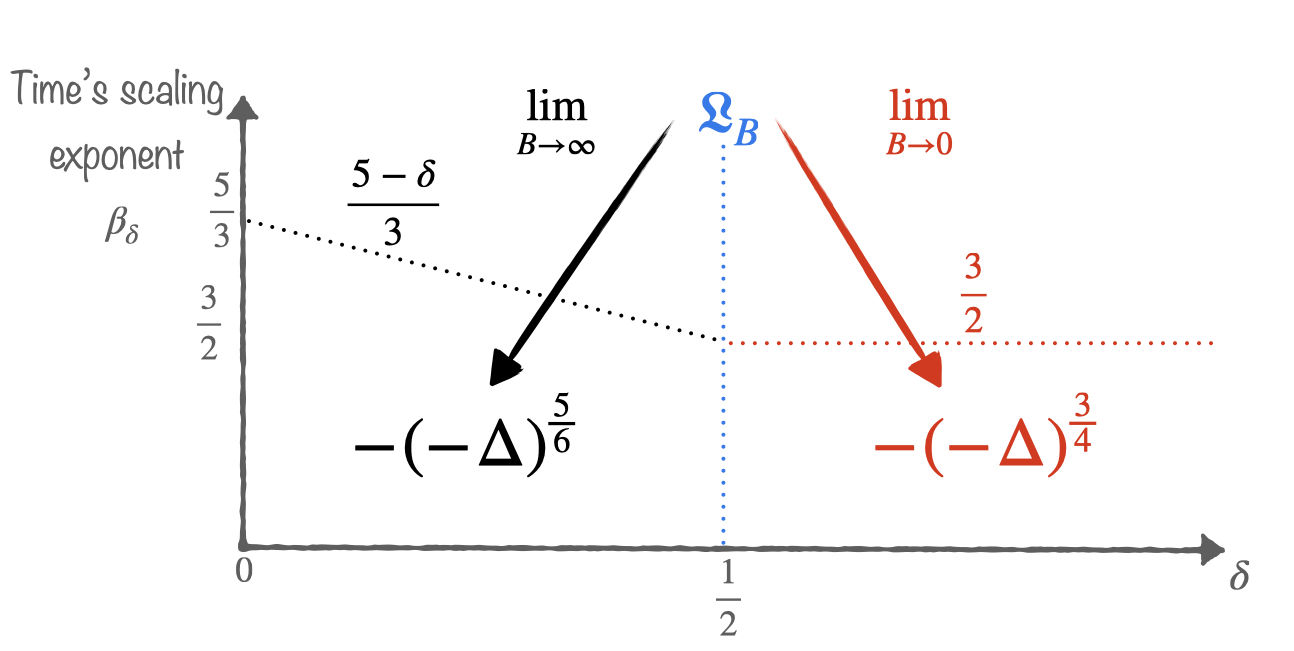}
    \caption{Transition graph.}
    \label{fig:transition}
\end{figure}
\section{Notations}
\label{section_notation}
Let $a$ and $b$ be two positive real numbers, we will write $a \lesssim b$ when there exists a positive constant $C$ such that $a \leq C b$. The conjugate of a complex number $z$ will be denoted by $z^*$ and $\mathbf{i}$ will denote the complex number of modulus 1. We will denote the one dimensional torus $\left[-\tfrac{1}{2}, \tfrac{1}{2}  \right[$ by $\T$, the euclidean norm on $\mathbb{R}^n$ by $\vert \cdot \vert$  and the set of natural numbers by\footnote{Here $\mathbb{N}$, includes $0$.} $\mathbb{N}$. In order to lighten the notations we will denote $\mathbb{R}$ \textbackslash $\lbrace 0 \rbrace$ and $\mathbb{T}$ \textbackslash $\lbrace 0 \rbrace$ by $\mathbb{R}^*$  and $\mathbb{T}^*$ respectively.\\

If $X$ is a topological space we denote the Borelian $\sigma$-field of $X$ by $\Bc (X)$. We denote by $\Fc\left([0,T], X \right)$ the set of $X$-valued functions on $[0,T]$, by $\Cc\left([0,T], X \right)$ the subspace of $X$-valued continuous functions on $[0,T]$ and by $\Cc_{\mathbf{b}}\left([0,T], X \right)$ the subspace of $X$-valued bounded continuous functions on $[0,T]$. Let $n$ in $\N \cup \{ \infty \}$, the space of $\mathbb R$-valued functions on $X$ with compact support and $n$ times differentiable is denoted by $\Cc_{\mathbf{c}}^n \left(X\right)$. The space of $\mathbb R$- valued c\`adl\`ag functions on $[0,T]$ will be denoted by $\mathcal{D}\left( [0,T], \R\right).$\\

For $f$ in $\ell^1(\Z)$, we define  its (discrete) Fourier transform $\widehat{f} : \T \rightarrow \C$ by

\begin{align*} 
\forall k \in \mathbb{T}, \quad \widehat{f}(k) = \sum_{x \in \mathbb{Z}} f(x) \exp(2\i k\pi x) \ . 
\end{align*}
As usual we extend this notation for all functions in $\ell^2(\Z)$. In order to study the Wigner distribution, defined in Sec.\;\ref{section_wigner}, we introduce the set of test functions $\Sc$ given by
 \[ \Sc := \left\{ H \in \Cc_{\mathbf{c}}^\infty( \R \times \T) \; \bigg{|} \; \; \forall (n,m,l) \in \mathbb{N}^3, \; \sup_{k \in \mathbb{T}} \sup_{u \in \mathbb{R}} \left\vert (1+u)^l \partial_k^n \partial_u^m H(u,k) \right\vert < \infty \right\} \ .\]
\begin{flushleft}
For any $H$ in $\Sc$, we denote its (continuous)  Fourier transform in the first variable by $\Fc \left[H\right] : \R \times \T \rightarrow \C$ where
\end{flushleft}
\[ \forall (p,k) \in \R\times \T, \quad \mathcal{F}\left[H\right](p,k) := \int_{\mathbb{R}} H(u,k) \exp(2\i \pi pu) \; du \ . \]
The set $\Sc$ is stable under the action of $\Fc$.\\
In the whole paper, for $H$ in $\Sc$, we denote the Laplacian of $H$ in the first variable by $\Delta \left[H\right]$ and for $\beta$ in $(1,2)$ the fractional Laplacian of $H$ in the first variable by $-(-\Delta)^{\frac{\beta}{2}}\left[H\right]$ where we recall that for any $p$ in $\R$ and $k$ in $\T$
\begin{align*}
 \mathcal{F} [\Delta \left[H\right]](p,k) &= - (2\pi p)^2  \mathcal{F}  \left[H\right](p,k) \ . \\  
 \mathcal{F} \left[-(-\Delta)^{\frac{\beta}{2}} \left[H\right]\right](p,k) &= - \vert 2\pi p \vert^{\beta} \mathcal{F}\left[H\right](p,k) \ .
\end{align*}
The space $\Sc\times \Sc$ is equipped with the norm $\Vert \cdot \Vert $ defined by
\begin{align*}
\forall J:=(J_1,J_2) \in \Sc \times \Sc, \quad \Vert J \Vert_{} =  \sum_{i=1}^2 \int_{\mathbb{R}} \sup_{k} \vert \mathcal{F}\left[J_i\right] (p,k) \vert \; dp \ . \end{align*}
The space $(\Sc\times \Sc , \Vert \cdot \Vert )$ is then a separable space $\left(\text{\cite[p. 572]{LP93}}\right)$. 
We denote by $(\Sc\times\Sc)^\prime$ the dual of $\Sc\times \Sc$ for the weak-* topology (we refer the reader to Sec.\;\ref{section_wigner} for a precise definition). For $\Wc$ in $(\Sc\times\Sc)^\prime$ and $J$ in $\Sc\times \Sc$ we denote by $\left\langle \Wc, J \right\rangle$ the duality bracket between $\Wc$ and $J$.\\

Throughout the article, the random variables will be defined on an underlying probability space $\left(\Omega, \mathcal{F}, \mathbb{P} \right)$ and $T$ will denote a fixed positive time.

\section{Microscopic dynamics}
\label{section_micro}
In this section, we define the microscopic dynamics studied in this paper. This dynamic was first introduced in \cite{BBO06,BBO09,BOS09} without a magnetic field and later in \cite{SS18,SSS19} with a magnetic field. 
\subsection{Deterministic dynamics}
We consider a one dimensional chain of coupled harmonic oscillators each having two transverse degrees of freedom and subject to the action of a magnetic field perpendicular to the plane of motion. At rest, the atoms are aligned according to the lattice $\mathbb{Z}$ and each $x$ in $\mathbb{Z}$ represents the balance position of one atom. We denote the velocity of the atom with rest position $x$ in $\mathbb{Z}$ by $p(x)=(p_1(x),p_2(x))$ in $\mathbb{R}^2$  and the displacement from its rest position by $q(x)=(q_1(x),q_2(x))$ in $\mathbb{R}^2$ . We denote the strength of the magnetic field by $B$ in $\mathbb R$. The deterministic dynamics is defined at any positive time $t$ and for any $i$ in $\{1,2\}$ by
\begin{equation}
\label{microscopic_system}
\begin{array}{l}
  \frac{d}{dt}q_{i}(t,x)  = p_{i}(t,x)\ , \\\\
  \frac{d}{dt}p_{i}(t,x)= \Delta_{d} \left[q_i(t,x)\right] +\delta_{i,1}B p_2(t,x) - \delta_{i,2}B p_1(t,x)\ ,
\end{array}
\end{equation}
where $\Delta_d$ denotes the discrete Laplacian on $\mathbb Z$.
We denote a typical configuration of the system by $(\mathbf{q},\mathbf{p}) := (q(x),p(x))_{x \in \mathbb{Z}}$ and the configuration  over time by $ \lbrace(\mathbf{q}(t),\mathbf{p}(t); t \geq 0)\rbrace := \lbrace(q(x,t),p(x,t))_{x \in \mathbb{Z}}\; | \; t \geq 0\}$. The initial configuration $(q(x,0),p(x,0))_{x \in \mathbb{Z}}$ is denoted by $(\mathbf{q}^0,\mathbf{p}^0)$.\\
Let $\alpha$ be the function defined on $\mathbb{Z}$ by
 \begin{align*} \forall z \in \mathbb{Z}, \quad \alpha(z) = \left\{ \begin{array}{l} 2\quad \text{if} \quad z=0 \ ,\\  -1 \quad \text{if} \quad \vert z \vert = 1 \ , \\ 0 \quad \text{otherwise} \ .
\end{array}
\right.
\end{align*}
Observe that 
\begin{equation}
\label{def_widehat_alpha}
\widehat{\alpha}(k)=4\sin^2(\pi k) \ ,
\end{equation} 
with $\widehat{\alpha}(0) = \widehat{\alpha}^\prime(0)= 0$ and $\widehat{\alpha}^{\prime\prime}(0) = 8\pi^2$.\\
The infinitesimal generator of this Markovian dynamics is given by $A+B G$ where for every smooth and local complex-valued functions\footnote{Here smooth and local means that for any $i$ in $\{1,2\}$, $\phi$ depends only on a finite number of the sequence $(q_i(x),p_i(x))_{x \in \Z}$ and is smooth with respect to these coordinates. } $\phi$ we have
\begin{align*} A\left[\phi\right] & =\frac{1}{2} \left(\sum_{x \in \mathbb{Z}} \sum_{i=1}^2 p_i(x) \partial_{q_i(x)}\left[\phi\right] + \sum_{x,x^{\prime} \in \mathbb{Z}} \sum_{i=1}^2\alpha(x-x^{\prime})q_i(x^{\prime}) \partial_{p_i(x)} \left[\phi\right]\right) \  \ . \\
G\left[\phi\right] &= \sum_{x \in \mathbb{Z}}\left(p_2(x)\partial_{p_1(x)}\left[\phi\right] -p_1(x) \partial_{p_2(x)} \left[\phi\right] \right) \ .
\end{align*}
We denote by $E(\mathbf{q},\mathbf{p})$ the total energy of the configuration $(\mathbf{q},\mathbf{p})$  where
\begin{align}
\label{definition_energy}
E(\mathbf{q},\mathbf{p})&= \frac{1}{2} \left(\sum_{x \in \mathbb{Z}} \vert p(x) \vert ^2 + \sum_{x \in \mathbb{Z}} \vert q(x)-q(x+1) \vert ^2 \right) \ .
\end{align}
In the whole paper, we will study only configurations $(\mathbf{q},\mathbf{p})$ with finite total energy. Observe that the energy is conserved during the time evolution, i.e.
\begin{align}
\label{conservation_energy}
 \forall t \in [0,T], \quad \frac{d}{dt}E(\mathbf{q}(t),\mathbf{p}(t)) &= 0 \ .  
 \end{align}
 
 \subsection{Eigenvalues and eigenvectors of the deterministic dynamics}
We define on $\mathbb{T}$ the following real valued functions 
 \begin{equation}
 \label{omega}
\omega_{1,B}(k) = \sqrt{\widehat{\alpha}(k) + \frac{ B^2  }{4}} + \frac{ B  }{2} \; \; \text{and} \; \; \omega_{2,B}(k) = \sqrt{\widehat{\alpha}(k) + \frac{ B^2   }{4}} - \frac{ B   }{2}  \ ,
\end{equation}
\begin{equation}
\label{theta}
 \forall i \in \lbrace1,2 \rbrace, \quad \theta_{i,B}(k)=\sqrt{\frac{\omega_{i,B}(k)}{\omega_{1,B}(k)+\omega_{2,B}(k)}}\ ,
 \end{equation}
 \begin{equation}
 \label{omega_prime}
\textbf{v}_{B}(k) := \frac{d\omega_{1,B}}{dk}(k) = \frac{d\omega_{2,B}}{dk}(k)= \frac{\widehat{\alpha}^\prime(k)}{2\sqrt{\widehat{\alpha}(k) + \frac{ B^2}{4}}} \ ,
 \end{equation}
 where $k$ is in $\T$. Observe that for any $i$ in $\{1,2\}$, $\theta_{i,B}$ is a bounded function. \\
 
\begin{flushleft}
For every configuration $(\mathbf{q},\mathbf{p})$ we define a couple $(\widehat{\psi}_{1,B},\widehat{\psi}_{2,B}) : \mathbb{T} \rightarrow \mathbb{C}^2  $ by
\end{flushleft} 

\begin{equation}
\label{def_psi_1}
\widehat{\psi}_{1,B}\left[\mathbf{q},\mathbf{p}\right](k):=\theta_{1,B}(k)\big[\widehat{p}_1(k)-\mathbf{i}\omega_{2,B}(k)\widehat{q}_1(k)+\mathbf{i}\widehat{p}_2(k) +\omega_{2,B}(k)\widehat{q}_2(k) \big] \ .
\end{equation}

\begin{equation}
\label{def_psi_2}
\widehat{\psi}_{2,B}\left[\mathbf{q},\mathbf{p}\right](k):=\theta_{2,B}(k)\big[\widehat{p}_1(k)-\mathbf{i}\omega_{1,B}(k)\widehat{q}_1(k)-\mathbf{i}\widehat{p}_2(k) -\omega_{1,B}(k)\widehat{q}_2(k) \big] \ .
\end{equation}
In order to lighten the notations, for any positive time $t$, $k$ in $\mathbb{T}$ and $i$ in $\{ 1,2 \}$ we will denote $\widehat{\psi}_{i,B}[q(t),p(t)](k)$ by $\widehat{\psi}_{i}(t,k)$. 
\begin{lemma} 
\label{eig}We have that for each $k$ in $\mathbb T$ and each $ i$ in  $\lbrace 1,2 \rbrace$,  $\widehat{\psi}_{i,B}(k)$ is an  eigenvector of $(A+B G)$ corresponding to  the eigenvalue $-\mathbf{i}\;\omega_{i,B}(k)$, i.e. 
 \begin{align*} \forall i \in \lbrace 1,2 \rbrace, \quad (A+B G)\left[\widehat{\psi}_{i,B}\right](k)=-\mathbf{i}\;\omega_{i,B}(k)\widehat{\psi}_{i,B}(k) \ .\end{align*}
\end{lemma}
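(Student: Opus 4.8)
The plan is to verify the eigenvalue relation by a direct computation in Fourier space, using the explicit formulas for the generator $A+BG$ and for the test functions $\widehat{\psi}_{i,B}$. First I would record how $A$ and $G$ act in Fourier variables. Applied to the linear functionals $\widehat{q}_i(k) = \sum_x q_i(x)e^{2\mathbf{i}\pi k x}$ and $\widehat{p}_i(k)$, the generator $A$ gives $A[\widehat{q}_i(k)] = \widehat{p}_i(k)$ and $A[\widehat{p}_i(k)] = -\widehat{\alpha}(k)\,\widehat{q}_i(k)$ (since $\sum_{x,x'}\alpha(x-x')q_i(x')e^{2\mathbf{i}\pi kx} = \widehat{\alpha}(k)\widehat{q}_i(k)$, using $\widehat{\alpha}\ge 0$), while $G$ gives $G[\widehat{p}_1(k)] = \widehat{p}_2(k)$, $G[\widehat{p}_2(k)] = -\widehat{p}_1(k)$, and $G[\widehat{q}_i(k)]=0$. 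Hence on the four-dimensional space spanned by $\widehat{q}_1,\widehat{q}_2,\widehat{p}_1,\widehat{p}_2$ (for fixed $k$), the operator $A+BG$ acts as a fixed $4\times 4$ matrix $M_B(k)$ with entries built from $\widehat{\alpha}(k)$ and $B$.

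Next I would simply apply $M_B(k)$ to the vector of coefficients defining $\widehat{\psi}_{i,B}(k)$ and check that the result is $-\mathbf{i}\,\omega_{i,B}(k)$ times the same vector. Concretely, writing $\widehat{\psi}_{1,B}(k) = \theta_{1,B}(k)\big[\widehat{p}_1 - \mathbf{i}\omega_{2,B}\widehat{q}_1 + \mathbf{i}\widehat{p}_2 + \omega_{2,B}\widehat{q}_2\big]$, applying $A+BG$ and collecting terms, the coefficient of $\widehat{q}_1$ becomes $-\widehat{\alpha}(k) - \mathbf{i}\omega_{2,B}\cdot(\mathbf{i}) \cdot(\dots)$, etc.; after simplification one needs the two algebraic identities
\begin{align*}
\omega_{1,B}(k)\,\omega_{2,B}(k) &= \widehat{\alpha}(k), \\
\omega_{1,B}(k) - \omega_{2,B}(k) &= |B|,
\end{align*}
both of which follow immediately from the definitions in \eqref{omega} (the first by expanding the product of sum and difference of $\sqrt{\widehat{\alpha}+|B|^2/4}$ and $|B|/2$, the second by subtraction). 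The overall scalar $\theta_{1,B}(k)$ plays no role in the eigenvalue identity and can be carried along unchanged; it is only fixed later by a normalization requirement. The computation for $\widehat{\psi}_{2,B}$ is entirely analogous with the roles of $\omega_{1,B}$ and $\omega_{2,B}$ (and the sign in front of the second pair of coordinates) interchanged.

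I do not expect any genuine obstacle here: the statement is a linear-algebra verification once the Fourier action of $A+BG$ is written down, and the only subtlety is bookkeeping the signs of $\mathbf{i}$ and making sure one uses $\omega_{1,B}\omega_{2,B}=\widehat{\alpha}$ and $\omega_{1,B}-\omega_{2,B}=|B|$ at the right places. The mild care needed is in the case $B<0$ versus $B>0$: since the formulas involve $|B|$, one should check the sign convention in $G$ is consistent, but this only affects which of $\widehat{\psi}_{1,B},\widehat{\psi}_{2,B}$ is which and does not change the conclusion. I would therefore present the proof compactly: state the Fourier action of $A$ and $G$, exhibit the two identities for $\omega_{i,B}$, and then display the one-line verification for $i=1$, remarking that $i=2$ is symmetric.
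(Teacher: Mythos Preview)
The paper does not actually supply a proof of Lemma~\ref{eig}; it is stated and then used without justification. Your proposed direct verification in Fourier variables is exactly the natural way to establish it, and the two algebraic identities you isolate, $\omega_{1,B}\omega_{2,B}=\widehat{\alpha}$ and $\omega_{1,B}-\omega_{2,B}=|B|$, are precisely what makes the computation close. Carrying it out for $i=1$ (with $B>0$) one gets, after collecting terms,
\[
(A+BG)[\widehat{\psi}_{1,B}]=\theta_{1,B}\big[-\mathbf{i}(\omega_{2,B}+B)\widehat{p}_1+(\omega_{2,B}+B)\widehat{p}_2-\widehat{\alpha}\,\widehat{q}_1-\mathbf{i}\widehat{\alpha}\,\widehat{q}_2\big]
=-\mathbf{i}\,\omega_{1,B}\,\widehat{\psi}_{1,B},
\]
as claimed; the case $i=2$ is symmetric. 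Your remark about the scalar $\theta_{i,B}$ being irrelevant to the eigenvalue relation is correct, and your flag about the sign of $B$ is the only point requiring a moment of care. In short, your plan is correct and is what the paper implicitly expects the reader to do.
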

\begin{proof}
    Let $i$ in $\{1.2\}$, $k$ in $\mathbb{T}$ and $t$ in $[0,T]$, using Eq.\;\eqref{microscopic_system}, Eq.\;\eqref{def_psi_1} and Eq.\;\eqref{def_psi_2}  we can prove that 
    \begin{equation}
       \partial_t \widehat{\psi}_{i}(t,k) = -\mathbf{i} \omega_{i,B}(k) \widehat{\psi}_{i}(t,k) \ . 
    \end{equation}
    This concludes the proof. 
\end{proof}
As it is explained in \cite[Section 3.2]{SSS19}, we normalised $\widehat{\psi}_{1/2,B}$ in Eq.\;\eqref{def_psi_1} and in Eq.\;\eqref{def_psi_2} by $\theta_{1,2}$ in order to have the following equality 
\begin{align*} E(t)= \int_\mathbb{T} \left( \left\vert \widehat{\psi}_1(t,k) \right\vert ^2 + \left\vert \widehat{\psi}_2(t,k) \right\vert ^2 \right) dk \ .\end{align*}
 
\subsection{Stochastic dynamics}
We introduce now the local stochastic perturbation which is defined through its infinitesimal generator $S$ whose action on any smooth and local complex valued functions $\phi$ is given by
\begin{equation*}
S\left[\phi\right] = \frac{1}{4} \sum_{x \in \mathbb{Z}} \sum \limits_{\underset{i \neq j}{i,j =1} }^2 \left(Y_{x,x+1}^{i,j}\right)^2\left[\phi\right] \ ,
\end{equation*}
where
\begin{eqnarray*}
Y_{x,x+1}^{i,j} & = &(p_j(x+1)-p_j(x))  (\partial_{p_i(x+1)} - \partial_{p_i(x)})\\
& - &(p_i(x+1)-p_i(x))(\partial_{p_j(x+1)} - \partial_{p_j(x)}) \ .
\end{eqnarray*}
The operator $S$  conserves the total energy $\left(\text{defined in Eq.\;\eqref{definition_energy}} \right)$ and the total pseudomomentum defined by 
\begin{align*}
    \mathcal{P}_m &= \left(\sum_{x \in \mathbb{Z}} p_1(x) - B q_2(x), \sum_{x \in \mathbb{Z}} p_2(x) + B q_1(x) \right) \ . 
\end{align*}
We introduce a scaling parameter $\varepsilon >0$  and we take $\gamma > 0$ which represents the intensity of the stochastic noise. We denote by $L^\varepsilon$ the  infinitesimal generator of the dynamics defined as follows
\begin{equation}
\label{generator_inf_dynamic}
L^\varepsilon = A + B  G + \varepsilon \gamma S\ .
\end{equation} 
We denote by $(\mathbf{q}^\varepsilon(t), \mathbf{p}^\varepsilon(t))_{t \geq 0}$ the Markovian dynamics generated by the  infinitesimal generator $L^\varepsilon$. For a rigorous definition of the dynamics we refer the reader to \cite[Sec 3.4]{SSS19} or to \cite[Chapter 6]{D14}.\\

We can check that $L^\varepsilon$ conserves the total energy and the total pseudo-momentum. Since the energy of the dynamics is conserved we  will denote it by $E^\varepsilon$. In order to simplify the notations for all $i$ in $\lbrace 1,2 \rbrace$, $k$ in $\T$ and positive time $t$, we will write  $\widehat{\psi}_{i}^\varepsilon(t,k)$ instead of $\widehat{\psi}_{i,B}\left[\mathbf{q}^\varepsilon (t),\mathbf{p}^\varepsilon (t)\right](k)$.

We assume that the initial configuration of the system $\left(\mathbf{q}^0,\mathbf{p}^0\right)$ is distributed according to a measure $\mu^\varepsilon$ which satisfies the following condition 
\begin{equation}
\label{hypo_distrib}
 K_0 = \sup_{0<\varepsilon<1}  \varepsilon \; \int_{\mathbb{T}}\mathbb{E}_{\mu^\varepsilon} \left[ \left\vert \widehat{\psi}^\varepsilon_1(0,k) \right\vert^2 +  \left\vert \widehat{\psi}^\varepsilon_2(0,k)\right\vert ^2 \right] \;dk \quad < \infty \ .
\end{equation}
Since the energy of the system is preserved, this condition is true at any time $t$, i.e.
\begin{equation} \label{K0}
\sup_{0<\varepsilon<1} \sup_{ t \geq 0} \; \varepsilon \; \int_{\mathbb{T}}\mathbb{E}_{\mu^\varepsilon} \left[ \vert \widehat{\psi}^\varepsilon_1(t,k) \vert^2 +  \vert \widehat{\psi}^\varepsilon_2(t,k)\vert ^2 \right] \; dk =K_0 \ .
 \end{equation} 
 \subsection{Wigner distribution}
 \label{section_wigner}
The space $(\Sc\times \Sc)^\prime $ is equipped with the weak-* topology, i.e. a sequence  $(\Wc^{N})_{N \in \mathbb{N}}=(\Wc_1^{N}, \Wc_2^N)_{N \in \mathbb{N}}$ in $(\Sc \times \Sc)^\prime$ converges to $\Wc$ in $(\Sc\times\Sc)^\prime$ if and only if for any $J:=(J_1,J_2)$ in $\Sc \times \Sc$ and $i$ in $\{ 1,2\}$
\begin{align*} \lim\limits_{ N \rightarrow \infty}  \left\vert \left\langle \Wc_i^N , J_i \right\rangle - \left\langle \Wc_i , J_i \right\rangle \right\vert = 0 \ .\end{align*}
We say then that a sequence $\left(\Wc^N\right)_{N \in \N}$ in $\Fc\left([0,T], (\Sc\times \Sc)^\prime\right)$ converges pointwise to $\Wc$ if and only if for any $t$ in $[0,T]$, $(\Wc^N(t))_{N \in\N}$ converges to $\Wc(t)$ in $(\Sc\times \Sc)^\prime$.

\medskip
For each $t$ in $[0,T]$, we define the Wigner distribution, denoted by $\Wc^\varepsilon(t)$, as the element of $(\Sc\times\Sc)^\prime$ defined for any $J=(J_1,J_2)$  in $\Sc \times \Sc$ by 
\[ \langle\mathcal{W}^\varepsilon(t) , J \rangle = \sum_{i=1}^2 \langle \Wc_i^\varepsilon(t),  J_i \rangle \ , \]
\vspace{-0.1cm}
where, for $i$ in $\{ 1,2 \}$,
 \begin{align}
&\langle\Wc_i^{\varepsilon}(t),J_i \rangle \label{wigner_def}  \\
& =  \frac{\varepsilon}{2} \sum_{x,x^{\prime} \in \mathbb{Z}} \mathbb{E}_{\mu^\varepsilon}\left[ \psi^\varepsilon_i \left(t\varepsilon^{-1},x^\prime\right)^* \psi^\varepsilon_i \left(t\varepsilon^{-1},x\right)\right] \int_{\mathbb{T}} e^{2\mathbf{i} \pi (x^{\prime}-x)k} J_{i}\left(\tfrac{\varepsilon}{2}(x+x^{\prime}),k\right)^* \; dk \nonumber\\
 & =  \frac{\varepsilon}{2} \int_{\mathbb{R}} \int_{\mathbb{T}}\E_{\mu^\varepsilon} \left[\widehat{\psi}^\varepsilon_i\left(t\varepsilon^{-1},k-\tfrac{\varepsilon p}{2}\right)^*\widehat{\psi}^\varepsilon_i\left(t\varepsilon^{-1},k+\tfrac{\varepsilon p}{2}\right)\right] \mathcal{F}\left[J_i\right](p,k)^* \;dk dp \ .  \nonumber
\end{align}
The well-posedness of these expressions are proved in Appendix \ref{appendix_kinetic}.
The following lemma proves that the Wigner distribution is an element of $\Cc\left([0,T], (\Sc\times \Sc)^\prime \right)$ and gives some of its properties.
\begin{lemma}\label{Lem_Wigner} The Wigner distribution satisfies the following properties
\begin{itemize}
\item[i)] For all $t$ in $[0, T]$, $\Wc^\varepsilon (t)$ belongs to  $(\Sc\times \Sc)^\prime$.  
\item[ii)] For any $J$ in $\Sc\times \Sc$, the family $(\left\langle \mathcal{W}^\varepsilon, J \right\rangle)_{\varepsilon > 0}$ is bounded in $(\Cc ([0,T], \C), \Vert \cdot \Vert_\infty)$ and
\begin{align*} \Vert \left\langle \mathcal{W}^\varepsilon , J \right\rangle \Vert_\infty \lesssim K_0 \Vert J \Vert \ .\end{align*}
Furthermore, the application $t \mapsto  \mathcal{W}^\varepsilon(t)$ belongs to $\Cc ([0,T], (\Sc\times \Sc)^\prime)$.
\end{itemize}
\end{lemma}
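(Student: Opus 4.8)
The plan is to derive the whole statement from one a priori estimate together with the energy bound~\eqref{K0}. Working from the Fourier-space expression for $\langle\Wc_i^\varepsilon(t),J_i\rangle$ in~\eqref{wigner_def}, and granting the integrability facts of Appendix~\ref{appendix_kinetic} that make the expectations and the double integral meaningful, I would first bound the integrand by the Cauchy--Schwarz inequality in the probability space,
\begin{align*}
\left|\E_{\mu^\varepsilon}\!\left[\widehat{\psi}_i^\varepsilon\!\left(t\varepsilon^{-1},k-\tfrac{\varepsilon p}{2}\right)^{\!*}\widehat{\psi}_i^\varepsilon\!\left(t\varepsilon^{-1},k+\tfrac{\varepsilon p}{2}\right)\right]\right|
&\le\E_{\mu^\varepsilon}\!\left[\left|\widehat{\psi}_i^\varepsilon\!\left(t\varepsilon^{-1},k-\tfrac{\varepsilon p}{2}\right)\right|^2\right]^{1/2}\\
&\quad\times\,\E_{\mu^\varepsilon}\!\left[\left|\widehat{\psi}_i^\varepsilon\!\left(t\varepsilon^{-1},k+\tfrac{\varepsilon p}{2}\right)\right|^2\right]^{1/2},
\end{align*}
then estimate $|\Fc[J_i](p,k)^*|\le\sup_k|\Fc[J_i](p,k)|$, integrate over $k\in\T$ using once more Cauchy--Schwarz in $k$, and exploit the $1$-periodicity of $\widehat{\psi}_i^\varepsilon(t\varepsilon^{-1},\cdot)$ and the translation invariance of Lebesgue measure on $\T$ to replace $\int_\T\E_{\mu^\varepsilon}[|\widehat{\psi}_i^\varepsilon(t\varepsilon^{-1},k\pm\tfrac{\varepsilon p}{2})|^2]\,dk$ by $\int_\T\E_{\mu^\varepsilon}[|\widehat{\psi}_i^\varepsilon(t\varepsilon^{-1},k)|^2]\,dk\le K_0/\varepsilon$, the last bound being precisely~\eqref{K0}. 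Summing over $i\in\{1,2\}$ this gives, uniformly in $t\in[0,T]$ and $\varepsilon\in(0,1)$,
\begin{align*}
\left|\langle\Wc^\varepsilon(t),J\rangle\right|\ \le\ \frac{K_0}{2}\sum_{i=1}^2\int_\R\sup_k\left|\Fc[J_i](p,k)\right|dp\ =\ \frac{K_0}{2}\,\Vert J\Vert.
\end{align*}

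This single estimate already yields~i): the defining integral converges absolutely, and $J\mapsto\langle\Wc^\varepsilon(t),J\rangle$ is linear and bounded on $(\Sc\times\Sc,\Vert\cdot\Vert)$, hence $\Wc^\varepsilon(t)\in(\Sc\times\Sc)^\prime$. It also gives the uniform boundedness $\Vert\langle\Wc^\varepsilon,J\rangle\Vert_\infty\lesssim K_0\Vert J\Vert$ of~ii), since the right-hand side above does not depend on $t$ or $\varepsilon$.

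It then remains to show that, for fixed $J_i\in\Sc$, $t\mapsto\langle\Wc_i^\varepsilon(t),J_i\rangle$ is continuous, which yields $t\mapsto\Wc^\varepsilon(t)\in\Cc([0,T],(\Sc\times\Sc)^\prime)$. The generator $L^\varepsilon=A+BG+\varepsilon\gamma S$ is a second-order operator (the noise part $S$ is a sum of squares of the vector fields $Y_{x,x+1}^{i,j}$), so the dynamics $(\mathbf{q}^\varepsilon(t),\mathbf{p}^\varepsilon(t))_{t\ge0}$ is a diffusion with almost surely continuous trajectories (see \cite[Sec 3.4]{SSS19}, \cite[Chapter 6]{D14} for the rigorous construction); since the energy is conserved pathwise, $s\mapsto\widehat{\psi}_i^\varepsilon(s,\cdot)$ moves continuously inside a fixed ball of $L^2(\T)$ along almost every path. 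To transfer this to $\langle\Wc_i^\varepsilon(\cdot),J_i\rangle$ I would use the physical-space form of~\eqref{wigner_def}: the two-point function $s\mapsto\E_{\mu^\varepsilon}[\psi_i^\varepsilon(s,x^\prime)^*\psi_i^\varepsilon(s,x)]$ is continuous in $s$ --- it satisfies a closed linear evolution equation with bounded right-hand side, obtained by applying $L^\varepsilon$ to the quadratic observables, as in \cite{BOS09,SSS19} --- and by Cauchy--Schwarz and~\eqref{K0} it is bounded by $K_0/\varepsilon$ uniformly in $(s,x,x^\prime)$; since $J_i\in\Sc$, the weights $\int_\T e^{2\mathbf{i}\pi(x^\prime-x)k}J_i(\tfrac{\varepsilon}{2}(x+x^\prime),k)^*\,dk$ decay faster than any power of $|x-x^\prime|$ and vanish unless $\tfrac{\varepsilon}{2}(x+x^\prime)$ lies in a fixed compact set, so the double series defining $\langle\Wc_i^\varepsilon(s),J_i\rangle$ converges absolutely, uniformly in $s$. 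Dominated convergence then gives the continuity of $s\mapsto\langle\Wc_i^\varepsilon(s),J_i\rangle$.

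The two applications of Cauchy--Schwarz and the change of variables on the torus are routine. The genuine difficulty is the continuity in $t$: one must control the $s$-regularity of the two-point function $\E_{\mu^\varepsilon}[\psi_i^\varepsilon(s,x^\prime)^*\psi_i^\varepsilon(s,x)]$ (equivalently of $\E_{\mu^\varepsilon}[\widehat{\psi}_i^\varepsilon(s,k_1)^*\widehat{\psi}_i^\varepsilon(s,k_2)]$) uniformly in the space (or spectral) variables, so that the limit $t_n\to t$ may be taken inside the expectation and inside the $\sum_{x,x^\prime}$ (or $\int_\T\int_\R$). This is where the closed evolution equation for the correlator and the a priori energy bound~\eqref{K0} do the real work, and it is closely tied to the integrability estimates of Appendix~\ref{appendix_kinetic}.
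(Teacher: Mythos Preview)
Your estimate for item i) and for the uniform boundedness in item ii) is exactly the paper's argument: Cauchy--Schwarz on the expectation, pull out $\sup_k|\Fc[J_i](p,k)|$, use periodicity on $\T$, and invoke the energy bound~\eqref{K0}. This yields the bound~\eqref{bound_wigner} verbatim.

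For the continuity of $t\mapsto\langle\Wc^\varepsilon(t),J\rangle$, you take a genuinely different route from the paper. The paper does not pass through the physical-space series and dominated convergence; instead it simply quotes the evolution equation~\eqref{evolution_eq_ib} (derived in Appendices~\ref{appendix_behavior_transport_term}--\ref{appendix_behavior_colli_term} via Dynkin's formula) in integrated form,
\[
\langle\Wc^\varepsilon(t),J\rangle-\langle\Wc^\varepsilon(0),J\rangle=\int_0^t(\text{bounded in }s)\,ds,
\]
and reads off that $t\mapsto\langle\Wc^\varepsilon(t),J\rangle$ is (Lipschitz) continuous as the primitive of a bounded function. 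Your approach---continuity of each two-point correlator via its own closed evolution equation, uniform bound $K_0/\varepsilon$ on each correlator, summability of the weights from the compact support and smoothness of $J_i\in\Sc$, then dominated convergence---is correct, but it duplicates work: establishing the closed evolution for the correlators is essentially the same computation as deriving~\eqref{evolution_eq_ib}, just carried out at the level of individual $(x,x')$ rather than aggregated against the test function. Since~\eqref{evolution_eq_ib} is needed anyway for the kinetic-limit Theorem~\ref{main_theorem_kinetic}, the paper's route is shorter and gives Lipschitz continuity for free; your route is more self-contained but costs an extra dominated-convergence step.
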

\begin{proof}
We refer the reader to Appendix \ref{appendix_properties_Wigner}.
\end{proof}
\begin{flushleft}
To study the asymptotic behavior of $(\Wc^\varepsilon)_{ \varepsilon >0}$ we need to introduce two distributions on $\Sc\times \Sc$ denoted by $\mathcal{A}^\varepsilon$ and $\left({\mathcal{A}}^\varepsilon\right)^*$ defined for any $J:=(J_1,J_2)$ in $\Sc \times \Sc$ by
\end{flushleft}
\begin{equation}
\langle\mathcal{A}^\varepsilon , J \rangle = \sum_{i=1}^2\langle\mathcal{A}_i^\varepsilon,J_i \rangle \quad  \text{and} \quad \langle\left({\mathcal{A}}^\varepsilon\right)^* , J \rangle = \sum_{i=1}^2\langle\left({\mathcal{A}}^\varepsilon\right)^*,J_i \rangle \ ,
\end{equation}
where, for any $i$ in $\{1,2\}$, by letting $i^*=3-i$, we set
\begin{align}
&\langle\mathcal{A}_i^\varepsilon(t),J_i \rangle \label{def_anti_wigner}\\
& =\frac{\varepsilon}{2} \int_{\mathbb{R}} \int_{\mathbb{T}} \mathbb{E}_{\mu^\varepsilon}\left[ \widehat{\psi}^\varepsilon_i \left(t\varepsilon^{-1},\frac{\varepsilon p}{2}-k \right)\widehat{\psi}^\varepsilon_{i^*} \left(t\varepsilon^{-1},k+\frac{\varepsilon p}{2} \right)\right] \mathcal{F}\left[J_i\right](p,k)^* \; dk dp \nonumber \ ,
\end{align}
and
\begin{align}
\label{def_anti_wigner_conjugate}
&\langle\left({\mathcal{A}}_i^\varepsilon\right)^*(t),J_i \rangle\\
&=\frac{\varepsilon}{2} \int_{\mathbb{R}} \int_{\mathbb{T}} \mathbb{E}_{\mu^\varepsilon}\left[ \widehat{\psi}^\varepsilon_i\left(t\varepsilon^{-1},k-\frac{\varepsilon p}{2}\right)^*\widehat{\psi}^\varepsilon_{i^*}\left(t\varepsilon^{-1},-k-\frac{\varepsilon p}{2} \right)^*\right] \mathcal{F}\left[J_i\right](p,k)^* \; dk dp \ . \nonumber
\end{align}
\section{Review of previous results}
In this section, we recall some results on this lattice. We first recall Theorem \ref{boltzmann_equa_theore} from \cite{BOS09,SSS19} which states the convergence of the Wigner distribution to the solution $f_B$ of some phonon Boltzmann equation \eqref{IBE}. Then we recall Theorem \ref{thm_fractionnal_diffusion_old} and Theorem \ref{thm_limi_hydro_u_old} from \cite{JKO09,SSS19} which show the convergence of $f_B$ in some hydrodynamic scaling to the solution of some fractional diffusion equations \eqref{old_fractionnaire}.
\label{section_previous}
\subsection{Kinetic limit of the Wigner distribution}
Let $J:=(J_1,J_2)$ in $\Sc\times \Sc$ and recall Eq.\;\eqref{omega}, \eqref{theta} and \eqref{omega_prime}. We define a collisional operator $C_B: J=\left( J_1,J_2 \right) \in  \Sc \times \Sc \to \left(\left[C_BJ\right]_1, \left[C_BJ\right]_2 \right) \in \Sc \times \Sc $ in the following way. For any $u$ in $\R$, $k$ in $\T$, $i$ in $\{1,2\}$ and $J:=(J_1,J_2)$ in $\Sc \times \Sc$, 
\begin{equation}
\label{collisional_operator}
[C_BJ]_i (u,k) =\sum_{j=1}^2 \int_{\mathbb{T}} \theta_{i,B}^2(k) R(k,k^\prime) \theta_{j,B}^2(k^\prime) \left[ J_j\left(u,k^\prime\right)- J_i(u,k)   \right] \; dk^\prime \ ,
\end{equation}
with 
\begin{equation}
\label{def_R_historique}
\forall (k,k^\prime) \in \T \times \T, \quad R(k,k^\prime) = 16\sin^2(\pi k) \sin^2(\pi k^\prime) \ .
\end{equation}
Let $\Wc$ in $\mathcal{C}\left([0,T], (\Sc\times \Sc)^\prime \right)$ and $\Wc^0$ in $(\Sc\times \Sc)^\prime$. 
\begin{itemize}
\item[i)]We say that $\Wc$ is a weak solution on $[0,T]$  of the linear Boltzmann equation
\begin{equation} \label{BE} \partial_t \Wc + \frac{1}{2\pi} \textbf{v}_B \partial_u \Wc =  \gamma C_B \Wc \ ,
\end{equation}
with $\Wc^0$ as initial condition if and only if for any $J$ in $\Sc\times \Sc$ and any $t$ in $[0,T]$
\begin{eqnarray}
\label{IBE}
\left\langle \Wc(t), J \right\rangle - \left\langle \Wc(0), J \right\rangle & = & \int_0^t \frac{1}{2 \pi} \left\langle \Wc(s),  \textbf{v}_B \partial_u J \right\rangle \; ds \\
&  + &  \gamma \int_0^t \left\langle \Wc(s), C_BJ \right\rangle \; ds \ . \nonumber
\end{eqnarray}
\item[ii)] We say that $\Wc$ is a \textbf{Borel measure valued} weak solution on $[0,T]$ of the linear Boltzmann equation (\ref{BE}) if it is a weak solution of (\ref{BE}) such that for all $t$ in $[0,T]$ and $i$ in $\{1,2\}$, $\Wc_i(t)$ is a bounded Borel measure on $\R \times \T$.
\end{itemize} 
\begin{lemma} \label{lemma_unicite} Let $\mu^0:=(\mu_1^0, \mu_2^0)$ in $(\Sc\times \Sc)^\prime$ be a couple of bounded Borel measure on $\R \times \T$. Then, there exists a unique \textbf{Borel measure valued} weak solution $\mu$ on $[0,T]$ to the linear Boltzmann equation \eqref{BE} with $\mu^0$ as initial condition.
\end{lemma}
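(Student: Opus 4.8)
The plan is to recast \eqref{BE} as a mild (Duhamel) equation built from a free-streaming-with-absorption semigroup, obtain existence by summing the associated Dyson series, and obtain uniqueness by a Gr\"onwall estimate in total variation norm.

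First I would record the elementary bounds on the collision operator. Writing $\lambda_B(k,i):=\sum_{j=1}^2\int_\T\theta_{i,B}^2(k)R(k,k^\prime)\theta_{j,B}^2(k^\prime)\,dk^\prime$ for the total scattering rate and using the identity $\theta_{1,B}^2(k)+\theta_{2,B}^2(k)=1$ (immediate from \eqref{theta}) together with $R(k,k^\prime)=16\sin^2(\pi k)\sin^2(\pi k^\prime)\le 16$, one gets $\lambda_B(k,i)=8\,\theta_{i,B}^2(k)\sin^2(\pi k)\le 8$. Hence $C_B=C_B^+-M_B$, where $M_B$ is multiplication by $\lambda_B$ and $[C_B^+J]_i(u,k)=\sum_{j}\int_\T\theta_{i,B}^2(k)R(k,k^\prime)\theta_{j,B}^2(k^\prime)J_j(u,k^\prime)\,dk^\prime$ is the nonnegative gain part; both are bounded, and, because the kernel $\theta_{i,B}^2(k)R(k,k^\prime)\theta_{j,B}^2(k^\prime)$ is symmetric under $(k,i)\leftrightarrow(k^\prime,j)$, the same kernel lets $C_B^+$ act on bounded signed measures as a positivity-preserving operator with $\Vert C_B^+\nu\Vert_{TV}\le 8\Vert\nu\Vert_{TV}$. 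I also introduce the free-streaming-with-killing evolution $U_t$ on bounded measures: $U_t\nu$ transports the $u$-variable along the ($u$-independent) velocity $\textbf{v}_B(k)/2\pi$ and multiplies the mass in each fibre $(k,i)$ by $e^{-\gamma\lambda_B(k,i)t}$. Since $\lambda_B$ and $\textbf{v}_B$ depend only on $k$, $U_t$ is a well-defined positivity-preserving semigroup of contractions on the space of bounded measures, and $\Wc\in\mathcal{C}([0,T],(\Sc\times\Sc)^\prime)$ is a weak solution of \eqref{BE} with initial datum $\mu^0$ if and only if it satisfies the mild equation $\Wc(t)=U_t\mu^0+\gamma\int_0^t U_{t-s}C_B^+\Wc(s)\,ds$. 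The equivalence is obtained by testing against $J\in\Sc\times\Sc$; for the ``weak $\Rightarrow$ mild'' direction one inserts the time-dependent test function $s\mapsto U_{t-s}^{*}J$ into the weak formulation, which requires extending \eqref{IBE} from constant-in-time $J$ to test functions that are $C^1$ in $s$ and $u$ and merely bounded Borel in $k$ (legitimate by the density of step functions in time, the continuity $t\mapsto\Wc(t)$ in $(\Sc\times\Sc)^\prime$, and a monotone-class argument in the $k$-variable using that each $\Wc_i(t)$ is a Radon measure).

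For existence I would set $\Wc^{(0)}(t)=U_t\mu^0$ and $\Wc^{(n+1)}(t)=\gamma\int_0^t U_{t-s}C_B^+\Wc^{(n)}(s)\,ds$. Each $\Wc^{(n)}(t)$ is fibrewise a bounded nonnegative measure with $\Vert\Wc^{(n)}(t)\Vert_{TV}\le (8\gamma t)^n/n!\,\Vert\mu^0\Vert_{TV}$, so $\Wc:=\sum_{n\ge0}\Wc^{(n)}$ converges in total variation, uniformly on $[0,T]$, to a bounded nonnegative measure $\Wc(t)$; the map $t\mapsto\langle\Wc(t),J\rangle$ is continuous for each $J$ (dominated convergence for the transport part), so $\Wc\in\mathcal{C}([0,T],(\Sc\times\Sc)^\prime)$, and by construction $\Wc$ solves the mild equation with $\Wc(0)=\mu^0$. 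Equivalently, $\Wc(t)$ is the law at time $t$ of the Markov process of \eqref{definition_intro_Z0} started from $\mu^0$. For uniqueness, if $\Wc^1,\Wc^2$ are measure-valued weak solutions with the same initial datum, then $\Delta(t):=\Wc^1(t)-\Wc^2(t)$ satisfies $\Delta(t)=\gamma\int_0^t U_{t-s}C_B^+\Delta(s)\,ds$; since the $\Wc^i$ are themselves mild solutions one has $\Vert\Wc^i(t)\Vert_{TV}\le e^{8\gamma T}\Vert\mu^0\Vert_{TV}$, so $s\mapsto\Vert\Delta(s)\Vert_{TV}$ is bounded and measurable, and $\Vert\Delta(t)\Vert_{TV}\le 8\gamma\int_0^t\Vert\Delta(s)\Vert_{TV}\,ds$. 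Gr\"onwall's lemma then forces $\Delta\equiv 0$.

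The main obstacle is the rigorous ``weak $\Rightarrow$ mild'' step, i.e.\ justifying that $s\mapsto U_{t-s}^{*}J$ is an admissible test function in \eqref{IBE}. The dual transport semigroup $U_t^{*}$ is the pull-back along $u\mapsto u+\textbf{v}_B(k)t/2\pi$, and $U_t^{*}J$ fails to lie in $\Sc$ precisely when $\textbf{v}_B$ is discontinuous in $k$ --- which happens at $k=0$ when $B=0$; for $B\neq0$, $\textbf{v}_B$ is smooth and this point is routine. In general I would deal with it by first extending the weak formulation from $\Sc\times\Sc$ to bounded Borel test functions that are $C^1$ in $u$ with $J,\partial_uJ$ bounded, which is possible because each $\Wc_i(t)$ is a Radon measure and such functions are bounded pointwise limits of elements of $\Sc$, the identity \eqref{IBE} being stable under such limits (bounded operators $\partial_u$, $C_B$ and the finite measures $\Wc_i(s)$). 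Alternatively, the statement for $B\neq0$ is contained in \cite{SSS19} and for $B=0$ in \cite{JKO09}, so one may simply invoke those references.
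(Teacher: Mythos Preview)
Your argument is correct in outline and carries through, but it is genuinely different from the paper's. The paper's proof (Appendix~A.5) is probabilistic: it asserts a representation formula
\[
\langle\mu(t),J\rangle=\int_{\R\times\T\times\{1,2\}}\E_{(u,k,i)}\Big[J_{\tilde I^B(t)}\big(\tilde Z^B_u(t),\tilde K^B(t)\big)\Big]\,d\mu^0(u,k,i)
\]
via the \emph{dual} piecewise deterministic Markov process $(\tilde Z_u^B,\tilde K^B,\tilde I^B)$ with generator $\Lc_B+\tfrac{1}{2\pi}\textbf{v}_B\partial_u$, and then infers uniqueness of $\mu$ from uniqueness of this PDMP. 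Your route is analytic: split $C_B=C_B^+-M_B$, build the free-streaming-with-absorption semigroup $U_t$, pass to the mild/Duhamel form, get existence by the Dyson series and uniqueness by Gr\"onwall in total variation. The two approaches are essentially equivalent---your Dyson series is exactly the jump expansion of the dual PDMP semigroup---but yours is more self-contained and makes the quantitative bounds explicit (e.g.\ $\Vert\mu(t)\Vert_{TV}\le e^{8\gamma t}\Vert\mu^0\Vert_{TV}$), while the paper's formulation packages everything into the well-posedness of a single stochastic process. The technical crux you flag, namely justifying that $s\mapsto U_{t-s}^\ast J$ is an admissible test function in \eqref{IBE}, is precisely the same step the paper hides inside ``the PDMP is unique'': in both cases one must extend the weak formulation beyond $\Sc\times\Sc$ to test functions that are merely bounded Borel in $k$ (to accommodate the discontinuity of $\textbf{v}_0$ at $k=0$). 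Your monotone-class argument for this is the right mechanism. Two small remarks: your $\lambda_B(k,i)$ clashes notationally with the paper's \eqref{def_lambda}, which denotes the \emph{inverse} rate; and the reference for the $B=0$ kinetic limit is \cite{BOS09}, not \cite{JKO09}.
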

\begin{proof}
We refer the reader to Appendix \ref{appendix_unicite_Boltzmann}.
\end{proof}
 The following theorem summarizes the results obtained by the authors of \cite[Theorem 1]{SSS19}  and by those of \cite[Theorem 9]{BOS09} respectively.
\begin{thm}
\cite{BOS09,SSS19} \label{boltzmann_equa_theore} Let $T >0$. Assume that the condition (\ref{hypo_distrib}) holds and that $(\mathcal{W}^\varepsilon(0))_{\varepsilon >0}$ converges in $(\Sc\times \Sc)^\prime$ to a bounded positive distribution $\mathcal{W}^0$.
\begin{itemize}
\item[i)]If $B \neq 0$, then there exists $\Wc$ in $\Cc\left([0,T], (\Sc\times \Sc)^\prime\right)$ such that $(\Wc^\varepsilon)_{\varepsilon > 0}$ converges pointwise to $\Wc$ in $\Fc\left([0,T], (\Sc\times \Sc)^\prime\right)$. 
Moreover for each time $t$ in $[0,T]$, the limit $\mathcal{W}(t)$ and $\mathcal{W}^0$ can be extended to a couple of bounded Borel measures on $\R \times \T$ respectively denoted by $\mu(t):=(\mu_1(t),\mu_2(t))$ and $\mu^0:=(\mu_1^0,\mu_2^0)$.
Furthermore, $\mu$ belongs to $\Cc\left([0,T], (\Sc\times \Sc)^\prime \right)$ and is the unique \textbf{Borel measure valued} weak solution of the  Boltzmann equation (\ref{BE}) with initial condition $\mu^0$. 
\item[ii)] If $B=0$ and furthermore
\begin{equation}
\label{hypo_Stefano}
\lim\limits_{ \rho \rightarrow 0} \limsup\limits_{ \varepsilon \rightarrow 0} \frac{\varepsilon}{2} \int_{ \vert k \vert < \rho}\mathbb{E}_{\mu^\varepsilon}\left[ \left\vert\widehat{\psi}^\varepsilon_i\left(0,k\right) \right\vert^2 \right] dk = 0 \ ,
\end{equation}
then the same conclusion as in i) holds with $B=0$ in Eq.\;\eqref{BE}.
\end{itemize} 
\end{thm}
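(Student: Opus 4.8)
The plan is to run the standard weak-coupling (kinetic) scheme for stochastically perturbed harmonic chains, using the diagonalization of $A+BG$ from Lemma~\ref{eig} in an essential way.

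\textbf{Microscopic evolution equation.} First I would differentiate $t\mapsto\langle\Wc^\varepsilon_i(t),J_i\rangle$, using the generator $L^\varepsilon=A+BG+\varepsilon\gamma S$ and the acceleration $t\mapsto t\varepsilon^{-1}$. The Hamiltonian part only multiplies $\widehat\psi^\varepsilon_i(k)$ by $e^{-\mathbf{i}\varepsilon^{-1}\omega_{i,B}(k)t}$, so in the second expression of \eqref{wigner_def} the two arguments $k\pm\tfrac{\varepsilon p}{2}$ generate the phase $\exp\big(\mathbf{i}\varepsilon^{-1}[\omega_{i,B}(k-\tfrac{\varepsilon p}{2})-\omega_{i,B}(k+\tfrac{\varepsilon p}{2})]\,t\big)$; a Taylor expansion gives $\omega_{i,B}(k-\tfrac{\varepsilon p}{2})-\omega_{i,B}(k+\tfrac{\varepsilon p}{2})=-\varepsilon p\,\textbf{v}_B(k)+o(\varepsilon)$, and after differentiating in $t$ and reading the factor $p$ as $\partial_u$ through the Fourier transform one gets the transport term $\tfrac{1}{2\pi}\langle\Wc^\varepsilon(s),\textbf{v}_B\partial_uJ\rangle$ of \eqref{IBE}. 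The noise term $\varepsilon\gamma S$ picks up the factor $\varepsilon^{-1}$ from the time change and hence acts at order one; computing $S$ on the quadratic observables $(\widehat\psi^\varepsilon_i)^*\widehat\psi^\varepsilon_i$ yields, at leading order, the collisional operator $\gamma C_B$ of \eqref{collisional_operator} (the kernel $R(k,k')$ and the weights $\theta_{i,B}^2$ come out of this computation), \emph{together with} off-diagonal contributions which are precisely the distributions $\Gamma^\varepsilon$ and $\tilde\Gamma^\varepsilon$ from \eqref{def_anti_wigner}--\eqref{def_anti_wigner_conjugate}. This produces a self-consistent integral identity for $(\Wc^\varepsilon,\Gamma^\varepsilon,\tilde\Gamma^\varepsilon)$ modulo $o(1)$.

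\textbf{The off-diagonal terms vanish.} The core estimate is that $\int_0^t\langle\Gamma^\varepsilon(s),\cdot\rangle\,ds\to0$ and $\int_0^t\langle\tilde\Gamma^\varepsilon(s),\cdot\rangle\,ds\to0$. Since these objects pair $\widehat\psi_i$ with $\widehat\psi_{i^*}$, they carry a phase built from $\omega_{1,B}\pm\omega_{2,B}$; when $B\neq0$ this phase never vanishes (indeed $\omega_{1,B}-\omega_{2,B}=|B|$ and $\omega_{1,B}+\omega_{2,B}=2\sqrt{\widehat\alpha(k)+|B|^2/4}\ge|B|$), so integrating by parts in $s$, together with the uniform energy bound \eqref{K0} and the rapid decay of $\mathcal F[J_i]$, kills these contributions (one first checks, exactly as for $\Wc^\varepsilon$, that $\Gamma^\varepsilon,\tilde\Gamma^\varepsilon$ have uniformly bounded mass). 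When $B=0$ the phase degenerates at the single point $k=0$, where $\widehat\alpha(k)\sim4\pi^2k^2$, $\omega_{i,0}(k)\sim2\pi|k|$ is only Lipschitz and $\textbf{v}_0$ jumps; this is exactly the role of the extra hypothesis \eqref{hypo_Stefano}, which forbids the rescaled energy from concentrating at $k=0$ and thus lets one still discard the off-diagonal terms and control the collision operator near the degeneracy. This oscillatory analysis near the band crossing is the delicate part of the whole argument, and conceptually a nonzero magnetic field removes the difficulty because it opens a gap $|B|$ between the two phonon branches — which is why the cases $B\neq0$ and $B=0$ must be treated under different assumptions.

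\textbf{Compactness and identification of the limit.} By Lemma~\ref{Lem_Wigner} one has $\|\langle\Wc^\varepsilon,J\rangle\|_\infty\lesssim K_0\|J\|$ uniformly, and the identity from the first step gives uniform control of the time increments of $t\mapsto\langle\Wc^\varepsilon(t),J\rangle$, hence equicontinuity; since $\Sc\times\Sc$ is separable, a diagonal extraction over a countable dense family of test functions (and rational times) together with equicontinuity in $t$ and Banach--Alaoglu produces a limit point $\Wc\in\Cc([0,T],(\Sc\times\Sc)')$ along a subsequence. Letting $\varepsilon\to0$ in the integral identity, and using the previous step to drop $\Gamma^\varepsilon$ and $\tilde\Gamma^\varepsilon$, shows that $\Wc$ satisfies \eqref{IBE}. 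Each $\Wc^\varepsilon_i(t)$ is a positive distribution with total mass bounded by a multiple of $K_0$, so $\Wc_i(t)$ extends to a bounded positive measure $\mu_i(t)$ on $\R\times\T$, and $\mu$ is then a measure-valued weak solution of \eqref{BE} with initial datum $\mu^0$. By Lemma~\ref{lemma_unicite} such a solution is unique, which forces the \emph{whole} family $(\Wc^\varepsilon)_{\varepsilon>0}$, and not merely a subsequence, to converge pointwise to $\mu$; this completes the proof in both cases.
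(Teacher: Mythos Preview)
Your proposal is correct and follows essentially the same scheme as the paper (and the cited works): derive the evolution identity for $\Wc^\varepsilon$ with transport, collision, and off-diagonal $\Gamma^\varepsilon,\tilde\Gamma^\varepsilon$ terms; kill the latter via their fast phase (gapped by $|B|$ when $B\neq0$); extract a limit by equicontinuity plus a diagonal argument over a countable dense family in $\Sc\times\Sc$; and upgrade subsequential to full convergence via Lemma~\ref{lemma_unicite}. Two small corrections: (i) $\Wc^\varepsilon_i(t)$ is \emph{not} a positive distribution for finite $\varepsilon$ --- positivity is a property of the \emph{limit} only, established separately by testing against nonnegative $J\in\Cc_c$ and Riesz representation (cf.\ \cite[Lemma~D.1]{SSS19}); (ii) when $B=0$, hypothesis \eqref{hypo_Stefano} is invoked primarily to control the \emph{transport} term (the Taylor expansion of $\omega_{i,0}$ near $k=0$, where $\textbf{v}_0$ is discontinuous), rather than the off-diagonal terms.
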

\begin{rmk}
When $B=0$, the assumption \eqref{hypo_Stefano} is required to compensate the lack of differentiability of the function $\mathbf{v}_{B}$ defined in Eq.\;\eqref{omega_prime} at $k=0$.
\end{rmk}
\subsection{Hydrodynamic limit of the Boltzmann equation}
\label{section_previous_results_hydro}
The aim of this section is to study the hydrodynamic limit of the Boltzmann equation \eqref{IBE}. Let us start by a short reminder about L\'evy processes \cite{B96}. Given a measure $\nu$ on $\mathbb{R}^*$, we say that $\nu$ is a L\'evy measure if and only if 
 \begin{equation}
 \label{def_Levy_measure_general}
 \int_{\mathbb{R}^*} \min \left( 1, r^2\right) \; d\nu(r) < \infty \ .
\end{equation} 
Let $Y_u(\cdot)$ be a real valued stochastic process starting from $u$ in $\mathbb{R}$. We say that $Y_u(\cdot)$ is a L\'evy process with (L\'evy) measure $\nu$ if and only if for any positive $t$ and $\theta$ in $\R$,
\begin{align*} 
\E\left[ \exp\left(\mathbf{i}\theta Y_u(t) \right)  \right] = \exp\left(t \Phi_{Y}(\theta) + \mathbf{i}\theta u \right)  \ . 
\end{align*}
Here, $\Phi_Y$ denotes the L\'evy exponent associated to the L\'evy process $Y_u(\cdot)$ and is given for any $\theta$ in $\R$ by
\begin{equation}
\label{fonction_caract_Levy_process_general_intro_chapter_2}
\Phi_{Y} (\theta) = a \theta^2 + \int_{\mathbb{R}^*}\left(\exp\left( \mathbf{i} \theta r \right)-1+\mathbf{i}\theta r \mathds{1}_{ \lbrace \vert r \vert < 1 \rbrace } \right) \; d\nu(r) \ ,
\end{equation} 
where $a$ is in $\mathbb{R}^+$.
The action of the infinitesimal generator $\mathfrak{L}$ of $Y_u(\cdot)$ on a smooth function $\phi : \mathbb{R} \rightarrow \mathbb{R}$ which decays sufficiently fast is given by
\begin{equation}
\label{generator_Levy_process_intro_chapter_2}
\forall p \in \R, \quad \mathfrak{L}\left[\phi\right](p) =  \int_{\mathbb{R}} \Fc\left[\phi\right](\xi) \Phi_{Y}(\xi) \exp\left( 2 \mathbf{i}\pi p \xi\right) \; d\xi \ .
\end{equation}
In this article we will only study pure jumps Lévy processes, i.e. $a=0$. 
Let $\beta$ in $(1,2)$, and assume that $d\nu(r):=\vert r \vert^{-\beta-1} \; dr$ then, up to a constant, we have
\begin{align*}
\forall \theta \in \mathbb{R}, \quad \Phi_{Y} (\theta) =  - \vert \theta \vert^\beta \ .
\end{align*}
For this choice of $\nu$, the infinitesimal generator of \,$Y_u(\cdot)$ is (up to a constant) the pseudo-differential operator $-\left( -\Delta \right)^{\tfrac{\beta}{2}}$. In this case, we say that $Y_u(\cdot)$ is an $\beta$-stable Lévy process. 

\medskip 

\begin{flushleft}
For $i$ in $\{1,2\}$ and $\left(k,k^\prime\right)$ in $\T^2$, we recall that $\theta_{i,B}(k)$ and $R\left(k,k^\prime\right)$ are defined in Eq.\;\eqref{theta} and Eq.\;\eqref{def_R_historique} respectively. Let $f : \mathbb{T}\times \{1,2\} \rightarrow \mathbb{R}$ be a real-valued function such that for any $i$ in $\{1,2\}$  $f(\cdot,i) \in \Cc \left( \T , \mathbb{R}\right) $. We define an operator $\Lc_B$ acting on $f$   by
\end{flushleft}
\begin{align}
    \label{def_infinit_generator_boltzmann}
\Lc_B [f](k,i) & = \lambda^{-1}_{B}(k,i) \sum_{j=1}^2 \int_{\mathbb{T}} P_B\left(k,i,dk^\prime,j\right) \left( f\left(k^\prime,j\right)- f(k,i) \right) \; dk^\prime \ .
\end{align} 
Here for any $\left(k,k^\prime\right)$ in $\T^2$ and $(i,j)$ in $\{1,2\}^2$
\begin{align}
\label{transition_prob_stefano}
P_B\left(k,i,dk^\prime,j\right) &= \gamma\lambda_B(k,i) \theta_{i,B}^2(k) \theta_{j,B}^2\left(k^\prime\right) R\left(k,k^\prime\right) \; dk^\prime \ ,
\end{align}
where
\begin{align} 
\label{def_lambda}
\lambda_B(k,i) &= \left[\gamma\theta_{i,B}^2(k)R(k)\right]^{-1} \quad \text{and } \quad R(k) = \int_{\T} R\left(k,k^\prime\right) \; dk^\prime \ .
\end{align}

\medskip
Let $\mu$ be the \textbf{Borel measure valued} weak solution of the Boltzmann equation \eqref{BE}  with initial condition $\mu^0$. Let $t$ in $[0,T]$ and $i$ in $\{ 1,2 \}$,  we assume  that $\mu_i(t)$ and $\mu_i^0$ have a density with respect to the measure $dudk$ denoted by $f_B(t,u,k,i)$ and $f^0(t,u,k,i)$ respectively. Then we have for any time $t$ in $[0,T]$, $k$ in $\T$, $u$ in $\R$ and $i$ in $\{1,2\}$
\begin{equation}
\label{equation_u_historique}
\partial_t f_B(t,u,k,i) + \frac{\textbf{v}_B(k)}{2\pi}  \partial_u f_B(t,u,k,i) = \mathcal{L}_B\left[f_B\right](t,u,k,i) \ ,
\end{equation}
with $f^0(u,k,i)$ as initial condition. To study the hydrodynamic behavior of $f_B$ we interpret the operator $\Lc_B$ as the infinitesimal generator of a pure jump continuous time Markov process on $\mathbb T \times \{1,2\}$ as follows. 

\medskip
Let $\left(X_B^n\right)_{n \in \N}:=\left(K_B^n, I_B^n\right)_{n \in \N}$ be the Markov chain on $\mathbb{T} \times \{1,2\}$ with transition probability $P_B$ defined in Eq.\;\eqref{transition_prob_stefano}.
Let $(\tau^n)_{n \in \N}$ be an i.i.d sequence of random variables, independent of $\left(X_B^n\right)_{n \in \N}$ such that $\tau^0 \sim \mathcal{E}(1)$. We define the random variable $\Tc^N$ by\footnote{We decided not to write $\mathcal{T}_B^N$ in order to lighten the notations.}
\begin{equation}
\label{definition_temps_saut}
\Tc^0 = 0 \quad \text{and} \quad \forall N \in \N \backslash \{ 0 \}, \quad \Tc^N = \sum_{n=1}^N \lambda_B\left(X_B^{n-1}\right) \tau^{n-1} \ .
\end{equation}   
Then we can define a pure jump Markovian process $\left( K_B(\cdot),I_B(\cdot)\right)$ with values in $\T \times \{1,2\}$ where for any positive time $t$ in $[0,T]$
\begin{align*}
K_B(t) &= K_B^n, \quad \text{and} \quad  I_B(t) = I_B^n, \quad \forall t \in \left[\Tc^n,\Tc^{n+1}\right[ \ .
\end{align*}
The infinitesimal generator of $\left( K_B(\cdot),I_B(\cdot)\right)$ is $\Lc_B$ defined in Eq.\;\eqref{def_infinit_generator_boltzmann}. From this process, we can define an additive functional of Markov process $Z_{u,B}(\cdot)$ such that for any time $t$ in $[0,T]$ and $u$ in $\mathbb{R}$
\begin{align*}
Z_{u,B}(t) &= u - \int_0^t \frac{\textbf{v}_B\left(K_B(s)\right)}{2\pi} \; ds \ .
\end{align*}
Then by Dynkin's formula we get that for any $i$ in $\{1,2\}$ and $(u,k)$ in $\mathbb{R} \times \mathbb{T}$
\begin{equation}
f_B(t,u,k,i) = \mathbb{E}_{(k,i)}\left[f^0\left(Z_{u,B}(t),K_B(t),I_B(t)\right)   \right] \ .
\end{equation} 
The hydrodynamic behavior of $f_B$ is completely determined by the one of the process $\left( K_B(\cdot),I_B(\cdot)\right)$. Hence, in the following we recall how the authors of \cite{JKO09,SSS19} studied this process. 
Let  $\pi_B$ the probability measure on $\mathbb{T} \times \{ 1,2 \}$ defined as follows
\begin{equation}
\label{invariante_measure_delta}
\pi_B(dk, di) =\sum_{j=1}^2 \frac{\lambda_B(k,j)^{-1}}{\gamma \bar{R}} \; dk \delta_j(di) \quad \text{with} \quad \overline{R}=\int_{\T} R(k) \; dk \ .
\end{equation}
We can prove that $\pi_{B}$ is a reversible probability measure of the Markov chain $\left(X_B^n\right)_{n \in \N}$. Observe that
\begin{equation} 
\label{def_P}
P_B(k,i,dk^\prime,j)= \pi_B(dk^\prime,j) \ ,
\end{equation} 
where $P_{B}$ is defined in Eq.\;\eqref{transition_prob_stefano}.
\begin{rmk}
According to Eq.\;\eqref{def_P}, $\left(X_B^n\right)_{n \geq 1}$ is an i.i.d sequence of random variables on $\mathbb{T}\times \{1,2 \}$ but observe that $\left(X_B^n\right)_{n \in \mathbb{N}}$ is not because of $X_B^0$. This property will play a crucial role in the proof of our main results stated in Sec.\;\ref{section_main_result_B}.
\end{rmk}
We define a function $\Psi_B$ in the following way
\begin{equation}
\forall k \in \mathbb{T}, \quad \forall i \in \{1,2\}, \quad \Psi_B(k,i) = \textbf{v}_B(k) \lambda_B(k,i) \ .
\end{equation}
The asymptotic behavior of the process $Z_{u,B}(\cdot)$ is fully determined by the tails of the function $\Psi_{B}$. In \cite{JKO09,SSS19} it is proved that 
 \begin{align}
\forall r >0, \quad  \lim\limits_{ N \rightarrow \infty} N^{\beta_B} \;\pi_{B} \left(\left\{ (k,i)\; \big| \quad \Psi_{B}(k,i) > N r  \right\} \right) &= \tilde{\kappa}_B \vert r\vert^{-\beta_B}\ , \\
 \forall r <0, \quad  \lim\limits_{ N \rightarrow \infty} N^{\beta_B} \;\pi_{B} \left(\left\{ (k,i)\; \big| \quad \Psi_{B}(k,i) < N r  \right\} \right) &= \tilde{\kappa}_B\vert r\vert^{-\beta_B} \ ,
 \end{align}
where 
\begin{align}
    \beta_B&= \frac{3}{2} \quad \text{if} \quad  B = 0 \quad \text{and} \quad \beta_B= \frac{5}{3}, \quad \text{if} \quad B\neq0 \ ,
  \label{expo_previous_result_hydro}
\end{align}
and
\begin{align}
\label{old_tails}
\tilde{\kappa}_B & = \left\{
    \begin{array}{ll}
        \tilde{\kappa}_1 (\widehat{\alpha}^{\prime \prime}(0))^{\tfrac{3}{4}}\gamma^{-\tfrac{3}{2}} \quad \text{if $B =0$} \ ,\\\\
        \tilde{\kappa}_2 \vert B \vert^{-\tfrac{1}{3}}\widehat{\alpha}^{\prime \prime}(0)\gamma^{-\tfrac{5}{3}} \quad \text{if $B\neq0$}\ ,
    \end{array} \right. 
\end{align}
with $\tilde{\kappa_1}$ and $\tilde{\kappa}_2$ two positive constants.
Then the following results are proved in \cite[Theorem 3.1] {JKO09} and \cite[Theorem 3]{SSS19}  respectively.
\begin{thm}[\cite{JKO09,SSS19}] 
\label{thm_fractionnal_diffusion_old} Let $u$ in $\R$ and define $u_N:=Nu$. We assume that $X_B(0)=\left(K_B(0), I_B(0)\right)=(k,i)$ with $k \neq 0$ and $i$  in $\{1,2\}$.\\
Then, under $\P_{(k,i)}$ the finite-dimensional distributions of the scaled process $N^{-1}Z_{u_N,B}\left(N^{\beta_B} \cdot\right)$ converge weakly  to the finite-dimensional distributions of a L\'evy process $Y_{u,B}(\cdot)$ generated by $-2 \gamma \tilde{\kappa}_{B}(-\Delta)^{\frac{\beta_B}{2}}$ where $\tilde{\kappa}_B$ and $\beta_B$ are defined in Eq.\;\eqref{expo_previous_result_hydro} and Eq.\;\eqref{old_tails} respectively.
\end{thm}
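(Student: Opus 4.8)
The plan is to reduce this to the classical domain-of-attraction theorem for sums of i.i.d.\ heavy-tailed variables, read through the renewal clock $\Tc_\bullet$. The starting point is that along the jump chain the integrand of $Z^B_u$ is constant, equal to $\mathbf v_B(K^B_n)/2\pi$, on the $n$-th sojourn interval, which has length $\lambda_B(X^B_n)\tau_n$; hence, with $m_N(t):=\sup\{n:\Tc_n\le N^{\alpha_B}t\}$,
\begin{equation*}
u_N-Z^B_{u_N}\!\big(N^{\alpha_B}t\big)=\frac{1}{2\pi}\Bigl(\Psi_B(k,i)\,\tau_0+\sum_{n=1}^{m_N(t)}\Psi_B(X^B_n)\tau_n+\mathcal R_N(t)\Bigr),
\end{equation*}
where $\mathcal R_N(t)$ is the contribution of the incomplete last interval, itself controlled by a single holding-time term $\lambda_B(X^B_\bullet)\tau_\bullet$. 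First I would observe that the two boundary terms are negligible after dividing by $N$: the first because $k\neq 0$ makes $\Psi_B(k,i)$ finite, the second because $\lambda_B$ is $\pi_B$-integrable while the relevant state is, for large $N$, $\pi_B$-distributed. Crucially, by \eqref{def_P} the sequence $(X^B_n)_{n\ge1}$ is i.i.d.\ with law $\pi_B$ \emph{whatever} the starting point $(k,i)$, so the initial condition enters only through the first, harmless, term.

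Next I would analyse the main sum $S_M:=\sum_{n=1}^M\Psi_B(X^B_n)\tau_n$, a sum of i.i.d.\ copies of $\xi:=\Psi_B(X^B_1)\tau_1$. The variable $\xi$ is centred, because $\E_{\pi_B}[\Psi_B]=0$ by oddness of $\mathbf v_B$ (the function $\widehat{\alpha}$ being even); and by the tail asymptotics recalled in Sec.\;\ref{section_previous_results_hydro}, $\pi_B(\Psi_B>x)\sim\tilde\kappa_B\,x^{-\alpha_B}$, with $\tilde\kappa_B$ as in \eqref{old_tails} and the same estimate for the left tail. Since $\tau_1$ has all moments, a Breiman-type lemma upgrades this to $\P(\xi>x)\sim\Gamma(\alpha_B+1)\,\tilde\kappa_B\,x^{-\alpha_B}$, two-sided and with $\alpha_B\in(1,2)$: thus $\xi$ is in the normal domain of attraction of a symmetric $\alpha_B$-stable law, and the functional stable limit theorem gives convergence in law of $s\mapsto M^{-1/\alpha_B}S_{\lfloor Ms\rfloor}$ to a symmetric $\alpha_B$-stable Lévy process.

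Then I would bring in the renewal clock: since $\E_{\pi_B}[\lambda_B]=1/(2\gamma)<\infty$ (using $\overline R=\int_\T R=4$), the functional law of large numbers gives $N^{-\alpha_B}\Tc_{\lfloor N^{\alpha_B}s\rfloor}\to s/(2\gamma)$ and hence $N^{-\alpha_B}m_N(s)\to 2\gamma s$ uniformly on $[0,T]$. Composing this deterministic, continuous time change with the stable functional limit of $S_\bullet$, and using that increments of $S$ over disjoint index blocks are independent — whence disjoint time intervals yield asymptotically independent increments — one obtains that the finite-dimensional distributions of $N^{-1}Z^B_{u_N}(N^{\alpha_B}\cdot)$ converge to those of $u$ minus a symmetric $\alpha_B$-stable Lévy process. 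Its generator is $-D(-\Delta)^{\alpha_B/2}$ for some $D>0$, and identifying $D$ is a routine chase of the normalisations (the renewal rate $2\gamma$, the holding-time moment $\Gamma(\alpha_B+1)$, the factor $1/2\pi$, the normalising constant of the stable limit theorem, and the convention $\Fc[-(-\Delta)^{\alpha/2}H]=-|2\pi p|^{\alpha}\Fc[H]$ of Sec.\;\ref{section_notation}), which produces $D=2\gamma\tilde\kappa_B$.

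The main difficulty is the interaction between the \emph{heavy tails} and the \emph{random} number $m_N(t)$ of summands: one may not simply replace $m_N(t)$ by $2\gamma N^{\alpha_B}t$, since an individual summand is of the same order as the whole sum. This is exactly why one needs the functional (rather than one-dimensional) stable limit for $S_\bullet$ and its joint treatment with the renewal functional; equivalently, one can invoke the abstract stable limit theorem for additive functionals of a Markov chain of \cite{JKO09}, whose hypotheses here reduce to the tail estimate above. Verifying that estimate is the only genuinely model-specific computation — it rests on the behaviour near $k=0$ of $\theta_{i,B}$, of $R$, and of $\mathbf v_B$ — and it is there that the two exponents $\alpha_B=3/2$ ($B=0$) and $\alpha_B=5/3$ ($B\neq0$) are produced.
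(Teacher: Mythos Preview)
Your proposal is correct and follows essentially the same route as the paper (and as \cite{JKO09,SSS19}, whose argument the paper reproduces in Sec.\;\ref{section_proof_hydro_transition} for the more general Theorem \ref{main_theorem_hydro}): write $Z^B$ as a sum of the i.i.d.\ increments $\Psi_B(X_n^B)\tau_n$ via \eqref{def_P}, use the tail asymptotics of $\Psi_B$ under $\pi_B$ to place these in the domain of attraction of a symmetric $\alpha_B$-stable law, pass the random number of summands through the renewal law of large numbers, and dispose of the first and last (incomplete) terms. The only cosmetic difference is that where you invoke Breiman's lemma together with the classical i.i.d.\ functional stable limit theorem, the paper packages the same computation through the array criterion of Proposition \ref{proposition_array} and handles the last-interval remainder by the maximal-type estimate of Lemma \ref{lemma_BN} rather than the renewal/size-biasing argument you sketch.
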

From these results, the authors of \cite{JKO09,SSS19} have been able to prove the following theorem.
\begin{thm}[\cite{JKO09,SSS19}] 
\label{thm_limi_hydro_u_old}
Let $f_B(\cdot,\cdot,\cdot):=\left(f_B(\cdot,\cdot,\cdot,1),f_B(\cdot,\cdot,\cdot,2)\right)$ be the solution of Eq.\;\eqref{equation_u_historique} with initial condition $f^{0,N}(\cdot,\cdot)$ $:=(f^{0,N}(\cdot,\cdot,1),f^{0,N}(\cdot,\cdot,2))$ defined as follows
\begin{align}
    \forall (u,k) \in \mathbb{R}\times \mathbb{T}, \quad f^{0,N}(u,k) = \left( f^0\left( \frac{u}{N},k,1 \right), f^0\left( \frac{u}{N},k,2 \right) \right) \ .
\end{align}
Here $(f^0(\cdot,\cdot,1),f^0(\cdot,\cdot,2))  \in \mathcal{C}^\infty_{\mathbf{c}} \left( \mathbb{R} \times \mathbb{T}  \right)^2$. Let $\overline{f}^0 : \mathbb{R} \rightarrow \mathbb{R}$ be the real-valued function defined for any $u$ in $\mathbb{R}$
\begin{align*} 
\overline{f}^0(u)& = \sum_{i=1}^2 \int_{\T } f^0(u,k,i) \; dk \ . \end{align*}
Then, for any time $t$ in $]0,T]$ and $u$ in $\mathbb{R}$ we have that
\begin{align*} 
 \lim\limits_{N \rightarrow \infty} \  \sum_{i=1}^2 \int_\T \left\vert f_B\left(N^{\beta_B}t, Nu,k,i   \right) - \frac{1}{2}\rho_B(t,u) \right\vert^2 \; dk = 0 \ , 
\end{align*}
where we recall that $\beta_B$ is defined in Eq.\;\eqref{expo_previous_result_hydro} and $\rho_B$ is the solution of 
\begin{equation}
\label{old_fractionnaire}
\forall t \in \; ]0,T], \quad \forall u \in \R, \quad \left\{
    \begin{array}{ll}
        \partial_t \rho_B(t,u) = -D_B(-\Delta)^{\frac{\beta_B}{2}} \left[\rho_B\right](t,u) \ ,\\
        \rho_B(0,u) = \overline{f}^0(u) \ ,
    \end{array}
\right.
\end{equation}
with $D_B$ a positive constant.
\end{thm}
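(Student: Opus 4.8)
The plan is to transport the $\alpha_B$-stable scaling limit of Theorem~\ref{thm_fractionnal_diffusion_old} into the probabilistic representation of $f_B$. By Dynkin's formula, since the initial datum of \eqref{equation_u_historique} is $f_N^0(u,k)=\big(f^0(u/N,k,1),f^0(u/N,k,2)\big)$,
\begin{align*}
f_B\big(N^{\alpha_B}t,Nu,k,i\big)=\mathbb{E}_{(k,i)}\left[f^0\left(\tfrac1N Z^B_{u_N}(N^{\alpha_B}t),\,K^B(N^{\alpha_B}t),\,I^B(N^{\alpha_B}t)\right)\right],\qquad u_N:=Nu .
\end{align*}
Because $f^0$ is bounded and $\rho_B$ satisfies the maximum principle, the integrand in the statement is bounded uniformly in $N$, $k$, $i$ by a constant, which is integrable on the finite measure space $(\T\times\{1,2\},dk\,d\delta)$; hence by dominated convergence it is enough to prove the pointwise convergence $f_B(N^{\alpha_B}t,Nu,k,i)\to\tfrac12\rho_B(t,u)$ for every $i\in\{1,2\}$ and every $k\neq0$.

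To identify the limit I would first note that the (reversible) invariant probability measure of the \emph{continuous-time} process $\big(K^B(\cdot),I^B(\cdot)\big)$ is proportional to $\lambda_B(k,i)\,\pi_B(dk,di)=(\gamma\overline{R})^{-1}\,dk\,d\delta$, i.e.\ it is the uniform measure $\tfrac12\,dk\,d\delta$ on $\T\times\{1,2\}$. Decompose $f^0=\tfrac12\overline{f}^0+h$ with $h(v,k',j):=f^0(v,k',j)-\tfrac12\overline{f}^0(v)$, so that $h(v,\cdot,\cdot)$ is centred for $\tfrac12\,dk\,d\delta$ for every $v\in\R$, and split $f_B(N^{\alpha_B}t,Nu,k,i)=\mathrm{I}_N+\mathrm{II}_N$ accordingly, where $\mathrm{I}_N=\tfrac12\,\mathbb{E}_{(k,i)}\big[\overline{f}^0\big(\tfrac1N Z^B_{u_N}(N^{\alpha_B}t)\big)\big]$. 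By Theorem~\ref{thm_fractionnal_diffusion_old}, $\tfrac1N Z^B_{u_N}(N^{\alpha_B}t)$ converges in law to $Y^B_u(t)$, the L\'evy process generated by $-2\gamma\tilde{\kappa}_B(-\Delta)^{\alpha_B/2}$; since $\overline{f}^0$ is bounded and continuous this gives $\mathrm{I}_N\to\tfrac12\,\mathbb{E}\big[\overline{f}^0(Y^B_u(t))\big]$. As $u\mapsto\mathbb{E}\big[\overline{f}^0(Y^B_u(t))\big]$ solves $\partial_t\rho=-2\gamma\tilde{\kappa}_B(-\Delta)^{\alpha_B/2}\rho$ with datum $\overline{f}^0$, it coincides with $\rho_B$ by uniqueness of \eqref{old_fractionnaire} (in particular $D_B=2\gamma\tilde{\kappa}_B$). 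Hence $\mathrm{I}_N\to\tfrac12\rho_B(t,u)$, and everything reduces to showing that $\mathrm{II}_N=\mathbb{E}_{(k,i)}\big[h\big(\tfrac1N Z^B_{u_N}(N^{\alpha_B}t),K^B(N^{\alpha_B}t),I^B(N^{\alpha_B}t)\big)\big]\to0$.

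The term $\mathrm{II}_N$ encodes the decorrelation between the displacement and the fast variable at the terminal time. I would split the time interval at $N^{\alpha_B}(t-\eta)$ for small $\eta\in(0,t)$ and condition on the trajectory up to that instant, writing $\xi$ for the configuration and $w$ for $\tfrac1N Z^B_{u_N}$ at time $N^{\alpha_B}(t-\eta)$. On the one hand, $h$ is uniformly continuous (smooth, with compact support in its first variable) and, by Theorem~\ref{thm_fractionnal_diffusion_old} applied at the two times $t-\eta<t$, the increment of $\tfrac1N Z^B_{u_N}$ over $[N^{\alpha_B}(t-\eta),N^{\alpha_B}t]$ converges in law to $Y^B_u(t)-Y^B_u(t-\eta)$, which is $O(\eta^{1/\alpha_B})$ by self-similarity of the $\alpha_B$-stable process; this lets one replace the first argument of $h$ by $w$ at the price of an error that vanishes when $N\to\infty$ and then $\eta\to0$. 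On the other hand, the conditional expectation of the resulting quantity equals $\big(Q^B_{N^{\alpha_B}\eta}\,h(w,\cdot,\cdot)\big)(\xi)$, where $\big(Q^B_s\big)_{s\ge0}$ is the Markov semigroup of $\big(K^B(\cdot),I^B(\cdot)\big)$; since $h(w,\cdot,\cdot)$ is centred for the invariant measure $\tfrac12\,dk\,d\delta$ and uniformly bounded as $w$ ranges over the compact support of $\overline{f}^0$, the ergodicity of $\big(K^B(\cdot),I^B(\cdot)\big)$ — which holds because $\big(X_n^B\big)_{n\ge1}$ is i.i.d.\ with law $\pi_B$ (the remark after \eqref{def_P}) and the holding times have finite mean, $\int\lambda_B\,d\pi_B=2/(\gamma\overline{R})<\infty$ — forces this quantity to $0$ over the diverging time $N^{\alpha_B}\eta$, except on an exceptional event of vanishing probability on which the process is caught, near the terminal time, in an anomalously long holding interval. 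Combining the two bounds yields $\limsup_{N\to\infty}|\mathrm{II}_N|\le c(\eta)$ with $c(\eta)\to0$ as $\eta\to0$, hence $\mathrm{II}_N\to0$.

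The main obstacle is precisely this last step: one must show that the fast variable $\big(K^B(N^{\alpha_B}t),I^B(N^{\alpha_B}t)\big)$ equilibrates to the uniform measure and decouples from $\tfrac1N Z^B_{u_N}(N^{\alpha_B}t)$, and the delicate regime is $k\to0$, where the holding times are heavy-tailed (of infinite variance under $\pi_B$) and where the rare long excursions generating the $\alpha_B$-stable scaling live. One therefore has to quantify that, at a fixed macroscopic time, the chain is with overwhelming probability not trapped in such an excursion — this is the estimate on the exceptional event above, which uses that, although $\lambda_B$ is not square-integrable for $\pi_B$, it is integrable, so the stationary residual holding time is a.s.\ finite. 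Everything else is soft once Theorem~\ref{thm_fractionnal_diffusion_old} is granted.
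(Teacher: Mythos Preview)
Your strategy is sound and close in spirit to what the paper does (in Section~\ref{section_proof_main_theorem_u}, for the more general Theorem~\ref{main_thm_limit_u}), but the technical implementation differs in two places, and the second one is precisely the gap you flag yourself.

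\medskip

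\textbf{Time splitting.} You cut at a \emph{macroscopic} time $N^{\alpha_B}(t-\eta)$ and then send $\eta\to0$ after $N\to\infty$. The paper instead cuts at $N^{\alpha_B}t-m_Nt$ with $m_N\to\infty$ and $m_N/N\to0$, i.e.\ a time that is microscopically diverging but macroscopically negligible. This buys a lot: the increment of $N^{-1}Z^B_{u_N}$ over the last window is bounded \emph{deterministically} by $\|\mathbf v_B\|_\infty\,t\,m_N/N\to0$, so there is no need to invoke the f.d.d.\ convergence of Theorem~\ref{thm_fractionnal_diffusion_old} (or self-similarity of the limit) to control it; your control of that increment via the limiting $\alpha_B$-stable law works but is more delicate and only gives smallness in probability, not a.s.

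\medskip

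\textbf{Mixing of the fast variable.} This is the place where your argument is only a sketch. You want $\mathbb E_{(k,i)}\big[(Q^B_{N^{\alpha_B}\eta}h(w,\cdot,\cdot))(\xi)\big]\to0$ with $\xi=(K^B(N^{\alpha_B}(t-\eta)),I^B(N^{\alpha_B}(t-\eta)))$. The difficulty, as you say, is that $(Q^B_s g)(\xi_0)$ does not decay uniformly in $\xi_0$ because of the heavy-tailed holding time when $\xi_0$ is near $k_0=0$, and $(w,\xi)$ are correlated. You gesture at a residual-life argument; this can be made to work, but it is not carried out. The paper sidesteps this completely by two devices. First, it does not argue pointwise in $(k,i)$ but integrates the starting point against the invariant measure $\sigma=\tfrac12\,dk\,d\rho$ of the continuous-time process (this is exactly what the $\int_{\mathbb T}\sum_i|\cdot|\,dk$ in the statement provides); by stationarity the law of $\xi$ is then $\sigma$, and the quantity to control becomes $\|P^{m_Nt}g\|_{L^1(\sigma)}$. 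Second, it uses a quantitative decay estimate (Lemma~\ref{lemma_convergence_P}, from \cite{K14}): $\|P^{s}g\|_{L^1(\sigma)}\le C(1+s)^{-a}\|g\|_{L^1(\lambda_B^a\,d\sigma)}$ for some $a\in(0,1)$, which immediately gives the vanishing over the diverging window $m_Nt$. Together with the Fourier inversion that replaces your centering $f^0=\tfrac12\overline{f}^0+h$ by characters $e^{i(pZ+rK)}\mathds 1_{I=j}$, this makes both terms $I_1^N$ and $I_2^N$ fall out in one line each.

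\medskip

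In short: your reduction to $\mathrm I_N\to\tfrac12\rho_B$ via Theorem~\ref{thm_fractionnal_diffusion_old} is fine, and your identification $D_B=2\gamma\tilde\kappa_B$ is correct. The genuine gap is the control of $\mathrm{II}_N$. The paper's choice of an intermediate scale $m_N$ and the $L^1(\sigma)$-decay lemma turn your ``main obstacle'' into a direct computation; if you want to keep your macroscopic-$\eta$ splitting and pointwise-in-$(k,i)$ route, you must supply a precise bound on the probability that, at the intermediate time, the chain sits in an anomalously long holding interval, uniformly over the (random, $w$-dependent) centered observable.
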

\begin{rmk}
As mentioned in the introduction, in \cite{JKO09} the result is stated for a unidimensional Boltzmann equation, i.e , $f_B$ is a one component function. However, following the proof given in \cite{JKO09,SSS19} we can extend the result to a two components function. 
\end{rmk}
\begin{rmk}
In \cite{SSS19}, the authors stated that $D_B= \vert B\vert^{-\frac{1}{3}}C$ where $C$ was a non explicit positive constant. From this observation, we can have an intuition on the scaling in time we have to do in order to observe the transition between these two fractional diffusion equations when we replace the value of the magnetic field $B$ by $BN^{-\delta}$. In  Eq.\;\eqref{def_vrai_constante} we give the explicit values of the constant $C$.
\end{rmk}

\section{Statement of the results}
\label{section_result}
In this section, we state our main results. Theorem \ref{main_theorem_kinetic} shows that at kinetic time scale $\varepsilon^{-1}$ under assumption \eqref{hypo_small_k} there is no transition in the convergence of the Wigner distribution. Theorem \ref{main_theorem_hydro} and Theorem \ref{main_thm_limit_u} show that a transition can be observed at some hydrodynamic time scale with the appearance of an interpolation process for some critical intensity of the magnetic field. We end this section with Theorem \ref{main_thm_limit_B_u_barre} which shows that this interpolation process converges to the fractional process studied in \cite{JKO09} when $B$ goes to zero and to the one studied in \cite{SSS19} when $B$ is sent to infinity.
\subsection{\textsc{Triviality of the transition in the kinetic time scale}}
\label{section_kinetic_transition}
In this section, we assume that the microscopic dynamics $\left(\text{see Eq.}\;\eqref{microscopic_system} \right)$ is submitted to a magnetic field of intensity $B_\varepsilon:=B\varepsilon^\delta$ where $\delta >0$ and $B> 0\footnote{In order to lighten the presentation, we choose to only consider the case $B >0$. However, the case $B<0$ can be treated in a similar way.}$.
\begin{thm}
\label{main_theorem_kinetic}
Let $\delta >0$, $ \eta$ in $]0,1[$ such that $\delta > \eta$ and $\kappa > 0$. We define $\kappa_\varepsilon:=\kappa \varepsilon^\eta$ and  we assume that the condition (\ref{hypo_distrib}) holds and that $(\mathcal{W}^\varepsilon(0))_{\varepsilon >0}$ converges in $(\Sc\times \Sc)^\prime$
to a bounded positive distribution $\mathcal{W}^0$. We assume furthermore that 
\begin{equation}
\label{hypo_small_k}  \limsup\limits_{ \varepsilon \rightarrow 0} \frac{\varepsilon}{2} \int_{ \vert k \vert < \kappa_\varepsilon}\mathbb{E}_{\mu^\varepsilon}\left[ \left\vert\widehat{\psi}^\varepsilon_i\left( 0,k\right) \right\vert^2 \right] \; dk =0  \ .
\end{equation}
Then, there exists $\Wc$ in $\Cc\left([0,T], (\Sc\times \Sc)^\prime\right)$ such that $(\Wc^\varepsilon)_{\varepsilon > 0}$ converges pointwise to $\Wc$ in $\Fc\left([0,T], (\Sc\times \Sc)^\prime\right)$. 
Moreover for each time $t$ in $[0,T]$, the limit $\mathcal{W}(t)$ and $\mathcal{W}^0$ can be extended to couples of bounded Borel measures on $\R \times \T$ denoted respectively by $\mu(t):=(\mu_1(t), \mu_2(t))$ and $\mu^0:=(\mu_1^0,\mu_2^0)$.\\
Furthermore, $\mu$ belongs to $\mathcal{C}\left([0,T], (\Sc\times \Sc)^\prime \right)$ and is the unique \textbf{Borel measure valued} weak solution on $[0,T]$ of the  Boltzmann equation \eqref{BE} with $B=0$ and initial condition $\mu^0:=(\mu^0_1, \mu_2^0)$. 
\end{thm}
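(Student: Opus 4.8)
The plan is to follow closely the strategy used in \cite{BOS09,SSS19} for Theorem \ref{boltzmann_equa_theore}, adapting it to track the $\varepsilon$-dependence of the magnetic field $B_\varepsilon=B\varepsilon^\delta$ and to exploit the extra assumption \eqref{hypo_small_k}. First I would recall that the Wigner distribution $\Wc^\varepsilon$ together with the auxiliary distributions $\Gamma^\varepsilon,\tilde\Gamma^\varepsilon$ defined in Eq.\;\eqref{def_anti_wigner}--\eqref{def_anti_wigner_conjugate} satisfy, by an application of the generator $L^\varepsilon$ to the quadratic observables $\widehat\psi^\varepsilon_i(\cdot)^*\widehat\psi^\varepsilon_j(\cdot)$ and Dynkin's formula, a closed system of evolution equations: the diagonal term $\Wc^\varepsilon$ couples to itself through the noise part $\gamma S$ (producing, after the kinetic rescaling $t\varepsilon^{-1}$, the collision operator $C_{B_\varepsilon}$) and couples to the off-diagonal terms $\Gamma^\varepsilon,\tilde\Gamma^\varepsilon$ through the Hamiltonian transport part $A+B_\varepsilon G$. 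The key point is that the off-diagonal (``anti-Wigner'') terms carry oscillating prefactors of the form $e^{\pm 2\mathbf i(\omega_{i,B_\varepsilon}+\omega_{j,B_\varepsilon})t\varepsilon^{-1}}$ (or $e^{\mathbf i(\omega_{i,B_\varepsilon}-\omega_{j,B_\varepsilon})t\varepsilon^{-1}}$ with $\omega_{1,B_\varepsilon}-\omega_{2,B_\varepsilon}=|B_\varepsilon|$), so after integration in time and an integration by parts they are $\Oc(\varepsilon)$ away from the region where the relevant frequency vanishes. Lemma \ref{Lem_Wigner}(ii) gives the uniform bound $\|\langle\Wc^\varepsilon,J\rangle\|_\infty\lesssim K_0\|J\|$, hence tightness (relative compactness for the weak-* pointwise topology along with an equicontinuity-in-time estimate extracted from the evolution equation), so it suffices to identify every limit point.

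The heart of the argument — and the step where $\delta>0$ enters decisively — is showing that the contribution of $\Gamma^\varepsilon,\tilde\Gamma^\varepsilon$ vanishes in the limit and that the collision term converges to the $B=0$ operator. For the terms with prefactor $e^{\pm 2\mathbf i(\omega_{i,B_\varepsilon}+\omega_{j,B_\varepsilon})t\varepsilon^{-1}}$ the frequency $\omega_{i,B_\varepsilon}+\omega_{j,B_\varepsilon}$ is bounded below uniformly in $\varepsilon$ (it is $\ge 2\sqrt{\widehat\alpha(k)}$ near $k=0$ when $i\ne j$, and even larger otherwise), so the usual non-stationary-phase/integration-by-parts argument kills them with an $\Oc(\varepsilon)$ bound exactly as in \cite{BOS09}. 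The genuinely new term is $\tilde\Gamma^\varepsilon$-type with frequency difference $\omega_{1,B_\varepsilon}-\omega_{2,B_\varepsilon}=B_\varepsilon=B\varepsilon^\delta$, which is small: here integration by parts in time produces a factor $\varepsilon/(B_\varepsilon t)=\varepsilon^{1-\delta}B^{-1}t^{-1}$, which still tends to zero precisely because $\delta<1$ (and the $t\to 0$ divergence is integrable), while the boundary term at $t=0$ is controlled by the initial data. One must be slightly careful for very small $\delta$ — which is why I would split the $k$-integral at the scale $\kappa_\varepsilon=\kappa\varepsilon^\eta$: on $|k|\ge\kappa_\varepsilon$ one uses $\widehat\alpha(k)\sim k^2$ and the resulting stationary-phase bound is $\Oc(\varepsilon^{1-2\eta})$, which vanishes because $\eta<1/2<1$ and we have chosen $\delta>\eta$ so that $\varepsilon^\delta$ dominates the would-be singularity at $k=0$; on $|k|<\kappa_\varepsilon$ assumption \eqref{hypo_small_k} directly forces the contribution to $0$. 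Simultaneously, since $\theta_{i,B_\varepsilon}^2(k)\to\tfrac12$ and $\omega_{i,B_\varepsilon}(k)\to\sqrt{\widehat\alpha(k)}$ uniformly on compacts as $\varepsilon\to0$, the collision operator $C_{B_\varepsilon}$ converges (tested against $J\in\Sc\times\Sc$, using dominated convergence and the explicit kernel $R(k,k')=16\sin^2\pi k\sin^2\pi k'$) to the operator $C_0$ with $\theta$-factors replaced by $\tfrac12$, i.e.\ exactly the collision operator of Eq.\;\eqref{BE} with $B=0$; likewise the transport velocity $\textbf v_{B_\varepsilon}(k)=\widehat\alpha'(k)/(2\sqrt{\widehat\alpha(k)+B_\varepsilon^2/4})\to\textbf v_0(k)$ away from $k=0$, and the singularity at $k=0$ is again absorbed by \eqref{hypo_small_k}.

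Putting these pieces together, any weak-* pointwise limit point $\Wc$ of $(\Wc^\varepsilon)$ satisfies the integrated Boltzmann equation \eqref{IBE} with $B=0$ and initial datum $\Wc^0$. Positivity and the bounded-measure extension of $\Wc(t)$ follow as in \cite{BOS09,SSS19} from the fact that $\Wc^\varepsilon_i(t)$ is (a small correction away from) a positive measure — concretely $\langle\Wc^\varepsilon_i(t),J_i\rangle$ with $J_i\ge0$ differs from a manifestly nonnegative quantity by an $\Oc(\varepsilon)$ error — together with the uniform mass bound \eqref{K0}. Finally, Lemma \ref{lemma_unicite} (uniqueness of the measure-valued weak solution of \eqref{BE} with $B=0$) identifies this limit point uniquely, which upgrades the relative-compactness statement to genuine pointwise convergence of the whole family $(\Wc^\varepsilon)_{\varepsilon>0}$ to $\mu$, and the continuity $\mu\in\Cc([0,T],(\Sc\times\Sc)')$ is read off from the integrated equation. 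I expect the main obstacle to be the careful bookkeeping in the off-diagonal term with the vanishing frequency gap $B_\varepsilon$: one has to combine the integration-by-parts-in-time estimate (good for $|k|$ not too small) with the small-$k$ cutoff controlled by \eqref{hypo_small_k}, and verify that the constraints $\eta<\delta$ and $\eta<1$ make both pieces go to zero simultaneously — everything else is a perturbation of the now-standard arguments in \cite{BOS09,SSS19}.
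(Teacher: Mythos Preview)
Your overall architecture (uniform bound and equicontinuity $\Rightarrow$ relative compactness; identification of any limit point via the integrated Boltzmann equation; positivity and bounded-mass extension; uniqueness via Lemma~\ref{lemma_unicite} to upgrade to full convergence) matches the paper exactly. The discrepancy is in \emph{where} you locate the new difficulty and how the hypothesis $\delta>\eta$ enters.

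You spend most of your effort on an anti-Wigner term with oscillation frequency $\omega_{1,B_\varepsilon}-\omega_{2,B_\varepsilon}=B_\varepsilon$, arguing that integration by parts in time gives a factor $\varepsilon/B_\varepsilon=\varepsilon^{1-\delta}$. But in the paper's setup the only auxiliary distributions that survive are $\Gamma^\varepsilon$ and $\tilde\Gamma^\varepsilon$ of Eq.~\eqref{def_anti_wigner}--\eqref{def_anti_wigner_conjugate}, and both carry \emph{sum} frequencies $\omega_{i,B_\varepsilon}(\cdot)+\omega_{i^*,B_\varepsilon}(\cdot)\ge 2\sqrt{\widehat\alpha(k)}$, exactly as in \cite{BOS09}. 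Accordingly the paper dispatches them by simply citing \cite[Sec.~3.4]{BOS09} (this is Lemma~\ref{lemma_anti_wigner}); no $B_\varepsilon$-dependent argument is needed there, and in particular the condition $\delta>\eta$ is \emph{not} used for the anti-Wigner terms. (Note also that your bound $\varepsilon^{1-\delta}$ would not vanish for $\delta\ge 1$, whereas the theorem places no upper restriction on $\delta$---another sign that this is not the operative mechanism.)

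The genuinely new work, and the place where $\delta>\eta$ is actually consumed, is the transport term: one must replace $\mathbf v_{B_\varepsilon}$ by $\mathbf v_0$ inside $\langle\Wc^\varepsilon_i(t),\mathbf v_{B_\varepsilon}\partial_u J_i\rangle$. On $|k|>\kappa_\varepsilon$ the paper uses the Taylor bound $|\mathbf v_{B_\varepsilon}(k)-\mathbf v_0(k)|\lesssim B_\varepsilon^2/\widehat\alpha(k)$ together with $\widehat\alpha(k)\gtrsim\kappa_\varepsilon^2$ to get a contribution of order $\varepsilon^{2(\delta-\eta)}\to 0$; on $|k|<\kappa_\varepsilon$ one needs the small-$k$ energy bound \emph{at time $t\varepsilon^{-1}$}, not at time $0$. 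You invoke \eqref{hypo_small_k} ``directly'', but that hypothesis is only at $t=0$; the paper first proves Lemma~\ref{lemma_generalisation_hypo_small_k} (a Gr\"onwall argument based on the evolution of $\langle\Wc^\varepsilon_i(t),\mathds 1_{|k|<\kappa_\varepsilon}\rangle$) to propagate it to all $t\in[0,T]$. The replacement $C_{B_\varepsilon}\to C_0$ is then a straightforward pointwise estimate $|\theta_{i,B_\varepsilon}^2-\tfrac12|\lesssim B\varepsilon^\delta$, with no cutoff needed.

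In short: your plan works once you redirect the $\kappa_\varepsilon$-splitting and the use of $\delta>\eta$ from the (nonexistent) difference-frequency anti-Wigner term to the transport-velocity replacement $\mathbf v_{B_\varepsilon}\to\mathbf v_0$, and once you insert the propagation lemma for \eqref{hypo_small_k}.
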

\begin{proof}
We refer the reader to Sec.\;\ref{section_proof_kinetic_transition} for the proof of Theorem \ref{main_theorem_kinetic}.
\end{proof}
From Theorem \ref{main_theorem_kinetic}, we deduce that under assumption \eqref{hypo_small_k} the transition in the kinetic time scale $\varepsilon^{-1}$ between the case of \cite{BOS09} (zero magnetic field) and \cite{SSS19} (magnetic field of order one) is trivial in the sense that it holds for $\delta =0$. We show in Theorem \ref{main_thm_limit_u} that it is not the case in a longer time scale. 
To prove Theorem \ref{main_theorem_kinetic} we need the following lemma which shows that the assumption \eqref{hypo_small_k} can be extended to times $t\varepsilon^{-1}$. 
\begin{lemma}
\label{lemma_generalisation_hypo_small_k}
Let $i$ in $\{ 1,2 \}$, then for any time $t$ in $[0,T]$
\begin{align*} \limsup\limits_{ \varepsilon \rightarrow 0} \frac{\varepsilon}{2} \int_{ \vert k \vert < \kappa_\varepsilon}\mathbb{E}_{\mu^\varepsilon}\left[ \left\vert\widehat{\psi}^\varepsilon_i\left( t\varepsilon^{-1},k\right) \right\vert^2 \right] \; dk = 0 \ , \end{align*} 
where we recall that $\kappa_\varepsilon = \kappa \varepsilon^\eta$ with $\eta$ in $]0,1[$ such that $\delta > \eta$ and $\kappa > 0$.
\end{lemma}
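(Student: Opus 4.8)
The plan is to prove a Gr\"onwall-type estimate for
\begin{equation*}
I_\varepsilon(t):=\frac{\varepsilon}{2}\sum_{i=1}^2\int_{|k|<\kappa_\varepsilon}\mathbb{E}_{\mu^\varepsilon}\big[\,|\widehat{\psi}^\varepsilon_i(t\varepsilon^{-1},k)|^2\,\big]\,dk\ ,
\end{equation*}
showing that $|\tfrac{d}{dt}I_\varepsilon(t)|$ is of order $\kappa_\varepsilon^2K_0$, so that the smallness of $I_\varepsilon(0)$ granted by hypothesis \eqref{hypo_small_k} propagates to every $t\in[0,T]$ up to an error $O(\varepsilon^{2\eta})$ that vanishes as $\varepsilon\to0$.

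First I would differentiate $I_\varepsilon$ in time. At microscopic time $s$ one has $\partial_s\mathbb{E}_{\mu^\varepsilon}[|\widehat{\psi}^\varepsilon_i(s,k)|^2]=\mathbb{E}_{\mu^\varepsilon}[L^\varepsilon|\widehat{\psi}^\varepsilon_i(s,k)|^2]$ with $L^\varepsilon=A+B_\varepsilon G+\varepsilon\gamma S$ and $B_\varepsilon=B\varepsilon^\delta$. The decisive observation is that the Hamiltonian part annihilates this observable: by Lemma \ref{eig} applied with the field $B_\varepsilon$, $(A+B_\varepsilon G)[\widehat{\psi}_{i,B_\varepsilon}](k)=-\mathbf{i}\,\omega_{i,B_\varepsilon}(k)\,\widehat{\psi}_{i,B_\varepsilon}(k)$, and since $A$ and $G$ are real vector fields also $(A+B_\varepsilon G)[\overline{\widehat{\psi}_{i,B_\varepsilon}}](k)=\mathbf{i}\,\omega_{i,B_\varepsilon}(k)\,\overline{\widehat{\psi}_{i,B_\varepsilon}}(k)$, whence by the Leibniz rule $(A+B_\varepsilon G)[|\widehat{\psi}_{i,B_\varepsilon}(k)|^2]=0$ pointwise on configuration space. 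Thus only the noise contributes, and the prefactor $\varepsilon\gamma$ it carries cancels the factor $\varepsilon^{-1}$ produced by the time rescaling:
\begin{equation*}
\frac{d}{dt}I_\varepsilon(t)=\frac{\varepsilon\gamma}{2}\sum_{i=1}^2\int_{|k|<\kappa_\varepsilon}\mathbb{E}_{\mu^\varepsilon}\big[\,S\,|\widehat{\psi}^\varepsilon_i(t\varepsilon^{-1},k)|^2\,\big]\,dk\ .
\end{equation*}
(Manipulating the non-local quadratic observable $|\widehat{\psi}_{i,B_\varepsilon}(k)|^2$ under the generator is legitimate because the dynamics is a linear SDE with finite second moments, and can be justified as in \cite[Sec.~3.4]{SSS19} or by a truncation in $x$.)

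The heart of the proof is then a bound on $\sum_{i}\int_{|k|<\kappa_\varepsilon}|\mathbb{E}_{\mu^\varepsilon}[S\,|\widehat{\psi}^\varepsilon_i(s,k)|^2]|\,dk$, uniform in $s$. Since $S=\tfrac14\sum_x\sum_{a\neq b}(Y_{x,x+1}^{a,b})^2$ acts only on the momenta and $\widehat{\psi}_{i,B_\varepsilon}(k)=\theta_{i,B_\varepsilon}(k)\,L_i(k)$ with $L_i(k)$ an affine functional of the configuration whose momentum part is $\widehat{p}_1(k)\pm\mathbf{i}\widehat{p}_2(k)$ (sign depending on $i$), one gets $S\,|\widehat{\psi}_{i,B_\varepsilon}(k)|^2=\theta_{i,B_\varepsilon}^2(k)\,S\,|L_i(k)|^2$, and using $(Y)^2[|L|^2]=2\,\mathrm{Re}(\overline{L}\,Y^2 L)+2|YL|^2$ together with the discrete-gradient identity $Y_{x,x+1}^{a,b}[\widehat{p}_l(k)]=c^{a,b}_l(x)\,e^{2\mathbf{i}\pi kx}(e^{2\mathbf{i}\pi k}-1)$ (a second application of $Y$ producing, after summation over $x$, one further difference) a direct computation --- the one underlying the derivation of the collision operator $C_B$ in \cite{BOS09,SSS19} --- shows that \emph{every} term of $S\,|L_i(k)|^2$ carries a factor $|e^{2\mathbf{i}\pi k}-1|^2=\widehat{\alpha}(k)=4\sin^2(\pi k)$ or $|e^{2\mathbf{i}\pi k}-1|=2|\sin(\pi k)|$, the remaining factors being, after Cauchy--Schwarz and after using $\mathbb{E}_{\mu^\varepsilon}[|\widehat{\psi}^\varepsilon_i(s,k)|^2]=\theta_{i,B_\varepsilon}^2(k)\,\mathbb{E}_{\mu^\varepsilon}[|L_i(k)|^2]$ to absorb the weight $\theta_{i,B_\varepsilon}^2(k)$, bounded by $\mathbb{E}_{\mu^\varepsilon}[|\widehat{\psi}^\varepsilon_i(s,k)|^2]$, by $\mathbb{E}_{\mu^\varepsilon}[\int_{\T}|\widehat{p}_l(k)|^2dk]=\mathbb{E}_{\mu^\varepsilon}[\sum_x p_l(x)^2]$, and by $\omega_{1,B_\varepsilon}(k)^2\,\mathbb{E}_{\mu^\varepsilon}[|\widehat{q}_l(k)|^2]$. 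On $\{|k|<\kappa_\varepsilon\}$ one has $\sin^2(\pi k)\le\pi^2\kappa_\varepsilon^2$ and $\omega_{1,B_\varepsilon}(k)\le\sqrt{\widehat{\alpha}(k)}+B_\varepsilon\lesssim\kappa_\varepsilon+B_\varepsilon$, so $\widehat{\alpha}(k)\,\omega_{1,B_\varepsilon}(k)^2\,|\widehat{q}_l(k)|^2\lesssim(\kappa_\varepsilon^2+B_\varepsilon^2)\,\widehat{\alpha}(k)|\widehat{q}_l(k)|^2$; since by Parseval $\int_{\T}\widehat{\alpha}(k)|\widehat{q}_l(k)|^2dk=\sum_x(q_l(x)-q_l(x+1))^2\le 2E^\varepsilon$ and $\sum_x p_l(x)^2\le 2E^\varepsilon$, and since the uniform energy bound \eqref{K0} gives $\mathbb{E}_{\mu^\varepsilon}[E^\varepsilon]\le K_0\varepsilon^{-1}$ and $\int_{\T}\mathbb{E}_{\mu^\varepsilon}[|\widehat{\psi}^\varepsilon_i(s,k)|^2]dk\le K_0\varepsilon^{-1}$ uniformly in $s$, while $\theta_{i,B_\varepsilon}^2(k)\le1$, one concludes
\begin{equation*}
\sup_{s\geq0}\ \sum_{i=1}^2\int_{|k|<\kappa_\varepsilon}\big|\,\mathbb{E}_{\mu^\varepsilon}[S\,|\widehat{\psi}^\varepsilon_i(s,k)|^2]\,\big|\,dk\ \lesssim\ (\kappa_\varepsilon^2+B_\varepsilon^2)\,\frac{K_0}{\varepsilon}\ .
\end{equation*}

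Plugging this into the expression for $\tfrac{d}{dt}I_\varepsilon$ yields $|\tfrac{d}{dt}I_\varepsilon(t)|\lesssim(\kappa_\varepsilon^2+B_\varepsilon^2)K_0=(\kappa^2\varepsilon^{2\eta}+B^2\varepsilon^{2\delta})K_0$, hence $I_\varepsilon(t)\le I_\varepsilon(0)+C\,T\,(\kappa^2\varepsilon^{2\eta}+B^2\varepsilon^{2\delta})K_0$ for all $t\in[0,T]$; as $\eta,\delta>0$ the last term tends to $0$ with $\varepsilon$, and $\limsup_{\varepsilon\to0}I_\varepsilon(0)=0$ by \eqref{hypo_small_k}, which gives $\limsup_{\varepsilon\to0}I_\varepsilon(t)=0$ and proves the lemma. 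I expect the only real difficulty to be the bookkeeping: organizing the generator computation for $S\,|\widehat{\psi}^\varepsilon_i(s,k)|^2$ so that the discrete-gradient factor $\widehat{\alpha}(k)$ (or $\sqrt{\widehat{\alpha}(k)}$) is visibly extracted from \emph{each} of the diagonal, cross, and $\widehat{q}$-type terms, and justifying these computations with non-local quadratic observables; no ingredient beyond Lemma \ref{eig}, the momentum-only structure of $S$, the energy bound \eqref{K0} and elementary inequalities is needed.
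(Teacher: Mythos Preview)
Your argument is correct and follows the same skeleton as the paper's: differentiate the localized energy $I_\varepsilon(t)$ in time, use Lemma~\ref{eig} to kill the Hamiltonian contribution $(A+B_\varepsilon G)[|\widehat\psi_i^\varepsilon(k)|^2]=0$, and bound the remaining noise term. The difference is organizational. The paper does not compute $S|\widehat\psi_i^\varepsilon(k)|^2$ from scratch; it invokes the collision-operator decomposition already derived in Appendix~\ref{appendix_behavior_colli_term} with the test function $J^{\kappa_\varepsilon}=\mathds{1}_{[-\kappa_\varepsilon,\kappa_\varepsilon]}$, bounds $|[C_0 J^{\kappa_\varepsilon}]_i(k)|\lesssim\kappa_\varepsilon+J^{\kappa_\varepsilon}_i(k)$ (and similarly controls the anti-Wigner pieces by the Wigner one via Cauchy--Schwarz), and closes with Gr\"onwall because of the $J^{\kappa_\varepsilon}_i(k)$ term on the right. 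You instead go back to the explicit action of $Y_{x,x+1}^{a,b}$ on $\widehat p_l(k)$, extract the factor $\widehat\alpha(k)\lesssim\kappa_\varepsilon^2$ from \emph{every} term (loss, gain, and cross), and integrate directly without Gr\"onwall. Your route is slightly sharper and more self-contained (it yields $I_\varepsilon(t)\le I_\varepsilon(0)+O(\varepsilon^{2\eta})$ rather than $I_\varepsilon(t)\lesssim e^{CT}(I_\varepsilon(0)+O(\kappa_\varepsilon))$), at the cost of redoing a generator computation that the paper has already packaged into $C_{B_\varepsilon}$ and $K_{B_\varepsilon}$; the paper's route is quicker given that packaging and does not need the precise vanishing rate of $R(k)$ at $k=0$.
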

\begin{proof}
We refer the reader to Appendix \;\ref{appendix-generalisation_hypo_small_k} for the proof of Lemma \ref{lemma_generalisation_hypo_small_k}.
\end{proof}
\begin{rmk}
Observe that assumption \eqref{hypo_small_k} is weaker than the assumption \eqref{hypo_Stefano} made in \cite{BOS09} to compensate the lack of differentiability of the function $\mathbf{v}_0$.
\end{rmk}
\subsection{\textsc{Transition in the hydrodynamic time scale}}
\label{section_hydro_transition}
In this section, we study the Markov chain $\left(X_B^n\right)_{n \in \N}$ introduced in Sec.\;\ref{section_previous_results_hydro} but with a magnetic field of intensity $B_N:=BN^{-\delta}$ where $\delta$ is in  $\R^+$ and $B >0\footnote{As in the previous section, the case $B<0$ can be studied in a similar way.}$. Hence, for each $N$ we will denote this chain by $\left(X_{B_N}^n  \right)_{ n \in \N}:=\left(K_{B_N}^n,I_{B_N}^n\right)_{ n \in \N}$. In Eq.\;\eqref{omega}, Eq.\;\eqref{theta} and Eq.\;\eqref{omega_prime}  we replace $B$ by $B_N$ and in all the functions defined in Sec.\;\ref{section_previous_results_hydro}. 
The aim of this section, is to study the behavior of $f_{B_N}(t,u,k,i)$ where for any positive time $t$, $k$ in $\T$, $u$ in $\R$ and $i$ in $\{1,2\}$ 
\begin{equation}
\label{equation_u_N}
\partial_t f_{B_N}(t,u,k,i) + \frac{\textbf{v}_{B_N}(k)}{2\pi}  \partial_u f_{B_N}(t,u,k,i) =  \Lc_{B_N}\left[f_{B_N}\right](t,u,k,i) \ ,
\end{equation}
where we recall that $\textbf{v}_{B_N}$  and $\Lc_{B_N}$ are defined in Eq.\;\eqref{omega_prime} and Eq.\;\eqref{def_infinit_generator_boltzmann} respectively.
By Dynkin's formula we get that for any time $t$ in $[0,T]$, $u$ in $\R$, $k$ in $\T$ and $i$ in $\{1,2\}$
\begin{equation}
f_{B_N}(t,u,k,i) = \mathbb{E}_{(k,i)}\left[f^0\left(Z_{u,B_N}(t),K_{B_N}(t),I_{B_N}(t)\right)   \right] \ .
\end{equation} 
Here for $t$ in $[0,T]$ $u$ in $\R$ and $N$ in $\mathbb{N}$ we recall that
\begin{align}
\label{stich}Z_{u,B_N}(t) &= u - \int_0^t \frac{\textbf{v}_{B_N}\left(K_{B_N}(s)\right)}{2\pi}  \; ds \ , 
\end{align}
where
\begin{align*}
K_{B_N}(t) &= K_{B_N}^n, \quad \text{and} \quad 
I_{B_N}(t) = I_{B_N}^n, \quad \forall t \in \left[\Tc_{B_N}^n,\Tc^{n+1}_{B_N}\right[ \ ,
\end{align*}
with
\begin{align*}
\Tc_{B_N}^0 = 0 \quad \text{and} \quad \forall n \in \N \backslash \{ 0 \}, \quad  \Tc_{B_N}^{n} &= \sum_{m=1}^n \lambda_{B_N}\left(X_{B_N}^{m-1}\right) \tau_N^{m-1} \ .
\end{align*}
Here for any integer $N$, $\left(\tau_N^n\right)_{n \geq 0}$ is a sequence of i.i.d random variables independent of $\left(X_{B_N}^n  \right)_{n \in \N}$ with $\tau_N^0 \sim \mathcal{E}(1)$. For every $N$, we recall that the invariant probability measure of the Markov chain $\left(X_{B_N}^n\right)_{n \in \N}$ is denoted by $\pi_{B_N}$ and defined in Eq.\;\eqref{invariante_measure_delta}.
We define the function $\Psi_{B_N}$ on $\T \times \{1,2\}$ by
\begin{equation}
\label{def_Psi}
\forall (k,i) \in \T \times \{1,2\}, \quad \Psi_{B_N}(k,i) = \textbf{v}_{B_N}\left(k\right) \lambda_{B_N}\left(k,i\right) \ .
\end{equation}

\medskip
As we recalled in Sec.\;\ref{section_previous_results_hydro}, the asymptotic behavior of $f_{B_N}$ is fully determined by the one of the process $Z_{u,B_N}(\cdot)$. The next propositions allow us to compute the tails of the function $\Psi_{B_N}$ and to determine the asymptotic behavior of the process $Z_{u,B_N}(\cdot)$.
\begin{proposition}
\label{proposition_x_B}
We define implicitly two functions $x_{B,\pm}$ on $\mathbb R^*$ by
\begin{align}
\left( 2\sqrt{x_{B,\pm}^2(r) + \frac{B^2}{4}} \pm B \right)  x_{B,\pm}(r) &= \frac{\pi }{\gamma r} \label{def_x_B_1} \ , \quad r\ne 0 \ .
\end{align}
Then
\begin{itemize}
    \item[i)] $x_{B,\pm}$ are odd functions. 
    \item[ii)] The functions $x_{B,\pm}$ and $x_{B,\pm}^\prime$ converge pointwise when $B$ goes to zero and for any $r \neq 0$ we have
    \begin{align*}
    &\lim\limits_{B \rightarrow 0}  x_{B,\pm}(r)=\text{sign}(r) \sqrt{\frac{ \pi}{2\gamma}} \vert r \vert^{-\frac{1}{2}} \quad \text{and} \quad \lim\limits_{B \rightarrow 0}  x_{B,\pm}^\prime(r)= -\frac{\sqrt{ \pi}}{2\sqrt{2\gamma}} \vert r \vert^{-\frac{3}{2}} \ .
    \end{align*}
    \item[iii)] The functions $x_{B,+}$ and $x_{B,+}^\prime$  converge pointwise  to zero when $B$ goes to infinity and for any $r \neq 0$, we have
    \begin{align*}
    &\lim\limits_{B \rightarrow \infty} B x_{B,+}(r) = \text{sign}(r)\frac{\pi}{2\gamma} \vert r \vert^{-1}  \quad \text{and} \quad \lim\limits_{B \rightarrow \infty} B x_{B,+}^\prime(r) =- \frac{\pi}{2\gamma } \vert r \vert^{-2} \ . 
    \end{align*}
    The functions $x_{B,-}$ and $x_{B,-}^\prime$  diverge pointwise to infinity when $B$ goes to infinity and for any $r \neq 0$, we have
    \begin{align*}
    &\lim\limits_{B \rightarrow \infty} B^{-\frac{1}{3}} x_{B,-}(r) = \text{sign}(r) C \vert r\vert^{-\frac{1}{3}} \quad \text{and} \quad  \lim\limits_{B \rightarrow \infty} B^{-\frac{1}{3}} x_{B,-}^\prime(r) =- \frac{C}{3} \vert r \vert^{-\frac{4}{3}} \ ,
    \end{align*}  
    with $C=\left(\frac{\pi}{2\gamma }\right)^{\frac{1}{3}}$.
\end{itemize}
\end{proposition}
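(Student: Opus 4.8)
The plan is to reduce the implicit equation to a question about the monotone auxiliary function $h_{B,\pm}(x) := \bigl(2\sqrt{x^2+B^2/4}\pm B\bigr)x$, whose key feature is the elementary identity $h_{B,-}(x) = 4x^3/\bigl(2\sqrt{x^2+B^2/4}+B\bigr)$, obtained by multiplying and dividing by the conjugate factor $2\sqrt{x^2+B^2/4}+B$. First I would check that, for $B>0$, each $h_{B,\pm}$ is a continuous, strictly increasing, odd bijection of $\mathbb R$ with $h_{B,\pm}'(x)>0$ for $x\neq 0$ (for the ``$-$'' branch the product form, together with $h_{B,-}'(x) = 2x^2/\sqrt{x^2+B^2/4} + 4x^2/(2\sqrt{x^2+B^2/4}+B)$, makes sign and monotonicity transparent); hence $x_{B,\pm}(r) = h_{B,\pm}^{-1}\bigl(\pi/(\gamma r)\bigr)$ is well defined, has the sign of $r$, and lies in $C^\infty(\mathbb R^*)$ by the implicit function theorem. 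Item (i) is then immediate from uniqueness and oddness of $h_{B,\pm}$; in particular $x_{B,\pm}'$ is even, so throughout (ii)--(iii) it suffices to treat $r>0$. Implicit differentiation of the defining relation gives $x_{B,\pm}'(r) = -\pi/\bigl(\gamma r^2\, h_{B,\pm}'(x_{B,\pm}(r))\bigr)$ with $h_{B,\pm}'(x) = 2x^2/\sqrt{x^2+B^2/4} + 2\sqrt{x^2+B^2/4} \pm B$, which reduces every derivative limit to an asymptotic evaluation of $h_{B,\pm}'$ along $x = x_{B,\pm}(r)$.

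\textbf{Limit $B\to 0$.} I would use the squeeze $|x| \le \sqrt{x^2+B^2/4} \le |x|+|B|/2$, which yields $2x^2 \le h_{B,+}(x) \le 2x^2+2Bx$ and $2x^2-Bx \le h_{B,-}(x) \le 2x^2$ on $(0,\infty)$; inserting $h_{B,\pm}(x_{B,\pm}(r)) = \pi/(\gamma r)$ into these bounds and letting $B\to 0$ forces $x_{B,\pm}(r) \to \sqrt{\pi/(2\gamma r)}$. Then $h_{B,\pm}'(x_{B,\pm}(r)) \to 4\sqrt{\pi/(2\gamma r)}$, and the differentiation formula gives $x_{B,\pm}'(r) \to -\tfrac{\sqrt\pi}{2\sqrt{2\gamma}}\, r^{-3/2}$; evenness upgrades $r$ to $|r|$ and accounts for the sign factor.

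\textbf{Limit $B\to\infty$.} For the ``$+$'' branch, from $2B \le 2\sqrt{x^2+B^2/4}+B \le 2x+2B$ one gets $2Bx \le h_{B,+}(x) \le 2x(x+B)$, hence $B x_{B,+}(r) \le \tfrac{\pi}{2\gamma r}$ and $x_{B,+}(r)\bigl(x_{B,+}(r)+B\bigr) \ge \tfrac{\pi}{2\gamma r}$; so $x_{B,+}(r)\to 0$ and $B x_{B,+}(r)\to \tfrac{\pi}{2\gamma r}$, and since $h_{B,+}'(x_{B,+}(r))/B \to 2$ the formula gives $B\,x_{B,+}'(r)\to -\tfrac{\pi}{2\gamma r^2}$. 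For the ``$-$'' branch I would exploit $h_{B,-}(x) = 4x^3/\bigl(2\sqrt{x^2+B^2/4}+B\bigr)$: the same bracketing of the denominator gives $\tfrac{2x^3}{x+B} \le h_{B,-}(x) \le \tfrac{2x^3}{B}$, whence $x_{B,-}(r) \ge (\pi B/(2\gamma r))^{1/3}$ (so it blows up) and, using that blow-up to absorb the linear term, $x_{B,-}(r) \le (\pi B/(\gamma r))^{1/3}$; thus $x_{B,-}(r) \asymp B^{1/3}$, so $x_{B,-}(r)^2/B^2 \to 0$, and dividing $4x_{B,-}^3 = \tfrac{\pi}{\gamma r}\bigl(B\sqrt{4x_{B,-}^2/B^2+1}+B\bigr)$ by $B$ yields $B^{-1/3}x_{B,-}(r)\to (\pi/(2\gamma r))^{1/3}=Cr^{-1/3}$. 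Finally, using $\sqrt{x_{B,-}^2+B^2/4}\sim B/2$ in $h_{B,-}'(x) = 2x^2/\sqrt{x^2+B^2/4} + 4x^2/(2\sqrt{x^2+B^2/4}+B)$ gives $B\,h_{B,-}'(x_{B,-}(r))/x_{B,-}(r)^2 \to 4+2 = 6$, i.e.\ $h_{B,-}'(x_{B,-}(r)) \sim 6C^2 r^{-2/3}B^{-1/3}$, so the derivative formula gives $B^{-1/3}x_{B,-}'(r) \to -\tfrac{\pi}{6\gamma C^2}r^{-4/3} = -\tfrac{C}{3}r^{-4/3}$, the last equality by $C^3 = \pi/(2\gamma)$; evenness extends everything to $|r|$.

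\textbf{Main obstacle.} The analysis is entirely elementary, so the real difficulty is bookkeeping: making the squeeze/bootstrap arguments rigorous — in particular the a priori two-sided control $x_{B,-}(r)\asymp B^{1/3}$, which is what makes the $B\to\infty$ ``$-$'' branch work — and tracking the constants, most delicately the $6=4+2$ split in $h_{B,-}'$ and the reconciliation $\tfrac{\pi}{6\gamma C^2}=\tfrac{C}{3}$ in the final limit.
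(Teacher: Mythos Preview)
Your proof is correct and follows essentially the same route as the paper's: both treat the implicit equation directly, extract bounds to identify the limiting behaviour of $x_{B,\pm}(r)$, and then differentiate the defining relation to handle the derivatives. Your presentation is more systematic---you name the auxiliary map $h_{B,\pm}$, verify it is an odd bijection, and derive explicit squeeze bounds---whereas the paper argues more tersely via accumulation points (for item~(ii)) and, for item~(iii), rewrites the equation as $\bigl(\sqrt{4x^2/B^2+1}\pm 1\bigr)x=\pi/(B\gamma r)$, observes $x_{B,\pm}/B\to 0$, and then Taylor expands. Your conjugate identity $h_{B,-}(x)=4x^3/\bigl(2\sqrt{x^2+B^2/4}+B\bigr)$ is exactly what underlies the paper's Taylor step $\sqrt{1+u}-1=u/(\sqrt{1+u}+1)$, so the two arguments are really the same computation in different clothing; your version has the advantage of making the two-sided control $x_{B,-}(r)\asymp B^{1/3}$ and the constant bookkeeping (notably the $6=4+2$ split in $h_{B,-}'$) fully explicit.
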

\begin{proof}
 We refer the reader to Appendix \:\ref{appendix_preuve_proposition_x_B}.
\end{proof}
\begin{proposition}
\label{proposition_f}
Let $h_{B,\pm}$ and $g_{B \pm}$ be the four real-valued functions defined for any $r \neq 0$ as follows 
\begin{align}
h_{B,\pm}(r) &= \frac{1}{4 \pi}\int_{0}^{x_{B,\pm}(r)} \left(\frac{1}{2} \pm \frac{B}{4 \sqrt{x^2 + \frac{B^2}{4}}} \right) x^2  \; dx  \; \mathds{1}_{ r > 0} \label{def_f_B,+}\\
& + \frac{1}{4 \pi}\int_{x_{B,\pm}(r)}^{0} \left(\frac{1}{2} \pm \frac{B}{4 \sqrt{x^2 + \frac{B^2}{4}}} \right) x^2 \; dx \; \mathds{1}_{ r < 0}\nonumber\ .\\
g_{B,\pm}(r) &= -h_{B,\pm}^\prime(r) \mathds{1}_{r >0} + h_{B,\pm}^\prime(r)\mathds{1}_{r<0}\ .\label{def_densite_intermediaire}
\end{align}
Then 
\begin{itemize}
    \item[i)] $g_{B,\pm}$ are positive even functions.
    \item[ii)] The measure $\nu_{B}$ with density $g_{B,+}+g_{B,-}$ with respect to the Lebesgue measure is a L\'evy measure on $\mathbb{R}^*$. 
    \item[iii)] The functions $g_{B,\pm}$ converge pointwise to $g_{0}$ when $B$ goes to zero where for any $r \neq 0$
    \begin{equation}
        g_{0}(r) = \left(\frac{\pi}{2^{11}\gamma^{3}}\right)^{\frac{1}{2}} \vert r \vert^{-\frac{3}{2} -1}.
    \end{equation}
    Moreover, $g_0$ is the density of a L\'evy measure on $\mathbb{R}^*$.
    \item[iv)] The function $B^{\frac{1}{3}}g_{B,+}$ converges pointwise to zero and $B^{\frac{1}{3}}g_{B,-}$ converges pointwise to $g_{\infty}$ when $B$ goes to infinity where for any $r \neq 0$
    \begin{equation}
        g_{\infty}(r) = \left(\frac{\pi^{2}}{2^{11}27 \gamma^{5}}\right)^{\frac{1}{3}} \vert r \vert^{-\frac{5}{3} - 1}\ .
    \end{equation}
    Moreover, $g_\infty$ is the density of a L\'evy measure on $\mathbb{R}^*$.
\end{itemize}
\end{proposition}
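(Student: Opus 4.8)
My plan is to reduce everything to an explicit pointwise formula for $g_{B,\pm}$ and then feed in the asymptotics of Proposition \ref{proposition_x_B}. Write $w_{B,\pm}(x):=\frac12\pm\frac{B}{4\sqrt{x^2+B^2/4}}$ for the weight appearing in the definition of $h_{B,\pm}$, so that $h_{B,\pm}(r)=\frac{\text{sign}(r)}{4\pi}\int_0^{x_{B,\pm}(r)}w_{B,\pm}(x)\,x^2\,dx$ for every $r\neq0$; note $0\le w_{B,\pm}\le1$ on $\R$ (for the $-$ sign because $\sqrt{x^2+B^2/4}\ge B/2$), that $w_{B,\pm}$ is even, and that $w_{B,-}(x)>0$ for $x\neq0$. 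Using the differentiability of $x_{B,\pm}$ (Proposition \ref{proposition_x_B}) and differentiating $h_{B,\pm}$ inside \eqref{def_densite_intermediaire}, the two orientation changes between $r>0$ and $r<0$ cancel and one gets, for all $r\neq0$,
\begin{equation}\label{eq:gformula}
g_{B,\pm}(r)=-\frac{1}{4\pi}\,w_{B,\pm}\!\big(x_{B,\pm}(r)\big)\,x_{B,\pm}(r)^2\,x_{B,\pm}'(r).
\end{equation}
From \eqref{def_x_B_1} the map $x\mapsto(2\sqrt{x^2+B^2/4}\pm B)x$ is strictly increasing on $(0,\infty)$, hence $x_{B,\pm}$ is strictly decreasing there and, being odd, $x_{B,\pm}'<0$ on $\R^*$. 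Thus \eqref{eq:gformula} is the product of a positive, a nonnegative and a positive factor, which gives positivity in (i); evenness in (i) follows from \eqref{eq:gformula} since $w_{B,\pm}$ is even, $x_{B,\pm}$ odd and $x_{B,\pm}'$ even (alternatively, $h_{B,\pm}$ is even by $x\mapsto-x$).

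For (ii) it suffices, by evenness, to control $\int_0^\infty\min(1,r^2)g_{B,\pm}(r)\,dr$ for each sign and fixed $B>0$. On $(0,\infty)$ one has $g_{B,\pm}=-h_{B,\pm}'$ and $h_{B,\pm}(r)\to0$ as $r\to\infty$ (since $x_{B,\pm}(r)\to0$), so $\int_1^\infty g_{B,\pm}=h_{B,\pm}(1)<\infty$, a proper integral over $[0,x_{B,\pm}(1)]$. Integrating by parts, $\int_0^1 r^2 g_{B,\pm}(r)\,dr=-h_{B,\pm}(1)+\lim_{r\to0^+}r^2h_{B,\pm}(r)+2\int_0^1 r\,h_{B,\pm}(r)\,dr$, so one only needs $h_{B,\pm}(r)=O(r^{-3/2})$ as $r\to0^+$. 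This comes from $h_{B,\pm}(r)\le\frac{1}{12\pi}x_{B,\pm}(r)^3$ (because $w_{B,\pm}\le1$) together with the elementary bound $2\sqrt{x^2+B^2/4}\pm B\ge x$ for $x\ge B$, which via \eqref{def_x_B_1} yields $x_{B,\pm}(r)^2\le\pi/(\gamma r)$ once $r$ is small. Since $g_{B,+},g_{B,-}\ge0$, the density $g_{B,+}+g_{B,-}$ then satisfies \eqref{def_Levy_measure_general}, proving (ii).

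For (iii) and (iv) I would fix $r>0$ (the case $r<0$ being handled by evenness) and pass to the limit in each factor of \eqref{eq:gformula}. As $B\to0$, Proposition \ref{proposition_x_B} gives $x_{B,\pm}(r)\to\sqrt{\pi/(2\gamma)}\,r^{-1/2}$ and $x_{B,\pm}'(r)\to-\frac{\sqrt\pi}{2\sqrt{2\gamma}}r^{-3/2}$, and since $x_{B,\pm}(r)^2+B^2/4\to\pi/(2\gamma r)>0$ also $w_{B,\pm}(x_{B,\pm}(r))\to\frac12$, so \eqref{eq:gformula} converges to $\frac{1}{4\pi}\cdot\frac12\cdot\frac{\pi}{2\gamma}r^{-1}\cdot\frac{\sqrt\pi}{2\sqrt{2\gamma}}r^{-3/2}=(\pi/(2^{11}\gamma^3))^{1/2}r^{-5/2}=g_0(r)$. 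As $B\to\infty$, for the $+$ branch one has $x_{B,+}(r)=O(B^{-1})$, $x_{B,+}'(r)=O(B^{-1})$ and $w_{B,+}\le1$, so $g_{B,+}(r)=O(B^{-3})$ and $B^{1/3}g_{B,+}(r)\to0$; for the $-$ branch I rewrite $B^{1/3}g_{B,-}(r)=-\frac{1}{4\pi}\big[B^{4/3}w_{B,-}(x_{B,-}(r))\big]\big[B^{-1/3}x_{B,-}(r)\big]^2\big[B^{-1/3}x_{B,-}'(r)\big]$, where the last two brackets tend to $C^2 r^{-2/3}$ and $-\frac{C}{3}r^{-4/3}$ with $C=(\pi/2\gamma)^{1/3}$ by Proposition \ref{proposition_x_B}, and, writing $w_{B,-}(y)=\frac12\big(1-(1+4y^2/B^2)^{-1/2}\big)$ with $4x_{B,-}(r)^2/B^2=4\big(B^{-1/3}x_{B,-}(r)\big)^2B^{-4/3}\to0$, the expansion $1-(1+u)^{-1/2}=\frac u2+o(u)$ gives $B^{4/3}w_{B,-}(x_{B,-}(r))\to C^2 r^{-2/3}$. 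Multiplying, $B^{1/3}g_{B,-}(r)\to\frac{C^5}{12\pi}r^{-8/3}=(\pi^2/(2^{11}27\gamma^5))^{1/3}r^{-8/3}=g_\infty(r)$, using $12\cdot2^{5/3}=3\cdot2^{11/3}=(2^{11}27)^{1/3}$. Finally $g_0$ and $g_\infty$ are Lévy densities by the elementary fact that $\int_{\R^*}\min(1,r^2)|r|^{-\alpha-1}dr<\infty$ for $\alpha\in(1,2)$, here with $\alpha=\frac32$ and $\alpha=\frac53$.

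Everything is mechanical once \eqref{eq:gformula} and Proposition \ref{proposition_x_B} are in hand; the only input not already contained in that proposition is the near-origin bound $x_{B,\pm}(r)=O(r^{-1/2})$ at fixed $B$ used in (ii), which is immediate from \eqref{def_x_B_1}. The point requiring care is the asymptotics of $w_{B,-}(x_{B,-}(r))$ in (iv): although $x_{B,-}(r)\to\infty$ as $B\to\infty$, the ratio $x_{B,-}(r)/B$ tends to $0$, so a Taylor expansion at $0$ is legitimate, and it is precisely the resulting extra factor $B^{-4/3}$ that converts the exponent $3/2$ into the exponent $5/3$ of $g_\infty$.
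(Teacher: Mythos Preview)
Your proof is correct and follows essentially the same route as the paper: both derive the explicit formula $g_{B,\pm}(r)=-\tfrac{1}{4\pi}\,w_{B,\pm}(x_{B,\pm}(r))\,x_{B,\pm}(r)^{2}\,x_{B,\pm}'(r)$, use it together with the oddness and monotonicity of $x_{B,\pm}$ for item~(i), check the L\'evy condition in~(ii) by integration by parts and the $O(r^{-1/2})$ bound on $x_{B,\pm}$ near $0$, and obtain~(iii)--(iv) by plugging the limits of Proposition~\ref{proposition_x_B} into this formula. Your write-up is in fact more explicit than the paper's on two points---the monotonicity argument giving $x_{B,\pm}'<0$, and the Taylor expansion of $w_{B,-}(x_{B,-}(r))$ as $B\to\infty$ that produces the extra $B^{-4/3}$---but these are exactly the details the paper leaves to the reader.
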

\begin{proof}
 We refer the reader to Appendix \;\ref{appendix_preuve_proposition_f}.
\end{proof}
\begin{proposition}
\label{proposition_properties_pi}
Let $\pi_{B_N}$ be the invariant probability measure defined in Eq.\;\eqref{invariante_measure_delta} and $\Psi_{B_N}$ the function defined in Eq.\;\eqref{def_Psi} then we have
 \vspace{0.2cm}
 \begin{align*}
\forall r >0, \quad  \lim\limits_{ N \rightarrow \infty} N^{\beta_\delta} \;\pi_{B_N} \left(\left\{ (k,i)\; \big| \quad \Psi_{B_N}(k,i) > N r  \right\} \right) &= \kappa_\delta(r) \ , \\
 \forall r <0, \quad  \lim\limits_{ N \rightarrow \infty} N^{\beta_\delta} \;\pi_{B_N} \left(\left\{ (k,i)\; \big| \quad \Psi_{B_N}(k,i) < N r  \right\} \right) &= \kappa_\delta(r) \ ,
 \end{align*}
where for any $r \neq 0$
 \begin{equation}
 \label{tails_Psi}
 \left\{
    \begin{array}{ll}
    \beta_\delta = \frac{3}{2} \quad  \text{and} \quad \kappa_\delta(r)=\gamma^{-\tfrac{3}{2}}\kappa_1 \vert r\vert^{-\frac{3}{2}} \quad \text{if} \quad \delta> \frac{1}{2}\ ,\vspace{0.1cm}\\
    \beta_\delta = \frac{3}{2} \quad  \text{and} \quad \kappa_\delta(r)= \left( h_{B,-}(r) + h_{B,+}(r) \right) \quad \text{if} \quad \delta=\frac{1}{2}\ ,\vspace{0.1cm}\\
        \beta_\delta = \frac{5-\delta}{3} \quad  \text{and} \quad \kappa_\delta(r)=\gamma^{-\tfrac{5}{3}}\kappa_2 \vert B \vert^{-\frac{1}{3}}\vert r\vert^{-\frac{5}{3}} \quad \text{if} \quad \delta < \tfrac{1}{2}\ , 
    \end{array}
\right.
\end{equation}
with $\kappa_1$ and $\kappa_2$ two positive constants defined respectively in Eq.\;\eqref{def_kappa_1} and in Eq.\;\eqref{def_kappa_2}.
 \end{proposition}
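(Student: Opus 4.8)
\emph{Strategy.} The plan is to reduce the statement to the computation of the measure of the superlevel sets of $k\mapsto\Psi_{B_N}(k,i)$ near $k=0$ in the chart $u=\sqrt{\widehat\alpha(k)}$, and then, in each of the three ranges of $\delta$, to rescale $u$ by the power of $N$ dictated by the competition between $u$ and $B_N$ in the resulting expressions and to pass to the limit by dominated convergence. The function $x_{B,\pm}$ of Proposition \ref{proposition_x_B} and the functions $h_{B,\pm}$ of Proposition \ref{proposition_f} will enter only in the critical case $\delta=\tfrac12$.

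\emph{Reduction and change of variables.} First I would observe that $\mathbf{v}_{B_N}$ is odd while $R$, $\lambda_{B_N}$ and $\theta_{i,B_N}$ are even, so that $\Psi_{B_N}(-k,i)=-\Psi_{B_N}(k,i)$ and $\pi_{B_N}$ is invariant under $k\mapsto -k$; combined with the oddness of $x_{B,\pm}$ (Prop.\;\ref{proposition_x_B}) and the evenness of the integrand in \eqref{def_f_B,+}, this reduces the claim to $r>0$, in which $\{\Psi_{B_N}(\cdot,i)>Nr\}\subset(0,\tfrac12)$. On $(0,\tfrac12)$ the map $k\mapsto u:=\sqrt{\widehat\alpha(k)}=2\sin(\pi k)$ is a diffeomorphism onto $(0,2)$, and using $R(k)=2\widehat\alpha(k)$, $\overline R=4$ and $\theta_{i,B}^2(k)=\tfrac12+\varepsilon_i\tfrac{|B|}{4\sqrt{\widehat\alpha(k)+B^2/4}}$ with $\varepsilon_i:=(-1)^{i+1}$, a direct computation turns $\Psi_{B_N}$ and the density of $\pi_{B_N}$ into explicit functions of $u$: for $u$ small,
\[
\Psi_{B_N}(k,i)=\frac{4\pi\,\bigl(1+O(u^2)\bigr)}{\gamma\,u\,\bigl(2\sqrt{u^2+B_N^2/4}+\varepsilon_i B_N\bigr)}\,,\qquad
\pi_{B_N}(dk,i)=\frac{u^2}{4\pi}\Bigl(\tfrac12+\varepsilon_i\tfrac{B_N}{4\sqrt{u^2+B_N^2/4}}\Bigr)\bigl(1+O(u^2)\bigr)\,du\,.
\]
Since $u\mapsto\Psi_{B_N}(k,i)$ decreases from $+\infty$ on a fixed neighbourhood of $u=0$, while $\sup\{|\Psi_{B_N}(k,i)|:u\in[\rho,2),\ 0<B_N\le B\}<\infty$ for every $\rho>0$, for $N$ large $\{\Psi_{B_N}(\cdot,i)>Nr\}$ is exactly $\{0<u<u_N^{(i)}\}$, where $u_N^{(i)}=u_N^{(i)}(r)\to 0$ solves $\Psi_{B_N}=Nr$; the quantity to estimate is then $\sum_{i=1,2}\int_{\{0<u<u_N^{(i)}\}}\pi_{B_N}(dk,i)$.

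\emph{The three regimes.} Solving $\Psi_{B_N}=Nr$ to leading order, $u_N^{(i)}$ is the positive root of $u\,\bigl(2\sqrt{u^2+B_N^2/4}+\varepsilon_i B_N\bigr)=\tfrac{4\pi}{\gamma Nr}$, i.e., up to the fixed normalisation of its argument, of the equation \eqref{def_x_B_1} defining $x_{B_N,\pm}$ ($+$ for $i=1$, $-$ for $i=2$). \emph{If $\delta>\tfrac12$}, then $B_N=BN^{-\delta}\ll u_N^{(i)}\asymp N^{-1/2}$ for both $i$, so $\theta_{i,B_N}^2\to\tfrac12$ on the relevant range; rescaling $u=N^{-1/2}w$, $N^{3/2}\int_{\{0<u<u_N^{(i)}\}}\pi_{B_N}(dk,i)$ converges to a finite positive multiple of $|r|^{-3/2}$, and summing over $i$ gives $\alpha_\delta=\tfrac32$ and $\kappa_\delta(r)=\gamma^{-3/2}\kappa_1|r|^{-3/2}$. \emph{If $\delta<\tfrac12$}, then $u_N^{(i)}\ll B_N$: for $i=1$, $\theta_{1,B_N}^2\to1$ and $u_N^{(1)}\asymp N^{\delta-1}$, so that term is $\asymp N^{3(\delta-1)}=o(N^{-(5-\delta)/3})$; for $i=2$, in the range $u\ll B_N$ one has $\Psi_{B_N}(k,2)\sim\tfrac{2\pi B_N}{\gamma u^3}$ and $\theta_{2,B_N}^2\sim\tfrac{u^2}{B_N^2}$, so $u_N^{(2)}\asymp(B_N/N)^{1/3}\ll B_N$ and, rescaling $u=(B_N/N)^{1/3}w$, $N^{(5-\delta)/3}\int_{\{0<u<u_N^{(2)}\}}\pi_{B_N}(dk,2)$ converges to a finite positive multiple of $|B|^{-1/3}|r|^{-5/3}$; this dominant term gives $\alpha_\delta=\tfrac{5-\delta}{3}$ and $\kappa_\delta(r)=\gamma^{-5/3}\kappa_2|B|^{-1/3}|r|^{-5/3}$, with $\kappa_1,\kappa_2$ as in \eqref{def_kappa_1} and \eqref{def_kappa_2}. \emph{If $\delta=\tfrac12$}, then $u_N^{(i)}\asymp N^{-1/2}\asymp B_N$, no one-sided approximation of $\Psi_{B_N}$ is available, and $u=N^{-1/2}w$ turns the equation for $w_N^{(i)}:=N^{1/2}u_N^{(i)}$ into $w\,\bigl(2\sqrt{w^2+B^2/4}+\varepsilon_i B\bigr)=\tfrac{4\pi}{\gamma r}$ with the fixed constant $B$ (using the scaling $x_{BN^{-1/2},\pm}(N\rho)=N^{-1/2}x_{B,\pm}(\rho)$ that follows from \eqref{def_x_B_1}); applying the same rescaling to the density and letting $N\to\infty$, one recognises the function $h_{B,\varepsilon_i}$ of \eqref{def_f_B,+}, so that $N^{3/2}\int_{\{0<u<u_N^{(i)}\}}\pi_{B_N}(dk,i)\to h_{B,\varepsilon_i}(r)$, and summing over $i$, $\alpha_\delta=\tfrac32$ and $\kappa_\delta(r)=h_{B,-}(r)+h_{B,+}(r)$.

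\emph{Main obstacle.} In each case the passage to the limit is a dominated-convergence argument on the rescaled integral: after the change of variables the domain of integration is a bounded interval uniformly in $N$ (the rescaled level point converges), the integrand converges pointwise, and it is dominated uniformly in $N$ by an integrable function. The delicate point will be to make quantitative and uniform in $N$ all the error terms — the $O(u^2)$ corrections above, the error in replacing $\Psi_{B_N}$ by its leading part when solving $\Psi_{B_N}=Nr$, and, when $\delta\neq\tfrac12$, the contribution of the ``wrong-scale'' region ($u\lesssim B_N$ when $\delta>\tfrac12$, and $u\gtrsim B_N$ for $i=2$ when $\delta<\tfrac12$) — so that each is of strictly smaller order than $N^{-\alpha_\delta}$ and $w_N^{(i)}$ converges to the root of the limiting equation. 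The case $\delta=\tfrac12$ is the hardest, because there the crossover scale of $\Psi_{B_N}$, the field scale $B_N$ and the cutoff scale all coincide, so that no region may be discarded; one then argues directly from the monotonicity and the uniform bounds of $w\mapsto w\bigl(2\sqrt{w^2+B^2/4}\pm B\bigr)$ and from the pointwise convergence statements for $x_{B,\pm}$ in Proposition \ref{proposition_x_B}.
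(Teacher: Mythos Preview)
Your proposal is correct and follows essentially the same route as the paper: symmetry reduction to $r>0$, the change of variable $u=2\sin(\pi k)$ (the paper uses the equivalent $x=\sin(\pi k)\,N^{\delta}$, which already absorbs $B_N$ into $B$), identification of the superlevel set as an interval $\{0<u<u_N^{(i)}\}$ via the implicit equation \eqref{def_x_B_1}, and then a regime-dependent rescaling of $u$ followed by dominated convergence. The only cosmetic difference is that the paper carries the exact factor $\sqrt{1-x^2N^{-2\delta}}$ through the computation (cf.\ Lemma \ref{lemma_x_N}) while you absorb it into the $O(u^2)$ error terms and justify this a posteriori, which is precisely the ``main obstacle'' you flag.
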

 \begin{proof}
 We refer the reader to Appendix \;\ref{appendix_proof_tails}.
 \end{proof}
 \begin{thm}
 \label{main_theorem_hydro}Let $\beta_\delta$ and $\kappa_\delta$ defined in Eq.\;\eqref{tails_Psi} and $\tau_0 \sim \mathcal{E}\left(1\right)$. Let $u$ in $\R$, we define $u_N:=Nu$ and we assume that $\left(X_{B_N}^0\right)=(k,i)$ with $k \neq 0$ and $i$  in $\{1,2\}$. We denote by $Z_u^\delta(\cdot)$ the L\'evy process starting from point $u$ with L\'evy measure $\nu_\delta$ where
\begin{equation}
\label{def_Levy_measure}
d\nu_\delta(r) = \left\{
    \begin{array}{ll}
   \gamma^{-\tfrac{1}{2}}\kappa_0 \mathbb{E}\left[ \tau_0^{\beta_\delta} \right]\vert r \vert^{-\frac{3}{2}-1} \; dr \quad \text{if} \quad \delta > \frac{1}{2}\ ,\\\\
        2\gamma \mathbb{E}\left[  \tau_0^{-1} \left( g_{B, +}\left(\frac{2\pi r}{\tau_0 }\right) + g_{B, -}\left(\frac{2\pi r}{\tau_0 }\right) \right) \right] \; dr \quad \text{if} \quad \delta = \frac{1}{2}\ ,\\\\ 
        \gamma^{-\tfrac{2}{3}}\vert B \vert^{-1/3}\kappa_\infty \mathbb{E}\left[ \tau_0^{\beta_\delta} \right]\vert r \vert^{-\frac{5}{3}-1}\; dr  \quad \text{if} \quad \delta < \frac{1}{2}\ ,
   \end{array}
\right. \\
\end{equation}
with
\begin{equation}
    \label{def_constante_laplacien}
    \kappa_0 = \left(\frac{1}{ 2^{10} \pi^2}\right)^{\frac{1}{2}} \quad \text{and} \quad \kappa_\infty= \left(\frac{1}{2^{13}27 \pi^3} \right)^\frac{1}{3} \ .
\end{equation}
\begin{flushleft}
Then, under $\P_{(k,i)}$ the finite-dimensional distributions of the  process $N^{-1}Z_{u_N,B_N}\left(N^{\beta_\delta}\cdot\right)$ converge weakly  to the finite-dimensional distributions of $Z_u^\delta (\cdot)$.
\end{flushleft}
 \end{thm}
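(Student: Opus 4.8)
We follow the two-step scheme of \cite{JKO09,SSS19}: realise $Z_{u_N}^{B_N}(\cdot)$ as an additive functional of the embedded chain $\left(X_n^{B_N}\right)_n$ and the holding times, reduce the scaling limit to a stable-type limit theorem for a triangular array of i.i.d.\ increments run along the renewal process counting the jumps, and then identify the limiting L\'evy measure with $\nu_\delta$ through Propositions \ref{proposition_properties_pi} and \ref{proposition_f}. Recall $\Psi_{B_N}=\textbf{v}_{B_N}\lambda_{B_N}$ and that $K^{B_N}$ is constant, equal to $K_n^{B_N}$, on the holding interval of $X_n^{B_N}$, of length $\lambda_{B_N}\!\left(X_n^{B_N}\right)\tau_n^N$; hence on each completed holding interval $Z_{u_N}^{B_N}$ decreases by exactly $\tfrac1{2\pi}\Psi_{B_N}\!\left(X_n^{B_N}\right)\tau_n^N$, and writing $\mathcal N(s)$ for the number of jumps before time $s$,
\[
N^{-1}Z_{u_N}^{B_N}\!\left(N^{\alpha_\delta}t\right)=u-\frac1{2\pi N}\sum_{n=1}^{\mathcal N(N^{\alpha_\delta}t)-1}\Psi_{B_N}\!\left(X_n^{B_N}\right)\tau_n^N+\mathrm{err}_N(t),
\]
where $\mathrm{err}_N(t)$ collects the $n=0$ term (of order $1/N$, since $X_0^{B_N}=(k,i)$ with $k\neq0$, so $\textbf{v}_{B_N}$ and $\lambda_{B_N}(\cdot,i)$ are bounded near $k$) and the contribution of the incomplete last holding interval, bounded by $\tfrac1{2\pi}\Vert\textbf{v}_{B_N}\Vert_\infty$ times the straddling holding time; a renewal (inspection-paradox) estimate together with uniform-in-$N$ tail bounds on $\lambda_{B_N}$ under $\pi_{B_N}$ shows $\sup_{t\le T}\lvert\mathrm{err}_N(t)\rvert\to0$ in probability.

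I would next treat the two factors in the main sum. Since $P_{B_N}(k,i,dk',j)=\pi_{B_N}(dk',j)$, the $\left(X_n^{B_N}\right)_{n\ge1}$ are i.i.d.\ $\sim\pi_{B_N}$, hence the holding times $\left(\lambda_{B_N}(X_n^{B_N})\tau_n^N\right)_{n\ge1}$ are i.i.d.\ with mean $\E_{\pi_{B_N}}[\lambda_{B_N}]=\tfrac{2}{\gamma\overline R}$, independent of $N$; moreover $\E_{\pi_{B_N}}\!\big[\lambda_{B_N}^{1+\epsilon}\big]$ stays bounded in $N$ for small $\epsilon>0$ (a short computation near $k=0$ using $\theta_{1,B_N}^2\ge\tfrac12$ for $i=1$ and the scaling of $\omega_{2,B_N}$ against $B_N$ for $i=2$), so a triangular-array weak law of large numbers gives $N^{-\alpha_\delta}\mathcal N(N^{\alpha_\delta}t)\to\tfrac{\gamma\overline R}{2}\,t$, uniformly on $[0,T]$ in probability. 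For $n\ge1$ the increments $\xi_n^N:=\Psi_{B_N}(X_n^{B_N})\tau_n^N$ are i.i.d., and by Proposition \ref{proposition_properties_pi} together with the uniform tail control from its proof (to interchange $\lim_N$ with $\E_{\tau}$ by dominated convergence),
\[
N^{\alpha_\delta}\,\P\!\left(\pm\,\xi_1^N>N\lvert r\rvert\right)\ \xrightarrow[N\to\infty]{}\ \int_0^\infty\kappa_\delta\!\left(r/s\right)e^{-s}\,ds\qquad(r\neq0),
\]
while the truncated second moment condition $\lim_{\epsilon\to0}\limsup_N N^{\alpha_\delta}\E\!\left[(\xi_1^N/N)^2\,\mathds{1}_{\{|\xi_1^N|\le\epsilon N\}}\right]=0$ holds (no Gaussian component) and, by evenness of $\kappa_\delta$, the truncated first moment is negligible. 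The classical functional limit theorem for row-i.i.d.\ triangular arrays then shows that $V_N(t):=\tfrac1N\sum_{n=1}^{\lfloor\frac{\gamma\overline R}2N^{\alpha_\delta}t\rfloor}\xi_n^N$ converges in distribution in $\mathcal D([0,T],\R)$ to a pure-jump L\'evy process $V(\cdot)$ whose L\'evy measure is read off from the displayed tails; inserting the three cases of \eqref{tails_Psi} and the relations between $g_{B,\pm}$ and $h_{B,\pm}'$ in \eqref{def_densite_intermediaire}, one checks that after the $\tfrac1{2\pi}$ dilation inherited from the decomposition above this L\'evy measure is exactly $\nu_\delta$ of \eqref{def_Levy_measure} with the constants \eqref{def_constante_laplacien}, Proposition \ref{proposition_f}(ii)--(iv) guaranteeing it is a genuine L\'evy measure.

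By the decomposition, $N^{-1}Z_{u_N}^{B_N}(N^{\alpha_\delta}t)$ differs by $o_{\P}(1)$ from $u-\tfrac1{2\pi}V_N\!\big(\tfrac{2}{\gamma\overline R}N^{-\alpha_\delta}\mathcal N(N^{\alpha_\delta}t)\big)$. Since $N^{-\alpha_\delta}\mathcal N(N^{\alpha_\delta}\cdot)$ converges in probability to the deterministic, continuous, strictly increasing time change $\tfrac{\gamma\overline R}{2}\,\mathrm{id}$, the joint convergence of $\big(V_N,\,N^{-\alpha_\delta}\mathcal N(N^{\alpha_\delta}\cdot)\big)$ together with continuity of composition at a continuous limiting time change — an Anscombe-type argument, legitimate even though $\mathcal N(\cdot)$ is a stopping time for, hence not independent of, the $\xi_n^N$ — yields $V_N\!\big(\tfrac{2}{\gamma\overline R}N^{-\alpha_\delta}\mathcal N(N^{\alpha_\delta}\cdot)\big)\Rightarrow V(\cdot)$. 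Hence the finite-dimensional distributions of $N^{-1}Z_{u_N}^{B_N}(N^{\alpha_\delta}\cdot)$ converge to those of $u-\tfrac1{2\pi}V(\cdot)$, which by the identification of the L\'evy measure is precisely $Z_u^\delta(\cdot)$.

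I expect the main difficulty to be making this L\'evy limit rigorous for a \emph{triangular} array — the single-increment law of $\xi_n^N$ genuinely varies with $N$, since $B_N=BN^{-\delta}\to0$ — simultaneously with the random time change by a counting process that is not independent of the increments; this is the technical heart of \cite{JKO09}. It requires carrying uniform-in-$N$ (equivalently, uniform-in-$B_N$) versions of every tail and moment estimate: the domination allowing the exchange of $\lim_N$ and $\E_\tau$, the uniform integrability of $\lambda_{B_N}$ under $\pi_{B_N}$ behind the renewal law of large numbers, and the uniform smallness of the straddling-interval remainder. The boundary case $\delta=\tfrac12$ is extra delicate: $\kappa_{1/2}$ is not a pure power and the limit $Z_u^{1/2}(\cdot)$ is a non-stable L\'evy process, so one must apply the triangular-array convergence criteria in their general (non-stable) form and check, via Proposition \ref{proposition_f}(ii), that the resulting measure is admissible.
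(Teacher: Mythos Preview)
Your proposal is correct and follows essentially the same route as the paper: decompose $Z_u^N$ into completed increments $\tfrac{1}{2\pi}\Psi_{B_N}(X_n^{B_N})\tau_n^N$ plus a straddling-interval remainder, exploit Eq.~\eqref{def_P} to get the i.i.d.\ structure of $(X_n^{B_N})_{n\ge1}$ under $\pi_{B_N}$, invoke the triangular-array L\'evy limit theorem (the paper packages this as Proposition~\ref{proposition_array}, adapted from Durrett--Resnick and \cite{JKO09}) fed by the tail asymptotics of Proposition~\ref{proposition_properties_pi}, couple with the law of large numbers for the jump-counting process (the paper's Lemma~\ref{lemma_S_N}), and handle the remainder (the paper's Lemma~\ref{lemma_BN}). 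The only organisational difference is that the paper first runs the argument under the stationary law $\pi_{B_N}$ and then transfers to the deterministic start $(k,i)$ at the very end, whereas you absorb the $n=0$ term into $\mathrm{err}_N$ from the outset; both exploit the same observation that $(X_n^{B_N})_{n\ge1}$ is already i.i.d.\ $\pi_{B_N}$ regardless of $X_0^{B_N}$.
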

\begin{proof}
We refer the reader to Sec.\;\ref{section_proof_hydro_transition}.
\end{proof}
\begin{rmk}
Observe that the process $Z_u^{1/2}$ defined in Eq.\;\eqref{def_Levy_measure}, which is not a Lévy stable process, belongs nevertheless to the domain of attraction of two Lévy stable laws, one of exponent $3/2$ and the other one with exponent $5/3$. Indeed, using the definition of $g_{B,\pm}$ given in Eq.\;\eqref{def_densite_intermediaire}, one can prove that there exists two positive constants $C$ and $C_B$ such that
\begin{align*}
\nu_{1/2}(r) \underset{0}{\sim} C \vert r \vert^{-\tfrac{3}{2}} \quad \text{and} \quad \nu_{1/2}(r) \underset{\infty}{\sim} C_B \vert r \vert^{-\tfrac{5}{3}} \ , 
\end{align*}
where $C_B$ converges to zero when $B$ goes to infinity and $C$ is independent of $B$. 
\end{rmk}
Observe that $\mathbb{E}\left[ \tau_0^{\beta_\delta} \right] = \Gamma( 1+\beta_\delta)$ where $\Gamma$ is the Gamma function.
Theorem \ref{main_theorem_hydro} allows us to obtain the hydrodynamic limit of the solution $f_{B_N}$ of the Boltzmann equation \eqref{equation_u_N}.
Let $Z_u^\delta(\cdot)$ be the L\'evy process starting from $u$ in $\R$ with L\'evy exponent $\Phi_\delta$ defined by
\begin{equation}
\label{Levy_exponent_delta}
\forall \theta \in \R, \quad \Phi_{\delta} (\theta) = \int_{\mathbb{R}^*}\left(\exp\left( \mathbf{i} \theta r \right)-1+\mathbf{i}\theta r \mathds{1}_{\{ \vert r \vert < 1 \}} \right) \; d\nu_\delta(r) \ ,
\end{equation}
where $\nu_\delta$ is the L\'evy measure defined in Eq.\;\eqref{def_Levy_measure}. Observe that for $\delta> \frac{1}{2}$ $\left(\text{resp. } \delta < \frac{1}{2}\right)$, $Z_u^\delta(\cdot)$ is the L\'evy process obtained in \cite{JKO09} (resp. \cite{SSS19}). For $\delta=\frac{1}{2}$ we have an interpolation process.
\begin{thm}
\label{main_thm_limit_u} 
Let $f_{B_N}(\cdot,\cdot,\cdot):=(f_{B_N}(\cdot,\cdot,\cdot,1),f_{B_N}(\cdot,\cdot,\cdot,2))$ be the solution of Eq.\;\eqref{equation_u_N} with initial condition  $f^{0,N}(\cdot,\cdot)$ $:=(f^{0,N}(\cdot,\cdot,1),f^{0,N}(\cdot,\cdot,2))$ defined as follows
\begin{align}
    \forall (u,k) \in \mathbb{R}\times \mathbb{T}, \quad f^{0,N}(u,k) = \left( f^0\left( \frac{u}{N},k,1 \right), f^0\left( \frac{u}{N},k,2 \right) \right) \ .
\end{align}
Here $(f^0(\cdot,\cdot,1),f^0(\cdot,\cdot,2))$ $\in$ $\mathcal{C}^\infty_{\mathbf{c}} \left( \mathbb{R} \times \mathbb{T}  \right)^2$.  We define on $\R$ the function $\overline{f}^0$ as follows
\begin{equation}
\label{def_u0_barre}
 \forall u \in \R, \quad \overline{f}^0(u) = \sum_{i=1}^2 \int_{\T } f^0(u,k,i) \; dk \ .
\end{equation}
We define the operator $\mathfrak{L}_\delta$ on the space $\Cc_{\mathbf{c}}^\infty\left(\R\right)$ in the following way
\begin{equation}
\label{def_Lev_gen_limit}
\forall \phi \in \Cc_{\mathbf{c}}^\infty(\R), \quad \forall p \in \R, \quad \mathfrak{L}_{\delta}\left[\phi\right](p) =  \int_{\mathbb{R}} \Fc  \left[\phi\right](\xi) \Phi_{\delta}(\xi) \exp\left( 2 \mathbf{i}\pi p \xi\right) \; d\xi \ ,
\end{equation}
where $\Phi_\delta$ is defined in Eq.\;\eqref{Levy_exponent_delta}. Let $\beta_\delta$ defined by Eq.\;\eqref{tails_Psi}, then
\[ \forall t \in \; ]0,T], \quad \forall u \in \R, \quad \lim\limits_{N \rightarrow \infty} \  \sum_{i=1}^2\int_\T \left\vert f_{B_N}\left(N^{\beta_\delta}t, Nu,k,i   \right) - \frac{1}{2} \rho_\delta(t,u) \right\vert \; dk = 0 \ , \]
where $\rho_\delta$ is the solution of 
\begin{equation}
\label{limit_hyrdo_B=0}
\forall t \in \; ]0,T], \quad \forall u \in \R, \quad \left\{
    \begin{array}{ll}
       \partial_t \rho_\delta(t,u) = \mathfrak{L}_{\delta} \left[\rho_\delta\right](t,u) \ ,\\
        \rho_\delta(0,u) = \overline{f}^0(u)\  .
    \end{array}
\right.
\end{equation}
\end{thm}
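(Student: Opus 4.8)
The plan is to derive Theorem \ref{main_thm_limit_u} from the probabilistic statement of Theorem \ref{main_theorem_hydro} by the same two-step mechanism used to pass from Theorem \ref{thm_fractionnal_diffusion_old} to Theorem \ref{thm_limi_hydro_u_old}, but with the $\alpha$-stable process replaced by the general L\'evy process $Z_u^\delta(\cdot)$. First I would fix a representation of the solution: by Dynkin's formula (already recorded in the excerpt),
\begin{equation*}
f_{B_N}\bigl(N^{\alpha_\delta}t,Nu,k,i\bigr) = \mathbb{E}_{(k,i)}\Bigl[f^0\Bigl(\tfrac{1}{N}Z^{B_N}_{Nu}\bigl(N^{\alpha_\delta}t\bigr),K^{B_N}\bigl(N^{\alpha_\delta}t\bigr),I^{B_N}\bigl(N^{\alpha_\delta}t\bigr)\Bigr)\Bigr],
\end{equation*}
where I used that $f_N^0(v,k)=f^0(v/N,k)$. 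Theorem \ref{main_theorem_hydro} gives that $N^{-1}Z^{B_N}_{Nu}(N^{\alpha_\delta}\cdot)$ converges in finite-dimensional distributions to $Z_u^\delta(\cdot)$; the remaining ingredient is that the fast variable $(K^{B_N},I^{B_N})(N^{\alpha_\delta}t)$ equilibrates to $\pi_{B_N}$, and more to the point that, after integrating $f^0$ against $dk\,d\delta$, only the averaged initial datum $\overline f^0$ survives in the limit. Concretely, I would show
\begin{equation*}
\lim_{N\to\infty}\ \sum_{i=1}^2\int_\T\Bigl|\,\mathbb{E}_{(k,i)}\bigl[f^0\bigl(\tfrac1N Z^{B_N}_{Nu}(N^{\alpha_\delta}t),K^{B_N},I^{B_N}\bigr)\bigr]-\tfrac12\,\mathbb{E}\bigl[\overline f^0\bigl(Z_u^\delta(t)\bigr)\bigr]\Bigr|\,dk=0,
\end{equation*}
and then identify $\rho_\delta(t,u)=\mathbb{E}[\overline f^0(Z_u^\delta(t))]$ as the solution of \eqref{limit_hyrdo_B=0}, using \eqref{def_Lev_gen_limit}–\eqref{Levy_exponent_delta} and the fact that $\mathfrak L_\delta$ is the Fourier multiplier by $\Phi_\delta$.

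For the decoupling of the slow and fast variables I would follow the argument of \cite{JKO09,SSS19}: split the jump chain $(X_n^{B_N})_{n\ge1}$ from the initial state $X_0^{B_N}=(k,i)$, exploit (as emphasised in the Remark after \eqref{def_P}) that $(X_n^{B_N})_{n\ge1}$ is an i.i.d.\ sequence distributed as $\pi_{B_N}$, and observe that $Z^{B_N}$ depends on the path only through the sequence of holding times $\lambda_{B_N}(X_n^{B_N})\tau_n^N$ and velocities $\mathbf v_{B_N}(K_n^{B_N})$. Writing $f^0$ via its Fourier transform $\mathcal F[f^0]$ in the first variable turns the quantity above into an expression involving the characteristic function of $N^{-1}Z^{B_N}_{Nu}(N^{\alpha_\delta}t)$, so Theorem \ref{main_theorem_hydro} applies directly; the integration $\int_\T\cdot\,dk$ and the sum over $i$ produce the factor $\tfrac12\overline f^0$ because, in the limit, the terminal fast state is $\pi_{B_N}$-distributed and its two "phonon-type" marginals each carry half of the total mass. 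Smoothness and compact support of $f^0$ give the uniform integrability / dominated-convergence estimates needed to upgrade finite-dimensional convergence of the process to convergence of the $L^1(\T)$-norm. One subtlety, already flagged in the hypotheses of Theorem \ref{main_theorem_hydro}, is that the starting point must have $k\neq0$; since this is a null set for $dk$, it does not affect the integral, but I would state it cleanly.

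It remains to check that $\rho_\delta$ so defined is \emph{the} solution of \eqref{limit_hyrdo_B=0}. By the L\'evy–Khintchine formula \eqref{fonction_caract_Levy_process_general_intro_chapter_2}, $t\mapsto\mathbb E[\overline f^0(Z_u^\delta(t))]$ solves $\partial_t\rho=\mathfrak L_\delta[\rho]$ with $\rho(0,\cdot)=\overline f^0$, where $\mathfrak L_\delta$ is exactly the operator \eqref{def_Lev_gen_limit}; uniqueness for this linear equation in a suitable class (e.g.\ by taking Fourier transform in $u$ and solving the resulting ODE $\partial_t\widehat\rho(t,\xi)=\Phi_\delta(\xi)\widehat\rho(t,\xi)$, valid since $\Phi_\delta$ has nonpositive real part and $\overline f^0\in\mathcal C_c^\infty$) pins down $\rho_\delta$. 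For the three regimes this specialises to $-(-\Delta)^{3/4}$ ($\delta>\tfrac12$), $-(-\Delta)^{5/6}$ ($\delta<\tfrac12$), and the interpolating generator $\mathfrak L_B$ ($\delta=\tfrac12$), consistently with Proposition \ref{proposition_f} which guarantees $\nu_\delta$ is genuinely a L\'evy measure in every case.

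The main obstacle I anticipate is precisely the decoupling step at the critical exponent $\delta=\tfrac12$: there the limiting process $Z_u^{1/2}$ is \emph{not} stable and the tails of $\Psi_{B_N}$ are only regularly—not exactly—varying, so one cannot simply invoke a clean stable limit theorem. One must instead track the contribution of the initial non-stationary term $X_0^{B_N}$ and of the holding time $\tau_0$ through the $\mathbb E[\tau_0^{\alpha_\delta}]$-type corrections appearing in \eqref{def_Levy_measure}, and show these corrections are exactly absorbed into $\nu_{1/2}$; controlling the error terms uniformly in $N$ (using $f^0\in\mathcal C_c^\infty$ and the uniform bound $K_0$) is where the real work lies. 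The other two regimes are, by contrast, essentially a rerun of \cite{JKO09} and \cite{SSS19} respectively, so I would present those briefly and concentrate the detailed estimates on $\delta=\tfrac12$.
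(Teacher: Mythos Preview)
Your high-level plan is the paper's plan, but the central technical step---decoupling the slow variable $Z_u^N(t)$ from the fast variable $(K^{B_N},I^{B_N})(N^{\alpha_\delta}t)$---is not achieved by the mechanism you describe. Two concrete issues:

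\textbf{(a) The terminal fast state is not asymptotically independent of $Z_u^N(t)$ by the i.i.d.\ structure alone.} The additive functional $Z_u^N(t)$ depends on all of $X_0^{B_N},\dots,X_{j_N(N^{\alpha_\delta}t)}^{B_N}$, and the terminal state is precisely $X_{j_N(N^{\alpha_\delta}t)}^{B_N}$; they share this last coordinate. Integrating the \emph{initial} state over $dk\,d\rho(i)$ does not decouple them (and incidentally, the continuous-time invariant law is $\sigma=\tfrac12\,dk\,d\rho$, not $\pi_{B_N}$, which is the jump-chain invariant law). The paper resolves this by a time-splitting: pick $m_N\to\infty$ with $m_N/N\to 0$, freeze the slow variable at the intermediate time $N^{\alpha_\delta}t-m_Nt$, and split the error into (i) the displacement of $Z_u^N$ over $[N^{\alpha_\delta}t-m_Nt,N^{\alpha_\delta}t]$, controlled by $\|\mathbf v_{B_N}\|_\infty\,m_N/N\to0$, and (ii) the relaxation of the fast variable over the window of length $m_Nt$, handled by the Markov property.

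\textbf{(b) You need a mixing estimate for the semigroup that is uniform in $N$.} Since the process depends on $N$ through $B_N$, knowing that each $P_N^t$ eventually equilibrates is not enough; you must know it does so on the time scale $m_N$ uniformly in $N$. The paper supplies this via Lemma \ref{lemma_convergence_P}: for centered $f$, $\|P_N^t f\|_{L^1(\sigma)}\le C(1+t)^{-a}\|f\|_{L^1(\sigma_N^a)}$ with $C$ independent of $N$, applied to $g_r(k,i)=e^{\mathbf i r k}\mathds 1_{j}(i)-\tfrac12\int_\T e^{\mathbf i r k'}dk'$ at $t=m_Nt$. Without an estimate of this type your decoupling step has no force.

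Finally, your anticipated difficulty at $\delta=\tfrac12$ is misplaced here: that case is already absorbed into Theorem \ref{main_theorem_hydro}, which you use as a black box. The proof of Theorem \ref{main_thm_limit_u} is uniform in $\delta$ once the two ingredients above are in place; the regularly-varying tail subtleties live entirely in the proof of Theorem \ref{main_theorem_hydro} and Proposition \ref{proposition_properties_pi}, not in this step.
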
 
\begin{proof}
We refer the reader to Sec.\;\ref{section_proof_main_theorem_u}.
\end{proof}
\begin{rmk} Observe that when $\delta > \frac{1}{2}$, $\mathfrak{L}_\delta = -\Gamma\left( 1+\frac{3}{2}\right)\gamma^{-\frac{1}{2}}D_0(-\Delta)^{\frac{3}{4}}$ and when $\delta < \frac{1}{2}$ we have that  $\mathfrak{L}_\delta = -\vert B \vert^{-\frac{1}{3}}\Gamma\left( 1+\frac{5}{3}\right)\gamma^{-\frac{2}{3}}D_\infty(-\Delta)^{\frac{5}{6}}$ where
\begin{equation}
    \label{def_vrai_constante}
    D_0= 2\kappa_0\int_0^{+\infty} \frac{1-\cos(r)}{r^{\frac{5}{2}}} \; dr \quad \text{and} \quad  D_\infty= 2\kappa_\infty\int_0^{+\infty} \frac{1-\cos(r)}{r^{\frac{8}{3}}} \; dr \ .
\end{equation}Hence, we recover the different cases studied in \cite{JKO09} and \cite{SSS19} respectively.
\end{rmk}

\medskip
When $\delta=\frac{1}{2}$, $\mathfrak{L}_{\frac{1}{2}}$ depends on $B$ hence we denote by $\mathfrak{L}_{B}$ the operator $\mathfrak{L}_{\frac{1}{2}}$, by $\Phi_{B}$ the L\'evy exponent $\Phi_{\frac{1}{2}}$ and by $\tilde{\rho}_{B}$ the function $\rho_{\frac{1}{2}}$. The next theorem shows that by sending $B$ to zero (resp. infinity), $\tilde{\rho}_{B}$ converges to the fractional diffusion equations of exponent $3/4$ $\left( \text{resp.} \; 5/6 \right)$.
\begin{thm}
\label{main_thm_limit_B_u_barre}
Let $\overline{f}^0$ be the function defined in Eq.\;\eqref{def_u0_barre} and $D_0$ and $D_\infty$ the constants defined in Eq.\;\eqref{def_vrai_constante}. Then 
\begin{itemize}
\item[i)] $\lim\limits_{B \rightarrow 0} \tilde{\rho}_{B} = \rho_0$ in $\mathbb{L}^1\left( [0,T], \mathbb{L}^2(\mathbb{R})  \right)$ where $\rho_0$ is the solution of 
\begin{equation}
\label{rho_zero}
\forall t \in \; ]0,T], \quad \forall u \in \R, \quad \left\{
    \begin{array}{ll}
       \partial_t \rho_0(t,u) = -\Gamma\left( 1+\frac{3}{2}\right)\gamma^{-\tfrac{1}{2}} D_0(-\Delta)^{\tfrac{3}{4}} \left[\rho_0\right](t,u) \ ,\\
        \rho_0(0,u) = \overline{f}^0(u) \ .
    \end{array}
\right.
\end{equation}
\item[ii)] $\lim\limits_{B \rightarrow \infty} \tilde{\rho}_B \left(B^{\frac{1}{3}} \cdot,\cdot\right) = \rho_\infty$ in $\mathbb{L}^1\left( [0,T], \mathbb{L}^2(\mathbb{R}) \right)$  where $\rho_\infty$ is the solution of 
\begin{equation}
\label{rho_infini}
\forall t \in \; ]0,T], \quad \forall u \in \R, \quad \left\{
    \begin{array}{ll}
       \partial_t \rho_\infty(t,u) = -\Gamma\left( 1+\frac{5}{3}\right)\gamma^{-\tfrac{2}{3}}D_\infty(-\Delta)^{\tfrac{5}{6}} \left[\rho_\infty \right] (t,u)\ ,\\
        \rho_\infty(0,u) = \overline{f}^0(u) \ .
    \end{array}
\right.
\end{equation}
\end{itemize}
\end{thm}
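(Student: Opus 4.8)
The plan is to run the entire argument on the Fourier side, where both $\tilde\rho_{B}$ and the two fractional heat flows become multiplication by explicit symbols, and then to feed in the pointwise convergence of the densities $g_{B,\pm}$ supplied by Proposition \ref{proposition_f}. First I note that since $f^{0}\in C_{c}^{\infty}(\R\times\T)^{2}$ and $\T$ is compact, the function $\overline f^{0}$ of Eq.\;\eqref{def_u0_barre} lies in $C_{c}^{\infty}(\R)$, so $\mathcal{F}[\overline f^{0}]$ is a Schwartz function and in particular belongs to every $\mathbb{L}^{p}(\R)$. By Eq.\;\eqref{def_Lev_gen_limit} the operator $\mathfrak{L}_{B}=\mathfrak{L}_{1/2}$ is the Fourier multiplier with symbol $\Phi_{B}=\Phi_{1/2}$, while the right-hand sides of \eqref{rho_zero} and \eqref{rho_infini} are Fourier multipliers with real, non-positive symbols which I denote $m_{0}$ and $m_{\infty}$ (the symbols of $-(-\Delta)^{3/4}$ and $-(-\Delta)^{5/6}$ up to the positive constants in \eqref{def_vrai_constante}). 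Since all three symbols have non-positive real part, the Cauchy problems \eqref{limit_hyrdo_B=0} (with $\delta=1/2$), \eqref{rho_zero} and \eqref{rho_infini} with initial datum $\overline f^{0}$ are solved on the Fourier side by
\[
\mathcal{F}[\tilde\rho_{B}(t,\cdot)]=e^{t\Phi_{B}}\,\mathcal{F}[\overline f^{0}],\qquad
\mathcal{F}[\rho_{0}(t,\cdot)]=e^{t m_{0}}\,\mathcal{F}[\overline f^{0}],\qquad
\mathcal{F}[\rho_{\infty}(t,\cdot)]=e^{t m_{\infty}}\,\mathcal{F}[\overline f^{0}] ,
\]
so by Plancherel's theorem the statement reduces to a dominated convergence of these symbols.

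\emph{Step 1: convergence of the symbols.} The densities $g_{B,\pm}$ being even (Proposition \ref{proposition_f} i)), the L\'evy measure $\nu_{1/2}$ of Eq.\;\eqref{def_Levy_measure} is symmetric, so in Eq.\;\eqref{Levy_exponent_delta} the compensator and the imaginary part vanish and, after the change of variables $s=2\pi r/\tau_{0}$,
\[
\Phi_{B}(\theta)=\int_{\R^{*}}\bigl(\cos(\theta r)-1\bigr)\,d\nu_{1/2}(r)
=\frac{\gamma}{\pi}\,\mathbb{E}\!\left[\int_{\R^{*}}\Bigl(\cos\bigl(\tfrac{\theta\tau_{0}}{2\pi}s\bigr)-1\Bigr)\bigl(g_{B,+}(s)+g_{B,-}(s)\bigr)\,ds\right].
\]
By Proposition \ref{proposition_f} iii), $g_{B,\pm}(s)\to g_{0}(s)$ pointwise as $B\to0$; granting the uniform bound of Step 3 and using $|\cos a-1|\le C\min(1,a^{2})$, dominated convergence (first in $s$, then in $\tau_{0}$ against the law $\mathcal{E}(1)$) gives $\Phi_{B}(\theta)\to m_{0}(\theta)$ for every $\theta$, the power $|\theta|^{3/2}$ and the integral $\int_{0}^{\infty}(1-\cos u)u^{-5/2}\,du$ entering $D_{0}$ in Eq.\;\eqref{def_vrai_constante} both coming out of the elementary identity $\int_{\R^{*}}(\cos(as)-1)|s|^{-5/2}\,ds=-2|a|^{3/2}\int_{0}^{\infty}(1-\cos u)u^{-5/2}\,du$, together with $\mathbb{E}[\tau_{0}^{3/2}]=\Gamma(1+\tfrac32)$. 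For $B\to\infty$ I use that $\Phi$ is linear in the L\'evy measure, so $B^{1/3}\Phi_{B}$ is the L\'evy exponent obtained from the displayed formula by replacing $g_{B,\pm}$ with $B^{1/3}g_{B,\pm}$; by Proposition \ref{proposition_f} iv), $B^{1/3}g_{B,+}\to0$ and $B^{1/3}g_{B,-}\to g_{\infty}$ pointwise, and the same computation — now with $\mathbb{E}[\tau_{0}^{5/3}]=\Gamma(1+\tfrac53)$, the analogous identity for the exponent $8/3$ and the integral $\int_{0}^{\infty}(1-\cos u)u^{-8/3}\,du$ entering $D_{\infty}$ — yields $B^{1/3}\Phi_{B}(\theta)\to m_{\infty}(\theta)$ for every $\theta$.

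\emph{Step 2: conclusion.} Fix $t\in[0,T]$. Since $\mathrm{Re}\,\Phi_{B}\le0$ one has $|e^{t\Phi_{B}(\xi)}|\le1$, and $e^{t\Phi_{B}(\xi)}\to e^{t m_{0}(\xi)}$ for every $\xi$ (trivially at $\xi=0$). Dominated convergence with the fixed majorant $4|\mathcal{F}[\overline f^{0}]|^{2}\in\mathbb{L}^{1}(\R)$ gives
\[
\bigl\|\tilde\rho_{B}(t,\cdot)-\rho_{0}(t,\cdot)\bigr\|_{\mathbb{L}^{2}(\R)}^{2}
=\int_{\R}\bigl|e^{t\Phi_{B}(\xi)}-e^{t m_{0}(\xi)}\bigr|^{2}\,\bigl|\mathcal{F}[\overline f^{0}](\xi)\bigr|^{2}\,d\xi\;\xrightarrow[B\to0]{}\;0 .
\]
As moreover each such norm is at most $2\|\overline f^{0}\|_{\mathbb{L}^{2}(\R)}$, uniformly in $B$ and $t$, a further dominated convergence in $t$ yields $\int_{0}^{T}\|\tilde\rho_{B}(t,\cdot)-\rho_{0}(t,\cdot)\|_{\mathbb{L}^{2}(\R)}\,dt\to0$, which is i). Part ii) is identical after noting that $\mathcal{F}[\tilde\rho_{B}(B^{1/3}t,\cdot)](\xi)=e^{B^{1/3}t\Phi_{B}(\xi)}\mathcal{F}[\overline f^{0}](\xi)$, so that one replaces $\Phi_{B}$ by $B^{1/3}\Phi_{B}$ and $m_{0}$ by $m_{\infty}$ and invokes the second convergence of Step 1.

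\emph{Step 3: the main obstacle.} The only genuinely technical point is the $B$-uniform domination required in Step 1: an $\mathbb{L}^{1}(\R)$ majorant for $\min(1,r^{2})\bigl(g_{B,+}+g_{B,-}\bigr)(r)$ valid for all $0<B\le1$, and one for $\min(1,r^{2})B^{1/3}\bigl(g_{B,+}+g_{B,-}\bigr)(r)$ valid for all $B\ge1$. I would extract these from the explicit formulas \eqref{def_f_B,+}--\eqref{def_densite_intermediaire}: differentiating $h_{B,\pm}$ shows that, up to a factor bounded uniformly in $B$, $g_{B,\pm}(r)$ equals $x_{B,\pm}(r)^{2}\,|x_{B,\pm}'(r)|$, so the monotonicity and the pointwise limits of $x_{B,\pm}$ and $x_{B,\pm}'$ from Proposition \ref{proposition_x_B} give $g_{B,\pm}(r)\lesssim|r|^{-5/2}$ (resp. $B^{1/3}g_{B,\pm}(r)\lesssim|r|^{-8/3}$) near $r=0$ uniformly in $B$, while for $|r|$ bounded away from $0$ the bound $x_{B,\pm}(r)\to0$ forces enough decay that $\min(1,r^{2})(g_{B,+}+g_{B,-})(r)$ stays below a fixed integrable function; the rescaled estimate is handled in the same way using part iv) of that proposition. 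Once this uniform control is in place, Steps 1 and 2 close the proof.
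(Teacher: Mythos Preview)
Your proof is correct and follows essentially the same route as the paper: pass to the Fourier side, reduce to pointwise convergence of the L\'evy exponent $\Phi_{B}$ (resp.\ $B^{1/3}\Phi_{B}$) via Proposition~\ref{proposition_f}~iii)--iv), justify dominated convergence by a $B$-uniform integrable majorant for $\min(1,r^{2})g_{B,\pm}(r)$, and then apply dominated convergence twice (in $\xi$, then in $t$). The paper packages your Step~3 as a separate lemma (Lemma~\ref{lemma_tcd}) whose proof is also only sketched, so your level of detail on the domination is comparable to, or slightly better than, the paper's.
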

\begin{proof}
 We refer the reader to Sec.\;\ref{section_main_result_B}.
\end{proof}
\section{Proof of Theorem \ref{main_theorem_kinetic}, Theorem \ref{main_theorem_hydro}, Theorem \ref{main_thm_limit_u} and Theorem \ref{main_thm_limit_B_u_barre}}
 \label{section_proof}
 \subsection{Sketch of the proof of Theorem \ref{main_theorem_kinetic}}
 \label{section_proof_kinetic_transition}
This section, is devoted to the proof of Theorem \ref{main_theorem_kinetic}. The strategy of our proof is similar to the one developed by the authors of \cite{SSS19}, so we will only give the ideas of the different steps and we refer the reader to the corresponding sections  of \cite{SSS19} for detailed proofs.\\\\
To prove Theorem \ref{main_theorem_kinetic}, we first prove that there exists $\mathcal{W}$ in $\Cc\left([0,T], \left(\Sc\times\Sc\right)^\prime \right)$ and a sequence $(\varepsilon_n)_{n \in \N}$ such that for any $J$ in $\mathcal{S}\times \mathcal{S}$ and $t$ in $[0,T]$
\begin{equation} 
\label{limit_Wigner}
\lim\limits_{n \rightarrow \infty} \left\vert\left\langle \mathcal{W}^{ \varepsilon_n} (t) , J \right\rangle - \left\langle \mathcal{W}(t) , J \right\rangle \right\vert= 0 \ . 
\end{equation}
Let $J:=(J_1,J_2)$ in $\Sc \times \Sc$, to prove Eq.\;\eqref{limit_Wigner} we start to show that for any $i$ in $\{1,2 \}$ and $t$ in $[0,T]$ we have
\begin{align}
\partial_t \left\langle \mathcal{W}_i^\varepsilon(t), J_i \right\rangle & =  \frac{1}{2\pi}\left\langle \mathcal{W}_i^\varepsilon(t), \textbf{v}_{0} \partial_u J_i \right\rangle  +   \gamma \left\langle \mathcal{W}_i^\varepsilon(t), [C_0J]_i \right\rangle  \nonumber  \\
& + \gamma \left( \langle\mathcal{A}_i^\varepsilon(t),[C_0J]_i \rangle + \langle\left({\mathcal{A}}_i^\varepsilon\right)^*(t),[C_0J]_i \rangle \right)  +  \mathcal{O}_{t,J}(\varepsilon) \ ,\label{evolution_eq_ib}
\end{align}
where $\mathcal{A}_i^\varepsilon$ and $\left(\mathcal{A}_i^\varepsilon\right)^*$ are defined in Eq.\;\eqref{def_anti_wigner} and Eq.\;\eqref{def_anti_wigner_conjugate} and $\vert \mathcal{O}_{t,J}(t,\varepsilon) \vert$  $\leq K(J) \times \varepsilon$ with $K(J)$ a constant independent of $\varepsilon$ and $t$. The derivation of this equation is presented in Appendix \;\ref{appendix_behavior_transport_term} and Appendix \;\ref{appendix_behavior_colli_term}. To end the proof we need the following lemma.  
\begin{lemma}
\label{lemma_anti_wigner}
Let $t$ in $[0,T]$ and $J:=(J_1,J_2)$ in $\Sc \times \Sc$ then
\begin{itemize}
\item[i)] $\Vert \langle\mathcal{A}^\varepsilon,J \rangle   \Vert_{\infty} \lesssim K_0 \Vert J \Vert$ and  $\Vert \langle\tilde{\mathcal{A}}^\varepsilon,J \rangle   \Vert_{\infty} \lesssim K_0 \Vert J \Vert.$   
\vspace{0.2cm}
\item[ii)] For any $i$ in $\{1,2\}$ \[ \lim\limits_{ \varepsilon \rightarrow 0} \left\vert \int_0^T \left\langle\mathcal{A}_i^\varepsilon(t),[C_0J]_i \right\rangle  \; dt  \right\vert = 0 \quad \text{and} \quad  \lim\limits_{ \varepsilon \rightarrow 0} \left\vert \int_0^T \left\langle\left({\mathcal{A}}_i^\varepsilon\right)^*(t),[C_0J]_i \right\rangle \; dt  \right\vert = 0 \ .\]
\end{itemize}
\end{lemma}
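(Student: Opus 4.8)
The two items are treated separately. Item i) is, for the anti-Wigner distributions, the exact analogue of the uniform bound for $\mathcal{W}^\varepsilon$ recalled in Lemma \ref{Lem_Wigner}: starting from \eqref{def_anti_wigner} I would apply the Cauchy--Schwarz inequality first inside $\mathbb{E}_{\mu^\varepsilon}[\,\cdot\,]$ and then in the $k$-integral, bound $|\mathcal{F}[J_i](p,k)| \le \sup_{k}|\mathcal{F}[J_i](p,k)|$, and use that by the change of variables $\xi = k \mp \tfrac{\varepsilon p}{2}$, periodicity on $\mathbb{T}$ and \eqref{K0},
\[ \int_{\mathbb{T}} \mathbb{E}_{\mu^\varepsilon}\!\left[ \bigl| \widehat{\psi}^\varepsilon_i(t\varepsilon^{-1},\xi) \bigr|^2 \right] d\xi \;\le\; \frac{K_0}{\varepsilon} \qquad \text{for all } t \ge 0 \ . \]
This yields $\sup_t |\langle \Gamma_i^\varepsilon(t), J_i \rangle| \le \tfrac{1}{2} K_0\|J_i\|$, hence $\|\langle \Gamma^\varepsilon, J\rangle\|_\infty \lesssim K_0\|J\|$, and the bound for $\tilde\Gamma^\varepsilon$ follows the same way from \eqref{def_anti_wigner_conjugate} (conjugation is harmless). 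The same estimate in fact shows that every bilinear-in-$\widehat\psi^\varepsilon$ ``Wigner-type'' distribution is bounded by $\lesssim K_0\|\cdot\|$, a fact that will be used below.

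For item ii), the key point is that the bilinear quantities entering $\Gamma_i^\varepsilon$, $\tilde\Gamma_i^\varepsilon$ are \emph{fast oscillating}. Since $A+B_\varepsilon G$ acts as a derivation and, by Lemma \ref{eig}, $\widehat\psi_{j,B_\varepsilon}(k)$ is an eigenvector of $A+B_\varepsilon G$ of eigenvalue $-\mathbf{i}\,\omega_{j,B_\varepsilon}(k)$, the deterministic part of the rescaled dynamics makes the integrand $\widehat\psi^\varepsilon_i(t\varepsilon^{-1}, \tfrac{\varepsilon p}{2}-k)\,\widehat\psi^\varepsilon_{i^*}(t\varepsilon^{-1}, k+\tfrac{\varepsilon p}{2})$ of $\Gamma_i^\varepsilon$ rotate, \emph{exactly}, at frequency $\varepsilon^{-1}\Omega_{i,\varepsilon}(p,k)$, with (using that $\omega_{j,B}$ is even)
\[ \Omega_{i,\varepsilon}(p,k) \;=\; \omega_{i,B_\varepsilon}\!\Bigl(k-\tfrac{\varepsilon p}{2}\Bigr) + \omega_{i^*,B_\varepsilon}\!\Bigl(k+\tfrac{\varepsilon p}{2}\Bigr) \;=\; 2\sqrt{\widehat\alpha(k)+\tfrac{B_\varepsilon^2}{4}} + \mathcal{O}\!\bigl((\varepsilon p)^2\bigr) \ , \]
the $\mathcal{O}(\varepsilon p)$ term cancelling because $\omega_{1,B_\varepsilon}' = \omega_{2,B_\varepsilon}' = \mathbf{v}_{B_\varepsilon}$ by \eqref{omega_prime}; in particular $\Omega_{i,\varepsilon}\ge|B_\varepsilon|$ everywhere and $\Omega_{i,\varepsilon}$ is of order one away from $k=0$. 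Computing the time derivative of $\langle \Gamma_i^\varepsilon(t), H\rangle$ for a test function $H$ — the $\varepsilon^{-1}(A+B_\varepsilon G)$ part producing exactly the leading oscillatory term, the $\varepsilon\gamma S$ part a finite sum of collision-type terms in the various bilinear distributions, as in Appendices \ref{appendix_behavior_transport_term}--\ref{appendix_behavior_colli_term} — gives
\[ \partial_t \langle \Gamma_i^\varepsilon(t), H \rangle \;=\; -\frac{\mathbf{i}}{\varepsilon}\, \bigl\langle \Gamma_i^\varepsilon(t), \Omega_{i,\varepsilon} H \bigr\rangle + \mathfrak{R}_i^\varepsilon[H](t) \ , \]
where $\Omega_{i,\varepsilon}$ acts as a Fourier multiplier in $u$ and, for $H$ supported in $\{|k|\ge\kappa_\varepsilon/2\}$, $\|\mathfrak{R}_i^\varepsilon[H]\|_\infty\lesssim K_0(\|H\|+\|\Delta[H]\|)$ by item i) and the boundedness of $\theta_{j,B_\varepsilon}^2$ and $R$ away from $k=0$ (an identical identity holds for $\tilde\Gamma_i^\varepsilon$, with $-\Omega_{i,\varepsilon}$ in place of $\Omega_{i,\varepsilon}$).

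It then remains to run a cut-off argument. Fix a smooth even $\chi_\varepsilon$ with $\chi_\varepsilon\equiv1$ on $\{|k|\ge\kappa_\varepsilon\}$, $\chi_\varepsilon\equiv0$ on $\{|k|\le\kappa_\varepsilon/2\}$, and split $H = H\chi_\varepsilon + H(1-\chi_\varepsilon)$. On $\{|k|\ge\kappa_\varepsilon/2\}$, because $\delta>\eta$ makes $B_\varepsilon=B\varepsilon^\delta\ll\kappa_\varepsilon=\kappa\varepsilon^\eta$, one has $\Omega_{i,\varepsilon}(p,k)\gtrsim\omega_{2,B_\varepsilon}(\kappa_\varepsilon/2)\gtrsim\kappa_\varepsilon$ uniformly in $p$ (for $|p|\le\varepsilon^{-1/2}$ from $\max(|k\pm\tfrac{\varepsilon p}{2}|)\ge|k|$ and the monotonicity of $\omega_{2,B_\varepsilon}$, for $|p|>\varepsilon^{-1/2}$ from $\Omega_{i,\varepsilon}\ge|B_\varepsilon|$ and the fast decay of $\mathcal{F}[H]$); hence $\widetilde H := H\chi_\varepsilon/\Omega_{i,\varepsilon}$ is a test function with $\|\widetilde H\|+\|\Delta[\widetilde H]\|\lesssim\kappa_\varepsilon^{-1}(\|H\|+\|\Delta[H]\|)$ — only one factor $\kappa_\varepsilon^{-1}=\kappa^{-1}\varepsilon^{-\eta}$ is lost since $\|\cdot\|$ carries no $k$-derivative. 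Applying the evolution equation to $\widetilde H$, integrating in $t$ and using item i),
\[ \int_0^T \langle \Gamma_i^\varepsilon(t), H\chi_\varepsilon \rangle\, dt \;=\; \mathbf{i}\varepsilon\Bigl( \langle\Gamma_i^\varepsilon(T),\widetilde H\rangle - \langle\Gamma_i^\varepsilon(0),\widetilde H\rangle - \int_0^T \mathfrak{R}_i^\varepsilon[\widetilde H](t)\,dt \Bigr) \;=\; \mathcal{O}\bigl(K_0\,\varepsilon^{1-\eta}\bigr) \ , \]
which tends to $0$ since $\eta<1$. For $H(1-\chi_\varepsilon)$, supported in $\{|k|<\kappa_\varepsilon\}$, I would cut the $p$-integral at $|p|\sim\varepsilon^{\eta-1}$: for $|p|\lesssim\varepsilon^{\eta-1}$ both momenta $k\pm\tfrac{\varepsilon p}{2}$ stay in $\{|\xi|<2\kappa_\varepsilon\}$, so Cauchy--Schwarz and Lemma \ref{lemma_generalisation_hypo_small_k} make the contribution $o(1)$ for each fixed $t$, while for $|p|\gtrsim\varepsilon^{\eta-1}$ the bound \eqref{K0}, $\|H\|<\infty$ and $\varepsilon^{\eta-1}\to\infty$ give $o(1)$; since by item i) the integrand is bounded uniformly in $t$ and $\varepsilon$, dominated convergence yields $\int_0^T\langle\Gamma_i^\varepsilon(t),H(1-\chi_\varepsilon)\rangle\,dt\to0$. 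Taking $H=[C_0J]_i\in\Sc$ (cf. \eqref{collisional_operator}), all the norms above are finite and the claim for $\Gamma_i^\varepsilon$ follows; the argument for $\tilde\Gamma_i^\varepsilon$ is identical.

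The main obstacle is the derivation of the evolution equation with a remainder controlled \emph{quantitatively and uniformly in $\varepsilon$}: one must expand the action of $\gamma S$ on the bilinear quantities and recognise each resulting term as a bounded functional of $\Gamma^\varepsilon$, $\tilde\Gamma^\varepsilon$, $\mathcal{W}^\varepsilon$ (or of another bilinear Wigner-type object, still controlled by item i)), while keeping track of the fact that $\theta_{j,B_\varepsilon}^2$ and the derivatives of $\omega_{j,B_\varepsilon}$ degenerate near $k=0$ as $\varepsilon\to0$. This is exactly why the cut-off $\chi_\varepsilon$ is placed at the scale $\kappa_\varepsilon$ and why the scale separation $B_\varepsilon\ll\kappa_\varepsilon$, i.e. $\delta>\eta$, is essential: it forces $\Omega_{i,\varepsilon}\gtrsim\kappa_\varepsilon$ on $\{|k|\ge\kappa_\varepsilon/2\}$, so that the single power $\varepsilon^{-\eta}$ lost in inverting $\Omega_{i,\varepsilon}$ is beaten by the factor $\varepsilon$ gained in the time integration by parts, while everything near $k=0$ is absorbed by \eqref{hypo_small_k} through Lemma \ref{lemma_generalisation_hypo_small_k}.
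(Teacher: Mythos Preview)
Your plan is correct and is precisely the argument the paper defers to: the paper gives no self-contained proof here but refers to \cite[Sec.~3.4]{BOS09}, where item i) is obtained by the same Cauchy--Schwarz/periodicity/energy-bound computation you describe, and item ii) by the same time-integration-by-parts exploiting that the anti-Wigner integrand oscillates at frequency $\varepsilon^{-1}\bigl(\omega_{i}(k-\tfrac{\varepsilon p}{2})+\omega_{i^*}(k+\tfrac{\varepsilon p}{2})\bigr)\sim 2\varepsilon^{-1}\sqrt{\widehat\alpha(k)+B_\varepsilon^2/4}$, combined with a small-$k$ cutoff handled via Lemma~\ref{lemma_generalisation_hypo_small_k}. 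Your explicit tracking of the scale separation $B_\varepsilon\ll\kappa_\varepsilon$ (i.e.\ $\delta>\eta$), which guarantees $\Omega_{i,\varepsilon}\gtrsim\kappa_\varepsilon$ on the support of $\chi_\varepsilon$ and hence the gain $\varepsilon\kappa_\varepsilon^{-1}=\varepsilon^{1-\eta}\to0$, is exactly the adaptation of the $B=0$ argument that the present setting requires.
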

\begin{proof}
 We refer the reader to \cite[Sec.\;3.4]{BOS09}.
\end{proof}
By Eq.\;\eqref{evolution_eq_ib}, item ii) of Lemma \ref{Lem_Wigner} and item i) of Lemma \ref{lemma_anti_wigner} we deduce that for all $J$ in $\Sc\times \Sc$ the family of functions $\left(\left\langle \Wc^\varepsilon, J \right\rangle\right)_{\varepsilon > 0}$ is equicontinuous and bounded in $\left(\Cc\left([0,T], \C\right), \Vert \cdot \Vert_{\infty} \right).$ Hence, for any $J$ in $\mathcal{S}\times \mathcal{S}$, there exists $\mathcal{W}_J(\cdot) \in \left(\mathcal{C}\left([0,T], \mathbb{C}\right), \Vert \cdot \Vert_{\infty} \right)$ such that $\left\langle \mathcal{W}^\varepsilon (\cdot) , J \right\rangle$ converges (up to a subsequence) to $\mathcal{W}_J(\cdot)$ in $\left(\mathcal{C}\left([0,T], \mathbb{C}\right), \Vert \cdot \Vert_{\infty} \right)$.

\medskip 

At this point, the subsequence depends on $J$ but since $\left( \mathcal{S} \times \mathcal{S}, \Vert \cdot \Vert \right)$ is a separable space, we can use a diagonal argument to erase this dependency. Hence, there exists a countable subset  $\mathcal{D} = (J^r)_{r \in \mathbb{N}}$ $\subset$ $\mathcal{S} \times \mathcal{S}$ dense in $\left(\mathcal{S} \times \mathcal{S}, \Vert \cdot \Vert \right)$, a subsequence $\left(\varepsilon_n \right)_{n \in \N}$ going to zero as $n$ goes to infinity and a sequence $\left( \mathcal{W}_J(\cdot) \right)_{J \in \mathcal{D}}$ $\in$ $\left( \mathcal{C}\left([0,T], \mathbb{C} \right), \Vert \cdot \Vert_{\infty} \right)$  such that  
\begin{align*}   \forall J \in \mathcal{D}, \quad \lim\limits_{ n \rightarrow \infty} \langle \mathcal{W}^{\varepsilon_n}(\cdot),J\rangle =  \mathcal{W}_J (\cdot) \ . 
\end{align*}
Using item ii) of Lemma \ref{Lem_Wigner}, we deduce that
\begin{equation*}
\forall t \in [0,T], \quad \forall J \in \mathcal{D}, \quad \left\vert \Wc_J (t) \right\vert \leq K_0 \Vert J \Vert \ .
\end{equation*}
Hence the application 
\begin{align*}
    J \in \mathcal{D} \subset \mathcal{S} \times \mathcal{S} \mapsto \mathcal{W}_J(\cdot) \in \left( \mathcal{C}\left([0,T], \mathbb{C} \right), \Vert \cdot \Vert  _{\infty}\right)
\end{align*}
is Lipschitz and linear. It can be uniquely extended into a linear Lipschitz function on $\mathcal{S} \times \mathcal{S}$. Hence, for any $t$ in $[0,T]$,
\begin{align*}
    J \in \mathcal{S} \times \mathcal{S} \mapsto \mathcal{W}_J(t)
\end{align*}
defines an element of $\left( \mathcal{S} \times \mathcal{S} \right)^\prime$ denoted by $\left\langle \mathcal{W}(t), J\right\rangle$. It remains to prove that $\mathcal{W}(\cdot)$ belongs to $\mathcal{C}\left([0,T] , \left( \mathcal{S} \times \mathcal{S} \right)^\prime\right)$.

\medskip 

Let $s$ in $[0,T]$ and $\left(s_p \right)_{p \geq 0}$ in $\left[0,T \right]^\mathbb{N}$ which converges to $s$ when $p$ goes to infinity. Let $J$ in $\mathcal{S}\times \mathcal{S}$, $\eta > 0$ and $J^r$ in $\mathcal{D}$ such that 
\begin{align*}
    \Vert J-J^r \Vert < \eta \ .
\end{align*}
We have that 
\begin{align*}
    \left\vert \left\langle \mathcal{W}\left(s_p\right), J \right\rangle - \left\langle \mathcal{W}\left(s\right), J \right\rangle \right\vert & \leq \left\vert \left\langle \mathcal{W}\left(s_p\right)- \mathcal{W}(s), J^r \right\rangle \right\vert  +  \left\vert \left\langle \mathcal{W}\left(s_p\right), J-J^r \right\rangle \right\vert \\
    &+ \left\vert \left\langle \mathcal{W}\left(s\right), J-J^r \right\rangle \right\vert \\
    & \leq \left\vert \left\langle \mathcal{W}\left(s_p\right)- \mathcal{W}(s), J^r \right\rangle \right\vert + 2 K_0 \Vert J- J^r \Vert \\
    & \leq \left\vert \left\langle \mathcal{W}\left(s_p\right)- \mathcal{W}(s), J^r \right\rangle \right\vert + 2 K_0 \eta \ .
\end{align*}
Recall that 
\begin{align}
    t \in [0,T] \mapsto \left\langle \mathcal{W}(t),J^r \right\rangle = \mathcal{W}_{J^r}(t) \in \mathbb{C}
\end{align}
is continuous because $J^r$ is in $\mathcal{D}$. Hence, it exists $p_0$ in $\mathbb{N}^*$ such that 
\begin{align*}
    \forall p > p_0, \quad \left\vert \left\langle \mathcal{W}\left(s_p\right)- \mathcal{W}(s), J^r \right\rangle \right\vert \leq \eta \ .
\end{align*}
Consequently, we have
\begin{align*}
    \left\vert \left\langle \mathcal{W}\left(s_p\right), J \right\rangle - \left\langle \mathcal{W}\left(s\right), J \right\rangle \right\vert \leq \left(2K_0 +1\right) \eta \ .
\end{align*}
This proves that $\mathcal{W}(\cdot)$ is in $\mathcal{C}\left([0,T], \left( \mathcal{S}\times \mathcal{S}\right)^\prime \right)$ and ends the proof.

\medskip 

Hence, we deduce that there exists $\Wc$ in $\Cc \left( [0,T], \left(\Sc\times \Sc\right)^\prime \right)$ such that $(\mathcal{W}^\varepsilon)_{\varepsilon >0}$ converges pointwise (up to a subsequence) to $\Wc$. Using item ii) of Lemma \ref{lemma_anti_wigner} and sending $\varepsilon$ to zero in Eq.\;\eqref{evolution_eq_ib} we get that for any $t$ in $[0,T]$ and $J$ in $\Sc \times \Sc$
\begin{align}
\label{Bolt_D} \left\langle \Wc(t), J \right\rangle - \left\langle \Wc(0), J \right\rangle & = \int_0^t \frac{1}{2 \pi} \left\langle \Wc(s), \textbf{v}_0 \partial_u J \right\rangle \; ds +  \gamma\int_0^t \left\langle \Wc(s), C_0 J \right\rangle \; ds \ .
\end{align}
This proves that $\Wc$ is a weak solution of the Boltzmann equation\;\eqref{BE}.

\medskip

Let $t$ in $[0,T]$. It remains to extend $\Wc(t)$ into a couple $\mu(t):=(\mu_1(t),\mu_2(t))$ of bounded Borel measure on $\R \times \T$. This is proved in \cite[Lemma D.1]{SSS19}. The idea is to prove that the Wigner distribution is a positive linear form on $\Cc_{\mathbf{c}}( (\mathbb{R}\times \T)^2)$ and then by using Riesz's representation theorem we can extend $\Wc(t)$ into a couple of Borel measure on $\R \times \T$ denoted by $\mu(t):=(\mu_1(t), \mu_2(t))$.

\medskip

To end the proof, it remains to prove that for any $t$ in $[0,T]$ and $i$ in $\{1,2\}$, $\mu_i(t)$ is a bounded measure on $\R \times \T$.
Let $J$ in $\mathcal{S}$ defined by $J(u,k):=J^{\lambda,r}(u)$ where
\[ \forall u \in \R, \quad J^{\lambda,r}(u) := \exp\left(- \frac{\lambda }{r^2- u^2} \right) \mathds{1}_{[-r,r]}(u),\quad \lambda >0, \quad r >0 \ . \]
Let $\lambda$ be fixed, by using Eq.\;\eqref{K0} and item ii) of Lemma \ref{Lem_Wigner} we get that
\begin{align*} \forall t \in [0,T], \quad \forall i \in \{1,2\}, \quad \lim\limits_{\varepsilon \rightarrow 0} \left\langle \mathcal{W}_i^\varepsilon(t), J^{\lambda,r} \right\rangle = \int_{\mathbb{R} \times \mathbb{T}} J^{\lambda, r} (u) \; \mu_i(t, du,  dk) \leq K_0 \ .
\end{align*}
By sending first $\lambda$ to zero and then $r$ to infinity we get 
\begin{align*} 
\mu_i\left(t,\mathbb{R} , \mathbb{T}\right) \leq K_0 \ . 
\end{align*}
Hence, $\mu$ is a \textbf{Borel measure valued} weak solution of the Boltzmann equation (\ref{BE}) with initial condition $\mu^0:=(\mu_1^0,\mu_2^0)$. By Lemma \ref{lemma_unicite}, there is a unique \textbf{Borel measure valued} weak solution of Eq.\;\eqref{BE} hence we deduce that $(\mathcal{W}^\varepsilon)_{\varepsilon >0}$ converges pointwise to $\mu:=(\mu_1,\mu_2)$ in $\Fc \left( [0,T] , (\Sc\times\Sc)^\prime \right)$.
  \subsection{Sketch of the proof of Theorem \ref{main_theorem_hydro}}
  \label{section_proof_hydro_transition} 
Let $u$ in $\R$, to lighten the notations we define for each time $t$, $Z_u^N(t) := N^{-1}Z_{u_N,B_N}\left(N^{\beta_\delta} t\right)$ where we recall that $Z_{u_N,B_N}$ is defined in Eq.\;\eqref{stich}, $\beta_\delta$ is defined in Eq.\;\eqref{tails_Psi} and $u_N=Nu$. The proof of Theorem \ref{main_theorem_hydro} is divided into two steps.

\medskip

In the first instance we show that the finite-dimensional distributions of $Z_u^N(\cdot)$ converge weakly under $\mathbb{P}_{ \pi_{B_N}}$ to the finite-dimensional distributions of a L\'evy process $Z_u^\delta(\cdot)$ where for any positive time $t$ and $\theta$ in $\R$
\begin{equation}
\mathbb{E}\left[e^{\mathbf{i}\theta Z_u^\delta(t)}  \right]=\exp\left(t \Phi_{\delta}(\theta)+ \mathbf{i}\theta u\right) \ .
\end{equation}
Here for any $\theta$ in $\mathbb{R}$ 
\begin{equation}
\label{Z_x}
\Phi_{\delta}(\theta) = \int_{\R}\left(e^{\mathbf{i} r \theta} -1+\mathbf{i}\theta r \mathds{1}_{\{ \vert r \vert < 1 \}}   \right) \; d\nu_\delta(r) \ ,
\end{equation}
where $\nu_\delta$ is defined in Eq.\;\eqref{tails_Psi}. Then we prove that we can extend this result under $\mathbb{P}_{(k,i)}$ when $k$ is in $\T^*$ and $i$ is in $\{1,2\}$. 

\medskip

We start to show the first step, we follow the strategy of \cite[Sec. 6]{JKO09} and only prove that one-dimensional distribution of $\left(Z^N_u(\cdot)\right)_{N \in \N}$ converges weakly to one-dimensional distribution of $Z_u^\delta(\cdot)$. Let $t >0$, we define the random integer $j_N(t)$ as follows 
 \begin{equation}
\sum_{n=0}^{j_N(t)} \lambda_{B_N}\left(X_{B_N}^n\right) \tau_N^n \leq t < \sum_{n=0}^{j_N(t)+1} \lambda_{B_N}\left(X_{B_N}^n\right) \tau_N^n \ ,
 \end{equation}
where we recall that for any integer $N$, $\left(\tau_N^n\right)_{n \in \N}$ is a sequence of i.i.d random variables independent of the Markov chain $\left( X_{B_N}^n \right)_{n \in \N}$ and such that $\tau_N^0 \sim \mathcal{E}(1)$ and $\lambda_{B_N}$ is defined in Eq.\;\eqref{def_lambda}. Let $t \geq 0$, we define 
\begin{align*}
Y_u^N(t) &= u- N^{-1} \sum_{n=0}^{\lfloor N^{\beta_\delta} t \rfloor} \frac{\Psi_{B_N} \left(X_{B_N}^n\right)}{2\pi}\tau_N^n = u- N^{-1} \sum_{n=0}^{\lfloor N^{\beta_\delta} t \rfloor} \frac{\textbf{v}_{B_N}\left(K_{B_N}^n\right)}{2\pi}\lambda_{B_N}\left(X_{B_N}^n\right)\tau_N^n \ .\\
S_N(t) &= \frac{j_N(N^{\beta_\delta} t)}{N^{\beta_\delta} }=\inf\left\{ u >0 \; \Bigg| \; \; \sum_{n=0}^{\lfloor N^{\beta_\delta} u \rfloor} \lambda_{B_N}\left(X_{B_N}^n\right)\tau_N^n > t \right\} \ . \\
S(t) & = t\overline{\lambda}^{-1}=2\gamma t \ .
\end{align*}
Let $t$ be a positive time, we can decompose $Z_u^N(t)$ into two parts, the first one is given by all the jumps made by the particle until time $N^{\beta_\delta} t$ and the second part is given by the displacement made by the particle since the last jump. Hence, using the definition of $\Psi_{B_N}$ given in Eq.\;\eqref{def_Psi} we can write
\begin{align*}
&Z_u^N(t) =u- \sum_{n=0}^{j_N\left(N^{\beta_\delta} t\right) -1} \frac{\Psi_{B_N} \left(X_{B_N}^n   \right)\tau_N^n}{2\pi N} - \left(\frac{N^{\beta_\delta}t-t_{j_N(N^{\beta_\delta} t)}}{2\pi N}\right)\textbf{v}_{B_N}\left(K^{j_N\left(N^{\beta_\delta}t\right)}_{B_N}\right) \ .\\
&Y_u^N\left(S_N(t)\right) = u- \sum_{n=0}^{j_N\left(N^{\beta_\delta} t\right)} \frac{\Psi_{B_N} \left(X_{B_N}^n \right)\tau_N^n}{2\pi N} \ .
\end{align*}
We first prove that $Y_u^N(S_N(t))$ converges weakly to $Z_u^\delta(t)$, then we prove that $Z_u^N(t)$ and $Y_u^N(S_N(t))$ are close.
\begin{proposition}
\label{proposition_B_N} Let $\nu_\delta$ defined in Eq.\;\eqref{def_Levy_measure}. Then, under $\mathbb{P}_{\pi_{B_N}}$ the finite-dimensional distributions of $\left(Y_u^N(\cdot) \right)_{N \in \N}$ converge weakly to the finite dimensional distributions of a L\'evy process $Y_u^\delta$   where  for any positive time $t$ and $\theta$ in $\R$
\begin{equation}
\mathbb{E}\left[e^{\mathbf{i}\theta Y_u(t)}  \right]=\exp\left((2\gamma)^{-1} t \Phi_\delta(\theta)+ \mathbf{i}\theta u\right)\ ,
\end{equation}
where the expression of $\Phi_\delta$ is recalled in Eq.\;\eqref{Z_x}.
\end{proposition}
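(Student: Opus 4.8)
The plan is to use that, under $\mathbb{P}_{\pi_{B_N}}$, the process $Y_u^N$ is --- apart from the deterministic starting point $u$ --- a negative partial sum of i.i.d.\ summands, and to reduce the whole statement to one scalar limit for characteristic functions, in the spirit of \cite[Sec.~6]{JKO09}. By Eq.~\eqref{def_P}, under $\mathbb{P}_{\pi_{B_N}}$ the sequence $\left(X_n^{B_N}\right)_{n\ge 0}$ is i.i.d.\ with common law $\pi_{B_N}$, and $\left(\tau_n^N\right)_{n\ge 0}$ is i.i.d.\ $\mathcal{E}(1)$ and independent of it, so that, writing $\xi_n^N:=\Psi_{B_N}\!\left(X_n^{B_N}\right)\tau_n^N/(2\pi N)$, we have $Y_u^N(t)=u-\sum_{n=0}^{\lfloor N^{\alpha_\delta}t\rfloor}\xi_n^N$. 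Since $\textbf{v}_{B_N}$ is odd in $k$ while $\widehat{\alpha}$, $R$, $\theta_{i,B_N}$ and $\pi_{B_N}$ are even in $k$, the law of $\Psi_{B_N}\!\left(X_0^{B_N}\right)$ under $\pi_{B_N}$ is symmetric, whence $\left(\xi_n^N\right)_{n\ge 0}$ is an i.i.d.\ family of centred symmetric random variables. Consequently the increments of $Y_u^N$ over disjoint time intervals are independent, the joint characteristic function of any finite-dimensional vector factorises over them, and --- since $Y_u^\delta$, which exists because $\nu_\delta$ is a Lévy measure by Proposition~\ref{proposition_f}, also has independent increments --- it suffices to prove the convergence in distribution of the one-dimensional marginals $Y_u^N(t)\Rightarrow Y_u^\delta(t)$ for each fixed $t>0$.

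Next, by independence,
\[ \mathbb{E}_{\pi_{B_N}}\!\left[e^{\mathbf{i}\theta Y_u^N(t)}\right]=e^{\mathbf{i}\theta u}\,\phi_N(\theta)^{\lfloor N^{\alpha_\delta}t\rfloor+1},\qquad \phi_N(\theta):=\mathbb{E}\!\left[\cos\!\Big(\tfrac{\theta\,\Psi_{B_N}(X_0^{B_N})\tau_0}{2\pi N}\Big)\right], \]
the last equality using symmetry (so $\phi_N(\theta)\in\mathbb{R}$). Granting the uniform tail bounds of the next step, which yield $\phi_N(\theta)\to 1$ and $1-\phi_N(\theta)=O(N^{-\alpha_\delta})$, one has $\left(\lfloor N^{\alpha_\delta}t\rfloor+1\right)\log\phi_N(\theta)=t\,N^{\alpha_\delta}\!\left(\phi_N(\theta)-1\right)+o(1)$, so the proposition reduces to establishing
\[ N^{\alpha_\delta}\!\left(\phi_N(\theta)-1\right)\;\xrightarrow{N\to\infty}\;(2\gamma)^{-1}\Phi_\delta(\theta). \]

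The heart of the proof is this last limit. I would write $\phi_N(\theta)-1=\int_{\mathbb{R}^*}\left(\cos(\theta r)-1\right)dQ_N(r)$, with $Q_N$ the law of $\Psi_{B_N}(X_0^{B_N})\tau_0/(2\pi N)$. Conditioning on $\tau_0$ and applying Proposition~\ref{proposition_properties_pi} with the fixed value $2\pi x/\tau_0$ in place of $r$ gives, at every continuity point $x>0$, the pointwise convergences $N^{\alpha_\delta}Q_N\!\left((x,\infty)\right)\to\mathbb{E}\!\left[\kappa_\delta(2\pi x/\tau_0)\right]$ and, by symmetry, the same on the left tail. To make the interchange of limit and $\tau_0$-average legitimate I would extract from the explicit description of $\pi_{B_N}\circ\Psi_{B_N}^{-1}$ afforded by Propositions~\ref{proposition_x_B} and~\ref{proposition_f} a bound $N^{\alpha_\delta}\,\pi_{B_N}\!\left(\{|\Psi_{B_N}|>Nr\}\right)\le C\min\!\left(r^{-\alpha_\delta},r^{-2}\right)$ uniform in $N$; the same bound yields $\limsup_N N^{\alpha_\delta}\int_{|r|\le s}r^2\,dQ_N(r)\to 0$ as $s\downarrow 0$. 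Combining, $N^{\alpha_\delta}Q_N$ converges vaguely on $\mathbb{R}^*$ to a symmetric measure which, after the change of variables $r\mapsto 2\pi r/\tau_0$ and comparison with Eq.~\eqref{def_Levy_measure}, is exactly $(2\gamma)^{-1}\nu_\delta$ in each of the regimes $\delta>\tfrac12$, $\delta=\tfrac12$, $\delta<\tfrac12$.

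I would then conclude with the usual truncation argument: split $\int\left(\cos\theta r-1\right)dQ_N$ at $|r|=s$, use vague convergence on $\{|r|>s\}$ (where $r\mapsto\cos\theta r-1$ is bounded and continuous) and the inequality $|\cos\theta r-1|\le\tfrac12\theta^2 r^2$ together with the truncated-second-moment bound on $\{|r|\le s\}$, then let $s\downarrow 0$. By symmetry of $\nu_\delta$ the imaginary part and the compensator $\mathbf{i}\theta r\mathds{1}_{\{|r|<1\}}$ drop out, so the limit equals $\int_{\mathbb{R}^*}\left(\cos\theta r-1\right)d\big[(2\gamma)^{-1}\nu_\delta\big](r)=(2\gamma)^{-1}\Phi_\delta(\theta)$, which is what was needed. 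The main obstacle is this third step: producing the two-sided tail bound for $N^{\alpha_\delta}\,\pi_{B_N}\circ\Psi_{B_N}^{-1}$ that is uniform both in $N$ and in the conditioning variable, so that the $\tau_0$-average and the small-jump truncation can be controlled at once. This is precisely what the implicit-function machinery of Propositions~\ref{proposition_x_B}--\ref{proposition_f} is designed to supply, and the delicate case is $\delta=\tfrac12$, where $\kappa_{1/2}$ is not a pure power law and the primitives $h_{B,\pm}$, $g_{B,\pm}$ must be used directly.
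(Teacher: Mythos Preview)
Your proof is correct and takes a somewhat different, more elementary route than the paper. The paper invokes a general martingale-array convergence criterion (Proposition~\ref{proposition_array}, adapted from \cite{DR78} and \cite[Lemma~4.2]{JKO09}) and simply verifies its five hypotheses: centred increments, the two tail conditions supplied by Proposition~\ref{proposition_properties_pi}, and the convergence of the averaged jump functional in Eq.~\eqref{fourth_assumption_array}. You instead exploit directly that under $\mathbb{P}_{\pi_{B_N}}$ the summands $\xi_n^N$ are i.i.d.\ and symmetric, factor the joint characteristic function over independent increments, and reduce the whole statement to the single scalar limit $N^{\alpha_\delta}(\phi_N(\theta)-1)\to(2\gamma)^{-1}\Phi_\delta(\theta)$, which you establish via vague convergence of $N^{\alpha_\delta}Q_N$ on $\mathbb{R}^*$ together with a truncated-second-moment bound. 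Both routes ultimately rest on the same analytic input (Proposition~\ref{proposition_properties_pi}) and both require the same uniform-in-$N$ tail estimate to justify passing to the limit under the $\tau_0$-average; the paper handles this step tersely inside its proof of Eq.~\eqref{fourth_assumption_array}, while you correctly flag it as the main technical point. Your approach buys self-containedness (no black-box limit theorem needed); the paper's buys generality (Proposition~\ref{proposition_array} does not require the i.i.d.\ structure, even though the verification here uses it). One minor simplification: the single uniform bound $N^{\alpha_\delta}\pi_{B_N}(\{|\Psi_{B_N}|>Nr\})\le Cr^{-\alpha_\delta}$ already controls the $\tau_0$-averaging (since $\tau^{\alpha_\delta}e^{-\tau}$ is integrable), the large-$|r|$ tail (since $\alpha_\delta\ge 3/2>0$), and the small-$|r|$ truncation (since $\alpha_\delta\le 5/3<2$), so the $\min(r^{-\alpha_\delta},r^{-2})$ form you propose is slightly more than what is needed.
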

\begin{proof}[Idea of the proof]
Proposition \ref{proposition_properties_pi} allows us to understand the behavior of the tails of the random variables $\left(\Psi_{B_N} \left(X_{B_N}^n\right) \right)_{n \in \mathbb{N}}$ when $N$ goes to infinity. Using this proposition and \cite[Lemma 4.2]{JKO09} which describes the convergence of an array of random variables we can conclude the proof. We refer the reader to Appendix \;\ref{appendix_proof_Levy} for a complete proof of Proposition \ref{proposition_B_N}.
\end{proof}
\begin{flushleft}
To complete the proof of Theorem \ref{main_theorem_hydro} we need the following two lemmas.
\end{flushleft}
\begin{lemma}
\label{lemma_S_N} Let $t_0 >0$ and $T \geq t_0$ then 
\begin{equation}
\forall \varepsilon >0, \quad \lim\limits_{N \rightarrow \infty} \mathbb{P}_{\pi_{B_N}}\left[ \sup_{ t \in [t_0,T]} \vert S_N(t) -S(t) \vert > \varepsilon   \right] =0\ .
\end{equation}
\end{lemma}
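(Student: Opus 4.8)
The plan is to use a law of large numbers argument for the renewal-type sums $\sum_{n=0}^{\lfloor N^{\alpha_\delta} u\rfloor}\lambda_{B_N}(X_n^{B_N})\tau_n^N$ together with the (almost) i.i.d.\ structure of the chain. Recall from the remark following Eq.\;\eqref{def_P} that the variables $(X_n^{B_N})_{n\ge 1}$ are i.i.d.\ with law $\pi_{B_N}$, and that $\lambda_{B_N}(X_n^{B_N})^{-1}$ is, up to the normalisation $\gamma\bar R$, the density of $\pi_{B_N}$ with respect to $dk\,\delta_j(di)$; hence $\E_{\pi_{B_N}}[\lambda_{B_N}(X^{B_N})]=\overline\lambda := (2\gamma)^{-1}$ for every $N$. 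First I would write, for fixed $t$,
\[
A_N(u):=N^{-\alpha_\delta}\sum_{n=0}^{\lfloor N^{\alpha_\delta}u\rfloor}\lambda_{B_N}\big(X_n^{B_N}\big)\tau_n^N,
\]
and show that $A_N(u)\to u\,\overline\lambda$ in probability under $\P_{\pi_{B_N}}$, locally uniformly in $u$. Since $\lambda_{B_N}(X_n^{B_N})\tau_n^N$ has mean $\overline\lambda$ (the $\tau$'s being independent of the chain, mean one) and the summands for $n\ge 1$ are i.i.d.\ with a variance bounded uniformly in $N$ (this uniform moment bound is the one technical point: $\theta_{i,B}$ is bounded and $R(k)$ is bounded below away from zero uniformly in $B$ for $B$ in a neighbourhood of $0$, so $\lambda_{B_N}$ has moments of all orders uniformly in $N$), an $L^2$ estimate gives pointwise convergence $A_N(u)\to u\overline\lambda$, and monotonicity of $u\mapsto A_N(u)$ upgrades this to uniform convergence on $[t_0,T]$ via a Dini-type argument on a finite grid.

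Next I would transfer this to $S_N$. By definition $S_N(t)=\inf\{u>0:\ A_N(u)>t\}$ is the generalized inverse of the nondecreasing function $A_N$, and $S(t)=t\overline\lambda^{-1}$ is the inverse of the limit $u\mapsto u\overline\lambda$. The standard fact that uniform convergence of nondecreasing functions to a continuous strictly increasing limit implies uniform convergence of the inverses on compacts then yields
\[
\sup_{t\in[t_0,T]}\big|S_N(t)-S(t)\big|\xrightarrow[N\to\infty]{}0
\]
in probability under $\P_{\pi_{B_N}}$. Concretely: on the event $\{\sup_{u\in[0,\,2\overline\lambda^{-1}T]}|A_N(u)-u\overline\lambda|<\eta\}$, one gets $A_N(S(t)+\eta\overline\lambda^{-1})>t$ and $A_N(S(t)-\eta\overline\lambda^{-1})<t$ simultaneously for all $t\in[t_0,T]$ (using $t_0>0$ to keep the argument positive), hence $|S_N(t)-S(t)|\le \eta\overline\lambda^{-1}$ uniformly; since $\P_{\pi_{B_N}}$ of that event tends to $1$, the claim follows.

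The main obstacle I anticipate is not the inversion step, which is soft, but securing the \emph{uniformity in $N$} of the moment bounds on $\lambda_{B_N}(X_n^{B_N})$ together with genuine uniformity of the law of large numbers: the magnetic field $B_N=BN^{-\delta}$ is $N$-dependent, so one is really dealing with a triangular array, and the control of $\lambda_{B_N}$ near $k=0$ must be checked to be uniform as $B_N\to 0$. Once one checks that $\theta_{i,B_N}^2(k)R(k)$ stays bounded below by a constant times $\sin^2(\pi k)$ uniformly in $N$ (so $\lambda_{B_N}(k,i)\lesssim \sin^{-2}(\pi k)$, which is $\pi_{B_N}$-integrable to any power less than the relevant threshold, and in particular square-integrable), the uniform $L^2$ estimate and hence the whole argument goes through; the contribution of $X_0^{B_N}$ is a single term of order $N^{-\alpha_\delta}$ times something tight, so it is negligible. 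I would also note that the restriction $t_0>0$ is exactly what is needed to avoid the degenerate behaviour of $S_N$ near $t=0$, where finitely many large jumps of $\lambda_{B_N}$ could spoil uniformity down to $0$.
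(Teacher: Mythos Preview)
Your overall architecture --- prove a law of large numbers for $A_N(u)=N^{-\alpha_\delta}\sum_{n\le \lfloor N^{\alpha_\delta}u\rfloor}\lambda_{B_N}(X_n^{B_N})\tau_n^N$, then pass to the generalized inverse $S_N$ via monotonicity and a finite grid --- is exactly the route the paper takes (Lemma~\ref{lemma_T_N} followed by the grid/inversion argument in Appendix~\ref{section_proof_lemma_weak}). The gap is in the technical step you flag as ``the one technical point'': your claimed uniform $L^2$ bound on the summands is false.

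First, $R(k)=8\sin^2(\pi k)$ is \emph{not} bounded below away from zero; it vanishes at $k=0$. Your refined claim, that $\theta_{i,B_N}^2(k)R(k)\gtrsim \sin^2(\pi k)$ uniformly, also fails for $i=2$: for fixed $B_N>0$ one has $\theta_{2,B_N}^2(k)\sim \hat\alpha(k)/B_N^2$ near $k=0$, so $\lambda_{B_N}(k,2)\sim B_N^2\sin^{-4}(\pi k)$, not $\sin^{-2}(\pi k)$. But even granting $\lambda_{B_N}(k,i)\lesssim \sin^{-2}(\pi k)$ (which does hold for $i=1$), this bound is \emph{not} square-integrable under $\pi_{B_N}$: since $\pi_{B_N}(dk,di)=\lambda_{B_N}^{-1}(k,i)(\gamma\bar R)^{-1}dk\,\delta_j(di)$, one computes $\E_{\pi_{B_N}}[\lambda_{B_N}^2]=(\gamma\bar R)^{-1}\sum_i\int_\T\lambda_{B_N}(k,i)\,dk$, and for $i=1$ this integral dominates $\int_\T\sin^{-2}(\pi k)\,dk=\infty$. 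So the variance of each summand is infinite and a direct $L^2$ estimate cannot work.

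The paper's fix is a truncation: replace $\lambda_{B_N}$ by $\lambda_{B_N}^\varepsilon:=\lambda_{B_N}\mathds{1}_{|k|\ge\varepsilon}$, which \emph{is} in $L^2(\pi_{B_N})$ (with bounds depending on $\varepsilon$), run the $L^2$ law of large numbers for the truncated array, and control the truncation error in $L^1$ uniformly in $N$ using the key cancellation $\lambda_{B_N}\,d\pi_{B_N}=(\gamma\bar R)^{-1}dk\,\delta_j(di)$, which gives $\E_{\pi_{B_N}}[\lambda_{B_N}\mathds{1}_{|k|<\varepsilon}]=O(\varepsilon)$ independently of $N$. With this in place, your inversion step goes through unchanged.
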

\begin{proof}
We refer the reader to Appendix \ref{section_proof_lemma_weak}.
\end{proof}
\begin{lemma}
\label{lemma_BN}
For any time $t$ and $\varepsilon >0$ be fixed we have
\begin{equation}
\label{result_lemma_6.2_stefano}
\lim\limits_{N \rightarrow \infty} \mathbb{P}_{\pi_{B_N}}\left[  \left\vert Z_u^N(t) - Y_u^N\left(S_N(t)\right)\right\vert > \varepsilon \right]=0 \ .
\end{equation}
\end{lemma}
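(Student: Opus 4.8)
\emph{Strategy.} The proof adapts that of \cite[Lemma~6.2]{SSS19} (itself following \cite[Sec.~6]{JKO09}); the only genuinely new ingredient is to make every estimate \emph{uniform in $N$}, equivalently in $B_N=BN^{-\delta}\in(0,B]$, and for this we borrow the $k\to0$ analysis already carried out in Appendix~\ref{appendix_proof_tails} for Proposition~\ref{proposition_properties_pi}. We may assume $t>0$; for $t=0$ one has $Z_u^N(0)-Y_u^N(S_N(0))=\frac{\Psi_{B_N}(X_0^{B_N})\tau_0^N}{2\pi N}$, which tends to $0$ in $\P_{\pi_{B_N}}$-probability directly from the tail bound of Proposition~\ref{proposition_properties_pi}. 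Subtracting the two decompositions displayed just before the statement,
\[
Z_u^N(t)-Y_u^N(S_N(t))=\frac{\Psi_{B_N}\big(X_{j_N}^{B_N}\big)\tau_{j_N}^N}{2\pi N}-\frac{N^{\alpha_\delta}t-t_{j_N}}{2\pi N}\,\textbf{v}_{B_N}\big(K_{j_N}^{B_N}\big),\qquad j_N:=j_N(N^{\alpha_\delta}t).
\]
From \eqref{omega_prime} and \eqref{def_widehat_alpha} one gets the \emph{uniform} bound $|\textbf{v}_{B_N}(k)|=\frac{|\widehat{\alpha}'(k)|}{2\sqrt{\widehat{\alpha}(k)+B_N^2/4}}\le 2\pi|\cos\pi k|\le 2\pi$, and from \eqref{theta}--\eqref{def_lambda} the uniform lower bound $\lambda_{B_N}(k,i)=[\gamma\theta_{i,B_N}^2(k)R(k)]^{-1}\ge(8\gamma)^{-1}$. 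Writing $\ell_n:=\lambda_{B_N}(X_n^{B_N})\tau_n^N=t_n-t_{n-1}$ for the length of the $n$-th holding interval $I_n:=[t_{n-1},t_n)$ (so $I_{j_N+1}$ straddles $N^{\alpha_\delta}t$) and $L_N(s):=t_{j_N(s)+1}-t_{j_N(s)}$ for the length of the interval straddling time $s$, the identity $\Psi_{B_N}=\textbf{v}_{B_N}\lambda_{B_N}$ bounds the first term above by $\ell_{j_N}/N$ and, since $N^{\alpha_\delta}t<t_{j_N+1}$, the second by $(N^{\alpha_\delta}t-t_{j_N})/N\le\ell_{j_N+1}/N=L_N(N^{\alpha_\delta}t)/N$. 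A short case analysis on the position of $t_{j_N}$ relative to $N^{\alpha_\delta}t-\tfrac{N\varepsilon}{2}$ then shows that, for $N$ large, $\{|Z_u^N(t)-Y_u^N(S_N(t))|>\varepsilon\}$ is contained in $\{L_N(N^{\alpha_\delta}t)>\tfrac{N\varepsilon}{2}\}\cup\{L_N(N^{\alpha_\delta}t-\tfrac{N\varepsilon}{2})>\tfrac{N\varepsilon}{2}\}$, so it suffices to prove $\P_{\pi_{B_N}}(L_N(s)>\tfrac{N\varepsilon}{2})\to0$ uniformly over $s\ge0$.

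\emph{Renewal structure and an inspection-paradox bound.} By the Remark following \eqref{def_P}, under $\P_{\pi_{B_N}}$ the variables $(X_n^{B_N})_{n\ge0}$ are i.i.d.\ with law $\pi_{B_N}$ and independent of $(\tau_n^N)_{n\ge0}$; hence $(t_n)_{n\ge0}$ is a renewal process whose i.i.d.\ increments $\ell_n$ have a density bounded by $8\gamma$ (since $\lambda_{B_N}\ge(8\gamma)^{-1}$ and $\tau_0^N\sim\mathcal E(1)$) and a mean $\mathbb E_{\pi_{B_N}}[\lambda_{B_N}]\,\mathbb E[\tau_0]=\frac1{\gamma\overline{R}}\sum_{i=1}^2\int_{\T}dk=\frac2{\gamma\overline{R}}$ independent of $N$. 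By standard renewal theory the renewal measure satisfies $\sup_{a\ge0}U_N([a,a+1])\le C$ uniformly in $N$. Decomposing $\{L_N(s)>h\}$ over the index $n$ of the straddling interval and using that $t_n$ is independent of $\ell_{n+1}=\lambda_{B_N}(X_{n+1}^{B_N})\tau_{n+1}^N$,
\[
\P_{\pi_{B_N}}(L_N(s)>h)=\sum_{n\ge0}\mathbb E\Big[\mathds 1_{\{t_n\le s\}}\,\overline G_N\big(\max(h,\,s-t_n)\big)\Big]\ \le\ C\,\overline G_N(h)\,(1+h)+C\!\int_h^\infty\!\overline G_N(y)\,dy,
\]
where $\overline G_N(y):=\P_{\pi_{B_N}}(\lambda_{B_N}(X)\tau_0>y)$; the first term is the contribution of $\{s-t_n\le h\}$ (a window of length $h$ of the renewal measure), the second that of $\{s-t_n>h\}$ (grouped by the integer part of $s-t_n$, using $\sup_aU_N([a,a+1])\le C$ and the monotonicity of $\overline G_N$).

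\emph{Uniform tail estimate and conclusion.} For any $\eta\in(0,\tfrac14)$ one has, uniformly in $N$,
\[
\mathbb E_{\pi_{B_N}}\big[\lambda_{B_N}(X)^{1+\eta}\big]=\frac1{\gamma\overline{R}}\sum_{i=1}^2\int_{\T}\lambda_{B_N}(k,i)^{\eta}\,dk\ \le\ C_\eta<\infty,
\]
because, by \eqref{theta}--\eqref{def_lambda}, $\lambda_{B_N}(\cdot,1)$ has at $k=0$ a singularity of order $k^{-2}$ and $\lambda_{B_N}(\cdot,2)$ one of order at most $\min(k^{-2},B_N^2k^{-4})$, both of which, raised to the power $\eta<\tfrac14$, are integrable with a bound uniform in $B_N\in(0,B]$ --- this is exactly the kind of computation performed in Appendix~\ref{appendix_proof_tails}. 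Markov's inequality then gives $\overline G_N(y)\le C_\eta\,\Gamma(2+\eta)\,y^{-1-\eta}$, hence $\int_h^\infty\overline G_N\le C'_\eta\,h^{-\eta}$, uniformly in $N$. Inserting $h=\tfrac{N\varepsilon}{2}$ into the bound of the previous step yields $\P_{\pi_{B_N}}(L_N(s)>\tfrac{N\varepsilon}{2})\le C''_\eta\,(N\varepsilon)^{-\eta}\to0$ uniformly in $s\ge0$; combined with the inclusion of the first step this proves the lemma.

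\emph{Main difficulty.} The reduction and the inspection-paradox bound are a routine adaptation of \cite[Sec.~6]{SSS19}. The real point is the uniformity in $N$ (i.e.\ as $B_N\downarrow0$) of the renewal-measure bound and, above all, of the moment bound $\sup_N\mathbb E_{\pi_{B_N}}[\lambda_{B_N}(X)^{1+\eta}]<\infty$: as $B_N\to0$ the function $\lambda_{B_N}(\cdot,2)$ develops near $k=0$ a spike of height $\sim B_N^{-2}$ on a window of width $\sim B_N$, and one must check that its $L^\eta$-norm nonetheless stays bounded, which is what forces $\eta<\tfrac14$ (rather than $\eta<\tfrac12$, which would already handle the type-$1$ mode). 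Since exactly these estimates underlie the proof of Proposition~\ref{proposition_properties_pi}, the cleanest route is to quote them from Appendix~\ref{appendix_proof_tails} rather than to redo them.
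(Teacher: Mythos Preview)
Your argument is correct, but it follows a genuinely different route from the paper's (which is the adaptation of \cite[Lemma~6.2]{JKO09} you cite at the start). The paper does not pass through holding-time lengths at all: starting from the same difference formula, it keeps the factor $\Psi_{B_N}\bigl(X_{j_N}^{B_N}\bigr)\tau_{j_N}^N$ intact, invokes Lemma~\ref{lemma_S_N} to localise the random index $j_N(N^{\alpha_\delta}t)$ in a window $[(S(t)-\sigma)N^{\alpha_\delta},(S(t)+\sigma)N^{\alpha_\delta}]$ up to probability $\sigma$, and then controls
\[
\P_{\pi_{B_N}}\!\left[\sup_{n\in[0,2\sigma N^{\alpha_\delta}]}\Psi_{B_N}\bigl(X_n^{B_N}\bigr)\tau_n^N>N\varepsilon\right]
\ \lesssim\ \sigma N^{\alpha_\delta}\,\pi_{B_N}\!\left(\Psi_{B_N}>N\varepsilon/\tau\right)
\ \lesssim\ \frac{\sigma}{\varepsilon^{\alpha_\delta}}
\]
directly from the tail estimate of Proposition~\ref{proposition_properties_pi}; sending $\sigma\to0$ finishes. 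This is shorter because it recycles two results already needed elsewhere (Lemma~\ref{lemma_S_N} and Proposition~\ref{proposition_properties_pi}) and never touches renewal theory.

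Your route instead throws away the factor $\textbf{v}_{B_N}$ via $|\textbf{v}_{B_N}|\le2\pi$, reduces to the pure holding-time question ``is the straddling interval long?'', and answers it by an inspection-paradox bound together with the uniform moment estimate $\sup_N\E_{\pi_{B_N}}[\lambda_{B_N}^{1+\eta}]<\infty$ for $\eta<\tfrac14$. The case analysis and the renewal bound $U_N([a,a+1])\le C$ (which you get cleanly from the stochastic domination $\ell_n\ge(8\gamma)^{-1}\tau_n^N$) are correct, and your moment computation is right --- the $k^{-4}$ spike of $\lambda_{B_N}(\cdot,2)$ near $0$ is what forces $\eta<\tfrac14$. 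What your approach buys is independence from Lemma~\ref{lemma_S_N}; what it costs is two extra uniform estimates that are not reused anywhere else in the paper, whereas the paper's proof only consumes ingredients already on the table.
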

\begin{proof}[Proof of Lemma \ref{lemma_BN}] We recall only the main ideas and we refer the reader to \cite[Lemma 6.2]{JKO09} for a complete proof. Let $\sigma >0$ be fixed, then by Lemma \ref{lemma_S_N} we have
\begin{align*}
& \mathbb{P}_{\pi_{B_N}}\left[  \left\vert Z_u^N(t) - Y_u^N\left(S_N(t)\right)\right\vert > \varepsilon \right]  \\
&\leq  \P_{\pi_{B_N}} \left[ \left\vert S_N(t) - t \right\vert  > \sigma \right] \\
& +  \P_{\pi_{B_N}} \left[ \left\vert S_N(t) - t \right\vert  \leq \sigma  \cap \left\vert Y_N(t) - Y_u^N\left( S_N(t)\right) \right\vert > \varepsilon\right]\\
 & \leq \P_{\pi_{B_N}} \left[ \left\vert S_N(t) - t \right\vert  \leq \sigma \ , \  \left\vert \tfrac{\Psi_{B_N}\left( X^{j_N\left(N^{\beta_\delta}t\right)}_{B_N} \right)}{N} \right\vert > \varepsilon \right]  + \sigma \ .
\end{align*}
Using the stationarity of the chain $\left( \Psi_{B_N}\left(X_{B_N}^n\right)\tau_N^n  \right)_{n \in \N}$ we have
\begin{align*}
& \P_{\pi_{B_N}} \left[ \left\vert S_N(t) - t \right\vert  \leq \sigma \ ,  \ \left\vert N^{-1}\Psi_{B_N}\left( X^{j_N(N^{\beta_\delta}t)}_{B_N} \right) \right\vert > \varepsilon \right] \\
& \leq \P_{\pi_{B_N}} \left[ \sup\left\{ \Psi_{B_N}\left(X_{B_N}^n\right)\tau_N^n  , \; n \in \left[ (S(t)-\sigma)N^{\beta_\delta} , (S(t)+ \sigma)N^{\beta_\delta}    \right]\right\} > N\varepsilon \right]\\
&= \P_{\pi_{B_N}} \left[ \sup\left\{ \Psi_{B_N}\left(X_{B_N}^n\right)\tau_N^n  , \; n \in \left[ 0,2\sigma N^{\beta_\delta}   \right]\right\} > N\varepsilon \right]\\
&= N^{\beta_\delta} \int_0^{+\infty} e^{-\tau} \pi_{B_N} \left(\left\{ u \; \big| \; \Psi_{B_N} (u) > N  \tau^{-1} \varepsilon \right\}\right) \; d\tau \\
& \lesssim \frac{\sigma}{\varepsilon^{\beta_\delta}} \ .
\end{align*}
This ends the proof.
\end{proof}
We recall that if a sequence $(x_n)_{n \in \N}$ in $\mathcal{D}\left([t_0,T], \mathbb{R}   \right)$ converges uniformly to $x$ in $\mathcal{D}\left([t_0,T], \mathbb{R}   \right)$ then the convergence follows in the sense of Skorohod topology. Hence, from Lemma \ref{lemma_S_N}, we conclude that $S_N(\cdot)$ converges in probability to $S(\cdot)$ in $ \mathcal{D}\left([t_0,T], \mathbb{R}   \right)$.

Using Proposition \ref{proposition_B_N} and Lemma \ref{lemma_S_N} we can conclude that $\left(Y_u^N(\cdot), S_N(\cdot)\right)$ converges in law to $(Y_u^\delta(\cdot), S(\cdot))$ in $\mathcal{D}([t_0,T], \mathbb{R})$. Hence, by Skorokhod's representation theorem there exists $\left(\tilde{Y}_u^N(\cdot), S_N(\cdot)\right)$ with values in $\mathcal{D}([0, \infty) \times [0, \infty) )$ such that the pairs $\left(Y_u^N(\cdot),S_N(\cdot)\right)$ and  $\left(\tilde{Y}_u^\delta(\cdot),\tilde{S}_N(\cdot)\right)$ have the same law and $\left(\tilde{Y}_u^N(\cdot), \tilde{S}_N(\cdot)\right)$ converges almost surely to $\left(Y_u^\delta(\cdot), S(\cdot)\right)$ in the Skorokhod topology.

From this we deduce that there exists a sequence of increasing homeomorphisms $(h_N)_{N \in \N}$ from $[t_0,T]$ to $[t_0,T]$ such that
\begin{align}
\lim\limits_{ N \rightarrow \infty} \sup_{ t \in [t_0,T]} \left\vert h_N(t) - t \right\vert &=0 \quad \text{and} \quad 
\lim\limits_{ N\rightarrow \infty} \sup_{t \in [t_0,T]} \left\vert \tilde{Y}_u^N\left(h_N(t)\right) - Y^\delta_u\left(S(t)\right) \right\vert =0 \ .
\end{align}
Then we have
\begin{align*}
 \left\vert \tilde{Y}_u^N\left( \tilde{S}_N(t) \right) - Y_u^\delta\left( S(t) \right) \right\vert  &\leq  \left\vert \tilde{Y}_u^N\left( \tilde{S}_N(t) \right) - Y_u^\delta \left(h_N^{-1}\left( \tilde{S}_N(t) \right) \right) \right\vert \\
 & + \left\vert Y_u^\delta \left(h_N^{-1}\left(\tilde{S}_N(t) \right)\right) - Y_u^\delta\left( S(t) \right) \right\vert \ ,
\end{align*}

\begin{flushleft}
which goes to zero when $N$ goes to infinity. Hence, $\left(\tilde{Y}_u^N\left( \tilde{S}_N(t) \right)\right)_{N \in \N}$ converges to $Y_u^\delta(S(t))$. Using that $\tilde{Y}_u^N\left( \tilde{S}_N(t) \right)$ and $Y_u^N\left( \tilde{S}_N(t) \right)$ have the same law we conclude that $\left(Y_u^N\left( \tilde{S}_N(t) \right)\right)_{N \in \N}$ converges in law to $Y_u^\delta(S(t))$. 
Using Eq.\;\eqref{result_lemma_6.2_stefano} we conclude that $\left(Z_u^N(t)\right)_{N \geq 0}$ converges weakly to $Y_u^\delta(S(t))$. Observe that $Y_u^\delta(S(t))$ and $Z_u^\delta(t)$ have the same law hence we proved that under $\P_{\pi_{B_N}}$ the finite-dimensional distribution of $\left(Z_u^N(\cdot)\right)_{N \geq 0}$ converge to the finite-dimensional distribution of $Z_u^\delta(\cdot)$.
\end{flushleft}

\medskip

To conclude the proof, it remains to extend this convergence under $\mathbb{P}_{(k,i)}$ when $k$ is in $\T^*$ and $i$ is in $\{1,2\}$. Since $k$ is not equal to zero by Eq.\;\eqref{def_Psi} we have
\[ \lim\limits_{N \rightarrow \infty} N^{-1} \Psi_{B_N} \left(X_{B_N}^0  \right) = \lim\limits_{N \rightarrow \infty} N^{-1} \Psi_{B_N} \left(k,i  \right)= 0 \; \; \; \mathbb{P}_{(k,i)}\; \; \; \text{a.s.} \] 
Hence, we deduce that $\left( N^{-1}\Psi_{B_N}\left(X_{B_N}^0\right) \tau_N^0 \right)_{N \in \N}$ goes to zero in $\mathbb{L}_{\P_{(k,i)}}^1$. From Markov inequality we deduce that
\begin{equation}
\label{Probabilite_X_et_tilde_X}
\forall \varepsilon >0, \quad \lim\limits_{N \rightarrow \infty} \P_{(k,i)} \left( N^{-1} \left\vert \Psi_{B_N}\left(X_{B_N}^0\right)  \right\vert \tau_0^N > \varepsilon  \right) =0 \ .
\end{equation}
We define the stochastic process $\tilde{Z}_u^N(\cdot)$ for any positive time $t$ by 
\begin{align*}
&{\tilde Z}_u^N(t) =u- \sum_{n=1}^{j_N(N^{\beta_\delta} t) -1} \frac{\Psi_{B_N} \left(X_{B_N}^n   \right)\tau_N^n}{2\pi N}   - \left(\frac{N^{\beta_\delta}t-t_{j_N\left(N^{\beta_\delta} t\right)}}{2\pi N}\right)\textbf{v}_{B_N}\left(K^{j_N\left(N^{\beta_\delta}t\right)}_{B_N}\right) \ .
\end{align*}
Whatever the distribution of $X_0^{B_N}$ is, by using Eq.\;\eqref{def_P}, we observe that $\left(X_{B_N}^n \right)_{n \geq 1}$ is a sequence of i.i.d random variables distributed according to $\pi_{B_N}$. Hence, since $\tilde{Z}_u^N(\cdot)$ depends only on $\left( X_{B_N}^n\right)_{n \geq 1}$ we deduce by Proposition \ref{proposition_B_N} that under $\P_{(k,i)}$ the finite-dimensional distributions of $\tilde{Z}_u^N(\cdot)$ converge weakly to the finite-dimensional distributions of the L\'evy process $Z_u^\delta(\cdot)$ defined in Eq.\;\eqref{Z_x}.\\
Using Eq.\;\eqref{Probabilite_X_et_tilde_X} we deduce that for any positive time $t$
\begin{equation}
\forall \varepsilon >0, \quad \lim\limits_{N \rightarrow \infty} \P_{(k,i)} \left( \left\vert Z_u^N(t)-\tilde{Z}_u^N(t) \right\vert > \varepsilon \right) =0 \ .
\end{equation}
This concludes the proof. 

\subsection{Sketch of the proof of Theorem \ref{main_thm_limit_u}}
\label{section_proof_main_theorem_u}
In this section, we prove Theorem \ref{main_thm_limit_u}, to prove it we need the following lemma.
\begin{lemma}
\label{lemma_convergence_P} Let $\rho$ be the counting measure on $\{1,2\}$. Observe that the measure $\sigma:=\frac{1}{2} dk d\rho$ is an invariant measure for the process $\left(K_{B_N}(\cdot),I_{B_N}(\cdot)  \right)$. Let $a\in (0,1)$ such that 
\begin{equation}
    \sup_N \int_{\mathbb{T} \times \{1,2\}} \lambda^a_{B_N}(k,i) \; d\sigma(k,i) < \infty \ ,
\end{equation}
where $\lambda_{B_N}$ is defined in Eq.\;\eqref{def_lambda} and define $d\sigma_N^a:= \lambda_{B_N}^{a} d\sigma $. Then for any \textbf{centered} $f$ in $ \mathbb{L}^1\left( \sigma \right)$ and positive time $t$
\[  \left\Vert P_N^{t} f \right\Vert_{\mathbb{L}^1\left( \sigma \right)} \leq \frac{C}{(1+t)^a} \Vert f \Vert_{\mathbb{L}^1(\sigma^a_N)}\ ,\]
where $C$ is a positive constant which dos not depends on $N$ and $P_N$ is the semi-group associated to the process $\left(K_{B_N}(\cdot),I_{B_N}(\cdot)\right)$.
\end{lemma}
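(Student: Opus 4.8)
The plan is to reduce the estimate, via the explicit ``resampling'' form of the generator, to a one‑dimensional renewal equation for the $\pi_{B_N}$‑average of $P_N^{t}f$, and then to exploit a cancellation forced by the centering $\int f\,d\sigma=0$.

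By \eqref{def_P} the generator acts as $\Lc_{B_N}g(k,i)=\lambda_{B_N}^{-1}(k,i)\big(\int g\,d\pi_{B_N}-g(k,i)\big)$, so from $(k,i)$ the process waits an exponential time of parameter $\lambda_{B_N}^{-1}(k,i)$ and then jumps to an independent $\pi_{B_N}$‑sample. Solving the Kolmogorov equation (equivalently, conditioning on the first jump time) gives
\[ P_N^{t}f(k,i)=e^{-t/\lambda_{B_N}(k,i)}f(k,i)+\int_0^t\frac{e^{-(t-s)/\lambda_{B_N}(k,i)}}{\lambda_{B_N}(k,i)}\,m_N(s)\,ds,\qquad m_N(s):=\int P_N^{s}f\,d\pi_{B_N}, \]
and, integrating this identity against $\pi_{B_N}$, the renewal equation $m_N=\phi_N+\psi_N*m_N$ with $\phi_N(t)=\int e^{-t/\lambda_{B_N}}f\,d\pi_{B_N}$ and $\psi_N(r)=\int\lambda_{B_N}^{-1}e^{-r/\lambda_{B_N}}\,d\pi_{B_N}$, where $\psi_N$ is a probability density (the law of the first jump time under $\pi_{B_N}$). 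Two algebraic facts are used: $\lambda_{B_N}\,d\pi_{B_N}=\bar\lambda\,d\sigma$ with $\bar\lambda=2/(\gamma\overline{R})$, which follows from \eqref{invariante_measure_delta} together with $\theta_{1,B_N}^2+\theta_{2,B_N}^2\equiv1$, and $\lambda_{B_N}\ge(8\gamma)^{-1}$, from $\theta_{i,B_N}^2\le1$ and $R(k)\le8$. The first gives the crucial cancellation $\int_0^\infty\phi_N(t)\,dt=\int\lambda_{B_N}f\,d\pi_{B_N}=\bar\lambda\int f\,d\sigma=0$ and the uniform‑in‑$N$ bound $\int\lambda_{B_N}^{1+a}\,d\pi_{B_N}=\bar\lambda\int\lambda_{B_N}^{a}\,d\sigma\le\bar\lambda\sup_N\int\lambda_{B_N}^{a}\,d\sigma<\infty$; the second gives $\|f\|_{\mathbb{L}^1(\sigma)}\le(8\gamma)^{a}\|f\|_{\mathbb{L}^1(\sigma_N^a)}$.

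Using that $x^b e^{-x}$ is bounded on $(0,\infty)$ for every $b>0$, one records a list of uniform‑in‑$N$ bounds: $\int e^{-t/\lambda_{B_N}}|f|\,d\sigma\le C(1+t)^{-a}\|f\|_{\mathbb{L}^1(\sigma_N^a)}$; $|\phi_N(t)|\le C(1+t)^{-a}\|f\|_{\mathbb{L}^1(\sigma_N^a)}$ and the same for $\int_t^\infty\phi_N$; $\overline{\Psi}_N(r):=\int_r^\infty\psi_N(u)\,du\le C(1+r)^{-(1+a)}$; and, for the memory kernel $\kappa_N(r):=\int\lambda_{B_N}^{-1}e^{-r/\lambda_{B_N}}\,d\sigma=\bar\lambda^{-1}\overline{\Psi}_N(r)$, both $\kappa_N(r)\le C(1+r)^{-(1+a)}$ and $\int_0^\infty\kappa_N=1$. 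Taking the $\mathbb{L}^1(\sigma)$‑norm of the representation above and using Fubini gives, for $t\ge1$,
\[ \|P_N^{t}f\|_{\mathbb{L}^1(\sigma)}\ \le\ C(1+t)^{-a}\|f\|_{\mathbb{L}^1(\sigma_N^a)}+\int_0^t|m_N(s)|\,\kappa_N(t-s)\,ds , \]
while for $t\le1$ the claim follows from $\|P_N^{t}f\|_{\mathbb{L}^1(\sigma)}\le\|f\|_{\mathbb{L}^1(\sigma)}\le(8\gamma)^{a}\|f\|_{\mathbb{L}^1(\sigma_N^a)}$. Moreover, once one knows $|m_N(s)|\le C(1+s)^{-a}\|f\|_{\mathbb{L}^1(\sigma_N^a)}$ uniformly in $N$, splitting $\int_0^t=\int_0^{t/2}+\int_{t/2}^t$ and using $\kappa_N(r)\le C(1+r)^{-(1+a)}$ on the first piece and $\int_0^\infty\kappa_N=1$ on the second bounds the last integral by $C(1+t)^{-a}\|f\|_{\mathbb{L}^1(\sigma_N^a)}$, which closes the proof.

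Everything therefore reduces to the uniform decay $|m_N(t)|\le C(1+t)^{-a}\|f\|_{\mathbb{L}^1(\sigma_N^a)}$, and \textbf{this renewal estimate is the main obstacle}. It is a statement of the form: if $z=\phi+\psi*z$ with $\psi$ a probability density of finite mean $\bar\lambda$ whose tail decays one power faster than the decay bound $\Theta$ available for $|\phi(t)|$ and for $|\int_t^\infty\phi|$, and if $\int_0^\infty\phi=0$, then $|z(t)|\lesssim\Theta(t)$. One proves it from $z=\phi+U_N*\phi$ with $U_N=\sum_{n\ge1}\psi_N^{*n}$, by writing $\int_0^t\phi(t-s)\,u_N(s)\,ds=\bar\lambda^{-1}\int_0^t\phi(t-s)\,ds+\int_0^t\phi(t-s)\big(u_N(s)-\bar\lambda^{-1}\big)\,ds$: the first term equals $-\bar\lambda^{-1}\int_t^\infty\phi$ by the cancellation, and the second is controlled through the convergence rate $u_N(s)\to\bar\lambda^{-1}$ of the renewal density, which itself satisfies a renewal equation with a centered, faster‑decaying inhomogeneity. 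The sign of $z$ must be kept throughout: the crude iteration $|m_N|\le|\phi_N|+\psi_N*|m_N|$ does not close, since $\|\psi_N\|_{\mathbb{L}^1}=1$ and $\int_0^t\psi_N(t-s)(1+s)^{-a}\,ds\sim(1+t)^{-a}$ with constant $1$, so no gain is possible without using $\int_0^\infty\phi_N=0$. All of this is uniform in $N$ precisely because $\sup_N\int\lambda_{B_N}^{a}\,d\sigma<\infty$ yields a uniform $(1+a)$‑moment of the jump times, hence the uniform tail bound $\overline{\Psi}_N(r)\le C(1+r)^{-(1+a)}$; with this in hand the renewal‑theoretic estimates can be carried out along the lines of \cite[Sec.~6]{JKO09} or by a Laplace‑transform/Tauberian argument.
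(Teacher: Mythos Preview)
The paper does not give a self-contained proof: it cites \cite[Theorem~1.1]{K14} and remarks that, by following that proof, the constant $C$ can be checked to be independent of $N$. Your approach is genuinely different and more explicit: you exploit the specific ``resample from $\pi_{B_N}$'' structure of the generator to derive a closed renewal equation $m_N=\phi_N+\psi_N*m_N$ for the scalar $m_N(t)=\int P_N^{t}f\,d\pi_{B_N}$, and you correctly identify the decisive cancellation $\int_0^\infty\phi_N=0$ coming from $\int f\,d\sigma=0$ via $\lambda_{B_N}\,d\pi_{B_N}=\bar\lambda\,d\sigma$. The reduction of $\Vert P_N^{t}f\Vert_{\mathbb L^1(\sigma)}$ to a bound on $|m_N|$ plus the ``no-jump'' term is correct, as are the uniform-in-$N$ moment and tail bounds you record.

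Where the proposal falls short is the step you rightly flag as ``the main obstacle''. Your suggested bootstrap --- control $m_N$ via the rate $v_N:=u_N-\bar\lambda^{-1}$, noting that $v_N$ satisfies another centered renewal equation --- does not reduce the difficulty. A direct computation gives $v_N=\tilde\phi_N+\psi_N*v_N$ with $\tilde\phi_N(t)=\psi_N(t)-\bar\lambda^{-1}\overline\Psi_N(t)$, $\int_0^\infty\tilde\phi_N=0$, and $|\tilde\phi_N(t)|\lesssim(1+t)^{-(1+a)}$. But this is exactly the same structure as the equation for $m_N$: indeed your own bound $|\phi_N(t)|\le C(1+t)^{-a}\Vert f\Vert_{\mathbb L^1(\sigma_N^a)}$ is not sharp --- since $d\pi_{B_N}=\bar\lambda\,\lambda_{B_N}^{-1}\,d\sigma$ one actually has $|\phi_N(t)|\lesssim(1+t)^{-(1+a)}\Vert f\Vert_{\mathbb L^1(\sigma_N^a)}$ as well --- so the inhomogeneity for $v_N$ is not ``faster-decaying'' than the one for $m_N$. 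The argument as sketched is therefore circular. Establishing $|v_N(t)|\le C(1+t)^{-a}$ uniformly in $N$ from a uniform $(1+a)$-moment of $\psi_N$ is a genuine sub-geometric ergodicity/renewal rate statement; it is precisely this that \cite{K14} provides, and your deference to \cite[Sec.~6]{JKO09} or a Laplace/Tauberian argument is a citation rather than a proof. In short, your route is a valid and illuminating alternative to the paper's black-box citation, but it does not bypass the need for the K14-type estimate --- it repackages the problem so that one sees exactly where that estimate enters.
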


\begin{proof}
This result is proved in a general setting in \cite[Theorem 1.1]{K14}. In our context, following the proof of \cite{K14} we can check that $C$ does not depend on $N$.
\end{proof}
We follow the strategy developed in \cite{JKO09} and \cite{SSS19}. The fundamental tool is Theorem \ref{main_theorem_hydro}. Let $\left(K_{B_N}(0),I_{B_N}(0)\right)=(k,i)$ with $k$ in $\T^*$ and $i$ in $\{1,2\}$.
By definition for any positive time $t$ and $u$ in $\R$
\begin{align*} f_{B_N}\left(N^{\beta_\delta} t,Nu,k,i\right) = \mathbb{E}_{(k,i)} \left[ f^0 \left( Z_{u}^{N}(t),K_{B_N}\left(N^{\beta_\delta} t\right),I_{B_N}\left(N^{\beta_\delta} t\right) \right) \right] \ ,\end{align*}
where we recall that
\begin{align*} 
Z_u^N(t) = N^{-1}Z_{u_N,B_N}\left(N^{\beta_\delta} t\right)= u- \frac{1}{2\pi N} \int_0^{N^{\beta_\delta} t} ds \; \textbf{v}_{B_N}\left(K_{B_N}(s)\right) \ . 
\end{align*}
Let $(m_N)_{N \in \N}$ be an increasing sequence of positive numbers such that
\begin{align}
&\lim\limits_{N \rightarrow \infty} m_N = + \infty \quad \text{and} \quad 
\lim\limits_{N \rightarrow \infty} \frac{m_N}{N} = 0 \label{def_sequence_m_N} \ .
\end{align}
Since $X_{B_N}(0)=\left(K_{B_N}(0),I_{B_N}(0)\right)=(k,i)$ with $k$ in $\T^*$ by  Theorem \ref{main_theorem_hydro} under $\mathbb{P}_{(k,i)}$ the finite-dimensional distributions of $Z_{u}^N\left(\left[1-N^{-\beta_\delta}m_N\right]\cdot\right)$ converge weakly to the finite-dimensional distributions of a L\'evy process $Z_u^\delta(\cdot)$ generated by $\mathfrak{L}_\delta$ where $\mathfrak{L}_\delta$ is defined in Eq.\;\eqref{def_Lev_gen_limit}. 

\medskip 
For any $i$ in $\{1,2\}$, $f^0(i) \in \Cc_{\mathbf{c}}^\infty\left(\R\times \T \right)$ hence $\overline{f}^0$ is in $\Cc_{\mathbf{c}}^\infty(\R)$ where $\overline{f}^0$ is defined in Eq.\;\eqref{def_u0_barre}. We have then
\begin{equation}
\label{convergence_laplaciel_frac}
\frac{1}{2}\lim\limits_{N \rightarrow \infty} \mathbb{E}_{(k,i)} \left[ \overline{f}^0 \left( Z_{u}^{N}\left(\left[1-N^{-\beta_\delta}m_N\right]t )  \right)\right) \right] = \frac{1}{2}\rho_\delta(u,t) \quad \mathbb{P}_{(k,i)} \quad a.s \ , 
\end{equation}
where $\rho_\delta$ is the unique solution of Eq.\;\eqref{limit_hyrdo_B=0}.
By using Fourier's inverse formula we have
\begin{align*}
&f_{B_N}\left(N^{\beta_\delta} t,Nu,k,i\right) \\
&= \mathbb{E}_{(k,i)} \left[ f^0 \left( Z_{u}^{N}\left( t\right),K_{B_N}\left(N^{\beta_\delta} t\right),I_{B_N}\left(N^{\beta_\delta} t \right) \right) \right]\\
& = \sum_{r \in \Z} \int_{\mathbb{R}} dp \; \sum_{j=1}^2 \mathcal{F}[\tilde{f}^0](p,r,j)\mathbb{E}_{(k,i)}\left[ e^{\mathbf{i}\left(pZ_{u}^{N}\left(t\right)+rK_{B_N}\left(N^{\beta_\delta} t\right)\right)} \mathds{1}_{I_{B_N}\left(N^{\beta_\delta} t\right)=j}   \right]\ ,
\end{align*}
where $\tilde{f}^0(i)$ is the function defined for any $z$ in $\mathbb{R}$, $i$ in $\{1,2\}$,  by 
\begin{equation}
f^0(z,k,i) := \sum_{r \in \Z} \tilde{f}^0 (z,r,i) \exp\left( 2 \mathbf{i} \pi kr  \right).
\end{equation}
By using Fourier's inverse formula we get
\begin{align*}
 &\frac{1}{2} \mathbb{E}_{(k,i)} \left[ \overline{f}^0\left( Z_{u}^{N} \left(  t-N^{-\beta_\delta}m_Nt \right ) \right) \right] \\ &=\sum_{j=1}^2\mathbb{E}_{(k,i)} \left[\frac{1}{2}\int_{\T \times \{1,2\}} dk^\prime d\rho(j) f^0 \left(Z_{u}^{N}\left( t-N^{\beta_\delta}m_Nt\right),k^\prime,j   \right)  \right]\\
&=\sum_{j=1}^2 \sum_{r \in \Z} \int_{\R} dp \; \mathcal{F}[\tilde{f}^0](p,r,j) \mathbb{E}_{(k,i)} \left[e^{\mathbf{i}p Z_{u}^{N}\left(t-N^{-\beta_\delta}m_Nt\right)} \frac{1}{2}\int_{\T} dk^\prime e^{\mathbf{i} k^\prime}   \right] \ .
\end{align*}
By using Eq.\;\eqref{convergence_laplaciel_frac} and the dominated convergence theorem we get
\begin{align*}
 &\limsup_{ N \rightarrow \infty}  \int_{\T \times \{1,2 \}} dk \; d\rho(i)  \; \left\vert f_{B_N} \left(N^{\beta_\delta} t, Nu,k  ,i \right) -\frac{1}{2}\rho_\delta(u,t) \right\vert \\
 &\leq \limsup_{ N \rightarrow \infty}  \int_{\T \times \{1,2 \}} dk \; d\rho(i) \; \left\vert A_{i,r,u,k,N} \right\vert \ ,
\end{align*}
where 
\[A_{i,r,u,k,N}:=  f_{B_N} \left(N^{\beta_\delta} t, Nu,k,i   \right) -\frac{1}{2}\mathbb{E}_{(k,i)} \left[  \overline{f}^0 \left( Z_{u}^{N}\left(\left[1-N^{-\beta_\delta}m_N\right]t \right)  \right) \right] \ .\]
By using Fourier's inverse formula we get
\begin{align*}
 \left\vert A_{i,r,u,k,N} \right\vert 
& \leq   \sum_{r \in \Z} \int_{\R} dp \; \sum_{j=1}^2 \left\vert \mathcal{F}[\tilde{f}^0_j](p,r) \right\vert \left( \left\vert I_1^N(t,k,i,j,p,r)\right\vert + \left\vert I_2^N(t,k,i,j,p,r) \right\vert \right)\ .
\end{align*}
where 
\begin{align*}
I_1^N(t,k,i,j,p,r) & = \mathbb{E}_{(k,i)}\left[e^{\mathbf{i}\left(pZ_{u}^{N}\left( t\right)+rK_{B_N}\left(N^{\beta_\delta} t\right)\right)} \mathds{1}_{I_{B_N}\left(N^{\beta_\delta} t\right)=j}  \right]\\
&- \mathbb{E}\left[ e^{\mathbf{i}\left(pZ_{u}^{N}\left( t-N^{-\beta_\delta}m_Nt\right)+rK_{B_N}\left(N^{\beta_\delta} t\right)\right)} \mathds{1}_{I_{B_N}\left(N^{\beta_\delta} t\right)=j}   \right]\ ,
\end{align*}
and
\begin{align*}
I_2^N(t,k,i,j,p,r) & = \mathbb{E}_{(k,i)}\left[e^{\mathbf{i}\left(pZ_{u}^{N}\left( t-N^{-\beta_\delta}m_Nt\right)+rK_{B_N}\left(N^{\beta_\delta} t\right)\right)} \mathds{1}_{I_{B_N}\left(N^{\beta_\delta} t\right)=j} \right]\\
&-\mathbb{E} \left[  e^{\mathbf{i}p Z_{u}^{N}\left(t-N^{-\beta_\delta}m_Nt\right)} \frac{1}{2}\int_{\T} e^{\mathbf{i} k^\prime} \; dk^\prime  \right] \ .
\end{align*}
To conclude the proof it is sufficient to show that for any positive time $t$,  $k\neq 0$, $(i,j)$ in $\{1,2\}^2$, $p$ in $\R$ and $r$ in $\Z$ we have 
\begin{equation}
\label{limit_I_i_1_2}
\forall q \in \{1,2\}, \quad \lim\limits_{N \rightarrow \infty}  \left\vert I_{q}^N(t,k,i,j,p,r) \right\vert  = 0 \ .
\end{equation} 
We start to deal with $I_1^N$.\\
We recall that for any $a$ in $\mathbb{R}$ we have $\left\vert 1-e^{\mathbf{i}a} \right\vert \leq \vert a \vert$ and that the function $\textbf{v}_{B_N}$  is a bounded function independently of $N$ (see Eq.\;\eqref{omega_prime}). Hence, for any positive $t$, $p$ in $\R$, $r$ in $\Z$ and $j$ in $\{1,2\}$ we have
\begin{align*}
 \left\vert I_1^N(t,k,i,j,p,r)  \right\vert
&  \leq \mathbb{E}_{k,i}\left[ \left\vert 1- e^{\mathbf{i}p\left(Z_{u}^{N}\left(t\right) - Z_{u}^{N}\left( t-N^{-\beta_\delta}m_N t\right)  \right)} \right\vert  \right]\\
&  \leq \mathbb{E}_{k,i}\left[ \left\vert p\left(Z_{u}^{N}( t) - Z_{u}^{N}\left( t-N^{-\beta_\delta}m_N t\right)  \right) \right\vert  \right] \\
& \leq \left\Vert \textbf{v}_{B_N} \right\Vert_{\infty} t \vert p \vert m_N N^{-1}\ ,
 \end{align*}
which goes to zero when $N$ goes to infinity by definition of the sequence $(m_N)_{N \in \N}$ $\left(\text{see Eq.}\;\eqref{def_sequence_m_N}\right)$. This proves Eq.\;\eqref{limit_I_i_1_2} for $i=1$. It remains to prove the convergence of $I_2^N$.
Using Markov Property we get
\begin{small}
 \begin{align*}
 & \mathbb{E}_{(k,i)}\left[ e^{\mathbf{i}\left(pZ_{u}^{N}\left( t-N^{-\beta_\delta}m_Nt\right)+rK_{B_N}\left(N^{\beta_\delta} t\right)\right)} \mathds{1}_{I_{B_N}\left(N^{\beta_\delta} t\right)=j}   \right] \\
 & =\mathbb{E}_{(k,i)}\left[ e^{\mathbf{i}\left(pZ_{u}^{N}\left( t-N^{-\beta_\delta}m_Nt\right) \right)}  \mathbb{E}_{\left(K_{B_N}\left(N^{\beta_\delta} t-m_Nt\right), I_{B_N}\left(N^{\beta_\delta} t-m_Nt\right)  \right)} \left[  e^{\mathbf{i} \left(rK_{B_N}(m_Nt)\right)} \mathds{1}_{I_{B_N}(m_Nt)=j}   \right] \right] \ . 
 \end{align*}
\end{small}
Hence, by Markov Property we get
 \begin{align*}
&\left\vert I_2^N(t,k,i,j,p,r) \right\vert \leq \mathbb{E}_{(k,i)} \left[\left\vert  \mathbb{E}_{\left(K_{B_N}\left(N^{\beta_\delta} t-m_Nt\right), I_{B_N}\left(N^{\beta_\delta} t-m_Nt\right)  \right)} g_r\left(K_{B_N},I_{B_N}\right) \right\vert   \right] ,
\end{align*}
where 
\begin{align*}
& g_r\left(K_{B_N},I_{B_N}\right) = e^{\mathbf{i}r K_{B_N}(m_Nt)} \mathds{1}_{I_{B_N}(m_Nt)=j} -\frac{1}{2}\int_{\T} e^{\mathbf{i}rk^\prime} \; dk^\prime \  .
\end{align*}
Let $r$ in $\N$ and $g_r$ the $\textbf{centered}$ function defined for any $k$ in $\T$ and $i$ in $\{1,2\}$ by
\[ g_r(k,i) = e^{\mathbf{i}rk} \mathds{1}_{j}(i) - \frac{1}{2}\int_{T} e^{\mathbf{i}rk^\prime} dk^\prime.\]
Using the fact that $\frac{1}{2}dkd\rho(i)$ is the reversible probability measure of the stochastic process $\left(K_{B_N}(\cdot),I_{B_N}(\cdot)\right)$ we get
\begin{small}
\begin{align*}
& \mathbb{E}_{(k,i)} \left[\left\vert  \mathbb{E}_{\left(K_{B_N}\left(N^{\beta_\delta} t-m_Nt\right), I_{B_N}\left(N^{\beta_\delta} t-m_Nt\right)  \right)} \left[ e^{\mathbf{i}r K_{B_N}(m_Nt)} \mathds{1}_{I_{B_N}(m_Nt)=j} -\frac{1}{2}\int_{\T} e^{\mathbf{i}rk^\prime} dk^\prime    \right] \right\vert   \right]  \\
& = \left\Vert P_N^{m_N t} g \right\Vert_{\mathbb{L}^1(\sigma)} \ ,
\end{align*}
\end{small}
which goes to zero when $N$ goes to infinity according to Lemma \ref{lemma_convergence_P}. 
This ends the proof of Theorem \ref{main_thm_limit_u} . 
\subsection{Proof of Theorem \ref{main_thm_limit_B_u_barre}}
\label{section_main_result_B} We recall that $\tau_0 \sim \Ec\left( 1\right)$.
By applying Fourier's formula to Eq.\;\eqref{limit_hyrdo_B=0} we get for any $p$ in $\mathbb{R}$ and positive time $t$ in $]0,T]$
\begin{equation}
    \mathcal{F}\left[\tilde{\rho}_{B}\right](t,p) = \exp\left(t \Phi_{B}(p) \right) \mathcal{F}\left[\overline{f}^{0}\right](p) \ ,
\end{equation}
where we recall that for any $p$ in $\mathbb{R}$ 
\begin{align*}
\Phi_{B}(p) &= 
2\int_{0}^\infty \left( \cos(pr) - 1 \right)\left( \mathbb{E}\left[ \tau_0^{-1} \left( g_{B,+}\left(\frac{2\pi r}{\tau_0}\right) + g_{B,-}\left(\frac{2\pi r}{\tau_0}\right) \right)\right] \right) \; dr \ ,
\end{align*}
since $g_{B,\pm}$ are even functions by item i) of Proposition \ref{proposition_f}. Observe that $\Phi_{B}$ is negative, to conclude the proof we need the following lemma the proof of which is left to the readers.
\begin{lemma}
\label{lemma_tcd}
There exists functions $h_\pm$ and $f_\pm$ such that for almost every $r$
\begin{align}
   &\forall B>1, \quad  B^{\frac{1}{3}} g_{B,\pm}(r) < f_\pm(r) \ ,  \label{hypothese_domination_2} \\
   &\forall B<1, \quad  g_{B,\pm}(r) < h_\pm(r) \ , \label{hypothese_domination_1}
\end{align}
where 
\begin{align}
\int_{0}^1 f_\pm(r)r^2 \; dr < \infty \quad \text{and} \quad \int_{1}^\infty f_\pm(r) \; dr < \infty \ , \label{hypothese_integrabilite_2}\\
    \int_{0}^1 h_\pm(r)r^2 \; dr < \infty \quad \text{and} \quad \int_{1}^\infty h_\pm(r) \; dr < \infty \ . \label{hypothese_integrabilite_1}
\end{align}
\end{lemma}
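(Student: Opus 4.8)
The plan is to prove Lemma \ref{lemma_tcd} by bounding $g_{B,\pm}$ directly from the explicit expressions \eqref{def_f_B,+}--\eqref{def_densite_intermediaire} together with the defining relation \eqref{def_x_B_1}, taking $h_\pm$ and $f_\pm$ to be explicit powers of $|r|$. By item i) of Proposition \ref{proposition_f} every function involved is even, so throughout one may take $r>0$ and then extend the dominating functions evenly. For $r>0$, differentiating \eqref{def_f_B,+} (and using that $x\mapsto(2\sqrt{x^2+B^2/4}\pm B)x$ is increasing on $(0,\infty)$ while $r\mapsto\pi/(\gamma r)$ is decreasing, so $x_{B,\pm}$ is decreasing) gives
\[
g_{B,\pm}(r)=-h_{B,\pm}'(r)=\frac{1}{4\pi}\Bigl(\tfrac12\pm\tfrac{B}{4\sqrt{x_{B,\pm}^2(r)+B^2/4}}\Bigr)\,x_{B,\pm}^2(r)\,\bigl|x_{B,\pm}'(r)\bigr|.
\]
I would then estimate, uniformly in $B$, the three factors on the right: the prefactor, $\bigl|x_{B,\pm}'(r)\bigr|$, and $x_{B,\pm}(r)$ itself, using only \eqref{def_x_B_1}.

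Write $G_\pm(x)=(\sqrt{4x^2+B^2}\pm B)x$ (recall $2\sqrt{x^2+B^2/4}=\sqrt{4x^2+B^2}$), so that $x_{B,\pm}(r)$ solves $G_\pm(x)=\pi/(\gamma r)$. Since $G_\pm'(x)=G_\pm(x)/x+4x^2/\sqrt{4x^2+B^2}\ge G_\pm(x)/x$, implicit differentiation yields, for every $B>0$ and $r>0$,
\[
\bigl|x_{B,\pm}'(r)\bigr|=\frac{\pi/(\gamma r^2)}{G_\pm'(x_{B,\pm}(r))}\le\frac{\pi/(\gamma r^2)}{\pi/\bigl(\gamma r\,x_{B,\pm}(r)\bigr)}=\frac{x_{B,\pm}(r)}{r}.
\]
As $\sqrt{x^2+B^2/4}\ge B/2$, the prefactor is at most $1$, hence $g_{B,\pm}(r)\le x_{B,\pm}^3(r)/(4\pi r)$. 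For the $+$ branch, $\sqrt{4x^2+B^2}+B\ge\max(2x,2B)$ forces both $x_{B,+}(r)\le\sqrt{\pi/(2\gamma r)}$ and $x_{B,+}(r)\le\pi/(2\gamma Br)$; the first gives $g_{B,+}(r)\le\frac{1}{4\pi r}\bigl(\frac{\pi}{2\gamma r}\bigr)^{3/2}=:h_+(r)$ for all $B>0$, with $h_+(r)=C|r|^{-5/2}$ obeying \eqref{hypothese_integrabilite_1}, while interpolating $\min(a,b)\le a^{8/9}b^{1/9}$ between the two bounds on $x_{B,+}^3(r)$ — the weight $8/9$ being chosen so that the power of $B$ becomes exactly $B^{-1/3}$ — gives $B^{1/3}g_{B,+}(r)\le C|r|^{-8/3}=:f_+(r)$, obeying \eqref{hypothese_integrabilite_2}.

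For the $-$ branch and $B<1$: from $\sqrt{4x^2+B^2}-B=4x^2/(\sqrt{4x^2+B^2}+B)\ge 2x^2/(x+B)$ one gets $2x_{B,-}^3(r)\le\frac{\pi}{\gamma r}\bigl(x_{B,-}(r)+B\bigr)$, hence $x_{B,-}(r)\le\sqrt{\pi/(\gamma r)}+(\pi/(\gamma r))^{1/3}$ (using $B<1$), so $g_{B,-}(r)\le x_{B,-}^3(r)/(4\pi r)\le C_1|r|^{-5/2}+C_2|r|^{-2}=:h_-(r)$, obeying \eqref{hypothese_integrabilite_1}. For the $-$ branch and $B>1$ the bound ``prefactor $\le1$'' is too lossy, because $x_{B,-}(r)$ may grow like $B^{1/3}$; instead one keeps the identity
\[
\tfrac12-\tfrac{B}{4\sqrt{x^2+B^2/4}}=\frac{2x^2}{\sqrt{4x^2+B^2}\,\bigl(\sqrt{4x^2+B^2}+B\bigr)}\le\frac{2x^2}{4x^2+B^2},
\]
so $g_{B,-}(r)\le x_{B,-}^5(r)/\bigl(2\pi r\,(4x_{B,-}^2(r)+B^2)\bigr)$. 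Splitting according to whether $x_{B,-}(r)\ge B/2$ — then $2x_{B,-}^3(r)\le\frac{3\pi}{\gamma r}x_{B,-}(r)$ gives $x_{B,-}(r)\lesssim|r|^{-1/2}$, so that $4x_{B,-}^2(r)+B^2\ge 4x_{B,-}^2(r)$ and $B^{1/3}\lesssim x_{B,-}(r)^{1/3}\lesssim|r|^{-1/6}$ — or $x_{B,-}(r)<B/2$ — then $2x_{B,-}^3(r)\le\frac{3\pi}{2\gamma r}B$ gives $x_{B,-}^5(r)\lesssim B^{5/3}|r|^{-5/3}$, which is absorbed by $4x_{B,-}^2(r)+B^2\ge B^2$ — in both cases one obtains $B^{1/3}g_{B,-}(r)\le C|r|^{-8/3}=:f_-(r)$, obeying \eqref{hypothese_integrabilite_2}. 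The dominations \eqref{hypothese_domination_1}--\eqref{hypothese_domination_2} then hold by construction, which finishes the proof.

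The only genuine obstacle is the $-$ branch as $B\to\infty$: each of the three factors in $g_{B,-}$ is individually unbounded, or the crude bound on it is too lossy, and the argument must exploit the exact cancellation between the prefactor (which behaves like $x^2/B^2$ when $x\ll B$) and the size of $x_{B,-}(r)$ (which behaves like $B^{1/3}|r|^{-1/3}$), made quantitative by the split into the two regimes $x_{B,-}(r)\ge B/2$ and $x_{B,-}(r)<B/2$. Everything else — the evenness reduction, the $B$-free bound $\bigl|x_{B,\pm}'(r)\bigr|\le x_{B,\pm}(r)/r$, and the elementary two-sided estimates on $x_{B,\pm}(r)$ extracted from \eqref{def_x_B_1} — is routine; and since only the existence of admissible $h_\pm,f_\pm$ is required, there is no need to track any of the multiplicative constants.
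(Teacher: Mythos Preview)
Your proof is correct. The paper does not supply its own argument for this lemma --- it explicitly states that ``the proof of which is left to the readers'' --- so there is nothing to compare against; your approach, based on the uniform derivative bound $|x_{B,\pm}'(r)|\le x_{B,\pm}(r)/r$ obtained from $G_\pm'(x)\ge G_\pm(x)/x$, together with elementary two-sided estimates on $x_{B,\pm}(r)$ extracted from \eqref{def_x_B_1} and the sharper prefactor bound $\tfrac12-\tfrac{B}{4\sqrt{x^2+B^2/4}}\le\tfrac{2x^2}{4x^2+B^2}$ in the delicate $-$ branch for large $B$, fills the gap completely.
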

By using item iii), item iv) of Proposition \ref{proposition_f}, Lemma \ref{lemma_tcd} and the dominated convergence theorem we get that for almost every $p$ in $\R$
\begin{align*}
    \lim\limits_{B \rightarrow 0}  \Phi_{B}(p) &= -\Gamma\left( 1+\frac{3}{2}\right)\gamma^{-\tfrac{1}{2}} D_0\vert p \vert^{\frac{3}{2}} \ , \\
      \lim\limits_{B \rightarrow \infty} B^\frac{1}{3} \Phi_{B}(p) &= -\Gamma\left( 1+\frac{5}{3}\right) \gamma^{-\tfrac{2}{3}}D_\infty\vert p \vert^{\frac{5}{3}} \  ,
\end{align*}
where $D_0$ and $D_\infty$ are defined in Eq.\;\eqref{def_vrai_constante} and $\Gamma$ denotes the Gamma function.
Since the proofs of item i) and ii) of Theorem \ref{main_thm_limit_B_u_barre} are similar we only prove item i). By applying Fourier's formula to Eq.\;\eqref{rho_zero} we obtain 
\begin{equation}
    \mathcal{F}\left[\rho_{0}\right](t,p) = \exp\left(-t \Gamma\left( 1+\frac{3}{2}\right)\gamma^{-\tfrac{1}{2}}D_0\vert p \vert^{\tfrac{3}{2}} \right) \mathcal{F}\left[\overline{f}^{0}\right](p) \ .
\end{equation}
We recall that $\overline{f}^0$ is in $\mathcal{C}_{\mathbf{c}}^\infty\left( \mathbb{R}\right)$ hence is in $\mathbb{L}^2(\mathbb{R})$. By the dominated convergence theorem we deduce that for any positive time $t$ in $[0,T]$
\begin{align*}
    &\lim\limits_{B \rightarrow 0} \Vert \tilde{\rho}_{B}(t,\cdot) - \rho_0(t,\cdot) \Vert_{\mathbb{L}^2(\mathbb{R})}^2 \\
    &=\lim\limits_{B \rightarrow 0} \Vert \mathcal{F}\left[\tilde{\rho}_{B}\right](t,\cdot) - \mathcal{F}\left[\rho_0\right](t,\cdot) \Vert_{\mathbb{L}^2(\mathbb{R})}^2 \\
    & = \lim\limits_{B \rightarrow 0} \int_{\R} \left\vert \mathcal{F}\,\left[\overline{f}^{0}\right](p) \left( \exp\left(t \Phi_{B}(p) \right) -  \exp\left(-t \Gamma\left( 1+\tfrac{3}{2}\right)\gamma^{-\tfrac{1}{2}}D_0\vert p \vert^{\tfrac{3}{2}} \right) \right) \right\vert^2 dp\\
    & = 0 \ .
\end{align*}
This proves that for each positive time $t$, $\rho_{B}(t,\cdot)$ converges in $\mathbb{L}^2(\R)$ to $\rho_{0}(t,\cdot)$. By the dominated convergence theorem we conclude the proof of Theorem \ref{main_thm_limit_B_u_barre} . 
\appendix
\section{Proof of the evolution equation of the Wigner distribution}
\label{appendix_kinetic}
In this appendix we want to prove Eq.\;\eqref{evolution_eq_ib}, this is the aim of Appendix \ref{appendix_behavior_transport_term} and Appendix \ref{appendix_behavior_colli_term}. In a first instance, in Appendix \ref{appendix_properties_Wigner} we prove Lemma \ref{Lem_Wigner}. We end this section by the proof of Lemma \ref{lemma_generalisation_hypo_small_k}.\\

In order to lighten the notations we denote, for $t$ in $[0,T]$, $i$ in $\lbrace 1,2 \rbrace$, $k$ in $\mathbb{T}$ and $p$ in $\mathbb{R}$, $\widehat{\psi}^\varepsilon_i\left( t\varepsilon^{-1}, k\pm \frac{\varepsilon p}{2}\right)$ by $\widehat{\psi}^\varepsilon_i\left(k\pm\frac{\varepsilon p}{2}\right) $. 
\subsection{Proof of Lemma \ref{Lem_Wigner}}
\label{appendix_properties_Wigner}
Before proving Lemma \ref{Lem_Wigner}, we prove that the integral in Eq.\;\eqref{wigner_def} is well defined. Let $J:=(J_1,J_2)$ in $\Sc \times \Sc$ and $i$ in $\{1,2\}$, by using Cauchy-Schwarz's inequality, Fubini's theorem and the periodicity of $\widehat{\psi}^\varepsilon_i$ we have
\begin{align*}
&\frac{\varepsilon}{2} \int_{\mathbb{R}} dp \;  \left\vert \Fc \left[J_i\right](p,k) \right\vert \int_{\mathbb{T}} dk  \; \mathbb{E}_{\mu^\varepsilon} \left[ \left\vert \widehat{\psi}^\varepsilon_i\left( k+\frac{\varepsilon p}{2}\right) \widehat{\psi}^\varepsilon_i\left( k-\frac{\varepsilon p}{2}\right)^* \right\vert \right]  \\
&\leq \frac{\varepsilon}{2} \left(\int_{\mathbb{R}} dp \;  \sup_{k \in \T} \left\vert \Fc \left[J_i\right](p,k) \right\vert \right) \left( \mathbb{E}_{\mu^\varepsilon} \left[ \int_\mathbb{T} dk \;  \left\vert\widehat{\psi}^\varepsilon_i\left(k \right)  \right\vert^2 \right] \right)\\
& \leq K_0 \Vert J \Vert \ ,
\end{align*}
where we used Eq.\;\eqref{K0} to obtain the last inequality. This proves the existence of the integral in Eq.\;\eqref{wigner_def}. The other expressions of the Wigner distribution presented in Sec.\;\ref{section_wigner} are consequences of Fubini's theorem and Fourier's inverse formula. Observe that we proved in fact
\begin{eqnarray}
\label{bound_wigner}
\forall t \in [0,T], \quad \left\vert\left\langle \mathcal{W}^\varepsilon_i(t), J \right\rangle \right\vert  \leq K_0 \Vert J \Vert \ .
\end{eqnarray}
Using the same argument we can prove that 
\begin{eqnarray}
\label{bound_anti_wigner}
\forall t \in [0,T], \quad \left\vert\left\langle \mathcal{A}^\varepsilon_i(t), J \right\rangle \right\vert  \leq K_0 \Vert J \Vert \quad \text{and} \quad \left\vert\left\langle \left({\mathcal{A}}_i^\varepsilon\right)^*, J \right\rangle \right\vert  \leq K_0 \Vert J \Vert \ .
\end{eqnarray}
\paragraph{\textbf{Proof of Lemma \ref{Lem_Wigner}}}
By Eq.\;\eqref{bound_wigner} and the linearity of the Fourier transform the proof of item i) is immediate.
To prove item ii) we use the evolution equation $( \text{see Eq.\;\eqref{evolution_eq_ib}})$ satisfied by the Wigner distribution. Let $J$ in $\Sc\times\Sc$, in Appendix \ref{appendix_behavior_transport_term} and Appendix \ref{appendix_behavior_colli_term} we prove that
\begin{eqnarray*}
\left\langle \mathcal{W}^\varepsilon(t), J \right\rangle - \left\langle \mathcal{W}^\varepsilon(0), J \right\rangle   & = & \int_0^t ds \;  \left\langle \mathcal{W}^\varepsilon(s),  \textbf{v}_{0} \partial_u J \right\rangle  + \gamma \int_0^t ds \; \left\langle \mathcal{W}^\varepsilon(t), C_0 J \right\rangle \\
& + & \gamma \left( \int_0^t ds \;  \langle\mathcal{A}^\varepsilon(s),C_0J \rangle    + \int_0^t  ds \;  \langle\left({\mathcal{A}}^\varepsilon\right)^*(s),C_0J \rangle \right) \\
&+ & \int_{0}^t ds \;  \mathcal{O}_{s,J}(\varepsilon) \ ,
\end{eqnarray*}
with $\vert \Oc_{s,J} \vert \leq K_J$ a constant independent of $\varepsilon$ and time $s$.
Since for any $t$ in $[0,T]$, $ \left\langle \mathcal{W}^\varepsilon(t), J \right\rangle$ is the integral of a bounded function this is a continuous function. Using Eq.\;\eqref{bound_wigner} and Eq.\;\eqref{bound_anti_wigner} we conclude the proof of Lemma \ref{Lem_Wigner}.

\subsection{Behavior of the transport term} 
\label{appendix_behavior_transport_term}
We recall that $B_{\varepsilon}=B\varepsilon^{\delta}$ with $\delta > 0$.
To prove Eq.\;\eqref{evolution_eq_ib} we use Dynkin's formula, Lemma \ref{Lem_Wigner} and Lemma \ref{lemma_anti_wigner}. Let $J:=(J_1,J_2)$ in $\Sc \times \Sc$, $t$ in $[0,T]$ and $i$ in $\{1,2 \}$ we want to prove that
\begin{align*}
\partial_t \left\langle \mathcal{W}_i^\varepsilon(t), J_i \right\rangle & =  \frac{1}{2\pi}\left\langle \mathcal{W}_i^\varepsilon(t), \textbf{v}_0 \partial_u J_i \right\rangle  +   \left\langle \mathcal{W}_i^\varepsilon(t), [C_0J]_i \right\rangle \\
& + \langle\mathcal{A}_i^\varepsilon(t),[C_0J]_i \rangle + \langle\left({\mathcal{A}}_i^\varepsilon\right)^*(t),[C_0J]_i \rangle + \mathcal{O}_{t,J}(\varepsilon) \ .
\end{align*}
For any $i$ in $\{ 1,2\}$ we define $\phi_i$ by
\begin{align*}
 \phi_i \left(q^\varepsilon(t), p^\varepsilon(t)\right) &=  \int_\mathbb{R} dp \int_\mathbb{T} dk \;  \widehat{\psi}^\varepsilon_i\left(t\varepsilon^{-1},k-\tfrac{\varepsilon p}{2} \right)^* \widehat{\psi}^\varepsilon_i\left(t\varepsilon^{-1},k+\tfrac{\varepsilon p}{2}\right) \Fc \left[J_i\right](p,k)^* \ .
 \end{align*}
By the equation above we can write that
\begin{align} \left\langle \mathcal{W}_i^\varepsilon (t), J_i \right\rangle = \frac{\varepsilon}{2}\E_{\mu^\varepsilon} \left[ \phi_i \left(q^\varepsilon\left(t\right), p^\varepsilon\left(t\right)\right) \right] \ .
\end{align}
By Dynkin's formula we have
\begin{align*}
 \partial_t \left\langle \mathcal{W}_i^\varepsilon (t), J_i \right\rangle & =  \varepsilon^{-1}    \mathbb{E}_{\mu^\varepsilon} \left\{ (A+B_{\varepsilon}G) \left[ \phi_i \left(q^\varepsilon\left(t\right), p^\varepsilon\left(t\right)\right) \right] \right\} + \gamma \mathbb{E}_{\mu^\varepsilon} \left\{ S \left[ \phi_i \left(q^\varepsilon\left(t\right), p^\varepsilon\left(t\right)\right) \right]\right\} \ .
\end{align*}
To complete the proof of Eq.\;\eqref{evolution_eq_ib} it is sufficient to prove that 
\begin{align}
     &\varepsilon^{-1}    \mathbb{E}_{\mu^\varepsilon} \left\{ (A+B_{\varepsilon}G) \left[ \phi_i \left(q^\varepsilon\left(t\right), p^\varepsilon\left(t\right)\right) \right] \right\} = \frac{1}{2 \pi}\langle \mathcal{W}_i^{\varepsilon} (t),  \textbf{v}_{0} \partial_u J_i \rangle + \mathcal{O}_J(\varepsilon) \label{Sandra_1} \ , \\ 
     &\mathbb{E}_{\mu^\varepsilon} \left\{ S \left[ \phi_i \left(q^\varepsilon\left(t\right), p^\varepsilon\left(t\right)\right) \right]\right\} = \left( \langle\mathcal{A}_i^\varepsilon(t),[C_0J]_i \rangle + \langle\left({\mathcal{A}}_i^\varepsilon\right)^*(t),[C_0J]_i \rangle \right)  +  \mathcal{O}_{J}(\varepsilon)  \label{Sandra_2} \ . 
\end{align}
In this section we prove Eq.\;\eqref{Sandra_1} and in Appendix \ref{appendix_behavior_colli_term} we prove Eq.\;\eqref{Sandra_2}. \\\\
Let $i$ in $\{1,2\}$, by Lemma \ref{eig} and the fact that $A+B\varepsilon^\delta G$ is a first order differential operator we have
\begin{align}
   &\varepsilon^{-1}\mathbb{E}_{\mu^\varepsilon} \left\{ (A+B_{\varepsilon}G) \left[ \phi_i \left(q^\varepsilon\left(t\right), p^\varepsilon\left(t\right)\right) \right] \right\} \nonumber \\
   & = \frac{\varepsilon}{2}  \int_\mathbb{R} dp  \int_\mathbb{T} dk \; \mathbb{E}_{\mu^\varepsilon} \left[ \widehat{\psi}^\varepsilon_i\left(k-\frac{\varepsilon p}{2}\right)^* \widehat{\psi}^\varepsilon_i\left(k+\frac{\varepsilon p}{2}\right)\right]  \Fc \left[J_i\right](p,k)^*   \nonumber \\
& \times  \frac{\mathbf{i}}{\varepsilon }\left[\omega_{i,B_\varepsilon}\left(k+\frac{\varepsilon p}{2}\right)- \omega_{i,B_\varepsilon}\left(k-\frac{\varepsilon p}{2}\right)\right] \label{I am batman} \ . 
\end{align}
We split the proof of Eq.\;\eqref{Sandra_1} into two steps, the first one which follows \cite{BOS09} is to prove the following 
\begin{equation}
\label{Billel}
\varepsilon^{-1}\mathbb{E}_{\mu^\varepsilon} \left\{ (A+B_{\varepsilon}G) \left[ \phi_i \left(q^\varepsilon\left(t\right), p^\varepsilon\left(t\right)\right) \right] \right\} =  \frac{1}{2 \pi}\langle \mathcal{W}_i^{\varepsilon} (t),  \textbf{v}_{B_\varepsilon} \partial_u J \rangle + \mathcal{O}_J(\varepsilon)  \ , 
\end{equation}
where  $\vert \mathcal{O}_J(\varepsilon) \vert \leq \kappa_J(B) \times \varepsilon$ and $\kappa_J(B)$ is a constant which depends only on J and $B$. The second step will be to prove the following equality
\begin{align}
\label{goal}
&\left\langle \Wc_i^{\varepsilon}(t),  \textbf{v}_{B_\varepsilon} \partial_u J_i \right\rangle = \left\langle \Wc_i^{\varepsilon}(t),  \textbf{v}_{0} \partial_u J_i \right\rangle + \mathcal{O}_J(\varepsilon) \ .
\end{align} 
To prove Eq.\;\eqref{Billel}, we need the following lemma which is proved at the end of this section.
\begin{lemma}
\label{lemma_BOB}We recall that the functions $\omega_{1/2,B_{\varepsilon}}$ and $\mathbf{v}_{B_\varepsilon}$ are defined in Eq.\;\eqref{omega} and Eq.\;\eqref{omega_prime} respectively. They satisfie the following properties.
\begin{itemize}
    \item [i)] $\mathbf{v}_{B_\varepsilon}$ is a bounded function on $\mathbb{T}$. 
    \item[ii)] There exists a positive constant $C$ such that 
    \begin{align*}
      \forall p \in \mathbb{R}, \quad \forall k \in \mathbb{T}, \quad   \varepsilon^{-1}\left\vert \omega_{i,B_\varepsilon}\left(k+\frac{\varepsilon p}{2}\right)- \omega_{i,B_\varepsilon}\left(k-\frac{\varepsilon p}{2}\right) \right\vert \leq C \vert p \vert  \ . 
    \end{align*}
    \item[iii)] For all $p$ in $\mathbb{R}$ and $k$ in $\mathbb{T}$ such that $ \vert k \vert > \varepsilon \vert p \vert$ we have
    \begin{align*}
    \varepsilon^{-1}\left\vert \omega_{i,B_\varepsilon}\left(k+\frac{\varepsilon p}{2}\right)- \omega_{i,B_\varepsilon}\left(k-\frac{\varepsilon p}{2}\right) - \varepsilon p \mathbf{v}_{B_\varepsilon}(k) \right\vert \lesssim  \varepsilon \frac{\vert p \vert^2}{\vert k \vert } \ . 
    \end{align*}
\end{itemize}
\end{lemma}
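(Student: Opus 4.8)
The plan is to reduce everything to elementary estimates on the functions $\omega_{i,B_\varepsilon}$ and their derivatives, using that $\widehat\alpha(k)=4\sin^2(\pi k)$ is smooth with a simple zero at $k=0$. First, for item i), I would observe from Eq.\;\eqref{omega_prime} that $\mathbf{v}_{B_\varepsilon}(k)=\widehat\alpha'(k)/\bigl(2\sqrt{\widehat\alpha(k)+B_\varepsilon^2/4}\bigr)$, and that $\widehat\alpha'(k)=4\pi\sin(2\pi k)$ vanishes at $k=0$ at the same rate that $\sqrt{\widehat\alpha(k)}=2|\sin(\pi k)|$ does; more precisely $|\widehat\alpha'(k)|\lesssim\sqrt{\widehat\alpha(k)}\le\sqrt{\widehat\alpha(k)+B_\varepsilon^2/4}$ uniformly on $\mathbb T$, which gives a bound on $|\mathbf{v}_{B_\varepsilon}|$ independent of $\varepsilon$ (this is the point where having $B_\varepsilon\ge 0$ only helps). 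Note this argument does not even need $B_\varepsilon>0$: the bound $|\mathbf{v}_{B_\varepsilon}(k)|\le C$ holds with $C$ uniform in $B_\varepsilon\ge 0$, and in particular the degenerate case $B_\varepsilon=0$, where $\mathbf{v}_0$ is merely bounded (not continuous at $0$), is already covered.

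For item ii), the natural route is the mean value inequality: write
\[
\omega_{i,B_\varepsilon}\!\left(k+\tfrac{\varepsilon p}{2}\right)-\omega_{i,B_\varepsilon}\!\left(k-\tfrac{\varepsilon p}{2}\right)=\varepsilon p\int_0^1 \omega_{i,B_\varepsilon}'\!\left(k+\tfrac{\varepsilon p}{2}(2s-1)\right)\,ds,
\]
so that the left side of ii), after dividing by $\varepsilon$, is bounded by $|p|\sup_{k}|\omega_{i,B_\varepsilon}'(k)|$. Since $\omega_{1,B_\varepsilon}'=\omega_{2,B_\varepsilon}'=\mathbf{v}_{B_\varepsilon}$ by Eq.\;\eqref{omega_prime}, this is exactly $|p|\,\|\mathbf{v}_{B_\varepsilon}\|_\infty$, and item i) closes the argument with a constant $C$ independent of $\varepsilon$ and $B_\varepsilon$. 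Here I should be slightly careful that $\omega_{i,B_\varepsilon}$, being $1$-periodic, is differentiable across the boundary of the fundamental domain $\T$, which it is since $\widehat\alpha$ is a smooth $1$-periodic function.

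For item iii), which I expect to be the main obstacle, I would Taylor-expand to second order. On the region $|k|>\varepsilon|p|$ the whole segment $[k-\tfrac{\varepsilon p}{2},k+\tfrac{\varepsilon p}{2}]$ stays in $\{|k'|\ge |k|/2\}$ (say), away from $k=0$, so $\omega_{i,B_\varepsilon}$ is $C^2$ there with controlled derivatives. Taylor's formula with integral remainder gives
\[
\omega_{i,B_\varepsilon}\!\left(k+\tfrac{\varepsilon p}{2}\right)-\omega_{i,B_\varepsilon}\!\left(k-\tfrac{\varepsilon p}{2}\right)-\varepsilon p\,\mathbf{v}_{B_\varepsilon}(k)=\varepsilon^2 p^2\!\int_0^1\!\!\int\!\cdots\,\omega_{i,B_\varepsilon}''\,,
\]
so after dividing by $\varepsilon$ the error is $\lesssim \varepsilon p^2 \sup_{|k'|\ge|k|/2}|\omega_{i,B_\varepsilon}''(k')|$, and it remains to show $\sup_{|k'|\ge|k|/2}|\omega_{i,B_\varepsilon}''(k')|\lesssim 1/|k|$ uniformly in $\varepsilon$ (and $B_\varepsilon$). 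This is the delicate estimate: differentiating $\mathbf{v}_{B_\varepsilon}=\widehat\alpha'/(2\sqrt{\widehat\alpha+B_\varepsilon^2/4})$ once more produces a term with $(\widehat\alpha')^2/(\widehat\alpha+B_\varepsilon^2/4)^{3/2}$, which near $k'=0$ behaves like $\widehat\alpha(k')/(\widehat\alpha(k'))^{3/2}\sim 1/\sqrt{\widehat\alpha(k')}\sim 1/|k'|$, together with a term $\widehat\alpha''/\sqrt{\widehat\alpha+B_\varepsilon^2/4}$ which is bounded away from $0$; hence $|\omega_{i,B_\varepsilon}''(k')|\lesssim 1/|k'|\le 2/|k|$ on $|k'|\ge|k|/2$. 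The worst case is $B_\varepsilon=0$; allowing $B_\varepsilon>0$ only regularizes the singularity, so the same bound holds uniformly. Assembling the three items then yields, after substituting iii) into Eq.\;\eqref{I am batman} and splitting the $k$-integral over $|k|\lessgtr\varepsilon|p|$ (the small region being controlled by ii) and the hypothesis \eqref{hypo_small_k}/\eqref{hypo_Stefano} that removes the contribution of small $k$), precisely the claimed identity \eqref{Billel} with an $\mathcal O_J(\varepsilon)$ whose constant depends only on $J$ and $B$.
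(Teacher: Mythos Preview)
Your proof follows essentially the same route as the paper's sketch: bound $\mathbf{v}_{B_\varepsilon}$ via $|\widehat\alpha'|\lesssim\sqrt{\widehat\alpha}\le\sqrt{\widehat\alpha+B_\varepsilon^2/4}$, deduce ii) from i) by the mean value theorem, and for iii) use a second-order Taylor expansion together with the estimate $|\omega''_{i,B_\varepsilon}(k')|\lesssim 1/|k'|$ on the segment $|k'|\ge|k|/2$. One minor slip: in your computation of $\omega''$, the term $\widehat\alpha''/(2\sqrt{\widehat\alpha+B_\varepsilon^2/4})$ is \emph{not} bounded near $k'=0$ in the worst case $B_\varepsilon=0$ --- it also blows up like $1/|k'|$, just as the other term does --- but this only confirms (rather than invalidates) your final bound $|\omega''_{i,B_\varepsilon}(k')|\lesssim 1/|k'|$, so the argument goes through unchanged.
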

Let $\kappa > 0$ and $\eta$ in $]0,1[$ with $\delta > \eta$. We denote by $\kappa_\varepsilon$, the real $\kappa\varepsilon^{\eta}$ and define $I^\varepsilon$ as follows 
\begin{align}
    I^\varepsilon = \varepsilon^{-1}\mathbb{E}_{\mu^\varepsilon} \left\{ (A+B_{\varepsilon}G) \left[ \phi_i \left(q^\varepsilon\left(t\right), p^\varepsilon\left(t\right)\right) \right] \right\} - \left\langle \Wc_i^{\varepsilon}(t),  \textbf{v}_{B_\varepsilon} \partial_u J_i \right\rangle  \ . 
\end{align} 
Using Eq.\;\eqref{I am batman} we cut $I^\varepsilon$ into two terms denoted by $I^\varepsilon_<\left(\kappa_\varepsilon\right)$ and $I^\varepsilon_>\left(\kappa_\varepsilon\right)$ with
\begin{align*}
I_<^\varepsilon\left(\kappa_\varepsilon\right) & =  \frac{\varepsilon}{2}  \int_\mathbb{R} dp \int_{ \vert k \vert < \kappa_\varepsilon} dk  \; \mathbb{E}_{\mu^\varepsilon} \left[ \widehat{\psi}^\varepsilon_i\left(k-\frac{\varepsilon p}{2}\right)^* \widehat{\psi}^\varepsilon_i\left(k+\frac{\varepsilon p}{2}\right)\right] \Fc \left[J_i\right](p,k)^*  \\
& \times  \frac{\mathbf{i}}{\varepsilon }\left[\omega_{i,B_\varepsilon}\left(k+\frac{\varepsilon p}{2}\right)- \omega_{i,B_\varepsilon}\left(k-\frac{\varepsilon p}{2}\right) - \varepsilon p \textbf{v}_{B_\varepsilon}(k)\right] \ .
\end{align*}   
By using point i) and point ii) of Lemma \ref{lemma_BOB} and the fact that $J_i$ belongs to $\mathcal{S}$ we have
\begin{align*}
&\left\vert I^\varepsilon_<\left(\kappa_\varepsilon\right) \right\vert \lesssim \int_{\vert k \vert < \kappa_\varepsilon} dk \; \E\left[\left\vert \widehat{\psi}^\varepsilon_i(k) \right\vert^2 \right]\ . 
\end{align*}
By Lemma \ref{lemma_generalisation_hypo_small_k} we obtain then
\begin{align*}
& \limsup_{\varepsilon \rightarrow 0} I^\varepsilon_<\left(\kappa_\varepsilon\right) = 0 \ .
\end{align*}
It remains to deal with $I^\varepsilon_>\left(\kappa_\varepsilon\right)$. We have
\begin{align*}
I_>^\varepsilon\left(\kappa_\varepsilon\right) & =  \frac{\varepsilon}{2}  \int_{\varepsilon \vert p \vert > \kappa_\varepsilon}dp \int_{ \vert k \vert > \kappa_\varepsilon} dk \; \mathbb{E}_{\mu^\varepsilon} \left[ \widehat{\psi}^\varepsilon_i\left(k-\frac{\varepsilon p}{2}\right)^* \widehat{\psi}^\varepsilon_i\left(k+\frac{\varepsilon p}{2}\right)\right]\Fc \left[J_i\right](p,k)^*\\
 & \times   \frac{\mathbf{i}}{\varepsilon }\left[\omega_{i,B_\varepsilon}\left(k+\frac{\varepsilon p}{2}\right)- \omega_{i,B_\varepsilon}\left(k-\frac{\varepsilon p}{2}\right) - \varepsilon p \textbf{v}_{B_\varepsilon}(k)\right]   \\
 &+  \frac{\varepsilon}{2}  \int_{\varepsilon \vert p \vert < \kappa_\varepsilon} dp \int_{ \vert k \vert > \kappa_\varepsilon} dk \; \mathbb{E}_{\mu^\varepsilon} \left[ \widehat{\psi}^\varepsilon_i\left(k-\frac{\varepsilon p}{2}\right)^* \widehat{\psi}^\varepsilon_i\left(k+\frac{\varepsilon p}{2}\right)\right] \Fc \left[J_i\right](p,k)^*  \\
 & \times \frac{\mathbf{i}}{\varepsilon }\left[\omega_{i,B_\varepsilon}\left(k+\frac{\varepsilon p}{2}\right)- \omega_{i,B_\varepsilon}\left(k-\frac{\varepsilon p}{2}\right) - \varepsilon p \textbf{v}_{B_\varepsilon}(k)\right] \\
 & = I^\varepsilon_{>,1}\left(\kappa_\varepsilon\right) + I^\varepsilon_{>,2}\left(\kappa_\varepsilon\right) \ .
\end{align*}
By the points i) and ii) of Lemma \ref{lemma_BOB} and Eq.\;\eqref{K0} we have
\begin{align*} 
\left\vert I^\varepsilon_{>,1}\left(\kappa_\varepsilon\right) \right\vert \lesssim  &  K_0 \int_{ \vert p \vert  > \kappa \varepsilon^{\eta-1}} dp \;    (p+ 1) \sup_{k \in \T} \vert\Fc \left[J_i\right] (p,k) \vert   \ ,
\end{align*}
which goes to zero when $\varepsilon$ goes to zero since $\eta <1$. For the remaining term we have that $\vert k \vert > \kappa_\varepsilon > \varepsilon \vert p \vert $ hence by point iii) of Lemma \ref{lemma_BOB} we get
\begin{align*}
    \left\vert I^\varepsilon_{>,2}\left(\kappa_\varepsilon\right) \right\vert 
    & \lesssim K_0 \int_{\mathbb{R}} dp \; \frac{\varepsilon \vert p \vert^2}{\kappa_\varepsilon} \sup_{k \in \T} \vert\Fc \left[J_i\right] (p,k) \vert \ ,
\end{align*}
which goes to zero since $\eta < 1$. \\\\
In order to end the computations for the transport term it remains to prove Eq.\;\eqref{goal}. We define $R^\varepsilon $ in the following way
\begin{align*} 
& R^\varepsilon=\left\langle \Wc_i^{\varepsilon}, \left[\textbf{v}_{B_\varepsilon}- \textbf{v}_{0}\right] \partial_u J_i \right\rangle \ .
\end{align*}
Hence, we want to prove that $(R^\varepsilon)_{\varepsilon >0}$ converges to zero when $\varepsilon$ goes to zero. As we did before for $I^\varepsilon$, we cute $R^\varepsilon$ into two terms $R^\varepsilon_{<}(\kappa_\varepsilon)$ and $R^\varepsilon_{>}(\kappa_\varepsilon)$ where 
\begin{align*}
 R_{<}^\varepsilon\left(\kappa_\varepsilon\right) &=\left\langle \Wc_i^{\varepsilon}, \left[\textbf{v}_{B_\varepsilon}- \textbf{v}_{0}\right] \partial_u J_i \mathds{1}_{\{k \leq \kappa_\varepsilon \}} \right\rangle \quad \text{and} \quad R_{>}^\varepsilon\left(\kappa_\varepsilon\right) &=\left\langle \Wc_i^{\varepsilon}, \left[\textbf{v}_{B_\varepsilon}- \textbf{v}_{0}\right] \partial_u J_i \mathds{1}_{\{k \geq \kappa_\varepsilon \}} \right\rangle \ .
 \end{align*}
We start to show that $(R^\varepsilon_{>}\left(\kappa_\varepsilon\right))_{\varepsilon >0}$ converges to zero. Using the fact that $\textbf{v}_{0}$ is bounded and the fact that $k> \kappa_\varepsilon$ we can bound $\vert R_{>}^\varepsilon \left(\kappa_\varepsilon\right) \vert$ by
\begin{align*}
&  \frac{\varepsilon}{2}  \int_\mathbb{R} dp \int_{ \vert k \vert > \kappa_\varepsilon} dk \; \mathbb{E}_{\mu^\varepsilon}\left[  \left\vert \widehat{\psi}^\varepsilon_i\left(k-\frac{\varepsilon p}{2}\right)^* \widehat{\psi}^\varepsilon_i\left(k+\frac{\varepsilon p}{2}\right) \right\vert\right] \sup_{k \in \mathbb{T}} \left\vert \mathcal{F}\left[J_i\right](p,k) \right\vert  \frac{\vert p \vert}{\kappa_\varepsilon}  \\
& \times  \left\vert \sqrt{\widehat{\alpha} (k)} - \sqrt{\widehat{\alpha}(k)+\frac{B^2 \varepsilon^{2\delta}}{4}} \right\vert \ .
\end{align*}
Since $k > \kappa_\varepsilon$, $\widehat{\alpha}(k) > 0$ and by using a Taylor expansion we get 
\begin{align*}
\left\vert R^\varepsilon_{>}( \rho) \right\vert & \lesssim  \frac{\varepsilon}{2}  \int_\mathbb{R} dp \int_{ \vert k \vert > \kappa_\varepsilon} dk \; \mathbb{E}_{\mu^\varepsilon}\left[ \left\vert \widehat{\psi}^\varepsilon_i\left(k-\frac{\varepsilon p}{2}\right)^* \widehat{\psi}^\varepsilon_i\left(k+\frac{\varepsilon p}{2}\right) \right\vert\right] \\
& \times \sup_{k \in \mathbb{T}} \left\vert \Fc \left[J_i\right](p,k)\right\vert  \times  \left(  \frac{\vert p \vert B^2  \varepsilon^{2\delta}}{\kappa_\varepsilon\sqrt{\widehat{\alpha}(k)}}  \right) \\
& \lesssim  \frac{\varepsilon}{2}  \int_\mathbb{R} dp \int_{ \vert k \vert > \kappa_\varepsilon} dk \; \mathbb{E}_{\mu^\varepsilon}\left[ \left\vert \widehat{\psi}^\varepsilon_i\left(k-\frac{\varepsilon p}{2}\right)^* \widehat{\psi}^\varepsilon_i\left(k+\frac{\varepsilon p}{2}\right) \right\vert\right] \\
& \times \sup_{k \in \mathbb{T}} \left\vert \Fc \left[J_i\right](p,k)\right\vert  \times  \left( \frac{\vert p \vert B^2 \varepsilon^{2\delta}}{\vert \kappa_\varepsilon \vert^2}  \right)\\
& \lesssim  K_0 \varepsilon^{2\left(\delta-\eta\right)} \ ,
\end{align*}
which goes to zero when $\varepsilon$ goes to zero since $\delta > \eta$ by assumption. 
It remains to show that $\left(R^\varepsilon_<\left(\kappa_\varepsilon\right)\right)_{\varepsilon > 0}$ converges to zero. By using the fact that $\textbf{v}_{0}$ and $\textbf{v}_{B_\varepsilon}$ are bounded and the fact that $J_i$ belongs to $\Sc$ we obtain for any time $t$ in $[0,T]$
\begin{align*}
 \left\vert R^\varepsilon_{<}\left(\kappa_\varepsilon\right) \right\vert \lesssim \frac{\varepsilon}{2} \int_{ \vert k \vert < \kappa_\varepsilon  } dk \; \mathbb{E}_{\mu^\varepsilon}\left[  \left\vert \widehat{\psi}^\varepsilon_i\left(k\right) \right\vert^2\right]  = \frac{\varepsilon}{2} \int_{ \vert k \vert < \kappa_\varepsilon  } dk \; \mathbb{E}_{\mu^\varepsilon}\left[  \left\vert \widehat{\psi}^\varepsilon_i\left(t\varepsilon^{-1},k\right) \right\vert^2\right] \ ,
\end{align*} 
which goes to zero by Lemma \ref{lemma_generalisation_hypo_small_k}. Hence, we proved that for any $i $ in $\lbrace 1,2 \rbrace$ 
\begin{align*}
    \mathbb{E}_{\mu^\varepsilon} \left\{ (A+B_\varepsilon G) \left[ \phi_i \left(q^\varepsilon\left(t\right), p^\varepsilon\left(t\right)\right) \right] \right\} = \left\langle \Wc_i^{\varepsilon}(t),  \textbf{v}_{0} \partial_u J_i \right\rangle + \mathcal{O}_J(\varepsilon) \ .
\end{align*}
So far we obtained the first term of  Eq.\;\eqref{evolution_eq_ib} which corresponds to the transport term of the Boltzmann equation $\left(\text{see Eq.}\;\eqref{BE}\right)$. In the next section, we obtain the last  terms of Eq.\;\eqref{evolution_eq_ib}. Before doing this, we give the sketch of the proof of Lemma \ref{lemma_BOB}. 
\begin{proof}[Sketch of the proof of Lemma \ref{lemma_BOB}]
We recall that $\widehat{\alpha}(k) = 4\pi^2 \sin^2\left( \pi k \right)$.\newline Hence, by definition we have for every $k$ in $\mathbb{T}$
\begin{align*}
    \textbf{v}_{B_\varepsilon}(k) = \frac{\widehat{\alpha}^\prime(k)}{2\sqrt{\widehat{\alpha}(k) + \frac{B_\varepsilon^2}{4}}} = \frac{4\pi^3 \cos\left( \pi k \right) \sin\left(\pi k \right)}{\sqrt{\sin^2\left( \pi k \right) +  \frac{B_\varepsilon^2}{4} }} \lesssim  4\pi^3 \ . 
\end{align*}
This ends the proof of point i). Using a Taylor expansion, the proof of point ii) follows using the item i). It remains to deal with point iii). For $p=0$, the results is obvious since $\vert k \vert > \varepsilon \vert p \vert $ hence it is sufficient to discuss the case $\vert k \vert > 0$. Let $f_i$ and $g_i$ be two real-valued functions defined on $]0,\varepsilon[$ by 
\begin{align*}
    f_i(s) = \omega_{i,B_\varepsilon}\left(k + \frac{s p}{2} \right) \quad \text{and} \quad  g_i(s) = \omega_{i,B_\varepsilon}\left(k - \frac{s p}{2} \right) \ .
\end{align*}
Then we have
\begin{align*}
    \varepsilon^{-1}\left\vert \omega_{i,B_\varepsilon}\left(k+\frac{\varepsilon p}{2}\right)- \omega_{i,B_\varepsilon}\left(k-\frac{\varepsilon p}{2}\right) - \varepsilon p \textbf{v}_{B_\varepsilon}(k) \right\vert  & = \varepsilon^{-1} \left[ f_i(\varepsilon) - f_i(0) - \varepsilon f_i^\prime(0) \right] \\ 
    &- \varepsilon^{-1} \left[ g_i(\varepsilon) - g_i(0) - \varepsilon g_i^\prime(0) \right] \ .
\end{align*} 
We conclude the proof by a Taylor expansion and using the fact that $\widehat{\alpha}(k) > C \vert k \vert $ where $C$ is a positive constant. 
\end{proof}
\subsection{\textsc{Behavior of the collisional term}}
\label{appendix_behavior_colli_term}
Let $i$ in $\{1,2\}$ and $i^*=3-i$, following the proof did by the authors of \cite{SSS19} we can prove that
\begin{align}
\gamma \mathbb{E}_{\mu^\varepsilon} \left[ S \left[ \phi_i \left(q^\varepsilon\left(t\varepsilon^{-1}\right), p^\varepsilon\left(t\varepsilon^{-1}\right)\right) \right]\right] & = \gamma \left( \left\langle \Wc^\varepsilon_i(t), [C_{B_\varepsilon}J]_i  \right\rangle + \left\langle \mathcal{A}^\varepsilon_i(t), [K_{B_\varepsilon}J]_i \right\rangle \right) \label{peakly} \\
&+ \gamma \left(\left\langle (\mathcal{A}^\varepsilon_i)^*(t), [K_{B_\varepsilon}J]_i \right\rangle \right)  + \mathcal{O}_{J}(\varepsilon) \nonumber \ ,
\end{align}
where for any $(u,k)$ in  $\mathbb{R} \times \mathbb{T}$
\begin{align*}
[C_{B_\varepsilon}J]_i (u,k) &= \sum_{j=1}^2 \int_\mathbb{T} dk^\prime  \; \theta_{i,B_\varepsilon}^2(k) R\left(k,k^\prime\right) \theta_{j,B_\varepsilon}^2\left(k^\prime\right) \left[J_j(u,k^\prime) - J_i(u,k) \right] \ , \\
[K_{B_\varepsilon}J]_i (u,k) &=  \int_\mathbb{T} dk^\prime \; \theta_{1,B_\varepsilon}\left( k\right)  \theta_{2,B_\varepsilon}(k)R\left(k,k^\prime\right)  \left[\theta_{i^*,B_\varepsilon}^2\left(k^\prime\right)J_{i^*}(u,k^\prime) -\frac{J_i(u,k)}{2} \right] \ .
\end{align*}
To conclude the proof of Eq.\;\eqref{Sandra_2} we have to replace $K_{B_\varepsilon}J$ and $C_{B_\varepsilon}J$ in Eq.\;\eqref{peakly} by $C_{0}J$  where for any $(u,k)$ in $\mathbb{R} \times \mathbb{T}$
\[ [C_0J]_i (u,k) = \sum_{j=1}^2 \int_\mathbb{T} dk^\prime \; \frac{1}{4} R\left(k,k^\prime\right) \left[J_j\left(u,k^\prime\right) - J_i(u,k) \right] \ .\]
Since the arguments are similar we only prove that
\begin{equation} \label{behav_coll}
 \left\langle \Wc_i^\varepsilon(t), [C_{B_\varepsilon}J]_i \right\rangle = \left\langle \Wc_i^\varepsilon(t), [C_0J]_i \right\rangle + \mathcal{O}_J(\varepsilon)\ .
 \end{equation}
We define an operator $\tilde{C}_{B_\varepsilon}$ on $\Sc \times \Sc$ where for any $i$ in $\{1,2\}$ and $(u,k)$ in $\R \times \T$ 
 \[ [\tilde{C}_{B_\varepsilon} J]_i(u,k) =  \sum_{j=1}^2 \int_\mathbb{T} dk^\prime \; \frac{\theta_{i,B_\varepsilon}^2(k)}{2} R\left(k,k^\prime\right)  \left[J_j\left(u,k^\prime\right) - J_i(u,k) \right]   \ . \]
Let $i$ in $\lbrace 1,2 \rbrace$ we first prove that
\begin{align}
\label{behavior_colli_inter}
&\left\langle \Wc^\varepsilon_i(t), [C_{B_\varepsilon}J]_i - [\tilde{C}_{B_\varepsilon} J]_i \right\rangle = \mathcal{O}_{J}(\varepsilon) \ .
\end{align}
Let $(p,k)$ in $\R \times \mathbb{T}$ and $j$ in $\lbrace 1,2 \rbrace$, we recall that 
\begin{align*}
    R\left(k,k^\prime\right) = 16 \sin^2(\pi k) \sin^2 \left(\pi k^\prime\right) \ .
\end{align*} 
Then
\begin{align*}
    &\left\vert \int_\mathbb{T} dk^\prime \; R\left(k,k ^\prime\right)\theta_{i,B_\varepsilon}^2(k) \left[ \theta_{j,B_\varepsilon}^2\left(k^\prime\right) - \frac{1}{2} \right] \left[ \mathcal{F}\left[J_j\right]\left(p,k^\prime\right)^* - \Fc \left[J_i\right](p,k)^* \right] \right\vert  \lesssim  \vert B \varepsilon^\delta \vert  \kappa_J(p) \ , \end{align*}
where 
\begin{align*}
&\kappa_J(p) = \left( \sup_{k \in \mathbb{T}}\left\vert \Fc \left[J_i\right](k,p)\right\vert + \sup_{k \in \mathbb{T}} \left\vert \mathcal{F}\left[J_j\right](k,p) \right\vert  \right) \ .
\end{align*}
Hence, using the fact that $J_i$ belongs to $\Sc$ and the usual argument we conclude that
\begin{align*}
&\left\vert \left\langle \Wc^\varepsilon_i(t), [C_{B_\varepsilon}J]_i - [\tilde{C}_{B_\varepsilon} J]_i \right\rangle \right\vert \lesssim K_0 \Vert J \Vert B \varepsilon^\delta\ .
\end{align*}
This proves Eq.\;\eqref{behavior_colli_inter}. Similarly , one can prove that
\begin{align*}
\label{behavior_colli_inter_2}
&\left\langle \Wc^\varepsilon_i(t), [\tilde{C}_{B_\varepsilon} J]_i - [C_{0}J]_i \right\rangle = \mathcal{O}_{J}(\varepsilon)\  .
\end{align*}
By the triangle inequality we obtain Eq.\;\eqref{behav_coll} and conclude the proof of Eq.\;\eqref{evolution_eq_ib}.
\subsection{Proof of Lemma \ref{lemma_generalisation_hypo_small_k}}
\label{appendix-generalisation_hypo_small_k}
In this section, we prove Lemma \ref{lemma_generalisation_hypo_small_k} which allows us to extend the assumption \eqref{hypo_small_k}, on the initial distribution $\mu^\varepsilon$ to times $t\varepsilon^{-1}$. The proof of this Lemma follows the one of \cite[Lemma 7]{BOS09}. Let $J=(J_1,J_2)$ be a bounded measurable function on $\mathbb{T}$. Then for any $i$ in $\lbrace 1,2 \rbrace $ and $t$ in $[0,T]$
\begin{align}
\left\langle \Wc^\varepsilon_{i}(t), J \right\rangle & =  \frac{\varepsilon}{2} \int_\mathbb{T} \mathbb{E}_{\mu^\varepsilon}\left[ \left\vert\widehat{\psi}^\varepsilon_i\left(t\varepsilon^{-1},k\right) \right\vert^2 \right]  J_i(k)^* \; dk\ .\\
\left\langle \mathcal{A}^\varepsilon_{i}(t), J \right\rangle & =  \frac{\varepsilon}{2} \int_\mathbb{T} \mathbb{E}_{\mu^\varepsilon}\left[  \widehat{\psi}^\varepsilon_i\left(t\varepsilon^{-1},k\right) \widehat{\psi}^\varepsilon_i\left(t\varepsilon^{-1},-k\right)  \right] J_i(k)^* \; dk\ .
\end{align}
Since $J$ is a bounded measurable function the previous objects are well defined. 
Indeed, we have
\begin{align}
    \left\vert \left\langle \Wc_i^\varepsilon(t) , J \right\rangle \right\vert \leq K_{0} \sup\limits_{k \in \mathbb{T}}  \vert J(k) \vert \quad \text{and} \quad \left\vert\left\langle \mathcal{A}_i^\varepsilon(t), J \right\rangle \right\vert \leq K_0 \sup_{k \in \T} \left\vert J (k) \right\vert  \ . 
\end{align}
Using Lemma \ref{eig} we get
\begin{align} 
\label{The_batman}
(A+B_\varepsilon G)\left[\left\vert \widehat{\psi}^\varepsilon_i \right\vert^2 \right]\left(k\right) =0 \ .
\end{align}
Following the proof presented in Appendix \;\ref{appendix_behavior_colli_term} and using Eq.\;\eqref{The_batman} we obtain that
\begin{align*}
\partial_t \left\langle \mathcal{W}_i^\varepsilon(t), J_i \right\rangle & =    \gamma \left(  \left\langle \mathcal{W}_i^\varepsilon(t), [C_0J]_i \right\rangle + \left\langle\mathcal{A}_i^\varepsilon(t),[C_0J]_i \right\rangle + \left\langle\left({\mathcal{A}}_i^\varepsilon\right)^*(t),[C_0J]_i \right\rangle \right) +  \mathcal{O}_{t,J}(\varepsilon)\ ,\nonumber
\end{align*}
where we recall that for any $i$ in $\{1,2\}$,  $[C_0J]_i$ is defined in Eq.\;\eqref{collisional_operator},  $\kappa$ is a positive constant and $\eta$ is in $]0,1[$. Let $\kappa_\varepsilon:=\kappa\varepsilon^{\eta}$ and $J^{\kappa_\varepsilon}:=(\mathds{1}_{[-\kappa_\varepsilon, \kappa_\varepsilon]},\mathds{1}_{[-\kappa_\varepsilon, \kappa_\varepsilon]})$. Then we have 
\begin{align*}
& \left\vert  \left[C_0J^{\kappa_\varepsilon}\right]_i(k) \right\vert \lesssim \left(\kappa_\varepsilon +J^{\kappa_\varepsilon}_i(k)\right) \ ,  
\end{align*}
and 
\begin{align*}
& \left\langle \mathcal{A}_i^\varepsilon(t) + (\mathcal{A}_i^\varepsilon)^*(t), \left[C_0J^{\kappa_\varepsilon}\right]_i \right\rangle \lesssim \left\vert \langle \Wc_i^\varepsilon(t), [C_0J^{\kappa_\varepsilon}]_i \rangle \right\vert \ . \end{align*}
Hence, using Eq.\;\eqref{evolution_eq_ib} we obtain that for any $i$ in $\{1,2\}$ and $t$ in $[0,T]$ 
\begin{align*}
\left\langle \Wc_i^\varepsilon(t), J^{\kappa_\varepsilon}_i \right\rangle  
& \lesssim  \left\langle \Wc_i^\varepsilon(0), J^{\kappa_\varepsilon}_i \right\rangle + (C_1 \rho +C_2 \varepsilon )t +  \int_0^t  \left\langle \Wc_i^\varepsilon(s), J^{\kappa_\varepsilon}_i \right\rangle \; ds \ ,
\end{align*}
where $C_1$ and $C_2$ are two positive constants.
By Gronwall's lemma we conclude that 
\begin{align}
\label{equation_final_hypo_weaker}
\left\langle \Wc_i^\varepsilon(t), J^{\kappa_\varepsilon}_i \right\rangle &\lesssim \exp(C_3) \left( C_1 \kappa_\varepsilon + C_2 \varepsilon + \left\langle \Wc_i^\varepsilon(0), J^{\kappa_\varepsilon}_i \right\rangle \right) \ .
\end{align}
Using assumption \eqref{hypo_small_k} the previous term goes to zero when $\varepsilon$ goes to zero which proves that
\[  \limsup_{\varepsilon \rightarrow 0} \left\langle \Wc_i^\varepsilon(t), J^{\kappa_\varepsilon}_i \right\rangle = \limsup_{\varepsilon \rightarrow 0} \frac{\varepsilon}{2} \int_{\vert k \vert < \kappa_\varepsilon} \mathbb{E}_{\mu^\varepsilon}\left[ \left\vert\widehat{\psi}^\varepsilon_i\left(t\varepsilon^{-1},k \right) \right\vert^2 \right] \; dk  = 0 \ .\]
This ends the proof of Lemma \ref{lemma_generalisation_hypo_small_k}. 
\subsection{\textsc{Proof of the uniqueness in the Boltzmann equation}}
\label{appendix_unicite_Boltzmann}
In this section, we want to prove Lemma \ref{lemma_unicite}. Let $\mu$ be a \textbf{Borel measure valued} solution of Eq.\;\eqref{BE}. Then, for any $J:=(J_1,J_2) \in \mathcal{S} \times \mathcal{S}$ we have that 
\begin{small}
\begin{align*}
\sum_{i=1}^2 \int_{\mathbb{R} \times \mathbb{T}} \mathbb{E}\left[ J(u,k) \;  d\mu_{i}(t,u,k) \right]  = \sum_{i=1}^2 \int_{\mathbb{R} \times \mathbb{T}} \mathbb{E}\left[  J_{\tilde{I}_B(t)}\left( \tilde{Z}_{u,B}(t) , \tilde{K}_B(t) \right) \; d\mu_i(0,u,k) \right] 
\end{align*}
\end{small}
where $\left(\tilde{Z}_{u,B}(\cdot), \tilde{I}_B(\cdot), \tilde{K}_B(\cdot) \right)$ is the dual process of $\left(Z_{u,B}(\cdot), I_B(\cdot), K_B(\cdot) \right)$ starting from $(u,k,i)$. Hence, the infinitesimal generator of the dual process is defined by $\mathcal{L}+\textbf{v}_B \partial_u$ where $\mathcal{L}_B$ is defined in Eq.\;\eqref{def_infinit_generator_boltzmann}. Since the Markov process $\left(\tilde{Z}_{u,B}(\cdot), \tilde{I}_B(\cdot), \tilde{K}_B(\cdot) \right)$ is unique, we conclude the proof. 
\section{Proof of Proposition \ref{proposition_x_B} and Proposition \ref{proposition_f}}
In this section, we prove Proposition \ref{proposition_x_B} and Proposition \ref{proposition_f}.
\subsection{Proof of Proposition \ref{proposition_x_B}}
\label{appendix_preuve_proposition_x_B}
We recall that $B>0$ and that $x_{B,\pm}$ is defined in Eq.\;\eqref{def_x_B_1}. 
Observe that $\text{sign}(x_{B,\pm}(r))=\text{sign}(r)$, moreover
Eq.\;\eqref{def_x_B_1} has exactly two solutions with opposite sign. This proves that $x_{B,\pm}(-r)=-x_{B,\pm}(r)$ and ends the proof of item i).

\medskip

We prove item ii), since the proof  are similar we only give the details for $x_{B,+}$.
Using Eq.\;\eqref{def_x_B_1} we get that for any $r \neq 0$
\[ \vert x_{B,+}(r) \vert < \sqrt{\frac{ \pi}{2\gamma}} \vert r \vert^{-\frac{1}{2}} \ .  \]
Hence for each $r \neq 0$, $(x_{B,+}(r))_{B >0}$ is a bounded sequence and therefore admits an accumulation point. Observing that there is only one accumulation point we conclude that the whole sequence converges and
\[ \lim\limits_{B \rightarrow 0} x_{B,+}(r) = \text{sign}(r) \sqrt{\frac{ \pi}{2\gamma}} \vert r \vert^{-\frac{1}{2}} = x_0(r) \ .\]
To prove the convergence of the sequence $(x_{B,+}^\prime)_{B >0}$ we derive Eq.\;\eqref{def_x_B_1} with respect to $r$ and we send $B$ to zero. This concludes the proof of item ii). 

\medskip 

We prove item iii). We can write Eq.\;\eqref{def_x_B_1} in the following way
\begin{equation}
\label{x_B_intermediaire}
\left( \sqrt{\frac{4x_{B,\pm}^2(r)}{B^2} +1} \pm 1 \right)  x_{B,\pm}(r) = \frac{\pi }{B\gamma r} \ . 
\end{equation}
Using Eq.\;\eqref{x_B_intermediaire} we observe that the sequence $\left(B^{-1}x_{B,\pm}(r)\right)_{B >0}$ converges to zero when $B$ goes to infinity. Hence by performing a Taylor expansion in Eq.\;\eqref{x_B_intermediaire} we obtain the first part of item iii).
To conclude the proof we derive Eq.\;\eqref{x_B_intermediaire} with respect to $r$ and send $B$ to infinity.
\subsection{Proof of Proposition \ref{proposition_f}}
\label{appendix_preuve_proposition_f}
Let $r\neq0$ we have 
\begin{align}
g_{B,\pm}(r) &= -\frac{x_{B,\pm}^\prime(r)}{4 \pi} \left( \frac{1}{2} \pm \frac{B}{4\sqrt{x_{B,\pm}^2 (r) +\frac{B^2}{4}}} \right) x_{B,\pm}^2(r)  \; \mathds{1}_{r > 0}\nonumber \\
&-\frac{x_{B,\pm}^\prime(r)}{4 \pi} \left( \frac{1}{2} \pm \frac{B}{4\sqrt{x_{B,\pm}^2 (r) +\frac{B^2}{4}}} \right) x_{B,\pm}^2(r) \; \mathds{1}_{r < 0} \ . \label{equation_densite_intermediaire}
\end{align} 
By Proposition \ref{proposition_x_B}, $x_{B,\pm}$ is an odd function, hence its derivative is an even function and this proves item i).\\
Since the arguments are similar, to prove item ii) it is sufficient to show that 
\begin{equation}
    \int_{1}^\infty g_{B,\pm}^\prime(r) \; dr < \infty \quad  \text{and} \int_{0}^1 r^2 g_{B,\pm}^\prime(r) \; dr < \infty \ .  
\end{equation}
Let $A >1$, then
\begin{align*}
    \int_{1}^A g_{B,\pm}^\prime(r) \; dr  \lesssim 3^{-1}(x_{B,\pm}^3(1)- x_{B,\pm}^3(A)) \ .
\end{align*}
By Eq.\;\eqref{def_x_B_1} we deduce that $(x_{B,\pm}(A))_{A >1}$ goes to zero when $A$ goes to infinity. The monotone convergence theorem ends the proof. Let $\varepsilon <1$, we have
\begin{align*}
    \int_{\varepsilon}^1 r^2h_{B,\pm}^\prime(r) \; dr  & \lesssim 3^{-1}(x_{B,\pm}^3(1)- \varepsilon^2 x_{B,\pm}^3(\varepsilon)) + \frac{2}{3} \int_{\varepsilon}^1 r x_{B,\pm}^3(r) \; dr \ .
\end{align*}
By Eq.\;\eqref{def_x_B_1} and using that $x_{B,\pm}(r) >0$ for $r>0$ we get
\begin{align}
\label{Bob}
\int_{\varepsilon}^1 r x_{B,\pm}^3(r) \; dr  =  \int_{\varepsilon}^1 \frac{\pi x^2_{B,\pm}(r) \; dr}{\gamma \left( 2\sqrt{x_{B,\pm}^2(r) + \frac{B^2}{4}} \pm B \right) } \ .
\end{align}
By sending $r$ to zero in Eq.\;\eqref{def_x_B_1} we obtain that for any $B$ and $r$ in\footnote{Here, $\mathcal{V}(0)$ denotes a neighborhood of $0$.}  $\mathcal{V}(0)$
\[ x_{B,\pm}(r) \sim \vert r \vert^{-\frac{1}{2}} C(B) \ ,   \]
where $C(B)$ is a constant which depends on $B$.
Hence we deduce that 
\[ \lim\limits_{ \varepsilon \rightarrow  0 } x_{B,\pm}^3(\varepsilon) \varepsilon^2 = 0 \quad \text{and} \quad \lim\limits_{\varepsilon \rightarrow 0} \int_{\varepsilon}^1 r x_{B,\pm}^3(r) \; dr < \infty \ .  \]
This ends the proof of item ii). Using item ii) (resp. item iii)), of Proposition \ref{proposition_x_B} and sending $B$ to zero (resp. to infinity) in Eq.\;\eqref{equation_densite_intermediaire} we get item iii)  (resp. item iv)) of Proposition \ref{proposition_f} and conclude the proof. 
\section{Proof of Proposition \ref{proposition_properties_pi} and Proposition \ref{proposition_B_N}}
\label{section_durrett_extended}
We recall that $B >0$. In this section, first we prove Proposition \ref{proposition_properties_pi} which gives us the tails of the function $\Psi_{B_N}$ under the measure $\pi_{B_N}$. Then we prove Proposition \ref{proposition_B_N} which allows us to prove Theorem \ref{main_theorem_hydro}.
\subsection{Proof of Proposition \ref{proposition_properties_pi}}
\label{appendix_proof_tails}
We recall that $\Psi_{B_N}$ is defined by Eq.\;\eqref{def_Psi}. Since for any $i$, $\Psi_{B_N}(\cdot,i)$ is an odd function and that the density of $\pi_{B_N}$ with respect to the Lebesgue measure on $\T$ is even we have for any $r\neq0$
\[  \pi_{B_N} \left( \{ (k,i), \quad \Psi_{B_N}(k,i) > Nr  \} \right) 
= \pi_{B_N} \left( \{ (k,i), \quad \Psi_{B_N}(k,i) < -Nr  \} \right) \ .\]
Hence, we only prove the result for $r>0$. We make the change of variables $x=\sin(\pi k)N^\delta$ for $k$ in $\T$ and we get for $r>0$
\begin{align*}
&\pi_{B_N} \left( \{ (k,i), \quad \Psi_{B_N}(k,i) > Nr  \} \right) \\
&= \frac{N^{-3 \delta}}{4\pi }\int_0^{N^{\delta}} \mathds{1}_{A_{N,B,+}(r)}(x)  \left(\frac{1}{2} + \frac{B}{4 \sqrt{x^2 + \frac{B^2}{4}}} \right) \frac{x^2}{\sqrt{1-x^2 N^{-2\delta}}} \; dx   \\
&+ \frac{N^{-3 \delta}}{4\pi }\int_0^{N^{\delta}} \mathds{1}_{A_{N,B,-}(r)}(x)  \left(\frac{1}{2} - \frac{B}{4 \sqrt{x^2 + \frac{B^2}{4}}} \right) \frac{x^2}{\sqrt{1-x^2 N^{-2\delta}}} \; dx   \\
& =: I_{B,+}^N(r) + I_{B,-}^N(r) \ ,
\end{align*}
where
\begin{align*}
A_{N,B,\pm}(r)  &=  \left\{x \; \Bigg{|} \; \frac{\left( 2\sqrt{x^2 + \frac{B^2}{4}} \pm B \right)  x}{ \sqrt{1-x^2 N^{-2\delta}}} < \frac{ \pi N^{2 \delta - 1}}{\gamma r}     \right\} \ .
\end{align*}
Let $r \neq 0$, we define $x_{N,B,\pm}(r)$ the solutions on $\left[-N^{\delta},N^{\delta}\right]$ of the following equations \begin{align}
\frac{\left( 2\sqrt{x_{N,B,\pm}^2(r) + \frac{B^2}{4}} +B \right)  x_{N,B,\pm}(r)}{ \sqrt{1-x_{N,B,\pm}^2(r) N^{-2\delta}}} &= \frac{\pi N^{2 \delta - 1}}{\gamma r} \ . \label{def_x_N_1}
\end{align}
Observe that sign$(x_{N,B,\pm}(r))=$ sign$(r)$. To complete the proof of Proposition \ref{proposition_properties_pi} we need the following lemma which is proved at the end of this section.
\begin{lemma}
\label{lemma_x_N} Let $r \neq0$, we have the following results
\begin{itemize}
\item[i)] If $\delta >\frac{1}{2}$ then 
\begin{align}
\lim\limits_{N \rightarrow \infty}  N^{\tfrac{1}{2}-\delta} x_{N, B,\pm}(r) &=\sqrt{\pi}(2\gamma)^{-\tfrac{1}{2}} \text{sign}(r)\vert r \vert^{-\tfrac{1}{2}} \ .
\end{align}
\item[ii)] If $\delta =\frac{1}{2}$ then 
\begin{align}
\lim\limits_{N \rightarrow \infty}   x_{N,B,\pm}(r) &= x_{B,\pm}(r) \ , \label{preuve_x_N_2,dsupb}
\end{align}
where $x_{B,\pm}$ is defined in Eq.\;\eqref{def_x_B_1}.
\item[iii)] If $\delta <\frac{1}{2}$ then 
\begin{align}
 \lim\limits_{N \rightarrow \infty} N^{1 - 2\delta} x_{N,B,+}(r) &=\pi (r \gamma)^{-1} B.\\ 
 \lim\limits_{N \rightarrow \infty}  N^{\tfrac{1-2\delta}{3}}x_{N,B,-}(r)&= (\pi)^{\tfrac{1}{3}}(2\gamma)^{-\tfrac{1}{3}} \text{sign}(r) \vert r\vert^{-\tfrac{1}{3}} B^{\tfrac{1}{3}} \ . \label{preuve_x_N_2,dinfb}
\end{align}
\end{itemize}
\end{lemma}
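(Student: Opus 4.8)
\textbf{Proof strategy for Lemma \ref{lemma_x_N}.}

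The plan is to treat the three regimes separately, in each case starting from the defining equation \eqref{def_x_N_1} and extracting the correct scaling by a fixed-point / monotonicity argument followed by a Taylor expansion. First I would observe that for each fixed $N$, $B$ and $r\ne 0$ the left-hand side of \eqref{def_x_N_1} is a strictly increasing function of $x_{N,B,\pm}$ on $[0,N^\delta]$ (for $r>0$), vanishing at $0$ and diverging as $x\to N^\delta$, so $x_{N,B,\pm}(r)$ is well defined, with $\mathrm{sign}(x_{N,B,\pm}(r))=\mathrm{sign}(r)$; by the same argument as in Proposition \ref{proposition_x_B} the map is odd in $r$, so it suffices to take $r>0$.

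For item i) ($\delta>\frac12$): I expect $x_{N,B,\pm}(r)\to\infty$ but $x_{N,B,\pm}(r)N^{-\delta}\to 0$, so in \eqref{def_x_N_1} the denominator $\sqrt{1-x_{N,B,\pm}^2 N^{-2\delta}}\to 1$ and, since $x_{N,B,\pm}\to\infty$, the term $2\sqrt{x_{N,B,\pm}^2+B^2/4}\pm B \sim 2x_{N,B,\pm}$. Substituting the ansatz $x_{N,B,\pm}(r)=c\,N^{\delta-1/2}(1+o(1))$ into \eqref{def_x_N_1} forces $2c^2 N^{2\delta-1}=\pi N^{2\delta-1}/(\gamma r)$, i.e. $c=\sqrt{\pi/(2\gamma r)}$; to make this rigorous one first shows $x_{N,B,\pm}(r)N^{1/2-\delta}$ is bounded (using a crude upper bound coming from dropping the $\pm B$ and the denominator) and has a unique accumulation point, then identifies the limit, exactly as in the proof of Proposition \ref{proposition_x_B}(ii). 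For item ii) ($\delta=\frac12$): here $N^{2\delta-1}=1$, $x_{N,B,\pm}$ stays bounded, and the only difference from \eqref{def_x_B_1} is the factor $\sqrt{1-x^2 N^{-1}}$ which tends to $1$ pointwise uniformly on compacts; passing to the limit in \eqref{def_x_N_1} gives precisely \eqref{def_x_B_1}, hence $x_{N,B,\pm}(r)\to x_{B,\pm}(r)$ by continuity and monotonicity. For item iii) ($\delta<\frac12$): now $N^{2\delta-1}\to 0$, so the right-hand side of \eqref{def_x_N_1} $\to 0$ and therefore $x_{N,B,+}(r)\to 0$; in the ``$+$'' equation one has $2\sqrt{x^2+B^2/4}+B\to 2B$ and the denominator $\to 1$, giving $2Bx_{N,B,+}(r)\sim\pi N^{2\delta-1}/(\gamma r)$, i.e. $N^{1-2\delta}x_{N,B,+}(r)\to \pi B/(\gamma r)$. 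In the ``$-$'' equation the factor $2\sqrt{x^2+B^2/4}-B$ vanishes at $x=0$, so $x_{N,B,-}(r)$ does not go to zero at the same rate: here $2\sqrt{x^2+B^2/4}-B = \tfrac{2x^2}{2\sqrt{x^2+B^2/4}+B}\sim \tfrac{x^2}{B}$ for small $x$ but this is too crude since we need the regime where $x_{N,B,-}$ is order $N^{(2\delta-1)/3}\to 0$; more carefully, writing $x_{N,B,-}(r)=dN^{(2\delta-1)/3}(1+o(1))$, the left side of \eqref{def_x_N_1} behaves like $\tfrac{x^3}{B}\cdot\tfrac{1}{\text{(lower order)}}$, wait—I would instead just note $2\sqrt{x^2+B^2/4}-B\sim 2|x|$ is false; rather for $x\to 0$, $2\sqrt{x^2+B^2/4}-B\sim x^2/B$, so the left side $\sim x^3/B$, forcing $d^3 N^{2\delta-1}/B = \pi N^{2\delta-1}/(\gamma r)$, i.e. $d=(\pi B/(\gamma r))^{1/3}\cdot(\dots)$; comparing with the claimed limit $(\pi)^{1/3}(2\gamma)^{-1/3}|r|^{-1/3}B^{1/3}$ one sees the factor must actually come from $2\sqrt{x^2+B^2/4}-B\sim x^2/B$ wait the claim has no factor $2^{-1/3}$ issue—so in fact the relevant expansion is $2\sqrt{x^2+B^2/4}-B = \frac{2x^2}{2\sqrt{x^2+B^2/4}+B}$ and near $x=0$ this is $\sim \frac{x^2}{B}$; then $\frac{x^3}{B}\sim\frac{\pi N^{2\delta-1}}{\gamma r}$ gives $x_{N,B,-}(r)N^{(1-2\delta)/3}\to \left(\frac{\pi B}{\gamma r}\right)^{1/3}$, and matching the stated constant determines the precise coefficient; the rigorous version again proceeds by boundedness of the rescaled quantity, uniqueness of the accumulation point, and Taylor expansion.

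The main obstacle is the ``$-$'' case of item iii): there the competing effects of $x_{N,B,-}\to 0$, the vanishing of $2\sqrt{x^2+B^2/4}-B$ at $x=0$, and the blow-up rate $N^{2\delta-1}$ must be balanced to pick out the correct power $(1-2\delta)/3$, and one has to check that $x_{N,B,-}(r)$ genuinely lies in the small-$x$ regime (rather than near $N^\delta$) so that the expansion $2\sqrt{x^2+B^2/4}-B\sim x^2/B$ is legitimate. This is handled by a two-sided a priori bound: an upper bound showing $x_{N,B,-}(r)\lesssim N^{(2\delta-1)/3}$ (forcing it into the small-$x$ zone where the denominator $\to 1$ and the expansion applies) and a matching lower bound, after which the limit is identified as above; the derivative statements (not requested in the lemma as stated) would follow by implicit differentiation of \eqref{def_x_N_1} and the same limiting procedure, exactly mirroring Proposition \ref{proposition_x_B}.
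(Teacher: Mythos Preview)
Your approach is essentially the same as the paper's: the paper also argues by checking whether $(x_{N,B,\pm}(r))_N$ is bounded or not (according to the sign of $2\delta-1$), extracts an accumulation point, and then identifies it by passing to the limit / Taylor expanding in \eqref{def_x_N_1}. One algebraic slip to fix in your item~iii) ``$-$'' case: the identity reads $2\sqrt{x^2+B^2/4}-B=\dfrac{4x^2}{2\sqrt{x^2+B^2/4}+B}\sim \dfrac{2x^2}{B}$ as $x\to 0$ (not $x^2/B$), which is exactly what produces the constant $(\pi)^{1/3}(2\gamma)^{-1/3}B^{1/3}|r|^{-1/3}$.
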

\begin{flushleft}
From Lemma \ref{lemma_x_N} we deduce that for any $r\neq0$
\end{flushleft}
\begin{align*}
I_{B,\pm}^N(r)&= \frac{N^{-3 \delta}}{4\pi }\int_0^{x_{N,B,\pm}(r)}   \left(\frac{1}{2} \pm \frac{B}{4 \sqrt{x^2 + \frac{B^2}{4}}} \right) \frac{x^2}{\sqrt{1-x^2 N^{-2\delta}}} \; dx \; \mathds{1}_{r>0}(r) \ .
\end{align*}
For $\delta > \frac{1}{2}$,  we make the change of variables $y=xN^{\tfrac{1}{2}-\delta}$ in $I_{B,\pm}^N$ and by the dominated convergence theorem we get that 
\begin{align*}
& \lim\limits_{N \rightarrow \infty} N^{\frac{3}{2}} I^N_{B,\pm}(r) = \frac{\sqrt{\pi} r^{-\tfrac{3}{2}}}{2^{\frac{9}{2}} 3 \gamma^{\frac{3}{2}}} \ .
\end{align*}
From this we deduce that for $r>0$
\begin{equation}
\lim\limits_{N \rightarrow \infty} N^{\frac{3}{2}} \pi_{B_N} \left(\left\{(k,i) \vert \; \Psi_{B_N}(k,i) > N r   \right\}\right) = \kappa_1 r^{-\frac{3}{2}} \gamma^{-\frac{3}{2}} \ ,
\end{equation} 
with 
\begin{equation}
\label{def_kappa_1}
\kappa_1=\frac{\sqrt{\pi} }{ 2^{\frac{7}{2}} 3 } \ .
\end{equation} 
$ \bullet$ Let $\delta=\frac{1}{2}$, by using the dominated convergence theorem we get 
\begin{align*}
\lim\limits_{N \rightarrow \infty} N^{3\delta} I_{B,\pm}^N(r) &= \frac{1}{4\pi }\int_{0}^{x_{B,1}(r)} \left(\frac{1}{2} \pm \frac{B}{4 \sqrt{x^2 + \frac{B^2}{4}}} \right) x^2 \; dx=h_{B,\pm}(r) \ ,
\end{align*}
where $h_{B,\pm}$ is defined in Eq.\;\eqref{def_f_B,+}.\\
$\bullet$ Let $\delta < \frac{1}{2}$, then we make the change of variables $y=xN^{\tfrac{1-2\delta}{3}}$ in $I_{B,-}^N(r)$ and the change of variables $y=xN^{1-2\delta} $ in $I^N_{B,+}(r)$ and by the dominated convergence theorem we get that
\begin{align*}
 \lim\limits_{N \rightarrow \infty} N^{1-\delta} I^N_{B,+} = 3^{-1} \pi^{3} (r \gamma)^{-3}  B^3 \quad \text{and} \quad  \lim\limits_{N \rightarrow \infty} N^{\frac{5-\delta}{2}} I^N_{B,-} = \kappa_2 \gamma^{-\frac{5}{3}}B^{-\frac{1}{3}} \vert r \vert^{-\frac{5}{3}} \ ,
\end{align*}
with 
\begin{equation}
\label{def_kappa_2}
\kappa_2= \frac{\pi^{\frac{2}{3}}}{ 2^{\frac{11}{3}}3}.
\end{equation} 
From this we deduce that 
\begin{align*}
\lim\limits_{N \rightarrow \infty} N^{\frac{5 - \delta}{2}}\pi_{B_N} \left\{(k,i) \vert \; \Psi_{B_N}(k,i) > N r   \right\} 
& = \kappa_2 \gamma^{-\frac{5}{3}}B^{-\frac{1}{3}} \vert r \vert^{-\frac{5}{3}} \ .
\end{align*}
This ends the proof of Proposition \ref{proposition_properties_pi}. 
\begin{proof}[Proof of Lemma \ref{lemma_x_N}]Since the proofs are similar we will only prove the results for $r>0$ and  $(x_{N,B,-}(r))_{N \in \N}$.
\medskip 

When $2 \delta - 1>0$ , $N^{2 \delta - 1}$ goes to infinity when $N$ goes to infinity, hence by Eq.\;\eqref{def_x_N_1}  $(x_{N,B,-}(r))_{N \in \N}$ is not bounded. Since $r>0$, $(x_{N,B,-}(r))_{N \in \N}$ is positive we deduce that $(x_{N,B,-}(r))_{N \in \N}$ goes to infinity when $N$ goes to infinity.  
By a Taylor expansion in $N$ in Eq.\;\eqref{def_x_N_1} we get Eq.\;\eqref{preuve_x_N_2,dsupb}.

\medskip
When $2\delta-1 \leq0$, $N^{2\delta-1}$ is bounded, then we deduce that the sequence $(x_{N,B,-}(r))_{N \in \N}$ is bounded and has an accumulation point denoted by $l_B(r)$. By sending $N$ to infinity in Eq.\;\eqref{def_x_N_1} we get that for $2\delta-1 <0$ (resp. $2\delta=1$), $l_B(r)=0$ (resp. $l_{B}(r)=x_{B,\pm}(r)$ defined in Eq.\;\eqref{def_x_B_1}). By a Taylor expansion in Eq.\;\eqref{def_x_N_1} we obtain Eq.\;\eqref{preuve_x_N_2,dsupb} and Eq.\;\eqref{preuve_x_N_2,dinfb}. This ends the proof of Lemma \ref{lemma_x_N}.
\end{proof}
\subsection{Proof of Proposition \ref{proposition_B_N}}
\label{appendix_proof_Levy}
To prove Proposition \ref{proposition_B_N} we need the following result which is adapted from \cite[Theorem 4.1]{DR78} and \cite[Lemma 4.2]{JKO09}. 
\begin{proposition} 
\label{proposition_array}
Let $\left(Z_N^n\right)_{(N,n) \in \N^2}$  an array of random variables  and its natural array of filtration $\left( \mathcal{G}_N^n \right)_{(N,n) \in \N^2}$ . Let $\beta$  in $(1,2)$ and $\nu$ a L\'evy measure on $\mathbb{R}^*$. We define a sequence of stochastic processes  $\left(M_N(\cdot)\right)_{N \in \N}$ by 
\begin{align}
\label{definition_M_N}
\forall t \geq0, \quad \forall N \in \N, \quad M_N(t) &= \sum_{n=1}^{\lfloor N^\beta t \rfloor} Z_n^N \ .  
\end{align}
Let $g$ in $\Cc_{\mathbf{c}}^\infty \left( \R^* \right)$ and $r \neq 0$ we assume that
\begin{align}
  \forall N \geq 1, \quad \forall n \geq 1, \quad \mathbb{E}\left[ Z_N^n  \Bigg{|} \mathcal{G}_{N}^{n-1} \right] &=0 \  ,\label{first_assumption_array} \\
 \forall r >0, \quad \lim\limits_{N \rightarrow \infty} N^{ \beta}\mathbb{P}\left[Z_N^1 > N r \right] &= \nu(r, +\infty) \ ,\label{second_assumption_array} \\
  \forall r < 0, \quad\lim\limits_{N \rightarrow \infty} N^{ \beta}\mathbb{P}\left[Z_N^1 < N r \right]& =\nu(-\infty,r)\label{thirdt_assumption_array}\  , \\
   \lim\limits_{N \rightarrow \infty} \mathbb{E}\left[ \left\vert \sum_{n=1}^{\lceil N^\beta t \rceil} \mathbb{E}\left[ g\left( \frac{Z_N^n}{N} \right)   \Bigg{|}  \mathcal{G}_{N}^{n-1}\right] - t \int_{\mathbb{R}} g(r) \; d\nu(r)  \right\vert\right]&=0 \ , \label{fourth_assumption_array} \\
 \lim\limits_{ N \rightarrow \infty} N \mathbb{E}\left[  \E\left(  g\left( \frac{Z_N^1}{N} \right)   \Bigg{|} \mathcal{G}_{N}^0  \right) \right]^2 &= 0 \ .\label{fifth_assumption_array} 
\end{align}
Then the finite-dimensional distributions of $\left(N^{-1}M_N(\cdot)\right)_{N \in \N}$ converge weakly to $Z(\cdot)$ in $\mathcal{D}\left( [0,T], \R\right)$ where $Z(\cdot)$ is a L\'evy process with L\'evy measure $\nu$.
\end{proposition}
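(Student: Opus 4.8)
The plan is to run the characteristic-function argument of \cite{DR78} and \cite[Lemma 4.2]{JKO09}, adapted to the triangular array $\left(Z_n^N\right)_{(n,N)\in\N^2}$. Fix $0=t_0<t_1<\dots<t_\ell$ and $\theta_1,\dots,\theta_\ell$ in $\R$. Using the tower property for the filtration $\left(\mathcal{G}_n^N\right)_n$ and peeling off the blocks $\left[\lfloor N^\alpha t_{j-1}\rfloor,\lfloor N^\alpha t_j\rfloor\right]$ from $j=\ell$ down to $j=1$, one reduces the statement about finite-dimensional distributions to the following \textbf{single-block estimate}: for all $0\le s\le t$ and $\theta$ in $\R$,
\[
\mathbb{E}\Big[\exp\Big(\tfrac{\mathbf{i}\theta}{N}\big(M_N(t)-M_N(s)\big)\Big)\,\Big|\,\mathcal{G}_{\lfloor N^\alpha s\rfloor}^N\Big]\ \longrightarrow\ \exp\big((t-s)\,\Phi_\nu(\theta)\big)\quad\text{in probability as }N\to\infty,
\]
where $\Phi_\nu(\theta)=\int_{\R^*}\big(e^{\mathbf{i}\theta r}-1-\mathbf{i}\theta r\,\mathds{1}_{\{|r|<1\}}\big)\,d\nu(r)$ is the (pure-jump, since $a=0$) L\'evy exponent attached to $\nu$. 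The limit being \emph{deterministic}, iterating this estimate block by block makes the joint characteristic function of the increments factorize into that of the independent increments of a L\'evy process with L\'evy measure $\nu$, which is the claim.

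To prove the single-block estimate take, without loss of generality, $s=0$; set $m=m_N:=\lfloor N^\alpha t\rfloor$, $U_n:=\exp(\mathbf{i}\theta Z_n^N/N)$ and $\phi_n:=\mathbb{E}[U_n\mid\mathcal{G}_{n-1}^N]$. Since $\mathbb{E}[U_n/\phi_n-1\mid\mathcal{G}_{n-1}^N]=0$, the product $\prod_{n\le m}(U_n/\phi_n)$ is a mean-one complex martingale, so that $\mathbb{E}\big[\prod_{n\le m}U_n\big]=\mathbb{E}\big[\big(\prod_{n\le m}U_n/\phi_n\big)\prod_{n\le m}\phi_n\big]$. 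Because $\big|\prod_{n\le m}U_n\big|=1$ and $\prod_{n\le m}\phi_n$ will be shown to converge in probability to $e^{t\Phi_\nu(\theta)}$ (hence to stay bounded away from $0$ for large $N$), the factor $\prod U_n/\phi_n$ is eventually bounded and bounded convergence reduces the whole statement to $\prod_{n\le m}\phi_n\to e^{t\Phi_\nu(\theta)}$. Conditions \eqref{first_assumption_array}--\eqref{thirdt_assumption_array} and the negligibility hypothesis \eqref{fifth_assumption_array} give $\max_{n\le m}|\phi_n-1|\to0$ and $\sum_{n\le m}|\phi_n-1|^2\to0$ in probability, so $\sum_{n\le m}\log\phi_n=\sum_{n\le m}(\phi_n-1)+o_{\mathbb{P}}(1)$ and it only remains to identify $\lim_N\sum_{n\le m}(\phi_n-1)=t\,\Phi_\nu(\theta)$.

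The identification is the heart of the matter. Write $U_n-1=h_\theta(Z_n^N/N)+\mathbf{i}\tfrac{\theta}{N}Z_n^N\mathds{1}_{\{|Z_n^N|\le N\}}$ with $h_\theta(r):=e^{\mathbf{i}\theta r}-1-\mathbf{i}\theta r\,\mathds{1}_{\{|r|\le1\}}$, and use the martingale property \eqref{first_assumption_array} to turn the conditional expectation of the second term into $-\mathbf{i}\tfrac{\theta}{N}\mathbb{E}[Z_n^N\mathds{1}_{\{|Z_n^N|>N\}}\mid\mathcal{G}_{n-1}^N]$, whence
\[
\phi_n-1=\mathbb{E}\big[h_\theta(Z_n^N/N)\mid\mathcal{G}_{n-1}^N\big]-\mathbf{i}\tfrac{\theta}{N}\mathbb{E}\big[Z_n^N\mathds{1}_{\{|Z_n^N|>N\}}\mid\mathcal{G}_{n-1}^N\big].
\]
Summing over $n\le m$, the first sum converges to $t\int_{\R^*}h_\theta\,d\nu$ by upgrading the test-function convergence \eqref{fourth_assumption_array} from $g$ in $\C_c^\infty(\R^*)$ to the bounded continuous function $h_\theta$ (after a harmless smoothing of the truncation at $|r|=1$), using $|h_\theta(r)|\lesssim\theta^2\min(r^2,1)$ together with $\int_{\R^*}\min(1,r^2)\,d\nu<\infty$ near the origin and the tail conditions \eqref{second_assumption_array}--\eqref{thirdt_assumption_array} near infinity; the second sum is handled by a layer-cake argument from \eqref{second_assumption_array}--\eqref{thirdt_assumption_array}, its limit being a pure drift $-\mathbf{i}\theta\,t\int_{\{|r|>1\}}r\,d\nu$, which vanishes for the symmetric measures $\nu_\delta$ of this paper (see Proposition~\ref{proposition_f}). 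Adding the two contributions recovers the L\'evy exponent $t\,\Phi_\nu(\theta)$, and condition \eqref{fifth_assumption_array}, which in particular absorbs the non-stationary first step $n=1$, finishes the proof of $\sum|\phi_n-1|^2\to0$.

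The step I expect to resist is the extension of \eqref{fourth_assumption_array} to the full L\'evy--Khintchine integrand: elements of $\C_c^\infty(\R^*)$ vanish both near $0$ and near $\infty$, so estimating $\sum_{n\le m}\mathbb{E}[h_\theta(Z_n^N/N)\mathds{1}_{\{|Z_n^N|\le\varepsilon N\}}\mid\mathcal{G}_{n-1}^N]$ uniformly in $N$ for small $\varepsilon$ needs, on top of \eqref{fourth_assumption_array}, a uniform bound on truncated conditional second moments near the origin; in the present applications this is supplied by the explicit small-argument expansions of Proposition~\ref{proposition_x_B} and Proposition~\ref{proposition_f}, while the analogous control for $|r|\to\infty$ and the passage from $h_\theta$ to a smooth compactly supported approximant are routine consequences of the tail hypotheses (and of the finiteness of $\int_{\{|r|>1\}}|r|\,d\nu$, valid since $\alpha>1$). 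Everything else --- the block factorization, the martingale normalization, the log-expansion --- follows \cite{DR78} and \cite[Lemma 4.2]{JKO09} essentially verbatim.
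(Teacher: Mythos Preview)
Your sketch follows exactly the characteristic-function/martingale route of \cite{DR78} and \cite[Lemma~4.2]{JKO09}, which is precisely what the paper invokes: its own proof of Proposition~\ref{proposition_array} is nothing more than the reference ``We refer the reader to \cite[Appendix~A]{JKO08V1}''. So your proposal is not merely aligned with the paper's approach --- it is substantially more detailed than what the paper provides, and the key steps (block factorization via the tower property, the multiplicative martingale $\prod U_n/\phi_n$, the expansion $\log\phi_n=(\phi_n-1)+o_{\mathbb P}(1)$, and the identification of $\sum(\phi_n-1)$) are the standard ones from those references.

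One small point worth tightening: in your identification step you obtain the extra drift $-\mathbf{i}\theta\,t\int_{\{|r|>1\}}r\,d\nu$ and then dismiss it by appealing to the symmetry of the specific measures $\nu_\delta$ arising in the paper. For the proposition \emph{as stated} (general $\nu$), this term does not vanish; rather, since $M_N/N$ is a martingale by \eqref{first_assumption_array}, the limiting L\'evy process is necessarily the \emph{martingale} L\'evy process with measure $\nu$, whose exponent is $\int(e^{\mathbf{i}\theta r}-1-\mathbf{i}\theta r)\,d\nu=\Phi_\nu(\theta)-\mathbf{i}\theta\int_{\{|r|\ge1\}}r\,d\nu$. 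In the paper's applications $\nu$ is symmetric and the distinction disappears, but if you want the argument to match the general statement, phrase the conclusion as convergence to the centred L\'evy process with measure $\nu$ rather than invoking symmetry. Your identification of the genuine technical crux --- lifting \eqref{fourth_assumption_array} from $\mathcal C_c^\infty(\mathbb R^*)$ to $h_\theta$, which requires a uniform-in-$N$ bound on the truncated conditional second moments near the origin --- is exactly right.
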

\begin{proof}
We refer the reader to \cite[Appendix A]{JKO08V1} for the proof of Proposition \ref{proposition_array}.
\end{proof}
We recall that $\pi_{B_N}$ is the stationary measure of the chain $\left(K_{B_N}^n,I_{B_N}^n\right)_{n \in \mathbb{N}}.$
Let $\beta_\delta$ defined in Eq.\;\eqref{tails_Psi}. We recall that for any $u$ in $\R$ and positive time $t$ in $[0,T]$
\begin{align*}
Y_{u_N}^N(t)& = u - N^{-1} \sum_{n=0}^{\lfloor N^{\beta_\delta} t \rfloor} \Psi_{B_N}\left(X_{B_N}^n\right)\frac{\tau_n^N}{2\pi } \ . 
\end{align*}
We introduce the array $\left(Z_N^n\right)_{(N,n) \in \mathbb{N}^2} := \left(\Psi_{B_N}\left(X_{B_N}^n\right)\tau_N^n\right)_{(N,n) \in \mathbb{N}^2}$ and its natural array of filtration $(\Gc_N^n)_{(N,n) \in \mathbb{N}^2}$. To prove Proposition \ref{proposition_B_N} it is sufficient to show that the array $(Z_N^n)_{(N,n) \in \N^2}$ satisfies the assumption of Proposition \ref{proposition_array}. \\

Since $\Psi_{B_N}$ is a centered function,  Eq.\;\eqref{first_assumption_array} is satisfied. By Proposition \ref{proposition_properties_pi}, Eq.\;\eqref{second_assumption_array} and Eq.\;\eqref{thirdt_assumption_array} are satisfied.  It remains to prove Eq.\;\eqref{fourth_assumption_array} and Eq.\;\eqref{fifth_assumption_array}.
\begin{proof}[Proof of Eq.\;\eqref{fourth_assumption_array}] Let $g$ in $\Cc^\infty_{\mathbf{c}} \left(\R^* \right)$, we want to prove that
\begin{align}
    \lim\limits_{N \rightarrow \infty}  \mathbb{E}_{\pi_{B_N}}\left[ \left\vert \sum_{n=1}^{\lfloor N^{\beta_\delta} t \rfloor} \mathbb{E}_{\pi_{B_N}}\left[ g\left( \frac{Z_N^n}{N} \right)  \Bigg{|} \mathcal{G}_N^{n-1}\right] - t \int_{\mathbb{R}} g(r) \; d\nu_\delta(r)  \right\vert\right] =0 \ ,
\end{align}
where $\nu_\delta$ is the L\'evy measure defined in Eq.\;\eqref{tails_Psi}. We have
\begin{align*}
    &\mathbb{E}_{\pi_{B_N}}\left[  \sum_{n=1}^{\lfloor N^{\beta_\delta} t \rfloor} \mathbb{E}_{\pi_{B_N}}\left[ g\left( \frac{Z_N^n}{N} \right)  \Bigg{|} \mathcal{G}^{n-1}_N\right]\right]\\
    &= N^{\beta_\delta}  \int_{0}^{+\infty} e^{-u} du \int_{\mathbb{T} \times \{1,2\}} g \left( \frac{\Psi_{B_N}(k,i)u}{2\pi N}  \right)\; d\pi_{B_N}(k,i)   \\
 & =  N^{\beta_\delta}  \int_{0}^{+\infty} e^{-u} du \int_{\mathbb{T} \times \{1,2\}} \int_0^\infty \left(2\pi N\right)^{-1} g^\prime \left( \frac{ r}{2\pi N}\right) \; \mathds{1}_{\left[0, \Psi_{B_N}(k,i)u\right]}(r) \; d\pi_{B_N}(k,i) \; d r     \\
& =  N^{\beta_\delta}  \int_{0}^{+\infty} e^{-u} du \int_0^\infty g^\prime(r) \; \pi_{B_N} \left(\Psi_{B_N} \geq \frac{2\pi Nr}{u}\right) \; d r \ .
\end{align*}
Using the tails condition (item i) of Proposition \ref{proposition_properties_pi}) we get that
\begin{align*}
\lim\limits_{N \rightarrow \infty} N^{\beta_\delta}  \int_{0}^{+\infty} e^{-u} du \int_0^\infty g^\prime(r)\;  \pi_{B_N} \left(\Psi_{B_N} \geq \frac{2\pi Nr}{u}\right) \; d r  =  \int_\mathbb{R} g(r) \; d\nu_\delta(r)\ .
\end{align*}
This proves Eq.\;\eqref{fourth_assumption_array}. From this result we deduce that Eq.\;\eqref{fifth_assumption_array} is satisfied and the proof of Proposition \ref{proposition_B_N} is complete.
\end{proof}
\section{Proof of Lemma \ref{lemma_S_N}}
\label{section_proof_lemma_weak}
We recall that $\pi_{B_N}$ is the stationary measure of the chain $\left(K_{B_N}^n,I_{B_N}^n\right)_{n \in \mathbb{N}}.$ To prove Lemma \ref{lemma_S_N} we need the following result.
 \begin{lemma}
\label{lemma_T_N}
Let $u >0$ then 
\begin{equation}
\forall \sigma >0, \quad \lim\limits_{N \rightarrow \infty} \mathbb{P}_{\pi_{B_N}}\left[ \left\vert \frac{1}{\lfloor N^{\beta_\delta} u \rfloor} \sum _{n=0}^{\lfloor N^{\beta_\delta} u \rfloor} \lambda_{B_N}\left(X_{B_N}^n \right)\tau_N^n
 - \frac{u}{2} \right\vert > \sigma \right] =0 \ .
\end{equation}
\end{lemma}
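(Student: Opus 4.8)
The plan is to recognise the sum as a normalised sum of i.i.d.\ variables (for each fixed $N$) and to apply a weak law of large numbers for triangular arrays; the only genuinely delicate point is a bound on a moment of order $p>1$ of $\lambda_{B_N}$ under $\pi_{B_N}$, uniform in $N$.

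First I would set up the i.i.d.\ structure. Write $\xi_n^N:=\lambda_{B_N}\!\big(X_n^{B_N}\big)\,\tau_n^N$, so the quantity in Lemma~\ref{lemma_T_N} is $\lfloor N^\alpha u\rfloor^{-1}\sum_{n=0}^{\lfloor N^\alpha u\rfloor}\xi_n^N$. Under $\P_{\pi_{B_N}}$ one has $X_0^{B_N}\sim\pi_{B_N}$, and by Eq.~\eqref{def_P} the kernel $P_{B_N}(k,i,dk',j)=\pi_{B_N}(dk',j)$ does not depend on $(k,i)$, so (as in the Remark after Eq.~\eqref{def_P}) the whole sequence $\big(X_n^{B_N}\big)_{n\ge0}$ is i.i.d.\ with law $\pi_{B_N}$ under $\P_{\pi_{B_N}}$; since $(\tau_n^N)_{n\ge0}$ is i.i.d.\ $\Ec(1)$ and independent of the chain, $(\xi_n^N)_{n\ge0}$ is, for each fixed $N$, an i.i.d.\ sequence of nonnegative variables with mean $\overline\lambda:=\int_{\T\times\{1,2\}}\lambda_{B_N}\,d\pi_{B_N}$. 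Using $\lambda_{B_N}=[\gamma\theta_{i,B_N}^2 R]^{-1}$, the explicit density of $\pi_{B_N}$ in Eq.~\eqref{invariante_measure_delta} and $\theta_{1,B_N}^2+\theta_{2,B_N}^2\equiv1$, one computes $\overline\lambda=2(\gamma\overline R)^{-1}$, the same for every $N$ (and consistent with the relation $S(t)=t\,\overline\lambda^{-1}$ used in the proof of Theorem~\ref{main_theorem_hydro}). Note that a plain Chebyshev/second-moment argument is not available: since $\lambda_{B_N}(k,2)\sim B_N^{2}\,\widehat\alpha(k)^{-2}$ as $k\to0$, one has $\E_{\pi_{B_N}}[\lambda_{B_N}^2]=+\infty$, so a truncation is unavoidable.

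The main step --- and the main obstacle --- is the uniform moment bound: there is $p\in(1,\tfrac54)$ with $\sup_N\E_{\pi_{B_N}}\!\big[\lambda_{B_N}^p\big]<\infty$. From $\lambda_{B_N}=[\gamma\theta_{i,B_N}^2 R]^{-1}$ and Eq.~\eqref{invariante_measure_delta}, $\E_{\pi_{B_N}}[\lambda_{B_N}^p]$ equals a fixed positive constant times $\sum_{i=1}^2\int_{\T}\big(\theta_{i,B_N}^2(k)R(k)\big)^{1-p}\,dk$. For $i=1$ one has $\theta_{1,B_N}^2\in[\tfrac12,1]$ (because $\omega_{1,B}\ge\omega_{2,B}$), so the integrand is $\lesssim R(k)^{1-p}\asymp\widehat\alpha(k)^{1-p}$, integrable and uniformly bounded for $p<\tfrac32$. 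For $i=2$, using $\omega_{1,B}\omega_{2,B}=\widehat\alpha$ and $\omega_{1,B}+\omega_{2,B}=2\sqrt{\widehat\alpha+B^2/4}$ one writes $\theta_{2,B_N}^2(k)R(k)=2\widehat\alpha(k)\theta_{2,B_N}^2(k)\gtrsim\widehat\alpha(k)^2/(\widehat\alpha(k)+B_N^2)$; splitting $\T$ into $\{|k|\ge\varepsilon\}$ (harmless, uniformly in $N$, since $B_N\le B$) and $\{|k|<\varepsilon\}$ (where the integrand is $\lesssim B_N^{2(p-1)}\widehat\alpha(k)^{2(1-p)}$), the second region contributes $\lesssim B_N^{2(p-1)}\int_0^\varepsilon\widehat\alpha(k)^{2(1-p)}\,dk$, which is finite for $p<\tfrac54$ and moreover $\to0$ as $N\to\infty$ because $B_N=BN^{-\delta}\to0$ and $p>1$. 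Fixing any such $p$ gives the bound, whence $C_p:=\sup_N\E[(\xi_1^N)^p]=\Gamma(1+p)\,\sup_N\E_{\pi_{B_N}}[\lambda_{B_N}^p]<\infty$.

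Finally I would conclude by the classical truncation weak law of large numbers for triangular arrays (Feller's criterion). Set $n_N:=\lfloor N^\alpha u\rfloor\to\infty$ and $\overline\xi_n^N:=\xi_n^N\mathds{1}_{\{\xi_n^N\le n_N\}}$. Markov's inequality with the uniform bound gives $n_N\,\P_{\pi_{B_N}}(\xi_1^N>n_N)\le C_p\,n_N^{1-p}\to0$ and $n_N^{-2}\sum_{n=1}^{n_N}\E[(\overline\xi_n^N)^2]=n_N^{-1}\E[(\xi_1^N)^2\mathds{1}_{\{\xi_1^N\le n_N\}}]\le C_p\,n_N^{1-p}\to0$ (using $(\xi_1^N)^2\mathds{1}_{\{\xi_1^N\le n_N\}}\le n_N^{2-p}(\xi_1^N)^p$), so the theorem yields $n_N^{-1}\big(\sum_{n=1}^{n_N}\xi_n^N-n_N\,\E[\overline\xi_1^N]\big)\to0$ in $\P_{\pi_{B_N}}$-probability. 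Since $0\le\E[\xi_1^N]-\E[\overline\xi_1^N]=\E[\xi_1^N\mathds{1}_{\{\xi_1^N>n_N\}}]\le C_p\,n_N^{1-p}\to0$, this gives $n_N^{-1}\sum_{n=1}^{n_N}\xi_n^N\to\overline\lambda$ in probability; adding the single term $n_N^{-1}\xi_0^N\to0$ (again by Markov) yields $n_N^{-1}\sum_{n=0}^{n_N}\xi_n^N\to\overline\lambda$ in $\P_{\pi_{B_N}}$-probability, which is the assertion of Lemma~\ref{lemma_T_N}.
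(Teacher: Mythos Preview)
Your argument is correct and takes a different route from the paper's. The paper proves the convergence in $L^1$ via a \emph{spatial} truncation: it replaces $\lambda_{B_N}$ by $\lambda_{B_N}^\varepsilon:=\lambda_{B_N}\mathds{1}_{\{|k|\ge\varepsilon\}}$, which is uniformly bounded, so a direct second-moment/Cauchy--Schwarz bound gives $\big\|n_N^{-1}\sum_n(\lambda_{B_N}^\varepsilon(X_n^{B_N})\tau_n^N-\bar\lambda_{B_N}^\varepsilon)\big\|_{L^1}\lesssim K(\varepsilon)\,n_N^{-1/2}$; the replacement error and $|\bar\lambda-\bar\lambda_{B_N}^\varepsilon|$ are each controlled by $C(\varepsilon)\lesssim\varepsilon$ using stationarity and the explicit density of $\pi_{B_N}$, and one sends $N\to\infty$ then $\varepsilon\to 0$. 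You instead establish a uniform-in-$N$ bound on $\E_{\pi_{B_N}}[\lambda_{B_N}^p]$ for some $p\in(1,5/4)$ and apply Feller's weak law for triangular arrays with truncation at the value level $n_N$. Both rely on the same i.i.d.\ structure under $\P_{\pi_{B_N}}$; the paper's route avoids the $L^p$ computation by working with bounded truncates, whereas yours makes the triangular-array mechanism and the sharp integrability threshold $p<5/4$ (dictated by the $i=2$ branch) explicit. One harmless inaccuracy: on $\{|k|<\varepsilon\}$ your bound $(\hat\alpha+B_N^2)^{p-1}\hat\alpha^{2(1-p)}\lesssim B_N^{2(p-1)}\hat\alpha^{2(1-p)}$ only holds where $\hat\alpha\lesssim B_N^2$, so the contribution of that whole region does not tend to $0$ with $N$; but this is irrelevant to your proof, since the cruder estimate $(\hat\alpha+B_N^2)^{p-1}\le C(B)$ already yields the uniform finiteness $\sup_N\int_0^\varepsilon[\theta_{2,B_N}^2R]^{1-p}\,dk<\infty$ for $p<5/4$, which is all you actually use.
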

  \begin{proof}[Proof of Lemma \ref{lemma_T_N}] In order to prove the convergence in probability we prove the convergence in $\mathbb{L}^1$. Let $N$ be fixed, we define the sequence of function $\left(\lambda_{B_N}^\varepsilon\right)_{ \varepsilon >0}$ where for any $k$ in $\T$ and $i$ in $\{1,2\}$ we have
 \begin{small}
\begin{align*}
\lambda_{B_N}^\varepsilon(k,i) &= \lambda_{B_N}(k,i) \mathds{1}_{[-\tfrac{1}{2}+\varepsilon] \cup [\varepsilon,\tfrac{1}{2}-\varepsilon]} \quad \text{and} \quad 
\overline{\lambda}_{B_N}^\varepsilon= \int_{\T \times \{1,2\}} \lambda_{B_N}^\varepsilon(k,i) \; d\pi_{B_N}(k,i) \ .
\end{align*} 
 \end{small}
 \begin{flushleft}
Observe that for any $\varepsilon$, $\lambda_{B_N}^\varepsilon$ is  in $\mathbb{L}^{2}(\pi_{B_N})$. 
Using the stationnarity of the Markov chain $\left(X_{B_N}^n\right)_{n \in \N}$ and the definition of $\pi_{B_N}$ given by Eq.\;\eqref{invariante_measure_delta} we have
 \end{flushleft}
 \begin{align*}
\frac{1}{\lfloor N^{\beta_\delta} u \rfloor} \left\Vert \sum _{n=0}^{\lfloor N^{\beta_\delta} u \rfloor} \left\vert \lambda_{B_N} \left(X_{B_N}^n \right)\tau_N^n - \lambda_{B_N}^\varepsilon\left(X_{B_N}^n \right)  \tau_N^n \right\vert   \right\Vert_{\mathbb{L}^1} &\leq C(\varepsilon) \ , \\
\frac{1}{\lfloor N^{\beta_\delta} u \rfloor} \left\Vert \sum _{n=0}^{\lfloor N^{\beta_\delta} u \rfloor} \left\vert \bar{\lambda}_{B_N}^\varepsilon - \frac{u}{2}  \right\vert   \right\Vert_{\mathbb{L}^1}& \leq C(\varepsilon) \ ,
 \end{align*}
 where $\vert C(\varepsilon) \vert \lesssim \varepsilon$.\\
By the triangle inequality, to end the proof it is sufficient to prove that for any $\varepsilon$ we have
\begin{align*}
\lim\limits_{N \rightarrow \infty} \frac{1}{\lfloor N^{\beta_\delta} u \rfloor} \left\Vert \sum _{n=0}^{\lfloor N^{\beta_\delta} u \rfloor} \lambda_{B_N}^\varepsilon\left(X_{B_N}^n \right)\tau_N^n  - \bar{\lambda}_{B_N}^\varepsilon \right\Vert_{\mathbb{L}^1} = 0 \ .
\end{align*}
Using Cauchy-Schwarz's inequality and that for any $N$ the random variables $\left(X_{B_N}^n,\tau_N^n\right)$ are independent and centered we get
\begin{align*}
\frac{1}{\lfloor N^{\beta_\delta} u \rfloor} \left\Vert \sum _{n=0}^{\lfloor N^{\beta_\delta} u \rfloor}  \lambda_{B_N}^\varepsilon\left(X_{B_N}^n \right)\tau_n^N  - \bar{\lambda}_{B_N}^\varepsilon \right\Vert_{\mathbb{L}^1}
& \leq  \frac{K(\varepsilon)}{\sqrt{N^{\beta_\delta}}} \ ,
\end{align*}
where $K(\varepsilon) \lesssim \varepsilon$.
Finally we get that 
\begin{align*}
\limsup_{ N \rightarrow \infty}  \frac{1}{\lfloor N^{\beta_\delta} u \rfloor} \left\Vert \sum _{n=0}^{\lfloor N^{\beta_\delta} u \rfloor} \lambda_{B_N}\left(X_{B_N}^n \right)\tau_N^n  - \bar{\lambda}_{B_N}^\varepsilon   \right\Vert_{\mathbb{L}^1} &
 \leq  C(\varepsilon) \ .
\end{align*}
By sending $\varepsilon$ to zero we end the proof of Lemma \ref{lemma_T_N}.
 \end{proof}
From this result we can prove Lemma \ref{lemma_S_N}.
\begin{proof}[Proof of Lemma \ref{lemma_S_N}]
 Let $t_0 >0$, $T \geq t_0$ and $\varepsilon >0$.
 \begin{eqnarray*}
 \mathbb{P}_{\pi_{B_N}}\left[ \sup_{ t \in [t_0,T]} \vert S_N(t) -S(t) \vert > \varepsilon   \right] & \leq & \mathbb{P}_{\pi_{B_N}}\left[ \exists t \in [t_0,T], \; \;  S_N(t) \leq S(t) - \tfrac{\varepsilon}{2}  \right]\\
 & + & \mathbb{P}_{\pi_{B_N}}\left[ \exists t \in [t_0,T], \; \;  S_N(t) \geq S(t) + \tfrac{\varepsilon}{2}    \right]
 \end{eqnarray*}
Let $m$ in $\N$, since $S$ is a continuous function there exists a subdivision $(t_i)_{i \in \{0, \cdots, m\} }$ such that $t_0 < t_1 < \cdots < t_m=T$ with
 \[ \forall i \in \{ 0, \cdots, m\}, \quad S(t_{i+1}) - S(t_i) \leq \frac{\varepsilon}{10} \ .\]
Using the fact that $S$ and $S_N$ are increasing functions we get 
 \begin{align*}
&\mathbb{P}_{\pi_{B_N}}\left[ \exists t \in [t_0,T], \; \;  S_N(t) \leq S(t) - \frac{\varepsilon}{2}  \right] \\
&\leq  \sum_{i=0}^{m-1} \mathbb{P}_{\pi_{B_N}}\left[ \exists t \in [t_i,t_{i+1}], \; \;  S_N(t) \leq S(t) - \frac{\varepsilon}{2}  \right]\\
 & \leq  \sum_{i=0}^{m-1} \mathbb{P}_{\pi_{B_N}}\left[ S_N(t_i) \leq S(t_{i+1}) - \frac{\varepsilon}{2}  \right] \\
 & \leq  \sum_{i=0}^{m-1} \mathbb{P}_{\pi_{B_N}}\left[ S_N(t_i) \leq S(t_{i}) - \frac{2\varepsilon}{5}  \right] \ . 
 \end{align*}
 Using the same techniques we get 
 \begin{equation*}
 \mathbb{P}_{\pi_{B_N}}\left[ \exists t \in [t_0,T], \; \;  S_N(t) \geq S(t) + \tfrac{\varepsilon}{2}    \right] \leq \sum_{i=0}^{m-1} \mathbb{P}_{\pi_{B_N}}\left[ S_N(t_{i+1}) - S(t_{i+1}) \geq \tfrac{2\varepsilon}{5}  \right] \ . 
 \end{equation*}
 Hence we proved that
 \begin{equation*}
 \mathbb{P}_{\pi_{B_N}}\left[ \sup_{ t \in [t_0,T]} \vert S_N(t) -S(t) \vert > \varepsilon   \right] \leq 2m \sup_{i=0, \cdots, m} \mathbb{P}_{\pi_{B_N}} \left[ \left\vert S_N(t_i) - S(t_i) \right\vert \geq \tfrac{2 \varepsilon}{5} \right] \ .
 \end{equation*}
 To conclude the proof it is sufficient to prove that
 \[ \forall t \in [t_0, T], \quad \forall \delta >0, \quad \lim\limits_{N \rightarrow \infty} \mathbb{P}_{\pi_{B_N}}\left[ \left\vert S_N(t) - S(t) \right\vert > \delta   \right]  =0 \ .\]
 This result follows from Lemma \ref{lemma_T_N}, indeed let $\delta >0$ then
 \begin{align*}
\mathbb{P}_{\pi_{B_N}}\left[ \left\vert S_N(t) - S(t) \right\vert > \delta   \right]
 & \leq   \mathbb{P}_{\pi_{B_N}}\left[  S_N(t)   < S(t)+ \delta   \right] \\
 &+ \mathbb{P}_{\pi_{B_N}}\left[  S_N(t)   >  S(t)+ \delta   \right]\\
  & \leq  \mathbb{P}_{\pi_{B_N}}\left[ \frac{1}{\lfloor N^{\beta_\delta} u \rfloor} \sum_{n=0}^{\lfloor N^{\beta_\delta} [S(t) + \delta] \rfloor} \lambda_{B_N}\left(X_{B_N}^n \right)\tau_N^n  > t \right]\\
  & + \mathbb{P}_{\pi_{B_N}}\left[\frac{1}{\lfloor N^{\beta_\delta} u \rfloor} \sum_{n=0}^{\lfloor N^{\beta_\delta} [S(t) - \delta] \rfloor} \lambda_{B_N}\left(X_{B_N}^n \right)\tau_N^n  \leq t \right] \ .
 \end{align*}
 Using the dominated convergence theorem and Lemma \ref{lemma_T_N}, we conclude the proof. 
 \end{proof}
 
 \section*{Acknowledgements}{The author is very grateful to Tomasz Komorowski for discussions and in particular to point out the reference \cite{K14}. 
The author would to thank his thesis advisor C\'edric Bernardin for his advices, remarks and his valuable support.}

 \bibliographystyle{plain}
\bibliography{main}
 \end{document}